\let\oldtocsection=\tocsection
\let\oldtocsubsection=\tocsubsection
\let\oldtocsubsubsection=\tocsubsubsection
\renewcommand{\tocsection}[2]{\hspace{0em}\oldtocsection{#1}{#2}}
\renewcommand{\tocsubsection}[2]{\hspace{1em}\oldtocsubsection{#1}{#2}}
\renewcommand{\tocsubsubsection}[2]{\hspace{2em}\oldtocsubsubsection{#1}{#2}}
\newtheorem{definition}{Definition}[section]
\newtheorem{claim}{Claim}
\newtheorem{step}{Step}
\newtheorem{subclaim}{Subclaim}
\newtheorem{proposition}[definition]{Proposition}
\newtheorem{theorem}[definition]{Theorem}
\newtheorem{corollary}[definition]{Corollary}
\newtheorem{lemma}[definition]{Lemma}
\numberwithin{equation}{section}
\newcommand{\comment}[1]{}
\newcommand{\N}{\mathbb N}
\newcommand{\R}{\mathbb R}
\newcommand{\es}{\emptyset}
\newcommand{\cl}{{\rm cl}}
\newcommand{\cA}{\mathcal{A}}
\newcommand{\cB}{\mathcal{B}}
\newcommand{\cE}{\mathcal{E}}
\newcommand{\cF}{\mathcal{F}}
\newcommand{\cH}{\mathcal{H}}
\newcommand{\cK}{\mathcal{K}}
\newcommand{\cO}{\mathcal{O}}
\newcommand{\cP}{\mathcal{P}}
\newcommand{\cR}{\mathcal{R}}
\newcommand{\bP}{\mathbf{P}}
\newcommand{\bS}{\mathbf{S}}
\newcommand{\ba}{\mathbf{a}}
\newcommand{\bb}{\mathbf{b}}
\newcommand{\bd}{\mathbf{d}}
\newcommand{\bx}{\mathbf{x}}
\newcommand{\by}{\mathbf{y}}
\newcommand{\bz}{\mathbf{z}}
\newcommand{\bw}{\mathbf{w}}
\newcommand{\bu}{\mathbf{u}}
\newcommand{\bv}{\mathbf{v}}
\newcommand{\dist}{{\rm dist}}
\newcommand{\sA}{\mathscr{A}}
\newcommand{\sB}{\mathscr{B}}
\newcommand{\sG}{\mathscr{G}}
\newcommand{\sH}{\mathscr{H}}
\newcommand{\sJ}{\mathscr{J}}
\newcommand{\sL}{\mathscr{L}}
\newcommand{\sM}{\mathscr{M}}
\newcommand{\sO}{\mathscr{O}}
\newcommand{\sP}{\mathscr{P}}
\newcommand{\sQ}{\mathscr{Q}}
\newcommand{\sR}{\mathscr{R}}
\newcommand{\kk}{{(k)}}
\newcommand{\norm}[1]{\|#1\|_\infty}
\renewcommand{\epsilon}{\varepsilon}
\newcommand{\sm}{\setminus}
\newcommand{\sub}{\subseteq}
\newcommand{\COMMENT}[1]{}
\title{Hypergraph regularity and random sampling}
\author[F.~Joos]{Felix Joos}
\address{Institut f\"ur Informatik, Universit\"at Heidelberg, Germany}
\email{joos@informatik.uni-heidelberg.de}
\author[J.~Kim]{Jaehoon Kim}
\address{Department of Mathematical Sciences, KAIST, South Korea 34141}
\email{jaehoon.kim@kaist.ac.kr}
\author[D.~K\"uhn]{Daniela K\"uhn}
\author[D.~Osthus]{Deryk Osthus}
\address{School of Mathematics, University of Birmingham, Edgbaston, Birmingham, B15 2TT, United Kingdom}
\email{d.kuhn@bham.ac.uk, d.osthus@bham.ac.uk}
\thanks{
 The research leading to these results was partially supported by the EPSRC, grant nos. EP/M009408/1 (F.~Joos, D.~K\"uhn and D.~Osthus), 
and EP/N019504/1 (D.~K\"uhn).
The research was  also partially supported by the European Research Council under the European Union's Seventh Framework Programme (FP/2007--2013) / ERC Grant 306349 (J.~Kim and D.~Osthus).}
\date{\today}
\begin{document}

\begin{abstract}
Suppose that a $k$-uniform hypergraph $H$ satisfies a certain regularity instance
(that is, there is a partition of $H$ given by the hypergraph regularity lemma into a bounded number of quasirandom subhypergraphs of prescribed densities).
We prove that with high probability a large enough uniform random sample of the vertex set of $H$ also admits  the same regularity instance.
Here the crucial feature is that the error term measuring the quasirandomness of the subhypergraphs requires only an arbitrarily small additive correction. 
This has applications to combinatorial property testing. 
The graph case of the sampling result was proved by Alon, Fischer, Newman and Shapira.
\end{abstract}
\maketitle

\section{Introduction}
Szemer\'edi's regularity lemma \cite{Sz78} is one of the most important results in discrete mathematics and has numerous applications.
Roughly speaking, it states that every graph can be partitioned into bounded number of vertex sets so that most of bipartite graphs between the parts are random-like.
To be more precise, it says that for every $\epsilon>0$ and $m\in \N$, 
there exists an $M(\epsilon,m)$ such that every large enough graph $G$ admits an equipartition $V_1,\ldots, V_t$ of its vertex set such that $m\leq t \leq M$ and all but at most $\epsilon t^2$ pairs $V_i,V_j$ induce an $\epsilon$-regular pair in $G$;
as usual, a bipartite graph with vertex sets $A,B$ is $\epsilon$-regular if it is $(\epsilon,d)$-regular for some $d\in [0,1]$. The latter means that
$|d-e(A',B')(|A'||B'|)^{-1}|\leq \epsilon$ for all subsets $A'\subseteq A,B'\subseteq B$ with $|A'|\geq \epsilon|A|,|B'|\geq \epsilon|B|$.

Being $\epsilon$-regular encapsulates random-like behaviour.
Much of the information about the graph is captured by the vertex partition $V_1,\ldots,V_t$ together with the densities $d(V_i,V_j):=e(V_i,V_j)(|V_i||V_j|)$ between the pairs $ij\in \binom{[t]}{2}$ of parts.
Consequently, it turned out to be interesting in many occasions to decide whether a graph $G$ on $n$ vertices satisfies a particular \emph{regularity instance} $R_{\epsilon}(\mathbf{d})$ which is defined as follows.

\begin{definition}
For given $\epsilon>0$, $t\in \N\setminus \{1\}$ and $\mathbf{d} \in [0,1]^{\binom{[t]}{2}}$, we say that a graph $G$ satisfies a regularity instance $R_{\epsilon}(\mathbf{d})$ if $G$ admits an equipartition $V_1,\ldots, V_t$ 
such that $V_i,V_j$ is an $(\epsilon,\mathbf{d}(ij))$-regular pair for each $ij\in \binom{[t]}{2}$.
\end{definition}

The regularity lemma was further extended to hypergraphs in the ground-breaking work of R\"odl and Frankl~\cite{FR92} (who proved the $3$-uniform case),
R\"odl and Skokan~\cite{RS04} (who proved the $r$-uniform case, with the corresponding counting lemma proved by R\"odl and Schacht \cite{RS07count}) as well as by Gowers~\cite{Gow06, Gow07}. This theory was further developed in e.g. R\"odl and Schacht~\cite{RS07}, Tao~\cite{Tao06}, Allen, B\"ottcher, Cooley and Mycroft~\cite{ABCM:17}, Nagle, R\"odl and Schacht~\cite{NRS18}, Allen, Davies and Skokan~\cite{ADS:19}, Moshkovitz and Shapira~\cite{MS:19}. In particular, in~\cite{RS07}, R\"odl and Schacht proved the so-called `regular approximation lemma', which is a powerful version of the hypergraph regularity lemma and which is of central importance to our proof.

The hypergraph regularity lemma guarantees that every (large) $k$-uniform hypergraph admits a partition of its edge set, where most classes of the partition consist of `regularly distributed' edges.
The appropriate notion for being regularly distributed is significantly more complicated than in the graph case and it took two decades
until a suitable notion was found and a corresponding theory was established.
At a high level,
many concepts from graph regularity carry over to hypergraphs.
In particular, one can define regularity instances for hypergraphs as a sequence of densities recorded from the cluster structure provided by the hypergraph regularity lemma.
Again in many places it has turned out to be fruitful to work with the much less complex regularity instance instead of the hypergraph itself.


\medskip

Another very natural way to study the properties of a given discrete structure is via random sampling. 
By examining a small random sample $S$ of a combinatorial object $\cO$,
can we determine (with high probability) whether $\cO$ has a specific property $\bP$
or whether it is far from satisfying $\bP$?
Questions of this type are known as property testing and have been intensively studied.

Rubinfeld and Sudan~\cite{RS96} introduced property testing and Goldreich, Goldwasser and Ron~\cite{GGR98} obtained results regarding 
$k$-colourability, max-cut and more general graph partitioning problems. 
(In fact, these results are preceded by the famous triangle removal lemma of Ruzsa and Szemer\'edi~\cite{RS78}, which can be rephrased in terms of testability of triangle-freeness.)
This list of problems was greatly extended, see \cite{AFKS00, AFNS09, AS08, AS08a, Fish05, GT03}. 
This sequence of results in property testing culminated in the result of Alon, Fischer, Newman and Shapira~\cite{AFNS09} who obtained a complete combinatorial characterization of all testable graph properties.
This solved a problem posed in~\cite{GGR98}, which was regarded as one of the main open problems in the area.
In fact, the combinatorial characterization says that a property is testable if and only if 
it can be decided whether a graph has the property solely by considering the regularity instances it satisfies.
A key step in their approach is the result that one can efficiently test whether a graph satisfies a particular regularity instance.

Therefore,
this suggests that is also of high importance to prove a similar result for hypergraphs.
Duke and R\"odl \cite{DR85} proved that randomly sampled subgraphs of a dense and regular pair $(V_i,V_j)$ almost surely span an edge.
Alon, de la Vega, Kannan and Karpinski ~\cite{AVKK03} as well as
Mubayi and R\"odl~\cite{MR03} then proved a stronger `inheritance' result in the setting of uniform hypergraphs, i.e.~with high probability uniform edge-distribution is inherited by random samples. Further generalizing these results, Czygrinow and Nagle~\cite{CN11} proved that if a hypergraph satisfies a regularity instance, 
then with high probability a randomly sampled hypergraph also satisfies `the same' regularity instance.
However, their result has the disadvantage that their argument gives polynomial regularity dependence which we sharpen to nearly best possible, up to an additive correction. We also reverse the inference in the result of Czygrinow and Nagle~\cite{CN11}, that regularity instances of sampled subhypergraphs predict that of the host hypergraph.

As the precise definition of a regularity instance for hypergraphs is fairly involved,
let us now only state our result for graphs; 
in fact, Theorem~\ref{thm:main_graphs} is proved by Alon, Fischer, Newman and Shapira~\cite{AFNS09} as a crucial tool for their characterization of testable graph properties.

\begin{theorem}[\cite{AFNS09}]\label{thm:main_graphs}
For all $\epsilon>0$ and all $\delta>0$ that are small in terms of $\epsilon$,
there exists $c>0$ such that for all sufficiently large $n$ and $q$ with $n\geq q$, the following holds.
Suppose $(\epsilon,t,d_t)$ is a regularity instance and
suppose $G$ is a graph on $n$ vertices with vertex set $V$.
Let $Q \in \binom{V}{q}$ be chosen uniformly at random. 
Then with probability at least $ 1 - e^{-cq}$ the following hold.
\begin{itemize}
	\item If $G$ satisfies the regularity instance $(\epsilon,t,d_t)$,
	then $G[Q]$ satisfies the regularity instance $(\epsilon+\delta,t,d_t)$.
	\item If $G[Q]$ satisfies the regularity instance $(\epsilon,t,d_t)$,
	then $G$ satisfies the regularity instance $(\epsilon+\delta,t,d_t)$.
\end{itemize}
\end{theorem}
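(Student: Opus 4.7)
My plan is to reduce both directions of the theorem to a single bipartite sampling lemma for regularity. First, I reformulate the $(\epsilon,d)$-regularity of a pair $(U,W)$ in terms of the bipartite cut distance from the $d$-uniform graph, namely $\max_{A\subseteq U,\,B\subseteq W}\bigl|e(A,B)-d|A||B|\bigr|/(|U||W|)$; this cut-norm formulation is equivalent to the original notion up to a polynomial loss in the parameter, and it is convenient for subsampling arguments because it is defined via a bilinear functional.

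Next I would establish a two-sided sampling lemma: if $(U,W)$ is $(\epsilon,d)$-regular and $S_U\subseteq U$, $S_W\subseteq W$ are uniform random samples of size $m\geq m_0(\delta)$, then with probability at least $1-e^{-cm}$ the sampled pair is $(\epsilon+\delta,d)$-regular; conversely, if $(S_U,S_W)$ is $(\epsilon,d)$-regular, then $(U,W)$ is $(\epsilon+\delta,d)$-regular with the same high probability. The proof (essentially the bipartite case of Alon--de~la~Vega--Kannan--Karpinski) proceeds by observing that the cut norm is approximated to within additive $\delta$ by maximizing over a net of $2^{O(1/\delta)}$ rounded test pairs (each a union of classes in an auxiliary equipartition of $U,W$ into $O(1/\delta)$ parts). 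Chernoff-style concentration for each net element, combined with a union bound over the net, yields the $e^{-cm}$ failure probability.

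Given the sampling lemma, the forward direction is routine: starting from a partition $V_1,\dots,V_t$ certifying $(\epsilon,t,d_t)$-regularity of $G$, set $Q_i:=V_i\cap Q$; Chernoff plus a union bound over $i$ gives $|Q_i|=q/t\pm o(q)$ whp, and we trim to an exact equipartition at a cost of $o(\delta)$ in the regularity parameter. Applying the sampling lemma to each of the $\binom{t}{2}$ pairs $(V_i,V_j)$ and taking a union bound then yields $(\epsilon+\delta,t,d_t)$-regularity of $G[Q]$. For the backward direction, I would extend the partition $Q_1,\dots,Q_t$ of $Q$ to a partition $V_1,\dots,V_t$ of $V(G)$ by assigning each $v\in V(G)\setminus Q$ to the part $i$ for which the observed density profile $(|N(v)\cap Q_j|/|Q_j|)_{j\in[t]}$ best matches $(d_{ij})_{j\in[t]}$, and then rebalance to an equipartition; the reverse sampling lemma applied pair-by-pair certifies $(\epsilon+\delta,t,d_t)$-regularity of $G$.

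The main obstacle I expect is the sampling lemma itself: regularity involves maximizing over exponentially many subsets, so a naive union bound is far too lossy. The structural observation that the cut norm is attained, up to additive $\delta$, on a net of only $2^{O(1/\delta)}$ rounded subsets is what makes concentration tractable and delivers the sharp $e^{-cq}$ error probability. A secondary subtlety in the backward direction is verifying that the extension of the sample-partition to a partition of $V(G)$ yields sufficiently balanced parts whose bilateral densities match $d_t$; this is handled by noting that if a non-negligible fraction of vertices in $V(G)\setminus Q$ had ambiguous profiles relative to $(Q_1,\dots,Q_t)$, then with high probability the sampled pair $(Q_i,Q_j)$ would fail to be $\epsilon$-regular, contradicting our assumption.
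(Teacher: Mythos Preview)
Your forward direction follows the standard route (cf.\ Lemma~\ref{lem: grabbing} in the paper), though the cut-norm reformulation is only equivalent to $\epsilon$-regularity up to a polynomial change of parameter, so deducing that the sampled pair is $(\epsilon+\delta,d)$-regular (rather than, say, $(\epsilon+\delta)^{1/3}$-regular) from additive preservation of the cut norm requires more care than your sketch provides. The real gap is in the backward direction. Your reverse bipartite sampling lemma is a statement about a \emph{fixed} pair $(U,W)$ and a uniformly random sample $(S_U,S_W)\subseteq(U,W)$. To apply it to $(V_i,V_j)$ and $(Q_i,Q_j)$ you would need $Q_i$ to be a uniform random subset of $V_i$; but your $V_i$ is constructed \emph{after} drawing $Q$, as a function of the witnessing partition $Q_1,\dots,Q_t$ of $G[Q]$, so $Q_i$ is not a random sample of $V_i$ and the lemma does not apply. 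The profile-matching heuristic does not close this gap: ambiguity of the profiles of vertices in $V\setminus Q$ does not force irregularity of $(Q_i,Q_j)$ (consider the case where several rows of $d_t$ coincide), and in any case you have given no argument that the \emph{extended} pairs $(V_i,V_j)$ are regular, as opposed to merely that most vertices went to plausible parts.

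The paper (which cites the graph case from~\cite{AFNS09} and proves the hypergraph generalisation, Theorem~\ref{lem: random choice2}) takes a completely different, symmetric route. It first applies the regular approximation lemma to $H$ to obtain a nearby $k$-graph $G$ together with a \emph{very} high-quality $\epsilon'$-equitable partition $\sP_1$ (with $\epsilon'\ll\epsilon_0$); the forward sampling argument (Lemma~\ref{lem: random choice}) then shows that with high probability $G[Q]$ admits a partition $\sP_2$ with the same parameters and density function as $\sP_1$. The crux is Lemma~\ref{lem: similar}: whenever two graphs share a common high-quality regularity instance, any coarser $\epsilon_0$-instance satisfied by one transfers to the other with only additive loss $\delta$. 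Both directions of the theorem then follow by invoking Lemma~\ref{lem: similar} with $(G,G[Q])$ in the roles of $(H_1,H_2)$ or $(H_2,H_1)$, and Lemma~\ref{lem: slightly different partition regularity} passes between $G$ and $H$. No partition of $Q$ is ever extended to $V$.
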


The main result of this paper is a hypergraph version for Theorem~\ref{thm:main_graphs}.
In another paper~\cite{JKKO1}, we exploit this theorem to prove a combinatorial characterization for testable hypergraph properties. 
For this it is again crucial that a regularity instance can be tested with an arbitrarily small additive error.
It is by no means clear that one can generalize the results of~\cite{AFNS09} to hypergraphs and obtain such a characterization of testable hypergraph properties: Austin and Tao~\cite{AT10} showed that for the stronger but related notion of `repairability' the graph results do not extend to hypergraphs; see~\cite{AT10,JKKO1} for a more detailed discussion.

It is beyond the scope of an introduction to present the precise statement of such a result as first several notions concerning hypergraph regularity have to be introduced.
We therefore defer the statement of our main result to Section~\ref{sec: more concepts and tools}.

In order to prove our main result, we prove Lemma~\ref{lem:refreg}, which is a strengthening of a version of the hypergraph regularity lemma. 
This is a strengthening of a regular approximation lemma (Lemma~\ref{RAL(k)}) proved by R\"odl and Schacht~\cite{RS07}.
The latter is a version of the regularity lemma which of a given hypergraph $H$ guarantees that by modifying a small proportion of the hyperedges one can obtain a hypergraph $H'$ which has a `high' quality regularity lemma partition, that is, with very small error terms.
We believe that Lemma~\ref{lem:refreg} is also of independent interest and will have additional applications. 
In particular, we apply it in~\cite{JKKO1} to derive from our testability characterization that the max cut problem is testable.


\section{Concepts and tools}\label{sec:concepts}

In this section we introduce the main concepts and tools (mainly concerning hypergraph regularity partitions) which form the basis of our approach. The constants in the hierarchies used to state our results have to be chosen from right to left. More precisely, if we claim that a result holds whenever $1/n \ll a \ll b \leq 1$ (where $n\in \N$ is typically the number of vertices of a hypergraph), 
then this means that there are non-decreasing functions $f : (0, 1] \rightarrow (0, 1]$ and $g : (0, 1] \rightarrow (0, 1]$ such that the result holds for all $0 < a, b \leq 1 $ and all $n \in \mathbb{N}$ with $a \leq f(b)$ and $1/n \leq g(a)$.
For a vector $\bx =(\alpha_1,\dots, \alpha_\ell)$, we let $\bx_*:= \{ \alpha_1,\dots, \alpha_\ell\}$ and write $\norm{\bx} = \max_{i\in [\ell]}\{ \alpha_i\}$. 
We say a set $E$ is an \emph{$i$-set} if $|E|=i$.
Unless stated otherwise, in the partitions considered in this paper, we allow some of the parts to be empty.

\subsection{Hypergraphs}\label{sec: basic notation}

In the following we introduce several concepts about a hypergraph $H$.
We typically refer to $V=V(H)$ as the vertex set of $H$
and usually let $n:=|V|$.
Given a hypergraph $H$ and a set $Q\subseteq V(H)$, we denote by $H[Q]$ the hypergraph induced on $H$ by $Q$.
For two $k$-graphs $G,H$ on the same vertex set, we often refer to $|G\triangle H|$  as the \emph{distance} between $G$ and $H$.
If the vertex set of $H$  has a partition $\{V_1,\ldots,V_\ell\}$,
we simply refer to $H$ as a hypergraph on $\{V_1,\ldots,V_\ell\}$.

A partition $\{V_1,\ldots,V_\ell\}$ of $V$ is an \emph{equipartition} if $|V_i|=|V_j|\pm 1$ for all $i,j\in [\ell]$. For a partition $\{V_1,\ldots,V_\ell\}$ of $V$ and $k\in [\ell]$,
we denote by $K^{(k)}_\ell(V_1,\dots, V_\ell)$ the \emph{complete $\ell$-partite} $k$-graph with vertex classes $V_1,\ldots, V_\ell$.
Let $0\leq \lambda<1$.
If $|V_i| = (1\pm \lambda)m$ for every $i\in [\ell]$, 
then an \emph{$(m,\ell,k,\lambda)$-graph} $H$ on $\{V_1, \dots, V_{\ell}\}$ 
is a spanning subgraph of $K^\kk_{\ell}(V_1,\dots, V_{\ell})$.
For notational convenience, we consider the vertex partition $\{V_1,\ldots ,V_\ell\}$ as an $(m,\ell,1,\lambda)$-graph. 
If $|V_i| \in \{m,m+1\}$, 
we drop $\lambda$ and simply refer to $(m,\ell,k)$-graphs.
Similarly, if the value of $\lambda$ is not relevant, then we say $H\subseteq K^\kk_{\ell}(V_1,\dots, V_{\ell})$ is an $(m,\ell,k,*)$-graph.

Given an $(m,\ell,k,*)$-graph $H$ on $\{V_1,\dots, V_\ell\}$, an integer $k\leq i\leq \ell$ and a set $\Lambda_i \in \binom{[\ell]}{i}$, 
we set $H[\Lambda_i] := H[\bigcup_{\lambda'\in \Lambda_i} V_{\lambda'}]$. If $2\leq k\leq i \leq \ell$ and $H$ is an $(m,\ell,k,*)$-graph,
we denote by $\cK_i(H)$ the family of all $i$-element subsets $I$ of $V(H)$ for which $H[I]\cong K^{\kk}_i$,
where $K^{\kk}_i$ denotes the complete $k$-graph on $i$ vertices.

If $H^{(1)}$ is an $(m,\ell,1,*)$-graph and $i\in [\ell]$, 
we denote by $\cK_i(H^{(1)})$ the family of all $i$-element subsets $I$ of $V(H^{(1)})$ which `cross' the partition $\{V_1,\dots,V_\ell\}$; 
that is, $I\in \cK_i(H^{(1)})$ if and only if $|I\cap V_s|\leq 1$ for all $s\in [\ell]$.

We will consider hypergraphs of different uniformity on the same vertex set.
Given an $(m,\ell,k-1,\lambda)$-graph $H^{(k-1)}$ and an $(m,\ell,k,\lambda)$-graph $H^\kk$ on the same vertex set, 
we say $H^{(k-1)}$ \emph{underlies} $H^\kk$ if $H^\kk\subseteq \cK_k(H^{(k-1)})$;
that is, for every edge $e\in H^\kk$ and every $(k-1)$-subset $f$ of $e$,
we have $f\in H^{(k-1)}$.
If we have an entire cascade of underlying hypergraphs we refer to this as a complex.
More precisely, 
let $m\geq 1$ and $\ell\geq k\geq 1$ be integers. 
An \emph{$(m,\ell,k,\lambda)$-complex}  $\cH$ on $\{V_1,\ldots,V_\ell\}$ 
is a collection of $(m,\ell,j,\lambda)$-graphs $\{H^{(j)}\}_{j=1}^{k}$ on $\{V_1,\ldots,V_\ell\}$ such that $H^{(j-1)}$ underlies $H^{(j)}$ for all $i\in [k]\sm\{1\}$, that is, $H^{(j)} \subseteq \cK_{j}(H^{(j-1)})$.
Again, if $|V_i| \in \{m,m+1\}$, then we simply drop $\lambda$ and refer to such a complex as an $(m,\ell,k)$-complex. 
If the value of $\lambda$ is not relevant, then we say that $\{H^{(j)}\}_{j=1}^{k}$  is an $(m,\ell,k,*)$-complex. A collection of hypergraphs is a \emph{complex} if it is an $(m,\ell,k,*)$-complex for some integers $m,\ell,k$.

When $m$ is not of primary concern, we refer to $(m,\ell,k,\lambda)$-graphs and $(m,\ell,k,\lambda)$-complexes simply as 
$(\ell,k,\lambda)$-graphs and $(\ell,k,\lambda)$-complexes, respectively.
Again, we also omit $\lambda$ if $|V_i|\in \{m,m+1\}$ and refer to $(\ell,k)$-graphs and $(\ell,k)$-complexes and we write the symbol `$*$' instead of $\lambda$ if $\lambda$ is not relevant.

Note that there is no ambiguity between an $(\ell,k,\lambda)$-graph and an $(m,\ell,k)$-graph (and similarly for complexes) as $\lambda<1$.

Suppose $n\geq \ell\geq k$ and
suppose $H$ is an $n$-vertex $k$-graph and $F$ is an $\ell$-vertex $k$-graph.
We define $\mathbf{Pr}(F,H)$ 
such that $\mathbf{Pr}(F,H)\binom{n}{\ell}$ equals the number of induced copies of $F$ in~$H$. 
For a collection $\cF$ of $\ell$-vertex $k$-graphs, 
we define $\mathbf{Pr}(\cF, H)$ 
such that $\mathbf{Pr}(\cF,H)\binom{n}{\ell}$ equals the number of induced $\ell$-vertex $k$-graphs $F$ in $H$ such that $F\in \cF$.

\COMMENT{
Note that the following proposition holds.
\begin{proposition}\label{prop: mathbf Pr doesn't change much}
Suppose $n,k,q\in \mathbb{N}$ with $k\leq q\leq n$ and $G$ and $H$ are $n$-vertex $k$-graphs on vertex set $V$ and $\cF$ is a collection of $q$-vertex $k$-graphs.
If $|G \triangle H| \leq \nu \binom{n}{k}$, then 
$$\mathbf{Pr}(\cF, G) = \mathbf{Pr}(\cF, H) \pm q^k \nu.$$
\end{proposition}

Note that adding or removing an edge can decrease the number of induced copies of members of $\cF$ by at most $\binom{n-k}{q-k}$. (Note that the size of $\cF$ is irrelevant as the $k$-vertices in the added/removed edge with $q-k$ other vertices can form at most one graph in $\cF$.) Thus adding or removing $\nu \binom{n}{k}$ edges can decrease the number of induced copies of the members of $\cF$ by at most $\nu \binom{n}{k}\binom{n-k}{q-k} \leq \nu q^k \binom{n}{q}$.
}

\subsection{Hypergraph regularity}\label{sec: 2 hypergraph regularity}
In this subsection we introduce $\epsilon$-regularity for hypergraphs.
Suppose $\ell \geq k\geq 2$ and $V_1,\dots, V_{\ell}$ are pairwise disjoint vertex sets.
Let $H^\kk$ be an $(\ell,k,*)$-graph on $\{V_1,\dots, V_{\ell}\}$, 
let $\{i_1,\dots, i_{k} \} \in \binom{[\ell]}{k}$, 
and let $H^{(k-1)}$ be a $(k,k-1,*)$-graph on $\{V_{i_1},\dots, V_{i_k}\}$. 
We define the \emph{density of $H^\kk$ with respect to $H^{(k-1)}$} as
$$d(H^{(k)} \mid H^{(k-1)}) := \left\{ \begin{array}{ll} \frac{|H^{(k)}\cap \cK_k(H^{(k-1)})|}{|\cK_k(H^{(k-1)})| } & \text{ if } |\cK_k(H^{(k-1)})|>0, \\
0 & \text{otherwise.} 
\end{array}\right.$$
Suppose $\epsilon>0$ and $d\geq 0$.
We say $H^{(k)}$ is \emph{$(\epsilon,d)$-regular with respect to $H^{(k-1)}$} 
if for all $Q^{(k-1)}\subseteq H^{(k-1)}$ with  
$$| \cK_k(Q^{(k-1)})| \geq \epsilon |\cK_k(H^{(k-1)})|, 
\text{ we have } |H^{(k)}\cap \cK_k(Q^{(k-1)})| = (d\pm \epsilon) |\cK_k(Q^{(k-1)})|.$$\COMMENT{We don't want to write $d(H^{(k)}\mid Q^{(k-1)}) = d\pm \epsilon$ here since we later want that if $\cK_{k}(H^{(k-1)})=\emptyset$ then any $H^{(k)}$  is $(\epsilon,d)$-regular with respect to $H^{(k-1)}$ for all $d,\epsilon \in [0,1]$.}
Note that if $H^{(k)}$ is $(\epsilon,d)$-regular with respect to $H^{(k-1)}$ and $H^{(k-1)}\neq \emptyset$, then we have $d(H^{(k)} \mid H^{(k-1)}) = d\pm \epsilon$. We say $H^{(k)}$ is \emph{$\epsilon$-regular with respect to $H^{(k-1)}$} if it is $(\epsilon,d)$-regular with respect to $H^{(k-1)}$ for some $d\geq 0$.

We say an $(\ell,k,*)$-graph $H^{(k)}$ 
on $\{V_1,\dots, V_\ell\}$ is \emph{$(\epsilon,d)$-regular with respect to an $(\ell,k-1,*)$-graph $H^{(k-1)}$} 
on $\{V_1,\dots, V_\ell\}$
if for every $\Lambda \in \binom{[\ell]}{k}$ 
$H^{(k)}$ is $(\epsilon,d)$-regular with respect to the restriction $H^{(k-1)}[\Lambda]$. 

Let $\bd=(d_2,\dots, d_{k})\in \R^{k-1}_{\geq 0}$.
We say an $(\ell,k,*)$-complex $\cH = \{H^{(j)}\}_{j=1}^{k}$ is \emph{$(\epsilon,\bd)$-regular} if $H^{(j)}$ is $(\epsilon,d_j)$-regular with respect to $H^{(j-1)}$ for every $j\in [k]\sm \{1\}$. 
We sometimes simply refer to a complex as being $\epsilon$-regular if it is $(\epsilon,\bd)$-regular for some vector $\bd$.

\subsection{Partitions of hypergraphs and the regular approximation lemma}\label{sec: partitions of hypergraphs and RAL}

The regular approximation lemma of R\"odl and Schacht implies that 
for all $k$-graphs $H$, there exists a $k$-graph $G$
which is very close to $H$ and so that $G$ has a very `high quality' partition into $\epsilon$-regular subgraphs.
To state this formally we need to introduce further concepts involving partitions of hypergraphs.

Suppose $A\supseteq B$ are finite sets, 
$\sA$ is a partition of $A$, 
and $\sB$ is a partition of $B$.
We say $\sA$ \emph{refines} $\sB$ and write $\sA \prec \sB$ 
if for every $\cA\in \sA$ there either exists $\cB \in \sB$ such that $\cA \subseteq \cB$ or $\cA \subseteq A\setminus B$.
The following definition concerns `approximate' refinements.
Let $\nu\geq 0$.
We say that $\sA$ \emph{$\nu$-refines} $\sB$ and write $\sA \prec_{\nu} \sB$ 
if there exists a function $f: \sA \rightarrow \sB\cup \{A\setminus B\}$ such that 
$$\sum_{\cA \in \sA}|\cA \setminus f(\cA)| \leq \nu |A|.$$
We make the following observations.
\begin{equation}\label{eq: prec triangle}
\begin{minipage}[c]{0.9\textwidth}\em
\begin{itemize}
\item  $\sA\prec \sB$ if and only if $\sA\prec_{0} \sB$. 
\item Suppose $\sA,\sA',\sA''$ are partitions of $A,A',A''$ respectively and $A''\subseteq A' \subseteq A$. 
If $\sA \prec_{\nu} \sA'$ and $\sA' \prec_{\nu'} \sA''$, 
then $\sA\prec_{\nu+\nu'}\sA''$.
\end{itemize} 
\end{minipage}\ignorespacesafterend 
\end{equation}

We now introduce the concept of a polyad.
Roughly speaking, given a vertex partition $\sP^{(1)}$, an $i$-polyad is an $i$-graph which arises from a partition $\sP^{(i)}$ of the complete partite $i$-graph $\cK_i(\sP^{(1)})$.
The $(i+1)$-cliques spanned by all the $i$-polyads give rise to a partition $\sP^{(i+1)}$ of $\cK_{i+1}(\sP^{(1)})$ (see Definition~\ref{def: family of partitions}).
Such a `family of partitions' then provides a suitable framework for describing a regularity partition (see Definition~\ref{def: equitable family of partitions}).

Suppose we have a vertex partition $\sP^{(1)} = \{V_1,\ldots,V_\ell\}$ and $\ell\geq k$.
For integers $k\leq \ell'\leq \ell$, we say that a hypergraph $H$ is an \emph{$(\ell',k,*)$-graph with respect to $\sP^{(1)}$} if it is an $(\ell',k,*)$-graph on $\{ V_i : i\in \Lambda\}$ for some $\Lambda \in \binom{[\ell]}{\ell'}$.

Recall that $\cK_{j}(\sP^{(1)})$ is the family of all crossing $j$-sets with respect to $\sP^{(1)}$.
Suppose that for all  $i\in[k-1]\sm \{1\}$, 
we have partitions $\sP^{(i)}$ of $\cK_{i}(\sP^{(1)})$ such that each part of $\sP^{(i)}$ is an $(i,i,*)$-graph with respect to $\sP^{(1)}$.
By definition,
for each $i$-set $I\in \cK_{i}(\sP^{(1)})$, 
there exists exactly one $P^{(i)}=P^{(i)}(I)\in \sP^{(i)}$ so that $I\in P^{(i)}$.
Consider $j\in [\ell]$ and any $J\in \cK_j(\sP^{(1)})$.
For each $i\in[ \max\{j,k-1\}]$,
the \emph{$i$-polyad} $\hat{P}^{(i)}(J)$ of $J$ is defined by
\begin{align}\label{def:polyad}
	\hat{P}^{(i)}(J) := \bigcup\left\{P^{(i)}(I) : I\in \binom{J}{i}\right\}.
\end{align}
Thus $\hat{P}^{(i)}(J)$ is a $(j,i)$-graph with respect to $\sP^{(1)}$.
Moreover, let
\begin{align}\label{eq: hat cP}
\hat{\cP}(J):=\left\{\hat{P}^{(i)}(J)\right\}_{i=1}^{\max\{j,k-1\}},\end{align}
\COMMENT{Until now, this does not have to be complex, since we do not know that $\cK_{i}(\hat{P}^{(i-1)})$ underlies $P^{(i)} \in \sP^{(i)}$ or not.
Because since $\sP$ is arbitrary, $\hat{P}^{(i)}(J) = \hat{P}^{(i)}(J')$ while $\hat{P}^{(i-1)}(J) \neq \hat{P}^{(i-1)}(J')$ might happen.
}
and for $j\in [k-1]$, let
\begin{align}\label{eq:sP}
	\hat{\sP}^{(j)} := \left\{\hat{P}^{(j)}(J): J\in \cK_{j+1}(\sP^{(1)})\right\}.
\end{align}
We note that $\hat{\sP}^{(1)}$ is the set consisting of all $(2,1)$-graphs with vertex classes $V_s, V_t$ (for all distinct $s,t\in [\ell]$). In other words, each element of $\hat{\sP}^{(1)}$ is a $1$-graph with the vertex set $V_s\cup V_t$ and the edge set $V_s\cup V_t$. 
Also note that for any $\hat{P}^{(j)}\in \hat{\sP}^{(j)}$, we have $\cK_{j+1}(\hat{P}^{(j)})\neq \emptyset$. Indeed, if $\hat{P}^{(j)} \in \hat{\sP}^{(j)}$, it follows that there is a set $J\in \cK_{j+1}(\sP^{(1)})$ such that $\hat{P}^{(j)}=\hat{P}^{(j)}(J)$ and $J\in \cK_{j+1}(\hat{P}^{(j)}(J))$.


The above definitions apply to arbitrary partitions $\sP^{(i)}$ of $\cK_i(\sP^{(1)})$.
However, it will be useful to consider partitions with more structure.
\begin{definition}[Family of partitions]\label{def: family of partitions}
Suppose $k\in \N\sm \{1\}$ and $\ba=(a_1,\dots, a_{k-1})\in \N^{k-1}$.
We say $\sP= \sP(k-1,\ba)= \{\sP^{(1)},\dots, \sP^{(k-1)}\}$ is a \emph{family of partitions on $V$} 
if it satisfies the following for each $j\in [k-1]\setminus \{1\}$:
\begin{enumerate}[label={ \rm(\roman*)}]
\item $\sP^{(1)}$ is a partition of $V$ into $a_1 \geq k$ nonempty classes,
\item $\sP^{(j)}$ is a partition of $\cK_{j}(\sP^{(1)})$ into nonempty\COMMENT{Here, I added the word `nonempty' because of the injectivity of $\ba$-labelling $\phi^{(j)}$ we use in section 2.6.1. Since the parts of $\sP^{(k-1)}$ are non-empty this also implies that $a_1\geq k-1$.} 
$j$-graphs such that
\begin{itemize}
	\item $\sP^{(j)}\prec \{\cK_j(\hat{P}^{(j-1)}): \hat{P}^{(j-1)}\in \hat{\sP}^{(j-1)}\}$ and
	\item $|\{P^{(j)}\in \sP^{(j)} : P^{(j)} \subseteq \cK_j(\hat{P}^{(j-1)})\}|=a_j$ for every $\hat{P}^{(j-1)}\in \hat{\sP}^{(j-1)}$.
\end{itemize}
\end{enumerate}
\end{definition}
We say $\sP = \sP(k-1,\ba)$ is \emph{$T$-bounded} if $\norm{\ba}\leq T$. 
For two families of partitions $\sP = \sP(k-1,\ba^{\sP})$ and $\sQ =\sQ(k-1,\ba^{\sQ})$, we say \emph{$\sP \prec \sQ$} if $\sP^{(j)} \prec \sQ^{(j)}$ for all $j\in [k-1]$. We say \emph{$\sP \prec_{\nu} \sQ$} if $\sP^{(j)} \prec_{\nu} \sQ^{(j)}$ for all $j\in [k-1]$.

As the concept of polyads is central to this paper,
we emphasize the following:
\begin{proposition}\label{prop: (i,i)-hypergraph}
Let $k\in \mathbb{N}\setminus \{1\}$, $\ba\in \mathbb{N}^{(k-1)}$ and $\sP = \sP(k-1,\ba)$ be a family of partitions. Then for all $i\in [k-1]$ and $j\in [a_1]$, the following hold.
\begin{enumerate}[label={ \rm(\roman*)}]
\item if $i>1$, then $\sP^{(i)}$ is a partition of $\cK_{i}(\sP^{(1)})$ into $(i,i,*)$-graphs with respect to $\sP^{(1)}$, 
\item each $\hat{P}^{(i)}\in \hat{\sP}^{(i)}$ is an $(i+1,i,*)$-graph with respect to $\sP^{(1)}$,
\item for each $j$-set $J\in \cK_j(\sP^{(1)})$, $\hat{\cP}(J)$ as defined in \eqref{eq: hat cP} is a complex.
\end{enumerate}
\end{proposition}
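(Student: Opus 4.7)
The plan is to prove (i) and (ii) jointly by induction on $i$, and then to derive (iii) from the refinement clause of Definition~\ref{def: family of partitions} together with a little additional polyad bookkeeping.

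For the base step $i=1$, (i) is vacuous and for (ii) the elements of $\hat{\sP}^{(1)}$ are exactly the $1$-graphs on $V_s\cup V_t$ with $s\neq t$, which are $(2,1,*)$-graphs by inspection. For the inductive step $i\geq 2$ I would invoke the refinement clause $\sP^{(i)}\prec\{\cK_i(\hat{P}^{(i-1)}):\hat{P}^{(i-1)}\in \hat{\sP}^{(i-1)}\}$: it places each $P^{(i)}\in \sP^{(i)}$ inside some $\cK_i(\hat{P}^{(i-1)})$, and the inductive hypothesis for (ii) says $\hat{P}^{(i-1)}$ is partite on a fixed tuple of $i$ classes; hence $P^{(i)}$ consists of crossing $i$-sets on those same $i$ classes, giving (i) at level $i$. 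Part (ii) at level $i$ then falls out: $\hat{P}^{(i)}(J)=\bigcup_{I\in \binom{J}{i}}P^{(i)}(I)$ is a union of $i+1$ parts from $\sP^{(i)}$, each of which by (i) is partite on the $i$ classes of $I$, so the union is partite on the $i+1$ classes of $J\in\cK_{i+1}(\sP^{(1)})$.

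For (iii), fix $J\in \cK_j(\sP^{(1)})$; the goal is to show $\hat{P}^{(i)}(J)\subseteq \cK_i(\hat{P}^{(i-1)}(J))$ for each meaningful $i\geq 2$. The plan is: pick $e\in \hat{P}^{(i)}(J)$, write $e\in P^{(i)}(I)$ for some $I\in \binom{J}{i}$, invoke the refinement property to get $P^{(i)}(I)\subseteq \cK_i(\hat{Q}^{(i-1)})$ for some polyad $\hat{Q}^{(i-1)}\in \hat{\sP}^{(i-1)}$, identify $\hat{Q}^{(i-1)}$ with $\hat{P}^{(i-1)}(I)$, and conclude that every $(i-1)$-subset $f$ of $e$ lies in $\hat{P}^{(i-1)}(I)\subseteq \hat{P}^{(i-1)}(J)$, the final inclusion being immediate from $\binom{I}{i-1}\subseteq \binom{J}{i-1}$.

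The main obstacle will be precisely the identification $\hat{Q}^{(i-1)}=\hat{P}^{(i-1)}(I)$, since the refinement property by itself only says $P^{(i)}(I)$ sits inside \emph{some} $\cK_i(\hat{Q}^{(i-1)})$. I plan to pin this down by observing that $I\in P^{(i)}(I)\subseteq \cK_i(\hat{Q}^{(i-1)})$ forces every $(i-1)$-subset $I'\subseteq I$ into $\hat{Q}^{(i-1)}$, so the unique $\sP^{(i-1)}$-part $P^{(i-1)}(I')$ containing $I'$ must occur among the $i$ constituent parts of $\hat{Q}^{(i-1)}$; combined with (ii) at level $i-1$ (so both $\hat{Q}^{(i-1)}$ and $\hat{P}^{(i-1)}(I)$ are $(i,i-1,*)$-graphs on the same $i$ vertex classes) and the fact that each such polyad has exactly one constituent part per $(i-1)$-tuple of those classes, matching parts across the two polyads forces $\hat{Q}^{(i-1)}=\hat{P}^{(i-1)}(I)$. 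Once this identification is in hand, (iii) follows, and everything else reduces to directly unpacking Definition~\ref{def: family of partitions}.
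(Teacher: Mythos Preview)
Your proposal is correct and follows essentially the same route as the paper's own proof: joint induction on $i$ for (i) and (ii), and for (iii) the same polyad identification argument via the refinement clause of Definition~\ref{def: family of partitions}. Your write-up of the identification $\hat{Q}^{(i-1)}=\hat{P}^{(i-1)}(I)$ is in fact slightly more explicit than the paper's (which simply asserts it from $\binom{J'}{i-1}\subseteq \hat{P}^{(i-1)}$), but the underlying reasoning is the same.
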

\COMMENT{
\begin{proof}
Let (a) be the following statement:
the set $\sP^{(i)}$ is a partition of $\cK_{i}(\sP^{(1)})$ into $(i,i,*)$-graphs with respect to $\sP^{(1)}$.\newline
Also consider the following.\newline
(b) Each element $\hat{P}^{(i)}\in \hat{\sP}^{(i)}$ is an $(i+1,i,*)$-graph with respect to $\sP^{(1)}$.\newline
Note that both (a)--(b) hold for $i=1$. Assume that (a) and (b) both hold for $i=j$. Then each $\hat{P}^{(j)} \in \hat{\sP}^{(j)}$ is a $(j+1,j,*)$-graph with respect to $\sP^{(1)}$, thus $\cK_{j+1}( \hat{P}^{(j)})$ is a $(j+1,j+1,*)$-graph with respect to $\sP^{(1)}$.
Hence, Definition~\ref{def: family of partitions}(ii) implies that each $P^{(j+1)}\in \sP^{(j+1)}$ is a subgraph of $\cK_{j+1}( \hat{P}^{(j)})$ for some $\hat{P}^{(j)} \in \hat{\sP}^{(j)}$. Thus 
every $P^{(j+1)}\in \sP^{(j+1)}$ is a $(j+1,j+1,*)$-graph with respect to $\sP^{(1)}$. Thus (a) holds for $j+1$. \newline
For each $\hat{P}^{(j+1)} \in \hat{\sP}^{(j+1)}$, there exists a $(j+2)$-set $J = \{v_1,\dots, v_{j+2}\}\in \cK_{j+2}(\sP^{(1)})$ such that $\hat{P}^{(j+1)}= \hat{P}^{(j+1)}(J)$. Let $v_i \in V_{\alpha_i}$ for each $i\in [j+2]$, then $\alpha_{i}\neq \alpha_{i'}$ for $i\neq i' \in [j+2]$.
For each $i\in [j+2]$, let $J_i := J\setminus \{v_{i}\}$, then the definition of polyad imply 
$$\hat{P}^{(j+1)}(J)= \bigcup_{i=1}^{j+2} P^{(j)}(J_i).$$
Since (a) holds for $j+1$, each $P^{(j)}(J_i)$ is a $(j+1,j+1,*)$-graph with respect to partition $\{V_{\alpha_{1}},\dots, V_{\alpha_{j+2}}\}\setminus \{V_{\alpha_i}\}$. Thus we conclude that $\hat{P}^{(j+1)}=\hat{P}^{(j+1)}(J)$ is a $(j+2,j+1,*)$-graph with respect to $\sP^{(1)}$. Thus (b) holds for $j+1$. Thus inductively, we obtain (a) and (b) hold for all $i\in [k-1]$, thus we get (i) and (ii).
To show (iii), we need to show that for each $i\leq \max\{j,k-1\}$, $\hat{P}^{(i-1)}(J)$ underlies $\hat{P}^{(i)}(J)$. For this it suffices to show that for every $I\in \hat{P}^{(i)}(J)$, we have $\binom{I}{i-1} \subseteq \hat{P}^{(i-1)}(J)$.
Note that 
$$\hat{P}^{(i)}(J) = \bigcup\left\{P^{(i)}(J') : J'\in \binom{J}{i}\right\}.$$
Thus if $I\in \hat{P}^{(i)}(J)$, then there exists $J'\in \binom{J}{i}$ such that $P^{(i)}(I) = P^{(i)}(J')\in \sP^{(i)}$ because $\sP^{(i)}$ is a partition of $\cK_{i}(\sP^{(1)})$.\newline
By Definition~\ref{def: family of partitions}(ii), there exists a $\hat{P}^{(i-1)}\in \hat{\sP}^{(i-1)}$ such that $P^{(i)}(I) = P^{(i)}(J')\subseteq \cK_{i}(\hat{P}^{(i-1)})$.
Thus we obtain that $\binom{I}{i-1}\subseteq \hat{P}^{(i-1)}$ as well as $\binom{J'}{i-1} \subseteq \hat{P}^{(i-1)}$.
Since $\binom{J'}{i-1} \subseteq \hat{P}^{(i-1)}$, 
we conclude that $\hat{P}^{(i-1)}=\hat{P}^{(i-1)}(J') \subseteq \hat{P}^{(i-1)}(J)$ from the definition of $\hat{P}^{(i-1)}(J)$. 
Thus we get  $\binom{I}{i-1} \subseteq \hat{P}^{(i-1)}(J)$, which shows that $\hat{\cP}(J)$ is a complex.
\end{proof}}

We now extend the concept of $\epsilon$-regularity to families of partitions.
\begin{definition}[Equitable family of partitions]\label{def: equitable family of partitions}
Let $k\in \N\sm \{1\}$.
Suppose $\eta>0$ and $\ba=(a_1,\dots, a_{k-1})\in \N^{k-1}$.
Let $V$ be a vertex set of size $n$.
We say a family of partitions $\sP= \sP(k-1,\ba)$ on $V$ is \emph{$(\eta,\epsilon,\ba,\lambda)$-equitable} if it satisfies the following:
\begin{enumerate}[label={ \rm(\roman*)}]
\item $a_1 \geq \eta^{-1}$,
\item $\sP^{(1)} = \{V_i : i\in [a_1]\}$ satisfies $|V_i|= (1\pm \lambda)n/a_1$ for all $i\in[a_1]$, and
\item if $k\geq 3$, then for every $k$-set $K\in \cK_{k}(\sP^{(1)})$ 
the collection $\hat{\cP}(K)=\{\hat{P}^{(j)}(K)\}_{j=1}^{k-1}$ is an $(\epsilon,\bd)$-regular $(k,k-1,*)$-complex, where $\bd= (1/a_2,\dots, 1/a_{k-1})$. 
\end{enumerate}
\end{definition}
As before we drop $\lambda$ if $|V_i| \in \{ \lfloor n/a_1\rfloor, \lfloor n/a_1\rfloor +1\}$ and say $\sP$ is $(\eta,\epsilon,\ba)$-equitable.
Note that for any $\lambda\leq 1/3$, every $(\eta,\epsilon,\ba,\lambda)$-equitable family of partitions $\sP$ satisfies
\begin{align}\label{eq: eta a1}
\left|\binom{V}{k}\setminus \cK_{k}(\sP^{(1)})\right| \leq k^2 \eta \binom{n}{k}.
\end{align}
\COMMENT{A $k$-set $K$ is not in $\cK_{k}(\sP^{(1)})$ if $|K\cap V_i|\geq 2$ for some $i\in [a_1]$.
Thus we choose $i$ with $\leq a_1$ ways, and choose two vertices in $\binom{V_i}{2}$ with $\leq \binom{(1+\lambda)n/a_1}{2}$ ways, and choose $k-2$ other vertices arbitrarily with $\leq \binom{n}{k-2}$ ways.
Thus we have 
$$\left|\binom{V}{k}\setminus \cK_{k}(\sP^{(1)})\right| \leq a_1 \cdot \binom{(1+\lambda)n/a_1}{2} \cdot \binom{n}{k-2} \leq  \frac{1}{2 a_1} (1+\lambda)^2k(k-1)\binom{n}{k} \leq k^2 \eta \binom{n}{k}.$$
}

We next introduce the concept of perfect $\epsilon$-regularity with respect to a family of partitions.
\begin{definition}[Perfectly regular]
Suppose $\epsilon>0$ and $k\in \N\sm\{1\}$. 
Let $H^{(k)}$ be a $k$-graph with vertex set $V$ and let $\sP= \sP(k-1,\ba)$ be a family of partitions on $V$. 
We say $H^{(k)}$ is perfectly $\epsilon$-regular with respect to $\sP$ 
if for every $\hat{P}^{(k-1)}\in \hat{\sP}^{(k-1)}$ the graph $H^{(k)}$ is $\epsilon$-regular with respect to $\hat{P}^{(k-1)}$.
\end{definition}

Having introduced the necessary notation, we are now ready to state the regular approximation lemma due to R\"odl and Schacht.
It states that for every $k$-graph $H$, 
there is a $k$-graph $G$ that is close to $H$ and that has very good regularity properties.

\begin{theorem}[Regular approximation lemma~\cite{RS07}]\label{thm: RAL}
Let $k\in \N\sm\{1\}$. 
For all $\eta,\nu>0$ and every function $\epsilon: \mathbb{N}^{k-1}\rightarrow (0,1]$, 
there are integers $t_0:= t_{\ref{thm: RAL}}(\eta,\nu,\epsilon)$ and $n_0:=n_{\ref{thm: RAL}}(\eta,\nu,\epsilon)$ 
so that the following holds:

For every $k$-graph $H$ on at least $n\geq n_0$ vertices,
there exists a $k$-graph $G$ on $V(H)$ and a family of partitions $\sP=\sP(k-1,\ba^{\sP})$ on $V(H)$ so that 
\begin{enumerate}[label={\rm (\roman*)}]
\item $\sP$ is $(\eta,\epsilon(\ba^{\sP}),\ba^{\sP})$-equitable and $t_0$-bounded,
\item $G$ is perfectly $\epsilon(\ba^{\sP})$-regular with respect to $\sP$, and
\item $|G\triangle H|\leq \nu \binom{n}{k}$.
\end{enumerate}
\end{theorem}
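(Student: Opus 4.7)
The plan is to build $\sP$ and $G$ by a layered argument combining the standard hypergraph regularity lemma with random resampling at every level. For $k=2$ the statement reduces to a mild strengthening of Szemer\'edi's regularity lemma: apply it with parameter $\epsilon_0 \ll \epsilon(\ba)$ to obtain a vertex equipartition $\sP^{(1)}=\{V_1,\dots,V_{a_1}\}$, and for each bipartite block $(V_i,V_j)$ resample the edges independently with probability equal to the observed density $d(V_i,V_j)$. A Chernoff bound shows that each resulting block is perfectly $\epsilon(\ba)$-regular at the intended density while changing only a $o(1)$ fraction of its edges, so the total edit distance is bounded by $\nu\binom{n}{2}$.

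For $k\geq 3$ the approach is inductive on the level $j$. First apply an ordinary hypergraph regularity lemma (with a very small parameter $\epsilon_0$) to $H$, obtaining a $T$-bounded family of partitions $\sP_0$ under which most polyads are already $\epsilon_0$-regular. Next, iteratively refine the levels $j=2,\dots,k-1$ so that every polyad $\hat{P}^{(j-1)}\in\hat{\sP}^{(j-1)}$ is split into exactly $a_j$ parts of equal density $1/a_j$, each of which is $\epsilon(\ba)$-regular with respect to $\hat{P}^{(j-1)}$; such a refinement can be produced by randomly slicing $\cK_j(\hat{P}^{(j-1)})$ into $a_j$ near-equal pieces and verifying, via a concentration estimate together with a union bound over the bounded number of polyads, that with high probability all pieces are $\epsilon(\ba)$-regular with respect to $\hat{P}^{(j-1)}$. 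The indices $a_j$ are chosen greedily from $j=2$ up to $j=k-1$, each time absorbing the regularity error introduced one level below.

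Finally, with $\sP$ in hand, construct $G$ from $H$ by independently resampling, within each top polyad $\hat{P}^{(k-1)}\in\hat{\sP}^{(k-1)}$, the set of $k$-edges in $\cK_k(\hat{P}^{(k-1)})$: include each such $k$-clique as an edge of $G$ with probability $d(H^{(k)}\mid\hat{P}^{(k-1)})$. Concentration plus a union bound over the $O(1)$ many top polyads yields that $G$ is perfectly $\epsilon(\ba)$-regular with respect to $\sP$, and the expected symmetric difference $|G\triangle H|$ is controlled by the edges inside those polyads on which $H$ was not already nearly regular; choosing $\epsilon_0$ small enough as a function of $\epsilon(\cdot)$ and $\ba^{\sP}$ makes this at most $\nu\binom{n}{k}$ with high probability, so a specific outcome of the random construction witnesses the claim.

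\emph{The main obstacle} is the intermediate refinement step: the regularity errors cascade down through the hierarchy of polyads, and the demand in Definition~\ref{def: equitable family of partitions} that the complex $\hat{\cP}(K)$ be $(\epsilon(\ba^{\sP}),\bd)$-regular \emph{uniformly} over every crossing $k$-set forces the parameters to be chosen by a delicate top-down dependence (essentially, $\epsilon$ at level $j$ must be chosen in terms of $a_{j+1},\dots,a_{k-1}$). Coordinating these choices with the greedy slicing argument is the technical heart of the R\"odl--Schacht proof, and it is what ultimately permits the final top-level perturbation to achieve \emph{perfect} $\epsilon(\ba^{\sP})$-regularity rather than merely the standard ``all but $\epsilon$-fraction'' guarantee.
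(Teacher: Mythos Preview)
This theorem is not proved in the paper; it is imported from R\"odl--Schacht~\cite{RS07} and used as a black box (the paper also quotes the stronger partition version as Lemma~\ref{RAL(k)}, and then proves its own strengthening, Lemma~\ref{lem:refreg}, by invoking Lemma~\ref{RAL(k)}). So there is no in-paper argument to compare your sketch against.

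On its own merits, your sketch has a genuine gap in the edit-distance step. Both for $k=2$ and at the top level for $k\geq 3$ you propose to replace the edges on each block (respectively, each polyad) by a fresh i.i.d.\ sample at the observed density $d$. But an i.i.d.\ random $k$-graph of density $d$ on a cell of size $N$ differs from \emph{any} fixed $k$-graph of density $d$ on that cell in roughly $2d(1-d)N$ edges in expectation, no matter how regular the original was. Summed over all polyads this gives $|G\triangle H|=\Theta\bigl(\binom{n}{k}\bigr)$, not $\leq \nu\binom{n}{k}$; your assertion that the symmetric difference ``is controlled by the edges inside those polyads on which $H$ was not already nearly regular'' is therefore false, since you resample \emph{every} polyad. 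A Chernoff bound certifies that the resampled block is $\epsilon$-regular; it says nothing about its distance to the old block. There is also a circularity in ``choosing $\epsilon_0$ small enough as a function of $\epsilon(\cdot)$ and $\ba^{\sP}$'': the vector $\ba^{\sP}$ is produced \emph{after} $\epsilon_0$ is fixed, so you cannot let $\epsilon_0$ depend on it.

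The actual R\"odl--Schacht proof avoids this by never performing a global resample. It runs an iterated refine-and-repair scheme: an index/energy increment drives the refinement, and at each stage the edge modifications are \emph{targeted} to the witnesses of irregularity, whose total measure is bounded by the energy defect. The parameter cascade you correctly flag as the main obstacle is handled inside this iteration, not by a one-shot random replacement. If you want a probabilistic repair, you would need a coupling that keeps $H$ on the already-regular bulk of each polyad and randomises only on a small exceptional set; isolating such a set of total measure at most $\nu$ on which the repair suffices is essentially the content of the R\"odl--Schacht argument.
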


The crucial point here is that in applications we may apply Theorem~\ref{thm: RAL} with a function $\epsilon$ such that $\epsilon(\ba^{\sP})\ll \norm{\ba^\sP}^{-1}$.
This is in contrast to other versions (see e.g.~\cite{Gow07,RS04,Tao06}) where (roughly speaking) in (iii) we have $G=H$ but in (ii) we have an error parameter $\epsilon'$ which may be large compared to $\norm{\ba^\sP}^{-1}$.

\subsection{The address space}\label{sec: address space}
Later on, we will need to explicitly refer to the densities arising, for example, in Theorem~\ref{thm: RAL}(ii).
For this (and other reasons) it is convenient to consider the `address space'.
Roughly speaking the address space consists of a collection of vectors where each vector identifies a polyad.

For $a,s\in \N$, 
we recursively define 
$[a]^{s}$ by $[a]^{s} := [a]^{s-1}\times [a]$ and $[a]^{1}:= [a]$. 
To define the address space,
let us write $\binom{[a_1]}{\ell}_{<} := \{(\alpha_1,\dots, \alpha_\ell)\in[a_1]^\ell: \alpha_1 < \dots <\alpha_\ell \}$.

Suppose $k',\ell,p\in \N$, $\ell\geq k'$, and $p\geq\max\{ k'-1,1\}$, and $\ba=(a_1,\dots,a_p )\in \N^{p}$. 
We define 
$$\hat{A}(\ell,k'-1,\ba):= 
\binom{[a_1]}{\ell}_{<} \times \prod_{j=2}^{k'-1}[a_j]^{\binom{\ell}{j}}$$ to be the \emph{$(\ell,k')$-address space}. 
Observe that $\hat{A}(1,0,\ba)=[a_1]$ and $\hat{A}(2,1,\ba)=\binom{[a_1]}{2}_{<}$. 
Recall that for a vector $\bx$, the set $\bx_*$ was defined at the beginning of Section~\ref{sec:concepts}.
Note that if $k'>1$, then each $\hat{\bx}\in \hat{A}(\ell,k'-1,\ba)$ can be written as $\hat{\bx} = (\bx^{(1)},\dots, \bx^{(k'-1)})$, 
where $\bx^{(1)}\in \binom{[a_1]}{\ell}_{<}$ and $\bx^{(j)} \in [a_j]^{\binom{\ell}{j}}$ for each $j\in [k'-1]\sm\{1\}$. 
Thus each entry of the vector $\bx^{(j)}$ corresponds to (i.e.~is indexed by) a subset of $\binom{[\ell]}{j}$. 
We order the elements of both $\binom{[\ell]}{j}$ and $\binom{\bx_*^{(1)}}{j}$ lexicographically and consider the bijection $g: \binom{\bx_*^{(1)}}{j} \rightarrow \binom{[\ell]}{j}$ which preserves this ordering.
For each $\Lambda \in \binom{\bx^{(1)}_*}{j}$ and $j\in [k'-1]$, 
we denote by $\bx^{(j)}_{\Lambda}$ the entry of $\bx^{(j)}$ which corresponds to the set $g(\Lambda)$.

\subsubsection{Basic properties of the address space}\label{sec: Basic properties of the address space}
Let $k\in\N\sm\{1\}$ and let $V$ be a vertex set of size $n$. 
Let $\sP(k-1,\ba)$ be a family of partitions on $V$. 
For each crossing $\ell$-set $L\in \cK_{\ell}(\sP^{(1)})$, 
the address space allows us to identify (and thus refer to) the set of polyads `supporting' $L$. 
We will achieve this by defining a suitable operator $\hat{\bx}(L)$ which maps $L$ to the address space.

To do this, write $\sP^{(1)}= \{V_i: i\in [a_1]\}$.  
Recall from Definition~\ref{def: family of partitions}(ii) that for $j\in[k-1]\setminus\{1\}$, we partition $\cK_j(\hat{P}^{(j-1)})$ of every $(j-1)$-polyad $\hat{P}^{(j-1)}\in\hat{\sP}^{(j-1)}$ into $a_j$ nonempty parts in such a way that $\sP^{(j)}$ is the collection of all these parts.
Thus, there is a labelling $\phi^{(j)}:\sP^{(j)} \rightarrow [a_j]$ such that for every polyad $\hat{P}^{(j-1)} \in \hat{\sP}^{(j-1)}$, the restriction of $\phi^{(j)}$ to $\{P^{(j)} \in \sP^{(j)}: P^{(j)}\subseteq \cK_j(\hat{P}^{(j-1)})\}$ is injective.\COMMENT{ To make this true, we added the work `nonempty' in Definition~\ref{def: family of partitions} which ensures that every element of $\sP^{(j)}$ are all distinct. }
The set $\mathbf{\Phi} := \{\phi^{(2)},\dots, \phi^{(k-1)}\}$ is called an \emph{$\ba$-labelling} of $\sP(k-1,\ba)$.
For a given set $L\in \cK_{\ell}(\sP^{(1)})$, we denote $\cl(L):= \{ i: V_i\cap L \neq \emptyset\}.$

Consider any $\ell \in [a_1]$. 
Let $j':=\min\{k-1,\ell-1\}$  and let $j'':=\max\{j',1\}$. 
For every $\ell$-set $L\in \cK_{\ell}(\sP^{(1)})$ we define an integer vector $\hat{\bx}(L) = (\bx^{(1)}(L),\dots, \bx^{(j'')}(L))$ by
\begin{equation}\label{eq: bx J def}
\begin{minipage}[c]{0.9\textwidth}\em
\begin{itemize}
\item $\bx^{(1)}(L) := (\alpha_1,\ldots,\alpha_\ell)$, where  $\alpha_1<\ldots<\alpha_\ell$ and $L \cap V_{\alpha_i}=\{v_{\alpha_i}\}$,
\item and for $i\in [j']\setminus\{1\}$ we set
$$\bx^{(i)}({L}) := \left(\phi^{(i)}(P^{(i)}): \{v_{\lambda} :\lambda \in \Lambda\}\in P^{(i)}, P^{(i)}\in \sP^{(i)}\right)_{\Lambda \in \binom{\cl(L)}{i}}.$$
\end{itemize}
\end{minipage}
\end{equation}
Here, we order $\binom{\cl(L)}{i}$ lexicographically. In particular, $\bx^{(i)}(L)$ is a vector of length $\binom{\ell}{i}$.

By definition, $\hat{\bx}(L) \in \hat{A}(\ell,j',\ba)$ for every $L\in \cK_{\ell}(\sP^{(1)})$ with $\ell, j'$ as above. 
Our next aim is to define an operator $\hat{\bx}(\cdot)$ which maps the set $\hat{\sP}^{(j-1)}$ of $(j-1)$-polyads injectively into the address space $\hat{A}(j,j-1,\ba)$ (see \eqref{eq: hat bx maps}). We will then extend this further into a bijection between elements of the address spaces and their corresponding hypergraphs.
However, before we can define $\hat{\bx}(\cdot)$, we need to introduce some more notation.

Suppose $j\in [k'-1]$. 
For $\hat{\bx} \in \hat{A}(\ell,k'-1,\ba)$ and $J\in \cK_{j}(\sP^{(1)})$ with $\cl(J) \subseteq \bx^{(1)}_{*}$, we define $\bx^{(j)}_{J} := \bx^{(j)}_{\cl(J)}$. Thus from now on, we may refer to the entries of $\bx^{(j)}$ either by an index set $\Lambda \in \binom{\bx^{(1)}_*}{j}$ or by a set $J \in \cK_{j}(\sP^{(1)})$.

Next we introduce a relation on the elements of (possibly different) address spaces.
Consider $\hat{\bx}= (\bx^{(1)}, \bx^{(2)},\dots,\bx^{(k'-1)}) \in \hat{A}(\ell,k'-1,\ba)$ with $\ell'\leq \ell$ and $k''\leq k'$.
We define $\hat{\by} \leq_{\ell',k''-1} \hat{\bx}$ if
\begin{itemize}
\item $\hat{\by} = (\by^{(1)}, \by^{(2)},\dots,\by^{(k''-1)})\in \hat{A}(\ell', k''-1,\ba)$,
\item $\by^{(1)}_*\subseteq \bx^{(1)}_*$ and 
\item $\bx^{(j)}_{\Lambda}=\by^{(j)}_{\Lambda}$ for any $\Lambda \in \binom{\by^{(1)}_*}{j}$ and $j\in [k''-1]\sm\{1\}$.
 \end{itemize}
Thus any $\hat{\by}\in \hat{A}(\ell', k''-1,\ba)$ with $\hat{\by} \leq_{\ell',k''-1} \hat{\bx}$ 
can be viewed as the restriction of $\hat{\bx}$ to an $\ell'$-subset of the $\ell$-set $\bx^{(1)}_*$.
Hence for $\hat{\bx} \in \hat{A}(\ell,k'-1,\ba)$, there are exactly $\binom{\ell}{\ell'}$ distinct integer vectors $\hat{\by}\in \hat{A}(\ell',k''-1,\ba)$ such that $\hat{\by} \leq_{\ell',k''-1} \hat{\bx}.$ 
Also it is easy to check the following properties.
\begin{proposition}\label{eq: I subset J then leq}
Suppose $\sP= \sP(k-1,\ba)$ is a family of partitions, $i\in [a_1]$ and $i':= \min\{i,k\}$. 
\begin{enumerate}[label={\rm (\roman*)}]
	\item Whenever $I\in \cK_{i}(\sP^{(1)})$ and $J\in \cK_{j}(\sP^{(1)})$ with $I\subseteq J$, then $\hat{\bx}(I) \leq_{i,i'-1} \hat{\bx}(J)$.
	\item If $J \in \cK_j(\sP^{(1)})$ and $\hat{\by} \leq_{i,i'-1} \hat{\bx}(J)$, then there exists a unique $I\in \binom{J}{i}$ such that $\hat{\by} = \hat{\bx}(I)$.
\end{enumerate}
\end{proposition}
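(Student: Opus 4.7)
The plan is to unpack the definition of the address-space operator $\hat{\bx}(\cdot)$ from~\eqref{eq: bx J def} and verify compatibility under restriction level by level. Both claims amount to checking that $L\mapsto \hat{\bx}(L)$ behaves functorially with respect to containment of crossing sets.

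For part (i), I would write $I\subseteq J$ as $I=\{v_\alpha: \alpha \in \cl(I)\}$ and $J=\{v_\beta : \beta \in \cl(J)\}$ with $v_\alpha\in V_\alpha$. The containment $\cl(I)\subseteq \cl(J)$ gives $\bx^{(1)}(I)_*\subseteq \bx^{(1)}(J)_*$ immediately. For higher levels $\ell\in[i'-1]\setminus\{1\}$ and any $\Lambda\in \binom{\cl(I)}{\ell}$, the $\ell$-set $L_\Lambda:=\{v_\lambda:\lambda\in\Lambda\}$ is the same set of vertices whether we view it as a subset of $I$ or of $J$; since $\sP^{(\ell)}$ is a partition of $\cK_\ell(\sP^{(1)})$ there is a unique $P^{(\ell)}\in \sP^{(\ell)}$ containing $L_\Lambda$, and~\eqref{eq: bx J def} then shows that both $\bx^{(\ell)}(I)_\Lambda$ and $\bx^{(\ell)}(J)_\Lambda$ equal $\phi^{(\ell)}(P^{(\ell)})$. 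The only bookkeeping point is that the implicit order-preserving bijection $g$ of Section~\ref{sec: address space} commutes with the inclusion $\binom{\cl(I)}{\ell}\hookrightarrow \binom{\cl(J)}{\ell}$, which holds because both use the lexicographic order.

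For part (ii), the choice of $I$ is forced: given $\hat{\by}\leq_{i,i'-1}\hat{\bx}(J)$, the set $\by^{(1)}_*\subseteq \cl(J)$ singles out the unique $I\in \binom{J}{i}$ with $\cl(I)=\by^{(1)}_*$, namely $I=\{v_\alpha\in J : \alpha\in \by^{(1)}_*\}$; uniqueness uses that $J$ is crossing, so the restriction of $\cl$ to singletons in $J$ is a bijection $J\to \cl(J)$. To conclude $\hat{\bx}(I)=\hat{\by}$, I would check coordinates: on level $1$, both vectors list $\cl(I)=\by^{(1)}_*$ in increasing order, and on higher levels $\ell\in[i'-1]\setminus\{1\}$ the identities $\bx^{(\ell)}(I)_\Lambda=\bx^{(\ell)}(J)_\Lambda=\by^{(\ell)}_\Lambda$ follow from part (i) applied to $I\subseteq J$ combined with the hypothesis $\hat{\by}\leq_{i,i'-1}\hat{\bx}(J)$.

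I do not anticipate any real obstacle: the proposition is a consistency check on the definitions in Section~\ref{sec: address space} and does not invoke any regularity or refinement property. The single point requiring a moment's care is that the re-indexing bijection $g$ respects the restriction from $\ell$-subsets of $J$ to $\ell$-subsets of $I$, which is automatic from its lexicographic definition; once this is noted, both parts are forced by the definitions.
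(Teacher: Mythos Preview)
Your proposal is correct and matches what the paper intends: the proposition is stated in the paper with the remark that ``it is easy to check the following properties,'' and no proof is given, so your direct unpacking of the definition in~\eqref{eq: bx J def} is exactly the verification the reader is meant to supply. The only minor imprecision is your phrasing about $g$ ``commuting with the inclusion''; the cleaner way to say it is that the entry $\bx^{(\ell)}(L)_\Lambda$ is by definition $\phi^{(\ell)}$ applied to the unique class of $\sP^{(\ell)}$ containing $\{v_\lambda:\lambda\in\Lambda\}$, and this value depends only on $\Lambda$ and not on whether the ambient crossing set is $I$ or $J$.
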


Now we are ready to introduce the promised bijection between the elements of address spaces and their corresponding hypergraphs.

Consider $j\in [k]\setminus\{1\}$. Recall that for every $j$-set $J\in \cK_{j}(\sP^{(1)})$, 
we have $\hat{\bx}(J) \in \hat{A}(j,j-1,\ba)$. 
Moreover, recall that $\cK_{j}(\hat{P}^{(j-1)})\neq \emptyset$ for any $\hat{P}^{(j-1)}\in \hat{\sP}^{(j-1)}$, and note that $\hat{\bx}(J) = \hat{\bx}(J')$ for all $J,J' \in \cK_{j}(\hat{P}^{(j-1)})$ and all $\hat{P}^{(j-1)}\in \hat{\sP}^{(j-1)}$.
Hence, for each $\hat{P}^{(j-1)} \in \hat{\sP}^{(j-1)}$ we can define 
\begin{align}\label{eq: hat bx maps}
\hat{\bx}(\hat{P}^{(j-1)}):= \hat{\bx}(J)\text{ for some } J\in \cK_{j}(\hat{P}^{(j-1)}).
\end{align}
Let 
\begin{align*}
\hat{A}(j,j-1,\ba)_{\neq \emptyset} &:= \{ \hat{\bx}\in \hat{A}(j,j-1,\ba): \exists \hat{P}^{(j-1)}\in \hat{\sP}^{(j-1)} \text{ such that }\hat{\bx}(\hat{P}^{(j-1)}) = \hat{\bx}\}\\
&= \{ \hat{\bx} \in \hat{A}(j,j-1,\ba) : \exists J\in \cK_j(\sP^{(1)}) \text{ such that } \hat{\bx} = \hat{\bx}(J)\},\\
\hat{A}(j,j-1,\ba)_{\emptyset}&:= \hat{A}(j,j-1,\ba)\setminus \hat{A}(j,j-1,\ba)_{\neq \emptyset}.
\end{align*}
Clearly \eqref{eq: hat bx maps} gives rise to a bijection between $\hat{\sP}^{(j-1)}$ and $\hat{A}(j,j-1,\ba)_{\neq \emptyset}$.
Thus for each $\hat{\bx} \in \hat{A}(j,j-1,\ba)_{\neq \emptyset}$, we can define the polyad  $\hat{P}^{(j-1)}(\hat{\bx})$ of $\hat{\bx}$ by
\begin{align}\label{eq: hat P def}
\hat{P}^{(j-1)}(\hat{\bx}):= \hat{P}^{(j-1)} \text{ such that }\hat{P}^{(j-1)} \in \hat{\sP}^{(j-1)} \text{ with } \hat{\bx}= \hat{\bx}(\hat{P}^{(j-1)}).
\end{align}
Note that for any $J\in \cK_j(\sP^{(1)})$, we have $\hat{P}^{(j-1)}(\hat{\bx}(J)) = \hat{P}^{(j-1)}(J).$ 

We will frequently make use of an explicit description of a polyad in terms of the partition classes it contains (see \eqref{eq:hatPconsistsofP(x,b)}). For this, we proceed as follows.
For each $b\in [a_1]$, let $P^{(1)}(b,b):= V_b$.
For each $j\in [k-1]\setminus\{1\}$ and $(\hat{\bx},b)  \in \hat{A}(j,j-1,\ba)_{\neq\emptyset}\times [a_j]$, we let 
\begin{align}\label{eq: Pj operator define}
	P^{(j)}(\hat{\bx},b):=P^{(j)}\in \sP^{(j)} \text{ such that } \phi^{(j)}(P^{(j)})=b \text{ and } P^{(j)} \subseteq \cK_{j}(\hat{P}^{(j-1)}(\hat{\bx})). 
\end{align}
Using Definition~\ref{def: family of partitions}(ii), 
we conclude that so far $P^{(j)}(\hat{\bx},b)$ is well-defined for each $(\hat{\bx},b)  \in \hat{A}(j,j-1,\ba)_{\neq\emptyset}\times [a_j]$ and all $j \in [k-1]\setminus\{1\}$.

For convenience we now extend the domain of the above definitions to cover the `trivial' cases.
For $(\hat{\bx},b) \in  \hat{A}(j,j-1,\ba)_{\emptyset}\times [a_j]$, we let
\begin{align}\label{eq: Pj operator define empty}
P^{(j)}(\hat{\bx},b) := \emptyset.
\end{align}
We also let $P^{(1)}(a,b):= \emptyset$ for all $a,b\in [a_1]$ with $a\neq b$.
For all $j\in [k-1]$ and $\hat{\bx}\in \hat{A}(j+1,j,\ba)_{\emptyset}$, we define
\begin{align}\label{eq: hat P P relations emptyset}
\hat{P}^{(j)}(\hat{\bx}):= 
\bigcup_{\hat{\by}\leq_{j,j-1} \hat{\bx}} P^{(j)}(\hat{\by},\bx^{(j)}_{\by^{(1)}_* }).
\end{align}

To summarize, given a family of partitions $\sP=\sP(k-1,\ba)$ and an $\ba$-labelling $\mathbf{\Phi}$, 
for each $j\in [k-1]$, this defines $ P^{(j)}(\hat{\bx},b)$ for $\hat{\bx}\in \hat{A}(j,j-1,\ba)$ and $b\in [a_j]$ and $\hat{P}^{(j)}(\hat{\bx})$ for all $\hat{\bx}\in \hat{A}(j+1,j,\ba)$.
For later reference, we collect the relevant properties of these objects below.
For each $j\in [k-1]\setminus \{1\}$, it will be convenient to extend the domain of the $\ba$-labelling $\phi^{(j)}$ of $\sP^{(j)}$ to all $j$-sets $J\in \cK_j(\sP^{(1)})$ by
setting $\phi^{(j)}(J) := \phi^{(j)}(P^{(j)})$, where $P^{(j)}\in \sP^{(j)}$ is the unique $j$-graph that contains $J$.

\begin{proposition}\label{prop: hat relation}
For a given family of partitions $\sP=\sP(k-1,\ba)$ and an $\ba$-labelling~$\mathbf{\Phi}$, the following hold for all $j\in [k-1]$.
\begin{enumerate}[label={\rm (\roman*)}]
\item $\hat{P}^{(j)}(\cdot):\hat{A}(j+1,j,\ba)_{\neq\emptyset}\to \hat{\sP}^{(j)}$ is a bijection.
\item For $j\geq 2$, the restriction of $P^{(j)}(\cdot,\cdot)$ onto $\hat{A}(j,j-1,\ba)_{\neq \emptyset} \times [a_j]$ is a bijection onto $\sP^{(j)}$.

\item $\hat{\bx} \in \hat{A}(j+1,j,\ba)_{\neq\emptyset}$ if and only if $\cK_{j+1}(\hat{P}^{(j)}(\hat{\bx}))\neq \emptyset$.

\item Each $\hat{\bx}\in \hat{A}(j+1,j,\ba)$ satisfies
\begin{align}\label{eq:hatPconsistsofP(x,b)}
\hat{P}^{(j)}(\hat{\bx})= 
\bigcup_{\hat{\by}\leq_{j,j-1} \hat{\bx}} P^{(j)}(\hat{\by},\bx^{(j)}_{\by^{(1)}_* }).
\end{align}
\item $\{P^{(j)}(\hat{\bx},b) : \hat{\bx}\in \hat{A}(j,j-1,\ba), b\in [a_j]\}$ forms a partition of $\cK_{j}(\sP^{(1)})$.

\item $\{\cK_{j+1}(\hat{P}^{(j)}(\hat{\bx})):\hat{\bx}\in \hat{A}(j+1,j,\ba)\}$ forms a partition of $\cK_{j+1}(\sP^{(1)})$.
\item $\{P^{(j+1)}(\hat{\bx},b) : \hat{\bx}\in \hat{A}(j+1,j,\ba), b\in [a_{j+1}]\}\prec \{\cK_{j+1}(\hat{P}^{(j)}(\hat{\bx})):\hat{\bx}\in \hat{A}(j+1,j,\ba)\}$.
\item If $\sP(k-1,\ba)$ is $T$-bounded, then $|\hat{\sP}^{(j)}|\leq |\hat{A}(j+1,j,\ba)|\leq T^{2^{j+1}-1}$ and $|\sP^{(j)}| \leq T^{2^{j}}$.

\item If $\cK_{j+1}(\hat{P}^{(j)}(\hat{\bx}))\neq \emptyset$ for all $\hat{\bx}\in \hat{A}(j+1,j,\ba)$, then $\hat{P}^{(j)}(\cdot):\hat{A}(j+1,j,\ba)\to \hat{\sP}^{(j)}$ is a bijection and, if in addition $j<k-1$, then $P^{(j+1)}(\cdot,\cdot):\hat{A}(j+1,j,\ba)\times [a_{j+1}]\to \sP^{(j+1)}$ is also a bijection.

\item $\hat{A}(j,j-1,\ba)_{\emptyset} = \emptyset$ for all $j\in [2]$ and thus $\hat{P}^{(1)}(\cdot)$ and $P^{(2)}(\cdot,\cdot)$ are always bijections.

\item If $\sP \prec \sQ(k-1,\ba^{\sQ})$, then $\{ \cK_{j+1}(\hat{P}^{(j)}(\hat{\bx})) : \hat{\bx}\in \hat{A}(j+1,j,\ba) \} \prec \{ \cK_{j+1}(\hat{Q}^{(j)}(\hat{\bx})) : \hat{\bx}\in \hat{A}(j+1,j,\ba^{\sQ})\}$.
\end{enumerate}
\end{proposition}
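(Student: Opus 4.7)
The plan is to derive all eleven parts by unfolding the definitions in \eqref{eq: hat P def}, \eqref{eq: Pj operator define}, \eqref{eq: Pj operator define empty} and \eqref{eq: hat P P relations emptyset}, combined with Definition~\ref{def: family of partitions} and Proposition~\ref{eq: I subset J then leq}. I would order the argument so that (i)--(iv) are established first; the remaining items then follow by short combinations of these.

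For (i), surjectivity is built into \eqref{eq: hat bx maps}, while injectivity follows because $\hat{\bx}(\hat{P}^{(j)})$ is a two-sided inverse (distinct polyads in $\hat{\sP}^{(j)}$ live over distinct $\bx^{(1)}_*$-or-labelling patterns, again by the injectivity of each $\phi^{(i)}$ on the parts of any given polyad). For (ii), Definition~\ref{def: family of partitions}(ii) says that each $P^{(j)}\in \sP^{(j)}$ is contained in $\cK_j(\hat{P}^{(j-1)})$ for a \emph{unique} $\hat{P}^{(j-1)}\in\hat{\sP}^{(j-1)}$, and $\phi^{(j)}$ is injective on the parts lying inside that $\cK_j$, giving a unique $(\hat{\bx},b)$ via (i). Part (iii) is immediate: if $\hat{\bx}=\hat{\bx}(J)$ then $J\in \cK_{j+1}(\hat{P}^{(j)}(\hat{\bx}))$, and conversely, any $J\in\cK_{j+1}(\hat{P}^{(j)}(\hat{\bx}))$ satisfies $\hat{\bx}(J)=\hat{\bx}$ by the definition of $\hat{\bx}(\hat{P}^{(j)})$. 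For (iv) in the $\emptyset$ case the identity is the definition \eqref{eq: hat P P relations emptyset}; in the $\neq\emptyset$ case, pick any $J\in \cK_{j+1}(\hat{P}^{(j)}(\hat{\bx}))$ and apply \eqref{def:polyad}, then use Proposition~\ref{eq: I subset J then leq} to identify the $j$-subsets $I\subseteq J$ with the vectors $\hat{\by}\leq_{j,j-1}\hat{\bx}$ via $I\mapsto \hat{\bx}(I)$, verifying $P^{(j)}(I)=P^{(j)}(\hat{\by},\bx^{(j)}_{\by^{(1)}_*})$ directly from \eqref{eq: bx J def} and \eqref{eq: Pj operator define}.

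Given (i)--(iv), the remaining parts are short. Part (v) follows from (ii) (after discarding the empty cases coming from \eqref{eq: Pj operator define empty}), since $\sP^{(j)}$ partitions $\cK_j(\sP^{(1)})$. Part (vi) follows from (i) and (iii): every crossing $(j+1)$-set $J$ belongs to $\cK_{j+1}(\hat{P}^{(j)}(J))$, and it cannot lie in $\cK_{j+1}(\hat{P}^{(j)}(\hat{\bx}))$ for a different $\hat{\bx}$ because the $j$-subsets of $J$ are distributed among the blocks of the partition $\sP^{(j)}$ in only one way. Part (vii) is Definition~\ref{def: family of partitions}(ii) rewritten via (i)--(ii). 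For (viii), I would bound $|\hat{A}(j+1,j,\ba)|\le \binom{a_1}{j+1}\prod_{i=2}^{j}a_i^{\binom{j+1}{i}}\le T^{\sum_{i=1}^{j}\binom{j+1}{i}}\le T^{2^{j+1}-1}$, and then $|\sP^{(j)}|\le |\hat{A}(j,j-1,\ba)|\cdot a_j \le T^{2^j}$ through (ii).

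Parts (ix)--(xi) are auxiliary. For (ix), the hypothesis together with (iii) forces $\hat{A}(j+1,j,\ba)_{\emptyset}=\emptyset$, so (i) (respectively (ii)) extends to a bijection on all of $\hat{A}(j+1,j,\ba)$ (respectively $\hat{A}(j+1,j,\ba)\times[a_{j+1}]$). For (x), $\hat{A}(1,0,\ba)_\emptyset=\emptyset$ holds since every $V_i$ is non-empty by Definition~\ref{def: family of partitions}(i), and $\hat{A}(2,1,\ba)_\emptyset=\emptyset$ since for any $\alpha_1<\alpha_2$ the pair $V_{\alpha_1}\cup V_{\alpha_2}$ supports crossing $2$-sets. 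For (xi), $\sP\prec \sQ$ means each $P^{(j)}\in\sP^{(j)}$ lies inside a unique $Q^{(j)}\in\sQ^{(j)}$ (or inside the `non-crossing' remainder); passing to $j$-subsets of a crossing $(j+1)$-set and summing yields $\hat{P}^{(j)}(\hat{\bx})\subseteq \hat{Q}^{(j)}(\hat{\bx}')$ for a unique image $\hat{\bx}'$, and the refinement claim for the $\cK_{j+1}$-families follows. The main obstacle is (iv): once the correspondence between subsets $I\subseteq J$ and vectors $\hat{\by}\leq_{j,j-1}\hat{\bx}$ is carefully matched with the labelling data, everything else cascades.
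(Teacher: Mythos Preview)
Your proposal is correct and follows essentially the same route as the paper: both treat (i)--(ii) as immediate from the definitions, prove (iii) by unpacking the characterisation $\hat{\bx}\in\hat{A}(j+1,j,\ba)_{\neq\emptyset}\Leftrightarrow \hat{\bx}=\hat{\bx}(J)$ for some crossing $J$, handle (iv) by reducing the $\neq\emptyset$ case to \eqref{def:polyad} and the bijection of Proposition~\ref{eq: I subset J then leq} between $j$-subsets $I\subseteq J$ and vectors $\hat{\by}\leq_{j,j-1}\hat{\bx}$, and then derive (v)--(xi) as short consequences. One small point of care in (iii): the converse direction (that $\cK_{j+1}(\hat{P}^{(j)}(\hat{\bx}))\neq\emptyset$ forces $\hat{\bx}\in\hat{A}(j+1,j,\ba)_{\neq\emptyset}$) cannot appeal directly to the definition of $\hat{\bx}(\hat{P}^{(j)})$, since that inverse is only defined on $\hat{\sP}^{(j)}$; you need to argue (e.g.\ by induction on $j$ via \eqref{eq: hat P P relations emptyset}) that any $J$ spanning a clique in $\hat{P}^{(j)}(\hat{\bx})$ actually has $\hat{\bx}(J)=\hat{\bx}$ --- but the paper's written proof is equally brief here.
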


We postpone the proof of Proposition~\ref{prop: hat relation} to Section~\ref{sec: more concepts and tools}.


 We remark that the counting lemma (see Lemma~\ref{lem: counting}) will enable us to restrict our attention to families of partitions as in Proposition~\ref{prop: hat relation}(ix). This is formalized in Lemma~\ref{lem: maps bijections}.

For $j\in [k-1]$, $\ell \geq j+1$ and for each $\hat{\bx} \in \hat{A}(\ell,j,\ba)$, 
we define the \emph{polyad of $\hat{\bx}$} by 
\begin{align}\label{eq: larger polyad def}
\hat{P}^{(j)}(\hat{\bx}) 
:= \bigcup_{\hat{\by}\leq_{j+1,j} \hat{\bx}} \hat{P}^{(j)}(\hat{\by})
\stackrel{(\ref{eq:hatPconsistsofP(x,b)})}{=}\bigcup_{\hat{\bz}\leq_{j,j-1} \hat{\bx}} P^{(j)}(\hat{\bz},\bx^{(j)}_{\bz^{(1)}_* }).
\end{align}\COMMENT{Here, we have this because
$\hat{\bz}\leq_{j,j-1} \hat{\by} \leq_{j+1,j} \hat{\bx}$ implies $\hat{\bz}\leq_{j,j-1}\hat{\bx}$.}
(Note that this generalizes the definition made in \eqref{eq: hat P def} for the case $\ell=j+1$.)
The following fact follows easily from the definition.
\begin{proposition}\label{prop:unique}
Let $\sP= \sP(k-1,\ba)$ be a family of partitions. 
Let $j\in [k-1]$ and $\ell\geq j+1$. Then for every $L\in \cK_{\ell}(\sP^{(1)})$, 
there exists a unique $\hat{\bx}\in \hat{A}(\ell,j,\ba)$ such that $L\in \cK_{\ell}(\hat{P}^{(j)}(\hat{\bx}))$.
\end{proposition}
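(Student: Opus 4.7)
The plan is to produce a candidate $\hat{\bx}$ for existence from the operator $\hat{\bx}(L)$ of \eqref{eq: bx J def} (truncated to its first $j$ levels) and then to pin down uniqueness by exploiting that $\sP^{(j)}$ is a partition of $\cK_j(\sP^{(1)})$. Since $\ell\geq j+1$ and $j\leq k-1$, the truncation $\hat{\bx}(L)\in\hat{A}(\ell,j,\ba)$ is well defined. To verify $L\in\cK_{\ell}(\hat{P}^{(j)}(\hat{\bx}(L)))$ it suffices, after unfolding \eqref{eq: larger polyad def}, to show that every $J\in\binom{L}{j}$ lies in some $P^{(j)}(\hat{\bz},\bx^{(j)}(L)_{\bz^{(1)}_*})$ with $\hat{\bz}\leq_{j,j-1}\hat{\bx}(L)$. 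I take $\hat{\bz}:=\hat{\bx}(J)\in\hat{A}(j,j-1,\ba)_{\neq\emptyset}$; Proposition~\ref{eq: I subset J then leq}(i) gives $\hat{\bz}\leq_{j,j-1}\hat{\bx}(L)$, the identity $\hat{P}^{(j-1)}(\hat{\bx}(J))=\hat{P}^{(j-1)}(J)$ from \eqref{eq: hat P def} combined with Definition~\ref{def: family of partitions}(ii) shows that the unique part $P^{(j)}_J\in\sP^{(j)}$ containing $J$ satisfies $P^{(j)}_J\subseteq\cK_j(\hat{P}^{(j-1)}(\hat{\bz}))$, and \eqref{eq: bx J def} makes $\bx^{(j)}(L)_{\cl(J)}$ exactly $\phi^{(j)}(P^{(j)}_J)$. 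Hence $P^{(j)}(\hat{\bz},\bx^{(j)}(L)_{\bz^{(1)}_*})=P^{(j)}_J\ni J$, as required. The case $j=1$ collapses to the fact that $\hat{P}^{(1)}(\bx^{(1)}(L))$ has edge set $\bigcup_{i\in\bx^{(1)}(L)_*}V_i$, which clearly contains the crossing set $L$.

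For uniqueness, suppose $L\in\cK_{\ell}(\hat{P}^{(j)}(\hat{\bx}'))$ for some $\hat{\bx}'\in\hat{A}(\ell,j,\ba)$. By Proposition~\ref{prop: (i,i)-hypergraph}, $\hat{P}^{(j)}(\hat{\bx}')$ is an $(\ell,j,*)$-graph on $\{V_i:i\in\bx'^{(1)}_*\}$, and because $L$ is crossing with one vertex in every class it meets, $\bx'^{(1)}_*=\{i:L\cap V_i\neq\emptyset\}$; the ordering convention in the definition of the address space then forces $\bx'^{(1)}=\bx^{(1)}(L)$. For the remaining levels I invoke Proposition~\ref{prop: hat relation}(v), according to which $\{P^{(j)}(\hat{\bz},b)\}_{\hat{\bz},b}$ is a partition of $\cK_j(\sP^{(1)})$. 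Fixing $J\in\binom{L}{j}$, the unique pair $(\hat{\bz},b)$ with $J\in P^{(j)}(\hat{\bz},b)$ must, in view of $J\in\hat{P}^{(j)}(\hat{\bx}')$ and \eqref{eq: larger polyad def}, satisfy $\hat{\bz}\leq_{j,j-1}\hat{\bx}'$ and $b=\bx'^{(j)}_{\bz^{(1)}_*}$; matching this against the existence step identifies $(\hat{\bz},b)=(\hat{\bx}(J),\phi^{(j)}(P^{(j)}_J))$. Letting $J$ range over $\binom{L}{j}$ pins down every level-$j$ coordinate of $\hat{\bx}'$, and the relation $\hat{\bx}(J)\leq_{j,j-1}\hat{\bx}'$ determines every lower-level coordinate too, because each $\Lambda\subseteq\bx^{(1)}_*$ with $|\Lambda|<j$ is contained in $\cl(J)$ for some $J\in\binom{L}{j}$ (this uses $\ell\geq j$). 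Hence $\hat{\bx}'=\hat{\bx}(L)$.

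The delicate step is purely bookkeeping: I must identify the abstract address $\hat{\bx}(J)$ with the concrete polyad $\hat{P}^{(j-1)}(J)$ via \eqref{eq: hat P def}, so that the operator $P^{(j)}(\hat{\bx}(J),\phi^{(j)}(P^{(j)}_J))$ really evaluates to the partition class $P^{(j)}_J$ of $\sP^{(j)}$ containing $J$. Once that link is made, both existence and uniqueness reduce to the simple fact that $\sP^{(j)}$ partitions $\cK_j(\sP^{(1)})$ (and $\sP^{(1)}$ partitions $V$).
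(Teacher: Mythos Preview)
Your proof is correct and follows essentially the same route as the paper's (commented) argument: build the candidate address from the operator $\hat{\bx}(\cdot)$ of \eqref{eq: bx J def} and verify membership via \eqref{eq: larger polyad def}. The only cosmetic difference is that the paper works with $(j+1)$-subsets $J\in\binom{L}{j+1}$ and the first equality in \eqref{eq: larger polyad def} (the $\hat{P}^{(j)}(\hat{\by})$ decomposition), whereas you work with $j$-subsets and the second equality (the $P^{(j)}(\hat{\bz},b)$ decomposition); your version also spells out the uniqueness half, which the paper leaves implicit.
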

\COMMENT{
\begin{proof}
Indeed, recall that for every $J\in \binom{L}{j+1}$, 
the vector $\hat{\bx}(J)$ is the unique element of $\hat{A}(j+1,j,\ba)$ which satisfies $J\in \cK_{j+1}(\hat{P}^{(j)}(\hat{\bx}(J)))$. 
Let $\hat{\bx}\in \hat{A}(\ell,j,\ba)$ be the vector which satisfies $\bx^{(i)}_{I'}= (\bx^{(i)}(J))_{I'}$ for every $I'\subsetneq J$ with $|I'|=i\leq j$ and every $J\in \binom{L}{j+1}$.\COMMENT{Here, subscript $I'$ denotes the coordinate of the vector.}
Then it is easy to see that the vector $\hat{\bx}= (\bx^{(1)},\dots, \bx^{(j)})$ is well-defined and $\hat{\bx}(J)\leq_{j+1,j} \hat{\bx}$ for all $J\in \binom{L}{j+1}$.
Thus by \eqref{eq: larger polyad def}, every $J\in \binom{L}{j+1}$ satisfies $J\in \cK_{j+1}(\hat{P}^{(j)}(\hat{\bx}))$, and hence $L\in \cK_{\ell}(\hat{P}^{(j)}(\hat{\bx}))$. 
This proves our claim. 
\end{proof}
}
Note that \eqref{eq: Pj operator define} and \eqref{eq: larger polyad def} together imply that, for all $j\in [k-1]$ and $\hat{\bx}\in \hat{A}(j+1,j,\ba)$,
\begin{align}\label{eq: complex definition by address}
\hat{\cP}(\hat{\bx}) := \left\{ \bigcup_{\hat{\by}\leq_{j+1,i} \hat{\bx}} \hat{P}^{(i)}(\hat{\by})\right\}_{i\in [j]}
\end{align} is a $(j+1,j,*)$-complex.\COMMENT{
If it is a complex, then it is easy to see that it is a $(j+1,j)$-complex.
Suppose $\hat{\by} \leq_{j+1,i} \hat{\bx}$ for $i\leq j$.
It suffice to show that if $J\in \hat{P}^{(i)}(\hat{\by})$, then $J\in \cK_{i}( \hat{P}^{(i-1)}(\hat{\bz}))$ for some $\hat{\bz}$ with $\hat{\bz} \leq_{j+1,i-1} \hat{\bx}$. \newline
Suppose $J\in \hat{P}^{(i)}(\hat{\by})$.
By \eqref{eq: larger polyad def}, $J\in P^{(i)}(\hat{\bz}, \by^{(i)}_{\bz^{(1)}_*})$ for some $\hat{\bz}$ with $\hat{\bz}\leq_{i,i-1} \hat{\by}$.
By the definition of $ P^{(i)}(\hat{\bz}, \by^{(i)}_{\bz^{(1)}_*})$, $J$ belongs to $\cK_{i}(\hat{P}^{(i-1)}(\hat{\bz}))$. Thus we get what we want.
}
Moreover, using Proposition~\ref{prop: hat relation}(iii) it is easy to check that for each $\hat{\bx}\in \hat{A}(j+1,j,\ba)$ with $\cK_{j+1}(\hat{P}^{(j)}(\hat{\bx}))\neq \emptyset$, we have (for $\hat{\cP}(J)$ as defined in~\eqref{eq: hat cP})
\begin{align}\label{eq: hat cP bx is hat cP J is for some J}
\hat{\cP}(\hat{\bx}) = \hat{\cP}(J)\text{ for some } J\in \cK_{j+1}(\sP^{(1)}).
\end{align}

\medskip

\subsubsection{Density functions of address spaces}\label{sec: subsub density function}

For $k\in \N\sm\{1\}$ 
and $\ba\in \N^{k-1}$,
we say a function $d_{\ba,k}:\hat{A}(k,k-1,\ba)\rightarrow[0,1]$ is a \emph{density function} of $\hat{A}(k,k-1,\ba)$.
For two density functions $d^1_{\ba,k}$ and $d^2_{\ba,k}$,
we define the \emph{distance} between $d^1_{\ba,k}$ and $d^2_{\ba,k}$ by
$$\dist(d^1_{\ba,k}, d^2_{\ba,k}) 
:= k!\prod_{i=1}^{k-1} a_i^{-\binom{k}{i}}\sum_{\hat{\bx}\in \hat{A}(k,k-1,\ba)} |d^1_{\ba,k}(\hat{\bx})-d^2_{\ba,k}(\hat{\bx})| .$$
Since $|\hat{A}(k,k-1,\ba)| = \binom{a_1}{k} \prod_{i=2}^{k-1} a_i^{\binom{k}{i}}$, we always have that $\dist(d^1_{\ba,k}, d^2_{\ba,k})\leq 1.$
Suppose we are given a density function $d_{\ba,k}$, a real $\epsilon>0$, and a $k$-graph $H^\kk$.
We say a family of partitions $\sP=\sP(k-1,\ba)$ on $V(H^{(k)})$ is an \emph{$(\epsilon,d_{\ba,k})$-partition} of $H^{(k)}$ 
if for every $\hat{\bx}\in \hat{A}(k,k-1,\ba)$ the $k$-graph
$H^{(k)}$ is $(\epsilon,d_{\ba,k}(\hat{\bx}))$-regular with respect to $\hat{P}^{(k-1)}(\hat{\bx})$. 
If $\sP$ is also $(1/a_1,\epsilon,\ba)$-equitable (as specified in Definition~\ref{def: equitable family of partitions}), 
we say $\sP$ is an \emph{$(\epsilon,\ba,d_{\ba,k})$-equitable partition} of $H^{(k)}$. Note that
\begin{equation}\label{eq: perfectly regular is regular}
\begin{minipage}[c]{0.9\textwidth}\em 
 if $\hat{P}^{(k-1)}(\cdot): \hat{A}(k,k-1,\ba)\rightarrow \hat{\sP}^{(k-1)}$ is a bijection, then $H^{(k)}$ is perfectly $\epsilon$-regular with respect to $\sP$ if and only if there exists a density function $d_{\ba,k}$ such that $\sP$ is an $(\epsilon,d_{\ba,k})$-partition of $H^{(k)}$.
\end{minipage}
\end{equation}

\subsection{Regularity instances}\label{sec: regularity instances}
A regularity instance $R$ encodes an address space, an associated density function and a regularity parameter.
Roughly speaking,
a regularity instance can be thought of as encoding a weighted `reduced multihypergraph' obtained from an application of the regularity lemma for hypergraphs.
To formalize this,
let $\epsilon_{\ref{def: regularity instance}}(\cdot,\cdot) : \mathbb{N} \times \mathbb{N} \rightarrow (0,1]$ be a function which satisfies the following.
\begin{itemize}
\item $\epsilon_{\ref{def: regularity instance}}( \cdot, k)$ is a decreasing function for any fixed  $k\in \mathbb{N}$ with $\lim_{t\rightarrow \infty} \epsilon_{\ref{def: regularity instance}}(t, k) = 0$, 
\item $\epsilon_{\ref{def: regularity instance}}( t, \cdot)$ is a decreasing function for any fixed $t\in \mathbb{N}$,
\item $\epsilon_{\ref{def: regularity instance}}(t,k) < t^{- 4^k} \epsilon_{\ref{lem: counting}}(1/t,1/t,k-1,k)/4$, where $\epsilon_{\ref{lem: counting}}
$ is defined in Lemma~\ref{lem: counting}.
\end{itemize}\COMMENT{Here, note that we divide $\epsilon_{\ref{lem: counting}}(1/t,1/t,k-1,k)$ by $4 t^{4^k}$. There are two reason for this. We divide it by three because
since we want to apply Lemma~\ref{lem: counting} even when $(\epsilon,\ba,d_{\ba,k})$ is not a regularity instance but  $(\epsilon/3,\ba,d_{\ba,k})$ is a regularity instance. For this purposes, in some lemmas we assume $(\epsilon/3, \ba, d_{\ba,k})$ is a regularity instance, instead of $(\epsilon,\ba,d_{\ba,k})$ is a regularity instance.
We also divide it by $t^{4^{k}}$ to make sure that $\epsilon $ is much less than $1/t$.
}
\begin{definition}[Regularity instance]\label{def: regularity instance}
A \emph{regularity instance} $R= (\epsilon,\ba,d_{\ba,k})$ 
is a triple, where
$\ba = (a_1,\dots, a_{k-1})\in \N^{k-1}$ with $0<\epsilon \leq \epsilon_{\ref{def: regularity instance}}(\|\ba\|_\infty,k)$,\COMMENT{Here, the condition that $\epsilon \ll \epsilon_{\ref{def: regularity instance}}(t)$ is required. If $\epsilon$ is too big compare to $1/t$, we lose the tack of $|\cK_{j}(\hat{P}(\hat{\bx}))|$ for each $\hat{\bx}\in \hat{A}(j,j-1,\ba)$. Then it causes a problem in the proof of Lemma~\ref{lem: slightly different partition regularity} } 
and $d_{\ba,k}$ is a density function of $\hat{A}(k,k-1,\ba)$. 
A $k$-graph $H$ satisfies 
the regularity instance $R$ if there exists a family of partitions $\sP=\sP(k-1,\ba)$ such that 
$\sP$ is an $(\epsilon,\ba,d_{\ba,k})$-equitable partition of~$H$.
The \emph{complexity} of $R$ is $1/\epsilon$.
\end{definition}

Since $\epsilon_{\ref{def: regularity instance}}$ depends only on $\norm{\ba}$ and $k$, it follows that  for given $r$ and fixed $k$, the number of vectors $\ba$ which could belong to a regularity instance $R$ with complexity $r$ is bounded by a function of $r$.

\COMMENT{

\begin{definition}[Regular reducible]\label{def: regular reducible}
A $k$-graph property $\bP$ is \emph{regular reducible} if for any $\beta>0$, 
there exists an $r=r_{\ref{def: regular reducible}}(\beta,\bP)$ such that 
for any integer $n\geq k$, 
there is a family $\cR=\cR(n,\beta,\bP)$ of at most $r$ regularity instances, each of complexity at most $r$,
such that the following hold for every $\alpha>\beta$ and every $n$-vertex $k$-graph $H$:
\begin{itemize}
\item If $H$ satisfies $\bP$, then there exists $R\in \cR$ such that $H$ is $\beta$-close to satisfying $R$.
\item If $H$ is $\alpha$-far from satisfying $\bP$, then for any $R\in \cR$  the $k$-graph $H$ is $(\alpha-\beta)$-far from satisfying $R$.
\end{itemize}
\end{definition}

Thus a property is regular reducible if it can be (approximately) encoded by a bounded number of regularity instances of bounded complexity.}
We will often make use of the fact that if we apply the regular approximation lemma (Theorem~\ref{thm: RAL}) to a $k$-graph $H$
to obtain $G$ and $\sP$,
then $\ba^\sP$ together with the densities of $G$ with respect to the polyads in $\hat{\sP}^{(k-1)}$ naturally give rise to a regularity instance $R$
where $G$ satisfies $R$ and $H$ is close to satisfying $R$.
\COMMENT{
Note that different choices of $\epsilon_{\ref{def: regularity instance}}$ lead to a different definition of regularity instances and thus might lead to a different definition of being regular reducible. However, our main result implies that for \emph{any} appropriate choice of $\epsilon_{\ref{def: regularity instance}}$, being regular reducible and testability are equivalent. In particular, if a property is regular reducible for an appropriate choice of $\epsilon_{\ref{def: regularity instance}}$, then it is regular reducible for all appropriate choices of $\epsilon_{\ref{def: regularity instance}}$, and so `regular reducibility' is well defined. 
}

\section{Main result: Sampling regularity instances}

Let us now turn to the statement of our main result
thereby extending Theorem~\ref{thm:main_graphs} to $k$-graphs.
It states that not too small random samples of vertex subsets satisfy with high probability essentially the same regularity instance;
that is, only an additive error term is needed.

\begin{theorem}\label{lem: random choice2}
	Suppose $0< 1/n< 1/q \ll c \ll \delta \ll \epsilon_0 \leq 1$ and $k\in \N\sm \{1\}$.
	Suppose $R=(2\epsilon_0/3, \ba, d_{\ba,k})$ is a regularity instance.
	Suppose $H$ is a $k$-graph on vertex set $V$ with $|V|=n$. 
	Let $Q \in \binom{V}{q}$ be chosen uniformly at random. 
	Then with probability at least $ 1 - e^{-cq}$ the following hold.
	\begin{itemize}
		\item[{\rm (Q1)$_{\ref{lem: random choice2}}$}] If there exists an $(\epsilon_0,\ba,d_{\ba,k})$-equitable partition $\sO_1$ of $H$, 
		then there exists an $(\epsilon_0+\delta,\ba,d_{\ba,k})$-equitable partition $\sO_2$ of $H[Q]$.
		\item[{\rm(Q2)$_{\ref{lem: random choice2}}$}] If there exists an $(\epsilon_0,\ba,d_{\ba,k})$-equitable partition $\sO_2$ of $H[Q]$, 
		then there exists  an $(\epsilon_0+\delta,\ba,d_{\ba,k})$-equitable partition $\sO_1$ of $H$.
	\end{itemize}
\end{theorem}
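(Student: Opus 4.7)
My plan is to prove the two implications (Q1) and (Q2) separately, with (Q1) a direct inheritance argument and (Q2) the substantially harder direction requiring a reference partition of $H$.

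For (Q1), given an $(\epsilon_0,\ba,d_{\ba,k})$-equitable partition $\sO_1=\{\sO_1^{(j)}\}_{j=1}^{k-1}$ of $H$, I would take $\sO_2$ to be the natural restriction of $\sO_1$ to $Q$, so $\sO_2^{(1)}=\{V_i\cap Q\}_{i\in[a_1]}$ and $\sO_2^{(j)}=\{P^{(j)}\cap \binom{Q}{j}: P^{(j)}\in \sO_1^{(j)}\}$ for $j\ge 2$. Three facts then have to hold with failure probability at most $e^{-\Omega(q)}$: first, $\sO_2$ is an equipartition, which follows from Chernoff applied to the hypergeometric sizes $|V_i\cap Q|$ together with a union bound over the constantly many polyads at each level; second, all polyad densities $d(P^{(j+1)} \mid \hat{P}^{(j)})$ are preserved up to an additive $\delta'\ll \delta$, since each is a ratio of hypergeometric counts; and third, regularity is inherited at every level. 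For this last point one cannot directly test the regularity definition inside $Q$ because the number of potential witnesses $Q^{(k-1)}$ is exponential in $q$. Instead I would use the equivalence between $\epsilon$-regularity and correct counts of small configurations supplied by the counting lemma (Lemma~\ref{lem: counting}): these counts are Lipschitz functions of the random set $Q$, so Azuma's inequality gives the necessary concentration, and correct configuration counts then certify $(\epsilon_0+\delta,\bd)$-regularity.

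For (Q2), the partition $\sO_2$ is defined intrinsically on $Q$ and has no canonical link to $V$. My plan is to introduce a reference partition of $H$ via the strengthened regular approximation lemma (Lemma~\ref{lem:refreg}): apply it with parameters much finer than $\ba$ to obtain $G$ with $|H\triangle G|\le \nu\binom{n}{k}$ and a family $\sP^*$ on which $G$ is perfectly $\epsilon^*$-regular for some $\epsilon^*\ll \delta$. By (Q1) applied to $G$ together with concentration of $|H[Q]\triangle G[Q]|$ around $\nu\binom{q}{k}$, both $H[Q]$ and $G[Q]$ are well-aligned with $\sP^*|_Q$. Since $\sO_2$ and $\sP^*|_Q$ are then simultaneously high-quality regularity partitions of $H[Q]$, their intersection pattern is forced to be highly structured: for every pair of level-$1$ parts, and inductively for every pair of polyads at each higher level, the fraction $|V_i^Q \cap V_j^*|/|V_j^*|$ concentrates around some value $p_{i,j}$ that does not depend on $Q$. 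I then lift these transition fractions back to $V$: each vertex of a given $\sP^{*(1)}$-part is assigned to the parts of $\sO_1^{(1)}$ in proportions $p_{\cdot,j}$, and each higher-level polyad of $\sP^*$ is split according to the analogous higher-level transition data. A direct counting argument, using the perfect $\epsilon^*$-regularity of $G$ against $\sP^*$ and the closeness $|H\triangle G|\le \nu\binom{n}{k}$, then shows that $\sO_1$ is an $(\epsilon_0+\delta,\ba,d_{\ba,k})$-equitable partition of $H$.

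The main obstacle is (Q2): the lift must be simultaneously consistent across all $k-1$ levels, so that the constructed $\sO_1$ is actually a family of partitions and so that the densities on its polyads match $d_{\ba,k}$ up to the allowed slack. The vanilla regular approximation lemma (Theorem~\ref{thm: RAL}) appears not to be strong enough for this, since it gives no control over how its output partition interacts with an externally given partition like $\sO_2$; this is precisely why the strengthening Lemma~\ref{lem:refreg} is developed in the paper. A secondary subtlety is that $\norm{\ba^{\sP^*}}$ can be much larger than $\norm{\ba}$, so the transition data must be aggregated carefully across many columns to produce exactly $a_1$ parts with exactly the densities prescribed by $d_{\ba,k}$ on each polyad.
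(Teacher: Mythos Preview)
Your plan has a genuine gap in (Q1). Direct inheritance by restricting $\sO_1$ to $Q$ and certifying regularity through configuration counts cannot deliver an \emph{additive} error $\delta\ll\epsilon_0$. The counting lemma (Lemma~\ref{lem: counting}) is a one-way implication: $\epsilon$-regularity with $\epsilon\ll\gamma$ gives counts accurate to $\gamma$, so starting from $\epsilon_0$-regularity you only control counts to precision $\gamma\gg\epsilon_0$. Converting counts back to regularity incurs a further polynomial loss, so at best you end up with $\epsilon'$-regularity where $\epsilon'\gg\epsilon_0$, not $(\epsilon_0+\delta)$-regularity. This is exactly the Czygrinow--Nagle phenomenon (Lemma~\ref{lem: grabbing}), and the paper is explicit that its output parameter is ``too large for our purposes'' --- the whole point of the theorem is to sharpen that polynomial dependence to an additive one.

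The paper's remedy is to treat (Q1) and (Q2) \emph{symmetrically} via Lemma~\ref{lem: similar}: apply the regular approximation lemma to $H$ to get a nearby $G$ with a very fine $(\epsilon,\ba^{\sP},d_{\ba^{\sP},k})$-equitable partition $\sP_1$; then Lemma~\ref{lem: random choice} (where the Czygrinow--Nagle loss is harmless because $\epsilon$ was chosen tiny) gives a matching partition $\sP_2$ of $G[Q]$. Now $G$ and $G[Q]$ share a high-quality partition with the same density function, and Lemma~\ref{lem: similar} transfers \emph{any} low-quality partition of one to the other with only additive loss. Your (Q2) sketch is essentially rediscovering the content of Lemma~\ref{lem: similar} (the transition fractions are the sets $B_j(\hat{\bw},b)$ in its proof), but you have not noticed that the same transfer lemma, applied with the roles of $G$ and $G[Q]$ swapped, is also what makes (Q1) work. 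Incidentally, Lemma~\ref{lem:refreg} is invoked inside the proof of Lemma~\ref{lem: similar} rather than directly in the proof of the theorem; the theorem itself only needs the vanilla regular approximation lemma (Theorem~\ref{thm: RAL}) to produce $\sP_1$.
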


We use Theorem~\ref{lem: random choice2} to completely characterize all testable hypergraph properties in the companion paper~\cite{JKKO1}.
Similar in the graph setting, regular instances are the key objects and, roughly speaking, a property is testable if regularity instances determine the property.



\medskip
Now we illustrate a few key points in our approach.
Roughly speaking, Theorem~\ref{lem: random choice2} states the following.
\begin{equation*}
	\begin{minipage}[c]{0.90\textwidth}\em
		Suppose $H$ is a $k$-graph and $Q$ a random subset of $V(H)$.
		Then with high probability, the following hold  (where $\delta \ll \epsilon_0$).
		\begin{itemize}
			\item If $\sO_1$ is an $\epsilon_0$-equitable partition of $H$ with density function $d_{\ba,k}$,
			then there is an $(\epsilon_0+\delta)$-equitable partition of $H[Q]$ with the same density function~$d_{\ba,k}$.
			\item If $\sO_2$ is an $\epsilon_0$-equitable partition of $H[Q]$ with density function $d_{\ba,k}$,
			then there is an $(\epsilon_0+\delta)$-equitable partition of $H$ with the same density function~$d_{\ba,k}$.
		\end{itemize}
	\end{minipage}
\end{equation*}
The crucial point here is that the transfer between $H$ and $H[Q]$ incurs only an additive increase in the regularity parameter $\epsilon_0$.
In fact, this additive increase can then be eliminated by slightly adjusting $H$ (or $H[Q]$).

The key ingredient  in the proof of Theorem~\ref{lem: random choice2} is Lemma~\ref{lem: similar}. 
Roughly speaking, Lemma~\ref{lem: similar} states the following.
\begin{equation*}
	\begin{minipage}[c]{0.90\textwidth}\em
		Suppose the following hold (where $\epsilon\ll \delta\ll \epsilon_0$).
		\begin{itemize}
			\item $H_1$ is a $k$-graph on vertex set $V_1$ and $\sQ_1$ is an $\epsilon$-equitable partition of $H_1$ with density function $d_{\ba^{\sQ},k}$.
			\item $H_2$ is a $k$-graph on vertex set $V_2$ and $\sQ_2$ is an $\epsilon$-equitable partition of $H_2$ with the same density function $d_{\ba^{\sQ},k}$.
			\item $\sO_1$ is an $\epsilon_0$-equitable partition of $H_1$ with density function $d_{\ba^\sO,k}$.
		\end{itemize}
		Then there is an $(\epsilon_0+\delta)$-equitable partition $\sO_2$ of $H_2$,
		also with density function~$d_{\ba^\sO,k}$.
	\end{minipage}
\end{equation*}
One may think of this results as follows;
if two $k$-graphs both satisfy \emph{some} `high quality' regularity partition with the same parameters,
then \emph{all} `low quality' regularity partitions from one $k$-graph are also regularity partitions of the other $k$-graph to the expense of only a small additive increase in the regularity parameter. 

To prove Theorem~\ref{lem: random choice2},
we will apply Lemma~\ref{lem: similar} with $H$ playing the role of $H_1$ and with
the random sample $H[Q]$ playing the role of $H_2$ (and vice versa).
In turn,
our strengthening of the regular approximation lemma (Lemma~\ref{lem:refreg}) will be one of the main tools in the proof of Lemma~\ref{lem: similar}; see the beginning of Section~\ref{sec: Sampling a regular partition} for a more detailed sketch.

\section{More concepts and tools}\label{sec: more concepts and tools}

Here we collect some further results that we need later in our proofs but which are not needed to understand our main theorem.

\subsection{A stronger hypergraph regularity lemma}

We next state Lemma~\ref{RAL(k)} which is a generalization of the regular approximation lemma
which was also proved by R\"odl and Schacht (see Lemma~25 in~\cite{RS07}).
Lemma~\ref{RAL(k)} 
has two additional features in comparison to Theorem~\ref{thm: RAL}.
Firstly, we can prescribe a family of partitions $\sQ$ and obtain a refinement $\sP$ of $\sQ$,
and secondly, we are not only given one $k$-graph $H$ but a collection of $k$-graphs $H_i$ that partitions the complete $k$-graph.
Thus we may view Lemma~\ref{RAL(k)} as a `partition version' of Theorem~\ref{thm: RAL}.
We will use it in the proof of Lemma~\ref{lem:refreg}.

\begin{lemma}[R\"odl and Schacht~\cite{RS07}]\label{RAL(k)}
	For all $o,s\in \N$, $k\in \N\sm \{1\}$, all $\eta,\nu>0$, 
	and every function $\epsilon : \mathbb{N}^{k-1}\rightarrow (0,1]$, 
	there are $\mu=\mu_{\ref{RAL(k)}}(k,o,s,\eta,\nu,\epsilon)>0$ 
	and $t=t_{\ref{RAL(k)}}(k,o,s,\eta,\nu,\epsilon)\in \N$ and $n_0=n_{\ref{RAL(k)}}(k,o,s,\eta,\nu,\epsilon)\in\N$ such that the following hold. 
	Suppose
	\begin{itemize}
		\item[{\rm (O1)$_{\ref{RAL(k)}}$}] $V$ is a set and $|V|=n\geq n_0$,
		\item[{\rm (O2)$_{\ref{RAL(k)}}$}] $\sQ=\sQ(k,\ba^{\sQ})$ is a $(1/a_1^{\sQ},\mu,\ba^{\sQ})$-equitable $o$-bounded family of partitions on $V$,
		\item[{\rm (O3)$_{\ref{RAL(k)}}$}] $\sH^{(k)}=\{H^{(k)}_1,\dots,H^{(k)}_{s}\}$ is a partition of $\binom{V}{k}$ so that $\sH^{(k)} \prec \sQ^{(k)}$.
	\end{itemize}
	Then there exist a family of partitions $\sP = \sP(k-1,\ba^{\sP})$ and a partition $\sG^{(k)}= \{ G^{(k)}_1,\dots, G^{(k)}_s\}$ of $\binom{V}{k}$ satisfying the following for every $i\in [s]$ and $j\in [k-1]$.
	\begin{itemize}
		\item[{\rm (P1)$_{\ref{RAL(k)}}$}] $\sP$ is a $t$-bounded $(\eta,\epsilon(\ba^{\sP}),\ba^{\sP})$-equitable family of partitions, and $a^{\sQ}_j$ divides $a^{\sP}_j$, 
		\item[{\rm (P2)$_{\ref{RAL(k)}}$}] $\sP \prec \{\sQ^{(j)}\}_{j=1}^{k-1}$,
		\item[{\rm (P3)$_{\ref{RAL(k)}}$}] $G^{(k)}_i$ is perfectly $\epsilon(\ba^{\sP})$-regular with respect to $\sP$,
		\item[{\rm (P4)$_{\ref{RAL(k)}}$}] $\sum_{i=1}^{s} |G^{(k)}_i\triangle H^{(k)}_i| \leq \nu \binom{n}{k}$, and
		\item[{\rm (P5)$_{\ref{RAL(k)}}$}] $\sG^{(k)} \prec \sQ^{(k)}$ and if $H^{(k)}_i \subseteq \cK_k(\sQ^{(1)})$, then $G_i^{(k)} \sub \cK_k(\sQ^{(1)})$.
	\end{itemize}
\end{lemma}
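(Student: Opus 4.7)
The plan is to derive Lemma~\ref{RAL(k)} from Theorem~\ref{thm: RAL} by iteratively handling one part $H_i^{(k)}$ of the prescribed partition $\sH^{(k)}$ at a time, accumulating successive refinements of the family of partitions starting from the given $\sQ$.

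The main tool I would first isolate is a ``refinement'' variant of Theorem~\ref{thm: RAL}: given an equitable family $\sP_0$ and a $k$-graph $H$, there exist $G$ close to $H$ and an equitable family $\sP\prec\sP_0$ with $a_j^{\sP_0}\mid a_j^{\sP}$ such that $G$ is perfectly $\epsilon(\ba^\sP)$-regular with respect to $\sP$. This follows by running the proof of Theorem~\ref{thm: RAL} while forcing the vertex partition at level $1$ to refine $\sP_0^{(1)}$, and correspondingly subdividing each cell of $\sP_0^{(j)}$ for $j\ge 2$ into equally many regular subclasses. Hypothesis (O2)$_{\ref{RAL(k)}}$, with $\mu$ small enough relative to later choices, is exactly what makes $\sQ$ admissible as input for this variant.

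Set $\sP_0:=\sQ$, pick $\nu_1,\dots,\nu_s$ summing to at most $\nu$ and a rapidly decreasing cascade $\epsilon_1\gg\dots\gg\epsilon_s$, and for $i=1,\dots,s$ apply the refinement variant to $H_i^{(k)}$ with starting family $\sP_{i-1}$ and parameters $\nu_i,\epsilon_i$, obtaining $\sP_i\prec\sP_{i-1}$ and $G_i^{(k)}$ perfectly $\epsilon_i(\ba^{\sP_i})$-regular with respect to $\sP_i$ and $\nu_i\binom{n}{k}$-close to $H_i^{(k)}$. The key technical point is then to verify that every earlier $G_i^{(k)}$ (for $i<s$) is still perfectly $\epsilon(\ba^{\sP_s})$-regular with respect to $\sP:=\sP_s$. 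For this I would use a hereditary property of $(\epsilon,d)$-regularity under polyad refinement: if a refined $(k-1)$-polyad $\hat R$ lies inside a $(k-1)$-polyad $\hat P$ with relative density $\alpha$, then $(\epsilon,d)$-regularity with respect to $\hat P$ transfers to $(\epsilon/\alpha,d)$-regularity with respect to $\hat R$ provided $\alpha$ is not too small; since $\sP_s$ is $t$-bounded, $\alpha$ is bounded from below, and the cascade $\epsilon_i\gg\epsilon_{i+1}$ absorbs the loss.

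Conclusions (P1) and (P2) are immediate from the iterative construction together with the divisibility built into the refinement variant; (P3) is precisely the hereditary step just described; (P4) follows from $\sum\nu_i\le\nu$; and (P5) holds because the refinement variant never moves an edge out of its enclosing $\sP_0^{(k-1)}$-cell, hence not out of its $\sQ^{(k-1)}$-cell either, so the assumption $\sH^{(k)}\prec\sQ^{(k)}$ is preserved. The main obstacle I expect is bookkeeping rather than a single delicate inequality: independent applications of the refinement variant do not automatically preserve that $\bigcup_i G_i^{(k)}=\binom{V}{k}$ with the $G_i^{(k)}$ pairwise disjoint, so the edits at step $i$ must be performed polyad-by-polyad in such a way that every $k$-set displaced from $G_i^{(k)}$ is reassigned to some $G_j^{(k)}$ ($j\ne i$) inside the same $\sP_{i-1}^{(k-1)}$-cell. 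This keeps the total modification within $\nu\binom{n}{k}$ and leaves (P5) intact, at the cost of an Ackermann-style cascade of the $\epsilon_i$'s and $t_i$'s that is routine but must be tracked carefully.
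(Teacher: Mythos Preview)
The paper does not prove Lemma~\ref{RAL(k)}; it is quoted verbatim from R\"odl and Schacht~\cite{RS07} (Lemma~25 there), where it is obtained by running the energy-increment argument simultaneously for all parts $H_i^{(k)}$ of $\sH^{(k)}$ while maintaining the refinement of $\sQ$ throughout. So there is no proof in the present paper to compare your proposal against; the question is whether your reduction to Theorem~\ref{thm: RAL} stands on its own.

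Your iterative scheme has a genuine orientation error in the cascade. You write $\epsilon_1\gg\epsilon_2\gg\dots\gg\epsilon_s$, but it is $G_1^{(k)}$ that must survive the \emph{most} refinements: its $\epsilon_1(\ba^{\sP_1})$-regularity with respect to $\sP_1$ becomes, via Lemma~\ref{lem: simple facts 1}(ii), only $\epsilon_1(\ba^{\sP_1})/\alpha$-regularity with respect to $\sP_s$, where $\alpha$ is bounded below by a function of $\|\ba^{\sP_s}\|$. For this to be at most $\epsilon(\ba^{\sP_s})$ you need $\epsilon_1$ \emph{smaller} than $\epsilon_s$, not larger; the inequality you need is $\epsilon_1(\ba^{\sP_1})\cdot\|\ba^{\sP_s}\|^{2^k}\le\epsilon(\ba^{\sP_s})$, and ``$\epsilon_i\gg\epsilon_{i+1}$'' points the wrong way. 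The same ordering is forced by the input-regularity requirement: $\sP_{j-1}$ must be $\mu_j$-equitable to serve as the input $\sQ$ at step~$j$, and $\mu_j$ depends on $\epsilon_j$ and on the bound $t_{j-1}$. Even with the cascade reversed you face a circularity you do not address: $\mu_j$ depends on $t_{j-1}$, which depends on $\epsilon_{j-1}$, which you are trying to choose small relative to $\mu_j$. This can be untangled by defining the $\epsilon_j$ backwards as functions that anticipate the worst-case tower $T_j(\|\ba\|)$ bounding $\|\ba^{\sP_s}\|$ given $\|\ba^{\sP_j}\|$, but your proposal does not set this up.

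Finally, the ``refinement variant'' you isolate as a first step is already the $s=1$ case of Lemma~\ref{RAL(k)}, and your justification for it (``running the proof of Theorem~\ref{thm: RAL} while forcing the vertex partition to refine $\sP_0^{(1)}$'') is not a black-box deduction from Theorem~\ref{thm: RAL}: it means re-entering the R\"odl--Schacht energy argument. Once you are inside that argument, incorporating the full partition $\sH^{(k)}$ from the outset (as~\cite{RS07} does) is both cleaner and avoids the parameter bookkeeping above entirely.
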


In Lemma~\ref{RAL(k)} we may assume without loss of generality that $1/\mu,t,n_0$ are non-decreasing in $k,o,s$ and non-increasing in $\eta, \nu$.

\subsection{The proof of Proposition~\ref{prop: hat relation}}

\begin{proof}[Proof of Proposition~\ref{prop: hat relation}]
	Observe that (i) and (ii) hold by definition.
	Note that $\hat{\bx} \in \hat{A}(j+1,j,\ba)_{\neq\emptyset}$ if and only if $\hat{\bx} = \hat{\bx}(J)$ for some $J \in \cK_{j+1}(\sP^{(1)})$ if and only if there exists a set $J\in \cK_{j+1}(\hat{P}^{(j)}(\hat{\bx}))$.%
	\COMMENT{
		Claim: if $\hat{\bx}\in \hat{A}(j+1,j,\ba)$ and $J\in \cK_{j+1}(\hat{P}^{(j)}(\hat{\bx}))$,
		then $\hat{\bx}=\hat{\bx}(J)$ and $\hat{\bx}\in \hat{A}(j+1,j,\ba)_{\neq \es}$.\newline
		Proof by induction on $j$.\newline
		$j=1$ is clear. \newline
		Suppose now that $j>1$. 
		Suppose also that $\hat{\bx}\in \hat{A}(j+1,j,\ba)_{\neq \es}$.
		Thus \eqref{eq: hat P def} implies that $\hat{P}^{(j)}= \hat{P}^{(j)}(\hat{\bx})\in \hat{\sP}^{(j)}$ and $\hat{\bx} = \hat{\bx}(\hat{P}^{(j)})$. 
		Thus \eqref{eq: hat bx maps} implies that 
		$\hat{\bx} =  \hat{\bx}(\hat{P}^{(j)}) = \hat{\bx}(J)$. Thus $\hat{\bx} = \hat{\bx}(J)$ for some $J \in \cK_{j+1}(\sP^{(1)})$.
		So suppose form now on that $\hat{\bx}\in \hat{A}(j+1,j,\ba)_{=\es}$.
		Then by \eqref{eq: hat P P relations emptyset} for each $\hat{\by}\leq_{j,j-1}\hat{\bx}$
		there exists $I\in \binom{J}{j}$ with $I\in P^{(j)}(\hat{\by},\hat{\bx}^{(j)}_{\by_*^{(1)}})$.
		Thus $\hat{\by}\in \hat{A}(j+1,j,\ba)_{\neq \es}$ by \eqref{eq: Pj operator define empty}.
		But also $I\in P^{(j)}(\hat{\by},\hat{\bx}^{(j)}_{\by_*^{(1)}})\subseteq \cK_j(\hat{P}^{(j-1)}(\hat{\by}))$.
		So by induction $\hat{\by}=\hat{\by}(I)$.
		In particular, $\by_*^{(1)}=\cl(I)$.
		Let $P^{(j)}\in \sP^{(j)}$ be such that $I\in P^{(j)}$.
		Then $P^{(j)}=P^{(j)}(\hat{\by},\hat{\bx}^{(j)}_{\by_*^{(1)}})$
		and so $\phi(P^{(j)})=\hat{\bx}^{(j)}_{\by_*^{(1)}}=\hat{\bx}^{(j)}_{\cl(I)}$.
		Thus $\hat{\bx}=\hat{\bx}(J)$,
		a contradiction.
	}
	Thus (iii) holds.
	To show (iv), by \eqref{eq: hat P P relations emptyset}, we may assume $\hat{\bx}\in \hat{A}(j+1,j,\ba)_{\neq \emptyset}$.
	Thus we know that $\cK_{j+1}(\hat{P}^{(j)}(\hat{\bx}))$ contains at least one $(j+1)$-set $J$ and $\hat{\bx}=\hat{\bx}(J)$.
	By \eqref{def:polyad}, we have 
	$$\hat{P}^{(j)}(\hat{\bx}) = \hat{P}^{(j)}(J)= \bigcup_{I\in \binom{J}{j}} P^{(j)}(I).$$
	By Proposition~\ref{eq: I subset J then leq}(ii), we know that
	$\hat{\by} \leq_{j,j-1} \hat{\bx}$ if and only if $\hat{\by}= \hat{\bx}(I)$ for some $I\in \binom{J}{j}$.
	Consider any $j$-set $I\subseteq J$. Recall that $\hat{P}^{(j-1)}(\hat{\bx}(I)) = \hat{P}^{(j-1)}(I)$, and thus $I\in \cK_j(\hat{P}^{(j-1)}(\hat{\bx}(I)))$. 
	Together with \eqref{eq: Pj operator define} this implies that $P^{(j)}(I) = P^{(j)}( \hat{\bx}(I), \phi^{(j)}(I))$, where $P^{(j)}(I)$ is the unique part of $\sP^{(j)}$ that contains $I$. Since $\phi^{(j)}(I) = \phi^{(j)}(P^{(j)}(I)) = \bx^{(j)}(J)_{I}$ holds by \eqref{eq: bx J def}, we have
	$$ 
	\hat{P}^{(j)}(\hat{\bx}) = \bigcup_{I\in \binom{J}{j}} P^{(j)}(I)
	=  \bigcup_{I\in \binom{J}{j}} P^{(j)}(\hat{\bx}(I) , \bx^{(j)}(J)_{I})
	= \bigcup_{\hat{\by}\leq_{j,j-1} \hat{\bx}} P^{(j)}(\hat{\by},\bx^{(j)}_{\by^{(1)}_* }).$$
	This shows that (iv) holds.
	It is easy to see that (i), (ii), (iii) and Definition~\ref{def: family of partitions}(ii) together imply (v), (vi) and (vii).
	If $\sP(k-1,\ba)$ is $T$-bounded, (i) implies that
	\begin{align*}
		|\hat{\sP}^{(j)}| \leq |\hat{A}(j+1,j,\ba)| \leq \prod_{i=1}^{j} a_i^{\binom{j+1}{i}} \leq \prod_{i=1}^{j} T^{\binom{j+1}{i} }\leq T^{2^{j+1}-1}.
	\end{align*}
	Thus for $j\in [k-1]\setminus \{1\}$, we have $|\sP^{(j)}| \leq a_{j} |\hat{\sP}^{(j-1)}| \leq T^{2^{j}}$.
	Also $|\sP^{(1)}| = a_1 \leq T$, thus we have (viii). 
	Statement (ix) follows from (i), (ii) and (iii).
	Property (x) is trivial from the definitions.
	
	Finally we show (xi). Suppose $J \in \cK_{j+1}( \hat{P}^{(j)}(\hat{\bx})) \cap \cK_{j+1}(\hat{Q}^{(j)}(\hat{\by}))$ for some $\hat{\bx}\in \hat{A}(j+1,j,\ba)$ and $\hat{\by}\in \hat{A}(j+1,j,\ba^{\sQ})$. 
	Then (iii) implies that $\hat{P}^{(j)}(\hat{\bx}) = \hat{P}^{(j)}(J)$ and $\hat{Q}^{(j)}(\hat{\by})= \hat{Q}^{(j)}(J)$. Since $\sP\prec \sQ$, we have $P^{(j)}(I)\subseteq Q^{(j)}(I)$. Thus 
	$$\hat{P}^{(j)}(\hat{\bx})  \stackrel{\eqref{def:polyad}}{=} \bigcup_{I\in \binom{J}{j}} P^{(j)}(I) \subseteq  \bigcup_{I\in \binom{J}{j}} Q^{(j)}(I) \stackrel{\eqref{def:polyad}}{=}  \hat{Q}^{(j)}(\hat{\by}).$$
	Thus we have $\cK_{j+1}(\hat{P}^{(j)}) \subseteq \cK_{j+1}(\hat{Q}^{(j)}(J))$. This implies (xi).
\end{proof}

\subsection{Constructing families of partitions using the address space}
On several occasions we will construct $P^{(j)}(\hat{\bx},b)$ and $\hat{P}^{(j)}(\hat{\bx})$ first and 
then  show that they actually give rise to a family of partitions
for which we can use the properties listed in Proposition~\ref{prop: hat relation}.
The following lemma,
which can easily be proved by induction, 
provides a criterion to show that this is indeed the case.

\begin{lemma}\label{lem: family of partitions construction}
	Suppose $k\in \N\sm\{1\}$ and $\ba\in \N^{k-1}$. Suppose $\sP^{(1)}= \{V_1,\dots, V_{a_1}\}$ is a partition of a vertex set $V$.
	Suppose that for each $j\in [k-1]\setminus\{1\}$ and each $(\hat{\bx},b)\in \hat{A}(j,j-1,\ba) \times [a_j]$, 
	we are given a $j$-graph $P'^{(j)}(\hat{\bx},b)$, and
	for each $j\in [k]\setminus \{1\}$ and $\hat{\bx}\in \hat{A}(j,j-1,\ba)$,
	we are given a $(j-1)$-graph $\hat{P'}^{(j-1)}(\hat{\bx})$. 
	Let 
	\begin{align*} P'^{(1)}(b,b)&:=V_b \text{ for all } b\in [a_1], \text{ and }\\ 
		\sP^{(j)}&:= \{P'^{(j)}(\hat{\bx},b) :(\hat{\bx},b)\in \hat{A}(j,j-1,\ba) \times [a_j]\} \text{ for all } j \in [k-1]\setminus\{1\}.
	\end{align*}
	Suppose the following conditions hold: 
	\begin{enumerate}[label=\rm (FP\arabic*)]
		\item\label{item:FP1} $P'^{(1)}(b,b) \neq \emptyset$ for each $b\in [a_1]$; moreover for each $j\in [k-1]\setminus \{1\}$ and each $(\hat{\bx},b)\in \hat{A}(j,j-1,\ba) \times [a_j]$, we have $P'^{(j)}(\hat{\bx},b)\neq \emptyset$.
		
		\item\label{item:FP2} For each $j\in [k-1]\setminus\{1\}$ and $\hat{\bx}\in \hat{A}(j,j-1,\ba)$,
		the set $\{P'^{(j)}(\hat{\bx},b):b\in [a_j]\}$ has size $a_j$\COMMENT{
			Need this to get that the $P'^{(j)}(\hat{\bx},b)$ are distinct.} and forms a partition of $\cK_j(\hat{P'}^{(j-1)}(\hat{\bx}))$.
		\item\label{item:FP3} For each $j\in [k-1]$ and $\hat{\bx}\in \hat{A}(j+1,j,\ba)$, we have 
		$$\hat{P'}^{(j)}(\hat{\bx})= 
		\bigcup_{\hat{\by}\leq_{j,j-1} \hat{\bx}} P'^{(j)}(\hat{\by},\bx^{(j)}_{\by^{(1)}_* }).$$
	\end{enumerate}
	Then the $\ba$-labelling $\mathbf{\Phi} = \{\phi^{(i)}\}_{i=2}^{k-1}$ given by $\phi^{(i)}(P'^{(i)}(\hat{\bx},b))=b$ for each $(\hat{\bx},b)\in \hat{A}(i,i-1,\ba)\times[a_i]$ is well-defined and satisfies the following:
	\begin{enumerate}[label={\rm (FQ\arabic*)}]
		\item\label{item:FQ1} $\sP = \{\sP^{(i)}\}_{i=1}^{k-1}$ is a family of partitions on $V$.
		\item\label{item:FQ2} The maps $P^{(j)}(\cdot,\cdot)$ and $\hat{P}^{(j)}(\cdot)$ defined in \eqref{eq: hat P def}--\eqref{eq: hat P P relations emptyset} for $\sP$, $\mathbf{\Phi}$ satisfy that for each $j\in [k-1]\setminus\{1\}$ and $(\hat{\bx},b)\in \hat{A}(j,j-1,\ba) \times [a_j]$, we have 
		$$P^{(j)}(\hat{\bx},b) = P'^{(j)}(\hat{\bx},b),$$ 
		and for each $j \in [k-1]$ and $\hat{\bx} \in \hat{A}(j+1,j,\ba)$ we have
		$$\hat{P}^{(j)}(\hat{\bx}) = \hat{P'}^{(j)}(\hat{\bx}).$$
	\end{enumerate}
\end{lemma}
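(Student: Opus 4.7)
The plan is to prove both \ref{item:FQ1} and \ref{item:FQ2} simultaneously by induction on $j \in [k-1]$, where at each level $j$ the induction hypothesis asserts that $\sP^{(i)}$ for $i\le j$ is a valid level of a family of partitions in the sense of Definition~\ref{def: family of partitions}, that the labellings $\phi^{(i)}$ for $i\le j$ are well-defined, and that the canonical operators $P^{(i)}(\cdot,\cdot)$ and $\hat{P}^{(i-1)}(\cdot)$ (defined via \eqref{eq: Pj operator define}--\eqref{eq: hat P P relations emptyset} from the tentative family $\sP$ and labelling $\mathbf{\Phi}$) coincide with the given $P'^{(i)}$ and $\hat{P'}^{(i-1)}$.

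The base case $j=1$ is immediate: $\sP^{(1)}=\{V_1,\dots,V_{a_1}\}$ is a partition by assumption (with $a_1\ge k$ encoded in the domain of addresses), no labelling is needed, and $P^{(1)}(b,b)=V_b=P'^{(1)}(b,b)$. For the inductive step, assume everything holds at levels $\le j-1$. A first observation, used throughout, is that by \ref{item:FP1} and \ref{item:FP2} the set $\cK_j(\hat{P'}^{(j-1)}(\hat{\bx}))$ is nonempty for every $\hat{\bx}\in\hat{A}(j,j-1,\ba)$, so in the terminology of Proposition~\ref{prop: hat relation}(iii) there are no ``empty'' addresses at the levels we consider; together with the inductive identification $\hat{P}^{(j-1)}(\hat{\bx})=\hat{P'}^{(j-1)}(\hat{\bx})$ and Proposition~\ref{prop: hat relation}(vi) applied at level $j-1$, we conclude that $\{\cK_j(\hat{P'}^{(j-1)}(\hat{\bx}))\}_{\hat{\bx}}$ partitions $\cK_j(\sP^{(1)})$. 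Hence gluing the $\cK_j(\hat{P'}^{(j-1)}(\hat{\bx}))$-partitions furnished by \ref{item:FP2} gives a partition $\sP^{(j)}$ of $\cK_j(\sP^{(1)})$ into nonempty $j$-graphs, with exactly $a_j$ parts inside each $\cK_j(\hat{P'}^{(j-1)}(\hat{\bx}))$; this is exactly Definition~\ref{def: family of partitions}(ii).

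Next, well-definedness of $\phi^{(j)}$. Since distinct $\hat{\bx}$'s yield disjoint $\cK_j(\hat{P'}^{(j-1)}(\hat{\bx}))$, any equality $P'^{(j)}(\hat{\bx},b)=P'^{(j)}(\hat{\bx}',b')$ together with \ref{item:FP1} forces $\hat{\bx}=\hat{\bx}'$; then the size-$a_j$ clause of \ref{item:FP2} forces $b=b'$. So $\phi^{(j)}(P'^{(j)}(\hat{\bx},b)):=b$ is a legitimate $\ba$-labelling whose restriction to the parts inside any given polyad is injective. Now for the identification at level $j$: by \eqref{eq: Pj operator define} the canonical operator $P^{(j)}(\hat{\bx},b)$ is the unique element of $\sP^{(j)}$ with label $b$ contained in $\cK_j(\hat{P}^{(j-1)}(\hat{\bx}))$; the inductive hypothesis gives $\hat{P}^{(j-1)}(\hat{\bx})=\hat{P'}^{(j-1)}(\hat{\bx})$, and by construction $P'^{(j)}(\hat{\bx},b)$ has label $b$ and lies inside $\cK_j(\hat{P'}^{(j-1)}(\hat{\bx}))$, so $P^{(j)}(\hat{\bx},b)=P'^{(j)}(\hat{\bx},b)$.

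Finally, the identification $\hat{P}^{(j)}(\hat{\bx})=\hat{P'}^{(j)}(\hat{\bx})$ for $\hat{\bx}\in\hat{A}(j+1,j,\ba)$ follows by combining \eqref{eq:hatPconsistsofP(x,b)} (which is the defining recursion for $\hat{P}^{(j)}$ on the $\sP$-side) with the level-$j$ identification just proved and then invoking \ref{item:FP3}; the three formulas line up term by term:
\[
\hat{P}^{(j)}(\hat{\bx})=\bigcup_{\hat{\by}\leq_{j,j-1}\hat{\bx}}P^{(j)}(\hat{\by},\bx^{(j)}_{\by^{(1)}_*})=\bigcup_{\hat{\by}\leq_{j,j-1}\hat{\bx}}P'^{(j)}(\hat{\by},\bx^{(j)}_{\by^{(1)}_*})=\hat{P'}^{(j)}(\hat{\bx}).
\]
This completes the inductive step and thereby establishes \ref{item:FQ1} and \ref{item:FQ2}. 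The routine bookkeeping obstacle is ensuring consistency at $j=2$, where \eqref{eq: hat P P relations emptyset} must be checked against the bootstrap conditions; but since all polyads under consideration are nonempty by \ref{item:FP1}--\ref{item:FP2}, this case reduces directly to the same identities as above. No quantitative estimates are required, so nothing beyond careful unwinding of Definitions~\ref{def: family of partitions} and \eqref{eq: Pj operator define}--\eqref{eq:hatPconsistsofP(x,b)} is needed.
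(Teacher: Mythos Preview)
Your proof is correct and follows essentially the same approach as the paper's own argument: an induction (yours on the level $j$, the paper's on $k$, which amounts to the same thing) that first uses Proposition~\ref{prop: hat relation}(vi) together with the inductive identification $\hat{P}^{(j-1)}=\hat{P'}^{(j-1)}$ and \ref{item:FP1}--\ref{item:FP2} to verify Definition~\ref{def: family of partitions}(ii) and well-definedness of $\phi^{(j)}$, then matches $P^{(j)}=P'^{(j)}$ via \eqref{eq: Pj operator define}, and finally obtains $\hat{P}^{(j)}=\hat{P'}^{(j)}$ by combining \eqref{eq:hatPconsistsofP(x,b)} with \ref{item:FP3}. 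The only cosmetic difference is that the paper phrases the induction on $k$ rather than on $j$, and is slightly more explicit about why $\hat{A}(j,j-1,\ba)_{\emptyset}=\emptyset$ (needed so that \eqref{eq: Pj operator define} rather than \eqref{eq: Pj operator define empty} applies); your observation that all polyads are nonempty by \ref{item:FP1}--\ref{item:FP2} covers this.
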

\COMMENT{
	We proceed by induction on $k$. If $k=2$, then clearly \ref{item:FQ1} holds. Let $P^{(1)}(\cdot,\cdot)$ and $\hat{P}^{(1)}(\cdot)$ be the maps defined in \eqref{eq: hat P def}--\eqref{eq: hat P P relations emptyset} for $\sP=\{\sP^{(1)}\}$. Then for each $\hat{\bx}= (a,b) \in \hat{A}(2,1,\ba)$, we have
	$$\hat{P'}^{(1)}(\hat{\bx}) = \bigcup_{\hat{\by} \leq_{1,0} \hat{\bx} } P'^{(1)}( \hat{\by}, \bx^{(1)}_{\by^{(1)}_*}) = P'^{(1)}(a,a) \cup P'^{(1)}(b,b) = P^{(1)}(a,a) \cup P^{(1)}(b,b) = \hat{P}^{(1)}(\hat{\bx}).$$
	Note that we obtain the final equality since $\hat{P}^{(1)}(\cdot)$ satisfies \eqref{eq:hatPconsistsofP(x,b)}. Thus \ref{item:FQ2} holds.\newline
	Now we assume that $k\geq 3$ and the lemma holds for $k-1$. 
	By the induction hypothesis, for each $j\in [k-2]\setminus\{1\}$, 
	the map $\phi^{(j)}$ given by $\phi^{(j)}(P'^{(j)}(\hat{\bx},b))=b$ for  $(\hat{\bx},b)\in \hat{A}(j,j-1,\ba) \times [a_j]$ is well-defined.
	Furthermore, $\{\sP^{(i)} \}_{i=1}^{k-2}$ forms a family of partitions, and this together with $\{\phi^{(i)}\}_{i=2}^{k-2}$ defines maps $P^{(j)}(\cdot,\cdot)$ and $\hat{P}^{(j)}(\cdot)$ for all $j\in [k-2]$.
	Moreover, for each $j\in [k-2]\setminus \{1\}$ and $(\hat{\bx},b)\in \hat{A}(j,j-1,\ba) \times [a_j]$, we have 
	\begin{align}\label{eq: fpc induction hypothesis 1}
		P^{(j)}(\hat{\bx},b) = P'^{(j)}(\hat{\bx},b)
	\end{align}
	and for each $j \in [k-2]$ and $\hat{\bx} \in \hat{A}(j+1,j,\ba)$, we have
	\begin{align}\label{eq: fpc induction hypothesis 2}
		\hat{P}^{(j)}(\hat{\bx}) = \hat{P'}^{(j)}(\hat{\bx}).
	\end{align}
	Since $\{\sP^{(i)} \}_{i=1}^{k-2}$ is a family of partitions, Proposition~\ref{prop: hat relation}(vi) implies that  $\{\cK_{k-1}(\hat{P}^{(k-2)}(\hat{\bx})) : \hat{\bx} \in \hat{A}(k-1,k-2,\ba) \}$ is a partition of $\cK_{k-1}(\sP^{(1)})$. 
	Condition \ref{item:FP2} with \eqref{eq: fpc induction hypothesis 2} implies that for each $\hat{\bx}\in \hat{A}(k-1,k-2,\ba)$, 
	\begin{equation}\label{eq: size ak-1 and forms a partition}
		\begin{minipage}[c]{0.8\textwidth}\em
			the set $\{P'^{(k-1)}(\hat{\bx},b): b\in [a_{k-1}] \}$ has size $a_{k-1}$ and forms a partition of $\cK_{k-1}(\hat{P'}^{(k-2)}(\hat{\bx})) = \cK_{k-1}(\hat{P}^{(k-2)}(\hat{\bx}))$. 
		\end{minipage}
	\end{equation}
	In particular, together with \ref{item:FP1} this shows that $\cK_{k-1}(\hat{P}^{(k-2)}(\hat{\bx}))\neq \emptyset$ for each $\hat{\bx} \in \hat{A}(k-1,k-2,\ba)$, and thus $\hat{A}(k-1,k-2,\ba)_{\neq\emptyset} = \hat{A}(k-1,k-2,\ba)$ by Proposition~\ref{prop: hat relation}(iii).
	(Here and below the subscript $\neq \emptyset$ is interpreted with respect to $\{\sP^{(i)} \}_{i=1}^{k-2}$.)
	Let $\hat{\sP}^{(k-2)}$ be as defined in \eqref{eq:sP} for the family of partitions $\{\sP^{(i)}\}_{i=1}^{k-2}$. Then
	\begin{align*}
		\hat{\sP}^{(k-2)} = \{\hat{P}^{(k-2)}(\hat{\bx})  : \hat{\bx} \in \hat{A}(k-1,k-2,\ba)_{\neq\emptyset}  \} \stackrel{(\ref{eq: fpc induction hypothesis 2})}{=} \{ \hat{P'}^{(k-2)}(\hat{\bx}) : \hat{\bx} \in \hat{A}(k-1,k-2,\ba) \} .\end{align*}
	Thus by \eqref{eq: size ak-1 and forms a partition}, we conclude that $\sP^{(k-1)}$ is a partition of  $\cK_{k-1}(\sP^{(1)})$, and
	$$\sP^{(k-1)} = \{P'^{(k-1)}(\hat{\bx},b): (\hat{\bx},b)\in \hat{A}(k-1,k-2,\ba)\times [a_{k-1}]\} \prec \{ \cK_{k-1}(\hat{P}^{(k-2)}): \hat{P}^{(k-2)} \in \hat{\sP}^{(k-2)}\}.$$
	Thus Definition~\ref{def: family of partitions}(ii) holds for $j=k-1$. 
	This with the induction hypothesis implies that $\sP$ is a family of partitions, so \ref{item:FQ1} holds.
	On the other hand, together with \ref{item:FP1} this implies that $P'^{(k-1)}(\hat{\bx},b)$ are pairwise disjoint nonempty sets for different $(\hat{\bx},b)\in \hat{A}(k-1,k-2,\ba)\times [a_{k-1}]$. 
	Thus $\phi^{(k-1)}$ is well-defined and $\{\phi^{(i)}\}_{i=2}^{k-1}$ is trivially\COMMENT{Injectivity when restricted is trivial from the definition.} an $\ba$-labelling.  Thus the definitions given in \eqref{eq: hat P def}--\eqref{eq: hat P P relations emptyset} applied to 
	$\{\sP^{(i)}\}_{i=1}^{k-1}$ and $\{\phi^{(i)}\}_{i=2}^{k-1}$ yield $P^{(k-1)}(\hat{\bx},b)$  for each $(\hat{\bx},b)\in \hat{A}(k-1,k-2,\ba) \times [a_{k-1}]$ and $\hat{P}^{(k-1)}(\hat{\bx})$ for each $\hat{\bx} \in \hat{A}(k,k-1,\ba)$.\newline
	Also, for each $(\hat{\bx},b)\in \hat{A}(k-1,k-2,\ba) \times [a_{k-1}]$, 
	the $(k-1)$-graph $P^{(k-1)}(\hat{\bx},b)$ is, by definition, the $(k-1)$-graph lying in $\cK_{k-1}(\hat{P}^{(k-2)}(\hat{\bx})) \stackrel{\eqref{eq: fpc induction hypothesis 2}}{=} \cK_{k-1}(\hat{P'}^{(k-2)}(\hat{\bx}))$ which satisfies $\phi^{(k-1)}(P^{(k-1)}(\hat{\bx},b) ) = b$.
	Thus the definition of $\phi^{(k-1)}$ together with \ref{item:FP2} implies that 
	\begin{align}\label{eq: first one obtained}
		P^{(k-1)}(\hat{\bx},b) = P'^{(k-1)}(\hat{\bx},b).
	\end{align}
	Since $\{\sP^{(i)} \}_{i=1}^{k-1}$ is a family of partitions and $\{\phi^{(i)}\}_{i=2}^{k-1}$ is an $\ba$-labelling, \eqref{eq:hatPconsistsofP(x,b)} holds. 
	Hence for each $\hat{\bx} \in \hat{A}(k,k-1,\ba)$, we have
	$$\hat{P}^{(k-1)}(\hat{\bx}) \stackrel{\eqref{eq:hatPconsistsofP(x,b)}}{=}
	\bigcup_{\hat{\by}\leq_{k-1,k-2} \hat{\bx}} P^{(k-1)}(\hat{\by},\bx^{(k-1)}_{\by^{(1)}_* })
	\stackrel{\eqref{eq: first one obtained}}{=}\bigcup_{\hat{\by}\leq_{k-1,k-2} \hat{\bx}} P'^{(k-1)}(\hat{\by},\bx^{(k-1)}_{\by^{(1)}_* })\stackrel{{\rm \ref{item:FP3}}}{ =}  \hat{P'}^{(k-1)}(\hat{\bx})  
	.$$
	This with \eqref{eq: first one obtained} implies \ref{item:FQ2} and proves the lemma.
}

\section{Hypergraph regularity: counting lemmas and approximation}\label{sec:counting}

In this section we present several results about hypergraph regularity.
The first few results are simple observations
which follow either from the definition of $\epsilon$-regularity or can be easily proved by standard probabilistic arguments.
We omit the proofs.
In Section~\ref{sec:3.2}
we then derive an induced version of the `counting lemma' that is suitable for our needs (see Lemma~\ref{lem: counting hypergraph}).

In Section~\ref{sec:3.4} we make two simple observations on refinements of partitions and in Section~\ref{sec:3.5} we consider small perturbations of a given family of partitions. 

\subsection{Simple hypergraph regularity results}
We will use the following results which follow easily from the definition of hypergraph regularity (see Section~\ref{sec: 2 hypergraph regularity}).

\begin{lemma}\label{lem: simple facts 1}
Suppose $m \in \mathbb{N}$,  $0< \epsilon \leq \alpha^2 <1$ and $d\in [0,1]$.
Suppose $H^{(k)}$ is an $(m,k,k,1/2)$-graph which is $(\epsilon,d)$-regular  with respect to an $(m,k,k-1,1/2)$-graph $H^{(k-1)}$.
Suppose $Q^{(k-1)} \subseteq H^{(k-1)}$ and $H'^{(k)}\subseteq H^{(k)}$ such that $|\cK_k(Q^{(k-1)})| \geq \alpha |\cK_k(H^{(k-1)})|$ and $H'^{(k)}$ is $(\epsilon,d')$-regular with respect to $H^{(k-1)}$ for some $d'\leq d$.  Then 
\begin{enumerate}[label=\rm(\roman*)]
\item $\cK_k(H^{(k-1)}) \setminus H^{(k)}$ is $(\epsilon,1-d)$-regular with respect to $H^{(k-1)}$, 
\item $H^{(k)}$ is $(\epsilon/\alpha,d)$-regular with respect to $Q^{(k-1)}$, and 
\item $H^{(k)}\setminus H'^{(k)}$ is $(2\epsilon,d-d')$-regular with respect to $H^{(k-1)}$.
\end{enumerate}
\end{lemma}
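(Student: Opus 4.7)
The plan is to unwind the definition of $(\epsilon,d)$-regularity in each of the three parts and verify it directly; these are all simple counting identities about the intersection of $H^{(k)}$ (or a derived hypergraph) with $\cK_k(R^{(k-1)})$ for test subgraphs $R^{(k-1)}$.

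For (i), I would take an arbitrary $R^{(k-1)} \subseteq H^{(k-1)}$ with $|\cK_k(R^{(k-1)})| \geq \epsilon |\cK_k(H^{(k-1)})|$ and write
\[
\bigl|(\cK_k(H^{(k-1)}) \setminus H^{(k)}) \cap \cK_k(R^{(k-1)})\bigr|
= |\cK_k(R^{(k-1)})| - |H^{(k)} \cap \cK_k(R^{(k-1)})|,
\]
using that $\cK_k(R^{(k-1)}) \subseteq \cK_k(H^{(k-1)})$ since $R^{(k-1)} \subseteq H^{(k-1)}$. Applying $(\epsilon,d)$-regularity of $H^{(k)}$ with respect to $H^{(k-1)}$ to the second term immediately gives $(1-d \pm \epsilon)|\cK_k(R^{(k-1)})|$, which is exactly what $(\epsilon,1-d)$-regularity demands.

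For (ii), I would take an arbitrary $R^{(k-1)} \subseteq Q^{(k-1)}$ with $|\cK_k(R^{(k-1)})| \geq (\epsilon/\alpha)|\cK_k(Q^{(k-1)})|$. The key observation is that
\[
|\cK_k(R^{(k-1)})| \geq (\epsilon/\alpha) \cdot \alpha \, |\cK_k(H^{(k-1)})| = \epsilon \, |\cK_k(H^{(k-1)})|,
\]
so $R^{(k-1)}$ is a legitimate test set for the $(\epsilon,d)$-regularity of $H^{(k)}$ with respect to $H^{(k-1)}$, yielding $|H^{(k)} \cap \cK_k(R^{(k-1)})| = (d \pm \epsilon)|\cK_k(R^{(k-1)})|$. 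Since $\epsilon \leq \epsilon/\alpha$ (as $\alpha \leq 1$), this is within the $(\epsilon/\alpha,d)$-regularity tolerance.

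For (iii), I would again fix a test set $R^{(k-1)} \subseteq H^{(k-1)}$, this time with $|\cK_k(R^{(k-1)})| \geq 2\epsilon \, |\cK_k(H^{(k-1)})|$. Since $H'^{(k)} \subseteq H^{(k)}$, I can write
\[
|(H^{(k)}\setminus H'^{(k)}) \cap \cK_k(R^{(k-1)})| = |H^{(k)} \cap \cK_k(R^{(k-1)})| - |H'^{(k)} \cap \cK_k(R^{(k-1)})|,
\]
apply $(\epsilon,d)$-regularity to the first term and $(\epsilon,d')$-regularity to the second (both applicable since $2\epsilon \geq \epsilon$), and sum the two $\pm \epsilon$ errors to get $(d - d' \pm 2\epsilon)|\cK_k(R^{(k-1)})|$.

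There is no real obstacle here; the whole lemma is a direct bookkeeping exercise in the definition. The only minor subtlety is ensuring that in each part the chosen test set meets the density threshold required for invoking $(\epsilon,d)$-regularity of the original pair, which is exactly where the hypothesis $|\cK_k(Q^{(k-1)})| \geq \alpha|\cK_k(H^{(k-1)})|$ is used in (ii) and the factor of $2$ in the parameter is used in (iii). The hypotheses on $m$, $\alpha^2 \geq \epsilon$, and the $\lambda=1/2$ bound on the vertex classes are not actually needed for this statement and will presumably only matter in downstream counting lemmas.
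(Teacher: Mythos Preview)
Your proof is correct and is exactly the direct verification from the definition of $(\epsilon,d)$-regularity that the paper has in mind; the paper itself omits the proof, stating only that these facts ``follow easily from the definition of hypergraph regularity.'' Your remark that the hypotheses $\epsilon \leq \alpha^2$ and $\lambda = 1/2$ are not actually used here is also accurate.
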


\COMMENT{
When $H^{(k)}$ is $(\epsilon,d)$-regular with respect to $H^{(k-1)}$, making small changes to both $H^{(k)}$ and $H^{(k-1)}$ does not ruin the regularity too much. This is stated in the following lemma. As both $H^{(k-1)}\triangle G^{(k-1)}$ and $H^{(k)}\triangle G^{(k)}$ are sufficiently small, one can directly check the definition of $(\epsilon,d)$-regularity to prove this.

\begin{lemma}[Persistence lemma]\label{lem: simple facts 2}
Suppose $m \in \mathbb{N}$, $0< \epsilon \leq 1/100$, $d\in [0,1]$ and $\nu \leq \epsilon^{10}$.
Suppose $H^{(k)}$ and $G^{(k)}$ are $(m,k,k,1/2)$-graphs on $\{V_1,\dots, V_k\}$, and
$H^{(k-1)}$ and $G^{(k-1)}$ are $(m,k,k-1,1/2)$-graphs on $\{V_1,\dots, V_k\}$.
Suppose $H^{(k)}$ is $(\epsilon,d)$-regular with respect to $H^{(k-1)}$. 
If $|\cK_k(H^{(k-1)})| \geq \nu^{1/2} m^{k}$, $|H^{(k)}\triangle G^{(k)}| \leq \nu m^k$ and $|H^{(k-1)}\triangle G^{(k-1)}| \leq \nu m^{k-1}$, then $G^{(k)}$ is $(\epsilon+ \nu^{1/3}, d)$-regular with respect to $G^{(k-1)}$. 
\end{lemma}

Consider $Q^{(k-1)}\subseteq G^{(k-1)}$ with $|\cK_k(Q^{(k-1)})| \geq (\epsilon+\nu^{1/3})|\cK_{k}(G^{(k-1)})|$.
Then
\begin{align*}
|\cK_k(Q^{(k-1)}) | &= |\cK_k(Q^{(k-1)} \cap H^{(k-1)})| \pm |Q^{(k-1)}\setminus H^{(k-1)}| \cdot (1+1/2)m\nonumber \\
& =  |\cK_k(Q^{(k-1)} \cap H^{(k-1)})|  \pm |G^{(k-1)}\setminus H^{(k-1)}|\cdot (1+1/2)m\nonumber \\
& =  |\cK_k(Q^{(k-1)} \cap H^{(k-1)})|  \pm 2 \nu m^k \nonumber\\
&=  |\cK_k(Q^{(k-1)} \cap H^{(k-1)})|  \pm 2\nu^{1/2} |\cK_k(H^{(k-1)})| 
\end{align*}
Also,
\begin{align*}
|\cK_k(G^{(k-1)}) | &= |\cK_k(H^{(k-1)})| \pm | G^{(k-1)}\triangle H^{(k-1)} | \times (1+1/2)m \nonumber \\
& =  |\cK_k(H^{(k-1)})|  \pm 2 \nu m^k = (1\pm 2\nu^{1/2})|\cK_k(H^{(k-1)})|  
\end{align*}
So, by the above two equalities
\begin{eqnarray*}
 |\cK_k(Q^{(k-1)} \cap H^{(k-1)})| &\geq& |\cK_k(Q^{(k-1)}) | - 2\nu^{1/2} |\cK_k(H^{(k-1)})|
\geq  (\epsilon+\nu^{1/3})|\cK_{k}(G^{(k-1)})| - 2\nu^{1/2} |\cK_k(H^{(k-1)})| \nonumber \\
&\geq &(\epsilon+\nu^{1/3})(1-2\nu^{1/2})|\cK_{k}(H^{(k-1)})| - 2\nu^{1/2} |\cK_k(H^{(k-1)})| \nonumber \\
&\geq & \epsilon |\cK_{k}(H^{(k-1)})| .
\end{eqnarray*}
This with regularity of $H^{(k)}$ implies that 
\begin{eqnarray*}
 |G^{(k)} \cap \cK_k( Q^{(k-1)})| & = &
  |H^{(k)} \cap \cK_k(Q^{(k-1)} \cap H^{(k-1)})| \pm |G^{(k)}\setminus H^{(k)}|  \pm | G^{(k-1)}\setminus H^{(k-1)}| \times (1+1/2) m \\
  &=& (d\pm \epsilon) |\cK_k(Q^{(k-1)} \cap H^{(k-1)})|  \pm \nu m^k \pm 2\nu m^{k} \\
  & = & (d\pm \epsilon) |\cK_k(Q^{(k-1)} \cap H^{(k-1)})|   \pm 3\nu^{1/2}  |\cK_{k}(H^{(k-1)})| \\
  &=& (d\pm \epsilon \pm \nu^{1/3}/2) |\cK_k(Q^{(k-1)} \cap H^{(k-1)})|\\
	&=& (d\pm \epsilon\pm \nu^{1/3})|\cK_{k}(Q^{(k-1)})|.
\end{eqnarray*}
}

\begin{lemma}[Union lemma]\label{lem: union regularity}
Suppose $0<\epsilon \ll 1/k,1/s.$
Suppose that $H^{(k)}_1,\dots, H^{(k)}_s$ are edge-disjoint $(k,k,*)$-graphs such that each $H^{(k)}_i$ is $\epsilon$-regular with respect to a $(k,k-1,*)$-graph $H^{(k-1)}$. 
Then $\bigcup_{i=1}^{s} H^{(k)}_i$ is $s\epsilon$-regular with respect to $H^{(k-1)}$.
\end{lemma}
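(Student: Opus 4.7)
The plan is straightforward: since each $H^{(k)}_i$ is $\epsilon$-regular with respect to $H^{(k-1)}$, there exist densities $d_i \in [0,1]$ such that $H^{(k)}_i$ is $(\epsilon, d_i)$-regular with respect to $H^{(k-1)}$. I will show that $H := \bigcup_{i=1}^{s} H^{(k)}_i$ is $(s\epsilon, d)$-regular with respect to $H^{(k-1)}$, where $d := \sum_{i=1}^{s} d_i$.

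The key observation is that since the $H^{(k)}_i$ are pairwise edge-disjoint, for any $(k,k-1,*)$-graph $Q^{(k-1)} \subseteq H^{(k-1)}$ the intersections $H^{(k)}_i \cap \cK_k(Q^{(k-1)})$ are also pairwise disjoint, so that
\[
|H \cap \cK_k(Q^{(k-1)})| = \sum_{i=1}^{s} |H^{(k)}_i \cap \cK_k(Q^{(k-1)})|.
\]
Now fix any $Q^{(k-1)} \subseteq H^{(k-1)}$ with $|\cK_k(Q^{(k-1)})| \geq s\epsilon\, |\cK_k(H^{(k-1)})|$. Since $s \geq 1$, this inequality in particular yields $|\cK_k(Q^{(k-1)})| \geq \epsilon\, |\cK_k(H^{(k-1)})|$, so the hypothesis of $(\epsilon, d_i)$-regularity applies to each $i \in [s]$ and gives
\[
|H^{(k)}_i \cap \cK_k(Q^{(k-1)})| = (d_i \pm \epsilon)\, |\cK_k(Q^{(k-1)})|.
\]
Summing these $s$ estimates and using the displayed identity above then yields
\[
|H \cap \cK_k(Q^{(k-1)})| = \Bigl(\sum_{i=1}^{s} d_i \;\pm\; s\epsilon\Bigr)\, |\cK_k(Q^{(k-1)})| = (d \pm s\epsilon)\, |\cK_k(Q^{(k-1)})|,
\]
which is exactly the statement of $(s\epsilon, d)$-regularity.

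There is essentially no obstacle here: the result is a direct application of the definition combined with the additivity of intersection counts under edge-disjoint unions, and the hypothesis $\epsilon \ll 1/k, 1/s$ is not even used quantitatively in the argument (it is presumably included only to ensure $s\epsilon \leq 1$ so that the conclusion is nontrivial). The one minor point to note is that the hypothesis `$|\cK_k(Q^{(k-1)})| \geq s\epsilon\, |\cK_k(H^{(k-1)})|$' required by the target regularity is strictly stronger than the hypothesis $|\cK_k(Q^{(k-1)})| \geq \epsilon\, |\cK_k(H^{(k-1)})|$ needed to invoke the given regularity of each $H^{(k)}_i$, so the inheritance goes in the correct direction.
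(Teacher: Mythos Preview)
Your proof is correct and is exactly the direct verification from the definition that one would expect; the paper in fact omits the proof entirely, noting that the lemma is a simple observation following from the definition of $\epsilon$-regularity.
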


We will also use the following observation (see for example \cite{RS07}), which can be easily proved using Chernoff's inequality.

\begin{lemma}[Slicing lemma \cite{RS07}]\label{lem: slicing}
Suppose $0< 1/m \ll d, \epsilon , p_0, 1/s$ and $d\geq 2\epsilon$. Suppose that
\begin{itemize}
\item $H^{(k)}$ is an $(\epsilon,d)$-regular $k$-graph with respect to a $(k-1)$-graph $H^{(k-1)}$,
\item $|\cK_{k}(H^{(k-1)})| \geq  m^{k}/\log m$,
\item $p_1,\dots, p_s \geq p_0$ and $\sum_{i=1}^{s} p_i \leq 1$.
 \end{itemize}
 Then 
there exists a partition $\{H^{(k)}_0,H^{(k)}_1,\dots, H^{(k)}_s\}$ of $H^{(k)}$ such that $H^{(k)}_i$ is $(3\epsilon,p_id)$-regular with respect to $H^{(k-1)}$ for every $i\in [s]$, and $H^{(k)}_0$ is $(3\epsilon,(1-\sum p_i)d)$-regular with respect to $H^{(k-1)}$.
\end{lemma}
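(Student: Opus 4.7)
The plan is to carry out a standard random slicing argument. Independently assign each edge $e \in H^{(k)}$ to a class $i \in \{0,1,\dots,s\}$ with probability $q_i$, where $q_0 := 1 - \sum_{i\in [s]} p_i$ and $q_i := p_i$ for $i \in [s]$. This directly gives a partition $\{H_0^{(k)}, H_1^{(k)}, \dots, H_s^{(k)}\}$ of $H^{(k)}$, and the goal is to show that with positive probability each $H^{(k)}_i$ is $(3\epsilon, q_i d)$-regular with respect to $H^{(k-1)}$. Note that since $d \ge 2\epsilon$ implies $1-\sum p_i \le 1-p_0(s-1)$ and all $q_i \ge \min\{p_0, 1-\sum p_i\}$ can be treated symmetrically; the case $q_0$ only requires the same bound with $q_0 d$ in place of $p_i d$.

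Fix $i \in \{0, 1, \dots, s\}$ and any $Q^{(k-1)} \subseteq H^{(k-1)}$ with $|\cK_k(Q^{(k-1)})| \ge 3\epsilon\, |\cK_k(H^{(k-1)})|$. Writing $N_i(Q) := |H^{(k)}_i \cap \cK_k(Q^{(k-1)})|$, the $(\epsilon, d)$-regularity of $H^{(k)}$ with respect to $H^{(k-1)}$ implies
\[
\mathbb{E}[N_i(Q)] = q_i\, |H^{(k)} \cap \cK_k(Q^{(k-1)})| = (q_i d \pm q_i \epsilon)\,|\cK_k(Q^{(k-1)})|.
\]
Since $N_i(Q)$ is a sum of independent Bernoulli random variables, Chernoff's inequality gives
\[
\Pr\!\bigl[\,|N_i(Q) - \mathbb{E}[N_i(Q)]| \ge \epsilon |\cK_k(Q^{(k-1)})| \,\bigr] \le 2\exp\!\bigl(-c \epsilon^2 |\cK_k(Q^{(k-1)})|\bigr)
\]
for an absolute constant $c > 0$. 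Combining the two displayed bounds yields $N_i(Q) = (q_i d \pm 2\epsilon)\,|\cK_k(Q^{(k-1)})|$ with the stated failure probability; absorbing the $q_i \epsilon$ term in $3\epsilon$ then gives $(3\epsilon, q_i d)$-regularity for this particular $Q^{(k-1)}$.

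The main (and only nontrivial) obstacle is to take a union bound over all relevant $Q^{(k-1)}$ and all $i \in \{0,1,\ldots,s\}$. The number of $Q^{(k-1)} \subseteq H^{(k-1)}$ is at most $2^{|H^{(k-1)}|} \le 2^{k^k m^{k-1}}$, while the crossing count satisfies $|\cK_k(Q^{(k-1)})| \ge 3\epsilon\,|\cK_k(H^{(k-1)})| \ge 3\epsilon\, m^k / \log m$ by hypothesis. Using the hierarchy $1/m \ll \epsilon, 1/s$, we have
\[
(s+1)\cdot 2^{k^k m^{k-1}}\cdot 2\exp\!\bigl(-c \epsilon^2 \cdot 3\epsilon\, m^k/\log m\bigr) \;=\; o(1),
\]
since $m/\log m$ dominates $1/\epsilon^3$ and the polynomial factor in $s$. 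Hence with positive (indeed, high) probability the desired regularity holds simultaneously for every $Q^{(k-1)}$ and every $i$, which produces the required partition and completes the proof. The key technical point is simply that the hypothesis $|\cK_k(H^{(k-1)})| \ge m^k / \log m$ is exactly strong enough to beat the $2^{m^{k-1}}$ union bound.
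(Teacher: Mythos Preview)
Your proof is correct and follows exactly the approach the paper indicates (the paper does not give a proof but states the lemma ``can be easily proved using Chernoff's inequality''): random slicing, Chernoff for each fixed $Q^{(k-1)}$, and a union bound over all $Q^{(k-1)}\subseteq H^{(k-1)}$. Two small remarks: your aside about $d\ge 2\epsilon$ is garbled and unnecessary (your Hoeffding-type bound does not use any lower bound on $q_i$, so $H^{(k)}_0$ needs no special treatment); and your bound $|H^{(k-1)}|\le k^k m^{k-1}$ implicitly assumes $H^{(k-1)}$ lives on $O(m)$ vertices, which is not stated in the lemma but is the intended context throughout the paper (and in \cite{RS07}).
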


\subsection{Counting lemmas}\label{sec:3.2}
Kohayakawa, R\"odl and Skokan proved the following `counting lemma' (Theorem~6.5 in~\cite{KRS02}), which asserts that the number of copies of a given $K^{(k)}_{\ell}$ in an $(\epsilon,\bd)$-regular complex is close to what one could expect in a corresponding random complex. We will deduce several versions of this which suit our needs.

\begin{lemma}[Counting lemma  \cite{KRS02}]\label{lem: counting}
For all $ \gamma, d_0>0$ and  $k, \ell \in \N\sm \{1\}$ with $k\leq \ell$,\COMMENT{We don't use the hierarchy notation here since we refer to $\epsilon_{\ref{lem: counting}}$ in the definition of regularity instance.}
there exist
$\epsilon_0 := \epsilon_{\ref{lem: counting}}(\gamma,d_0,k,\ell)\leq 1$ and $m_0:= n_{\ref{lem: counting}}(\gamma,d_0,k,\ell)$
such that the following holds:
Suppose $0\leq \lambda <1/4$.
Suppose 
$0<\epsilon \leq \epsilon_0$ and $m_0 \leq m$ and
$\bd=(d_2,\dots, d_{k})\in \mathbb{R}^{k-1}$ such that 
$d_j\geq d_0$ for every $j\in [k]\setminus\{1\}$.
Suppose that
$\cH= \{H^{(j)}\}_{j=1}^{k}$ is an $(\epsilon,\bd)$-regular $(m,\ell,k,\lambda)$-complex, 
and $H^{(1)}=\{V_1,\ldots,V_\ell\}$ with $m_i=|V_i|$ for every $i\in [\ell]$.
Then 
$$|\cK_{\ell}(H^{(k)})| = (1\pm \gamma) \prod_{j=2}^{k}d_j^{\binom{\ell}{j}} \cdot \prod_{i=1}^{\ell} m_i.$$
\end{lemma}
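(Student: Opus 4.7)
The plan is to prove Lemma~\ref{lem: counting} by induction on the uniformity $k$, with the partition degree $\ell$ arbitrary. The base case $k=2$ reduces to the classical dense graph counting lemma for $\ell$-partite graphs whose $\binom{\ell}{2}$ bipartite pairs are each $(\epsilon,d_2)$-regular; one proves $|\cK_\ell(H^{(2)})|=(1\pm\gamma)d_2^{\binom{\ell}{2}}\prod_i m_i$ by a secondary induction on $\ell$, peeling off one part at a time: all but an $O(\epsilon)$-fraction of $v\in V_1$ have the property that the neighborhoods $N(v)\cap V_i$ induce pairs that remain regular (with slightly weaker parameters) of the same density $d_2$, so one applies the inductive $(\ell-1)$-count inside $\bigcup_{i\ge 2}(N(v)\cap V_i)$.

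For the inductive step from $k-1$ to $k$, I would first apply the lemma (valid by induction with a smaller error parameter $\gamma'\ll\gamma$) to the $(\epsilon,(d_2,\dots,d_{k-1}))$-regular subcomplex $\{H^{(j)}\}_{j=1}^{k-1}$ to obtain $|\cK_\ell(H^{(k-1)})|=(1\pm\gamma')\prod_{j=2}^{k-1}d_j^{\binom{\ell}{j}}\prod_i m_i$. It then suffices to show that a $(d_k^{\binom{\ell}{k}}\pm\gamma'')$-fraction of these $\ell$-sets $L$ have all of their $\binom{\ell}{k}$ $k$-subsets contained in $H^{(k)}$. I would enumerate $\binom{[\ell]}{k}$ as $\Lambda_1,\dots,\Lambda_N$ with $N=\binom{\ell}{k}$ and, at stage $t$, let $\sA_t\sub\cK_\ell(H^{(k-1)})$ consist of those $L$ with $L[\Lambda_s]\in H^{(k)}$ for every $s\le t$. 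Using $(\epsilon,d_k)$-regularity of $H^{(k)}$ with respect to $H^{(k-1)}[\Lambda_t]$, together with the $(k-1)$-uniform counting hypothesis applied to the subcomplex restricted to the $k$ parts indexed by $\Lambda_t$, one derives $|\sA_t|=(d_k\pm\epsilon')|\sA_{t-1}|$ at every step. Iterating over $t=1,\dots,N$ yields $|\sA_N|=(d_k^N\pm N\epsilon')|\cK_\ell(H^{(k-1)})|$, which combined with the $(k-1)$-count produces the claimed estimate.

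The main obstacle is verifying at each stage $t$ the hypothesis of $\epsilon$-regularity: I must show that the $(k-1)$-graph $Q^{(k-1)}_t\sub H^{(k-1)}[\Lambda_t]$ consisting of those $(k-1)$-subsets that lie inside some $L\in\sA_{t-1}$ satisfies $|\cK_k(Q^{(k-1)}_t)|\ge\epsilon|\cK_k(H^{(k-1)}[\Lambda_t])|$, so that regularity transfers to $Q^{(k-1)}_t$. The required lower bound on $|\cK_k(Q^{(k-1)}_t)|$ is obtained by yet another application of the inductive counting lemma to the appropriate $(k-1)$-subcomplex, whose density at the $k$-level is roughly $d_k^{t-1}\prod_{j=2}^{k-1}d_j^{\binom{k}{j}}$. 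Ensuring this density exceeds $\epsilon$ forces $\epsilon_0$ to be much smaller than $\gamma$ and to decrease rapidly in $1/d_0$, $k$ and $\ell$, which is exactly the parameter dependence encoded in $\epsilon_{\ref{lem: counting}}(\gamma,d_0,k,\ell)$. No new conceptual ingredient is required beyond the iterated use of $\epsilon$-regularity together with the inductive counting hypothesis; the technical heart of the proof is the bookkeeping of error accumulation over the $N$ iterations and the verification of the density condition at each stage.
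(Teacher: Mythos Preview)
The paper does not prove this lemma: it is quoted from Kohayakawa--R\"odl--Skokan~\cite{KRS02} (Theorem~6.5 there) and used as a black box. So there is no ``paper's own proof'' to compare your sketch to. That said, your high-level scheme (induction on $k$, then peel off the $\binom{\ell}{k}$ many $k$-constraints one at a time) is close in spirit to the standard arguments, and the base case $k=2$ is exactly the classical graph counting lemma you describe.

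There is, however, a genuine gap in the inductive step. You claim that from $(\epsilon,d_k)$-regularity of $H^{(k)}$ with respect to $H^{(k-1)}[\Lambda_t]$ one obtains $|\sA_t|=(d_k\pm\epsilon')|\sA_{t-1}|$ by passing to the $(k-1)$-graph $Q^{(k-1)}_t$ of all $(k-1)$-subsets contained in some $L\in\sA_{t-1}$. But regularity only controls $|H^{(k)}\cap\cK_k(Q^{(k-1)})|$ for sub-$(k-1)$-graphs $Q^{(k-1)}$; it says nothing directly about the \emph{weighted} count $\sum_{K\in H^{(k)}[\Lambda_t]} w(K)$, where $w(K)$ is the number of $L\in\sA_{t-1}$ with $L\supseteq K$. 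Two things go wrong with your $Q^{(k-1)}_t$: first, $\cK_k(Q^{(k-1)}_t)$ will typically contain many $k$-sets which are \emph{not} projections of any $L\in\sA_{t-1}$ (their $(k-1)$-faces come from different $L$'s); second, even among genuine projections the multiplicities $w(K)$ vary. So knowing $|H^{(k)}\cap\cK_k(Q^{(k-1)}_t)|\approx d_k|\cK_k(Q^{(k-1)}_t)|$ does not yield $|\sA_t|\approx d_k|\sA_{t-1}|$.

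What is missing is an \emph{extension lemma}: one must show that all but an $o(1)$-fraction of $K\in\cK_k(H^{(k-1)}[\Lambda_t])$ satisfy $w(K)=(1\pm\gamma'')\,|\sA_{t-1}|/|\cK_k(H^{(k-1)}[\Lambda_t])|$. This is typically proved by a second-moment computation, which in turn requires counting not just $K^{(k-1)}_\ell$ but certain ``doubled'' configurations in the lower complex (and, for $t>1$, also taking into account the already-imposed $H^{(k)}$-constraints on $\Lambda_1,\dots,\Lambda_{t-1}$). Once you have near-uniformity of $w(\cdot)$, the regularity hypothesis applied to the large sub-$(k-1)$-graph of ``typical'' $(k-1)$-sets does give the desired step. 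This extension/second-moment ingredient is precisely the technical core of the Kohayakawa--R\"odl--Skokan and Nagle--R\"odl--Schacht proofs, and it is not a routine consequence of the regularity hypothesis alone; your sketch elides it. An equivalent repair is to switch to a vertex-link induction on $\ell$: show that for most $v\in V_\ell$ the link complex on $V_1,\dots,V_{\ell-1}$ remains regular with only mildly degraded parameters, and then recurse. Either route requires substantially more than the bookkeeping you identify as ``the main obstacle''.
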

\COMMENT{It is possible to delete the condition $1/m \ll \epsilon$ from all counting lemmas.}

Recall that equitable families of partitions were defined in Section~\ref{sec: partitions of hypergraphs and RAL}. 
Based on the counting lemma, 
it is easy to show that for an equitable family of partitions $\sP$ and an $\ba$-labelling $\mathbf{\Phi}$, 
the maps $\hat{P}^{(j-1)}(\cdot):\hat{A}(j,j-1,\ba)\to\hat{\sP}^{(j-1)}$ and $P^{(j)}(\cdot,\cdot):\hat{A}(j,j-1,\ba)\times [a_j]\to \sP^{(j)}$ defined in Section~\ref{sec: address space} are bijections.
We will frequently make use of this fact in subsequent sections,
often without referring to Lemma~\ref{lem: maps bijections} explicitly.

\begin{lemma}\label{lem: maps bijections}
Suppose that $k,t \in \mathbb{N}\setminus \{1\}$, $0\leq \lambda < 1/4$ and $\epsilon/3 \leq \epsilon_{\ref{def: regularity instance}}(t,k)$ and $\ba = (a_1,\dots, a_{k-1}) \in [t]^{k-1}$ and $|V|=n$ with $1/n \ll 1/t, 1/k$.\COMMENT{In other words,  $R=(\epsilon/3, \ba, d_{\ba,k})$ is a regularity instance for any density function $d_{\ba,k}$.}  If $\sP = \sP(k-1,\ba)$ is a $(1/a_1,  \epsilon,\ba,\lambda)$-equitable family of partitions on $V$, and $\sP$ with an $\ba$-labelling $\mathbf{\Phi}$ defines maps $\hat{P}^{(j-1)}(\cdot)$ and $P^{(j-1)}(\cdot,\cdot)$, then the following hold.
\begin{enumerate}[label={\rm(\roman*)}]
\item For each $j\in [k-1]$, $\hat{P}^{(j)}(\cdot):\hat{A}(j+1,j,\ba)\to\hat{\sP}^{(j)}$ is a bijection and if $j>1$, then $P^{(j)}(\cdot,\cdot):\hat{A}(j,j-1,\ba)\times [a_j]\to \sP^{(j)}$ is also a bijection. In particular, 
$\hat{A}(j,j-1,\ba) = \hat{A}(j,j-1,\ba)_{\neq \emptyset}$.
\item For each $j\in [k-1]\setminus \{1\}$ and $\hat{\bx} \in \hat{A}(j+1,j,\ba)$, 
$\hat{\cP}(\hat{\bx})$ is an $(\epsilon, (1/a_2,\dots, 1/a_{j}))$-regular $(j+1,j,\lambda)$-complex.
\end{enumerate}
\end{lemma}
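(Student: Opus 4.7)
The plan is to proceed by induction on $j\in [k-1]$, proving (i) and (ii) simultaneously. The base case $j=1$ is immediate from Proposition~\ref{prop: hat relation}(x); part (ii) is vacuous. For the inductive step, I assume the lemma holds at level $j-1$. In particular, $\hat{A}(j,j-1,\ba)=\hat{A}(j,j-1,\ba)_{\neq\es}$, so by Proposition~\ref{prop: hat relation}(ix) the map $P^{(j)}(\cdot,\cdot):\hat{A}(j,j-1,\ba)\times [a_j]\to \sP^{(j)}$ is a bijection. Consequently, $\hat{P}^{(j)}(\hat{\bx})$ (defined via \eqref{eq:hatPconsistsofP(x,b)}) is a bona fide $(j+1,j,*)$-graph for every $\hat{\bx}\in \hat{A}(j+1,j,\ba)$, and by Proposition~\ref{prop: hat relation}(ix) again, to conclude (i) at level $j$ it suffices to show $\cK_{j+1}(\hat{P}^{(j)}(\hat{\bx}))\neq\es$ for every such $\hat{\bx}$.

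The key intermediate claim is that for every $(j+1)$-set $L\in \cK_{j+1}(\sP^{(1)})$, the complex $\hat{\cP}(L)$ is $(\epsilon,(1/a_2,\ldots,1/a_j))$-regular. Since every family of partitions satisfies $a_1\geq k$, one extends $L$ to a $k$-set $K\in \cK_k(\sP^{(1)})$ by picking one vertex from each of $k-j-1$ further classes (possible as each class is nonempty). The equitable property (Definition~\ref{def: equitable family of partitions}(iii)) then provides the regularity of $\hat{\cP}(K)$, and the identity $\hat{P}^{(i)}(L)=\hat{P}^{(i)}(K)[\cl(L)]$ for $i\leq j$---which holds because any $i$-subset $I'\subseteq K$ with $\cl(I')\subseteq \cl(L)$ must equal the unique $i$-subset of $L$ spanning the same classes---transfers this regularity to $\hat{\cP}(L)$ via the restriction property built into the definition of regular $(k,i,*)$-graphs.

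To complete (i) at level $j$, I would use a double-counting argument. Fix $\bx^{(1)}\in \binom{[a_1]}{j+1}_<$ and let $N(\bx^{(1)})$ be the number of $\hat{\bx}\in \hat{A}(j+1,j,\ba)_{\neq\es}$ with first coordinate $\bx^{(1)}$. By Proposition~\ref{prop: hat relation}(vi), the sets $\cK_{j+1}(\hat{P}^{(j)}(\hat{\bx}))$ over such $\hat{\bx}$ partition the collection of $(j+1)$-sets $L$ with $\cl(L)=\bx^{(1)}_*$, which has size $\prod_{s\in \bx^{(1)}_*}|V_s|$. The intermediate claim combined with the counting lemma (Lemma~\ref{lem: counting}) gives
\begin{equation*}
|\cK_{j+1}(\hat{P}^{(j)}(\hat{\bx}))|=(1\pm\gamma)\prod_{i=2}^{j}a_i^{-\binom{j+1}{i}}\prod_{s\in \bx^{(1)}_*}|V_s|
\end{equation*}
for a chosen small $\gamma$. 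Summing and cancelling the product of $|V_s|$ yields $N(\bx^{(1)})=(1\pm O(\gamma))\prod_{i=2}^{j}a_i^{\binom{j+1}{i}}$. Since $N(\bx^{(1)})$ is an integer with $N(\bx^{(1)})\leq \prod_{i=2}^{j}a_i^{\binom{j+1}{i}}\leq t^{2^{j+1}}$ (Proposition~\ref{prop: hat relation}(viii)), taking $\gamma$ much smaller than $t^{-2^{j+1}}$ forces equality. As $\bx^{(1)}$ was arbitrary, (i) follows, and part (ii) is then immediate from the intermediate claim since every $\hat{\bx}\in \hat{A}(j+1,j,\ba)$ is now of the form $\hat{\bx}(L)$.

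The principal obstacle is parameter bookkeeping: the multiplicative error $\gamma$ from Lemma~\ref{lem: counting} must be beaten by the absolute gap of at most one between $N(\bx^{(1)})$ and the theoretical maximum $\prod_{i=2}^{j}a_i^{\binom{j+1}{i}}$. The definition of $\epsilon_{\ref{def: regularity instance}}$ embeds the factor $t^{-4^k}$ precisely for this purpose, and since $4^k>2^{k+1}$ there is enough slack to apply Lemma~\ref{lem: counting} with the required small $\gamma$ while still satisfying $\epsilon\leq \epsilon_{\ref{lem: counting}}(\gamma,1/t,j,j+1)$. A secondary technical point is verifying the class-restriction identity $\hat{P}^{(i)}(L)=\hat{P}^{(i)}(K)[\cl(L)]$, which is the bridge between the equitable condition (stated only at the top level $k$) and the regularity needed at the lower level $j+1$.
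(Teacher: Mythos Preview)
Your inductive framework and the intermediate claim are both sound: extending $L$ to a $k$-set $K$ and pulling regularity back via the restriction identity $\hat P^{(i)}(L)=\hat P^{(i)}(K)[\cl(L)]$ is exactly the mechanism the paper uses to extract $(\epsilon,1/a_i)$-regularity of each $P^{(i)}(\hat\bw,b)$ with respect to $\hat P^{(i-1)}(\hat\bw)$ from Definition~\ref{def: equitable family of partitions}(iii). Where your route diverges is in deducing (i). Your double-counting argument requires $\gamma\prod_{i=2}^j a_i^{\binom{j+1}{i}}<1$, i.e.\ $\gamma$ of order $t^{-2^{j+1}}$; but the only quantitative input is $\epsilon<t^{-4^k}\epsilon_{\ref{lem: counting}}(1/t,1/t,k-1,k)$, where the first argument $1/t$ plays the role of $\gamma$. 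Your claim that the prefactor $t^{-4^k}$ provides ``enough slack'' to shrink $\gamma$ to $t^{-2^{j+1}}$ tacitly assumes a specific (at most polynomial with small exponent) dependence of $\epsilon_{\ref{lem: counting}}$ on its first argument, which the paper never asserts. As written this step is not justified.

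The paper avoids the issue by proving (ii) first, for \emph{every} $\hat\bx\in\hat A(j+1,j,\ba)$ rather than only the nonempty ones. The point is that for each $i$ and each $(\hat\bw,b)$, either $\hat\bw\in\hat A(i,i-1,\ba)_{\neq\emptyset}$ and your restriction argument gives genuine $(\epsilon,1/a_i)$-regularity, or $\cK_i(\hat P^{(i-1)}(\hat\bw))=\emptyset$ (Proposition~\ref{prop: hat relation}(iii)) and regularity is vacuous. Since the levels of $\hat\cP(\hat\bx)$ are assembled from such pieces via \eqref{eq:hatPconsistsofP(x,b)}, (ii) follows for all $\hat\bx$. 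One then applies Lemma~\ref{lem: counting} to each $\hat\cP(\hat\bx)$ with the crude choice $\gamma=1/2$, giving $|\cK_{j+1}(\hat P^{(j)}(\hat\bx))|>0$ directly; Proposition~\ref{prop: hat relation}(iii) and (ix) finish. No double-counting, and only $\epsilon\le\epsilon_{\ref{lem: counting}}(1/t,1/t,k-1,k)$ (together with the obvious monotonicities in $\gamma$ and the level parameters) is needed. Your own induction already supplies everything required for this shortcut: once $\hat A(i,i-1,\ba)_{=\emptyset}=\emptyset$ for $i\le j$, every restriction piece of $\hat\cP(\hat\bx)$ is a genuine class and your extension-to-$K$ argument yields (ii) for all $\hat\bx$; then a single application of the counting lemma with moderate $\gamma$ replaces the double-counting.
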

\COMMENT{
First, we show that 
\begin{equation}\label{eq: regular regular}
\begin{minipage}[c]{0.9\textwidth}\em
for each $j\in [k-1]\setminus \{1\}$, $\hat{\bx} \in \hat{A}(j,j-1,\ba)$ and $b\in [a_j]$, 
$P^{(j)}(\hat{\bx},b)$ is $(\epsilon, 1/a_{j})$-regular with respect to $\hat{P}^{(j-1)}(\hat{\bx})$.
\end{minipage}
\end{equation}
Indeed, if $\hat{\bx}\in \hat{A}(j,j-1,\ba)_{\neq \emptyset}$ and thus $\hat{P}^{(j-1)}(\hat{\bx})\in \hat{\sP}^{(j-1)}$, then \eqref{eq: regular regular} holds by the definition of a $(1/a_1,  \epsilon,\ba,\lambda)$-equitable family of partitions.
Otherwise, Proposition~\ref{prop: hat relation}(iii) implies that $\cK_j( \hat{P}^{(j-1)}(\hat{\bx}) )=\emptyset$, and thus $P^{(j)}(\hat{\bx},b)$ is $(\epsilon, 1/a_{j})$-regular with respect to $\hat{P}^{(j-1)}(\hat{\bx}) $ by the definition of $(\epsilon, 1/a_{j})$-regularity.\COMMENT{Here, we changed the definition of $(\epsilon, d)$-regularity so that this holds.}\newline
We first show (ii).
By \eqref{eq: complex definition by address} it suffices to show that for each $\hat{\bx}\in \hat{A}(j+1,j,\ba)$, 
each $i\in [j]\setminus\{1\}$ and each $\hat{\by} \leq_{j+1,i}\hat{\bx}$, 
the $(j+1,i,*)$-graph $\hat{P}^{(i)}(\hat{\by})$ is $(\epsilon,1/a_i)$-regular with respect to 
the $(j+1,i-1,*)$-graph $\hat{P}^{(i-1)}(\hat{\bz})$, 
where $\hat{\bz} \in \hat{A}(j+1,i-1,\ba)$ is the unique vector satisfying $\hat{\bz} \leq_{j+1,i-1} \hat{\bx}$ and $\bz^{(1)}_* = \by^{(1)}_*$.\newline
For each $\Lambda \in \binom{\by^{(1)}_*}{i}$, let $\hat{\bw} \in \hat{A}(i,i-1,\ba)$  
be the unique vector such that  $\bw^{(1)}_* = \Lambda$ and $\hat{\bw} \leq_{i,i-1} \hat{\bx}$.
Then $\hat{\bw}\leq_{i,i-1}\hat{\by}$ and $\hat{\bw}\leq_{i,i-1}\hat{\bz}$.
So
$$
\hat{P}^{(i)}(\hat{\by})[ \Lambda ] 
\stackrel{\eqref{eq: larger polyad def}}{=}
\left( \bigcup_{\hat{\bu} \leq_{i,i-1} \hat{\by}} P^{(i)}(\hat{\bu},\hat{\by}^{(i)}_{\bu^{(1)}_*} ) \right)[\Lambda] 
= P^{(i)}(\hat{\bw}, \by^{(i)}_{\Lambda})$$
and
$$\hat{P}^{(i-1)}(\hat{\bz})[ \Lambda ] \stackrel{\eqref{eq: larger polyad def}}{=} \hat{P}^{(i-1)}(\hat{\bw}).$$
Thus \eqref{eq: regular regular} implies that $\hat{P}^{(i)}(\hat{\by})[ \Lambda ] $ is  $(\epsilon,1/a_i)$-regular with respect to $\hat{P}^{(i-1)}(\hat{\bz})[ \Lambda ]$.
Hence $\hat{P}^{(i)}(\hat{\by})$ is $(\epsilon,1/a_i)$-regular with respect to  $\hat{P}^{(i-1)}(\hat{\bz})$, so (ii) holds.\newline
Now we deduce (i).
Together with (ii),
Lemma~\ref{lem: counting} implies that\COMMENT{need $1/n \ll 1/t, 1/k$ in order to apply Lemma~\ref{lem: counting}. }
for any $j\in [k-1]$ and $\hat{\bx} \in \hat{A}(j+1,j,\ba)$,\COMMENT{Here, note that $j=1$ is OK, as we defined $\cK_2( V_i\cup V_j)$ as a complete bipartite graph between $V_i$ and $V_j$, and it has size $|V_i||V_j| \geq(1- 1/2) (1-\lambda)^2 a_1^{-2} n^2$.}
$$|\cK_{j+1}(\hat{P}^{(j)}(\hat{\bx}))| \geq (1- 1/2)(1-\lambda)^{j+1} \prod_{i=1}^{j}a_i^{-\binom{j+1}{i}}  n^{j+1} > 0.$$
Thus $\hat{A}(j+1,j,\ba) = \hat{A}(j+1,j,\ba)_{\neq \emptyset}$ by Proposition~\ref{prop: hat relation}(iii).
Moreover, by Proposition~\ref{prop: hat relation}(ix), for each $j\in [k-1]$, $\hat{P}^{(j)}(\cdot):\hat{A}(j+1,j,\ba)\to\hat{\sP}^{(j)}$ is a bijection, and for each $j\in [k-1]\setminus\{1\}$,
and $P^{(j)}(\cdot,\cdot):\hat{A}(j,j-1,\ba)\times [a_j]\to \sP^{(j)}$ is a bijection. 
}

Note that in Lemma~\ref{lem: counting} the graphs $H^\kk[\Lambda]$ for $\Lambda\in \binom{\ell}{k}$ are all $(\epsilon,d_k)$-regular with respect to $H^{(k-1)}[\Lambda]$.
In view of Lemma~\ref{lem: slicing},
 we obtain the following corollary,
which allows for varying densities at the $k$-th `level'.
\begin{corollary}\label{cor: counting}
For all $\gamma, d_0>0$ and  $k, \ell\in \N\sm \{1\}$ with $k\leq \ell$, 
there exist
$\epsilon_0 := \epsilon_{\ref{cor: counting}}(\gamma,d_0,k,\ell)$ and $m_0:= n_{\ref{cor: counting}}(\gamma,d_0,k,\ell)$
such that the following holds:
Suppose $0\leq \lambda <1/4$.
Suppose $d'\geq d_0$,
$0<\epsilon \leq \epsilon_0$ and $m\geq m_0$ for each $i\in [\ell]$, and
$\bd=(d_2,\dots, d_{k-1})\in \mathbb{R}^{k-2}$
such that
$d_j\geq d_0$ for each $j\in [k-1]\setminus \{1\}$.
Suppose
$\cH= \{H^{(j)}\}_{j=1}^{k}$ is an $(m,\ell,k,\lambda)$-complex, $H^{(1)}=\{V_1,\ldots,V_\ell\}$ with $m_i=|V_i|$ for every $i\in [\ell]$, and
for every $\Lambda\in \binom{\ell}{k}$, the complex $\cH[\Lambda]$ is $(\epsilon,(d_2,\ldots,d_{k-1},p_{\Lambda}))$-regular, where $p_\Lambda$ is a multiple of $d'$.
Then 
$$|\cK_{\ell}(H^{(k)})| = (1\pm \gamma)\prod_{\Lambda\in \binom{\ell}{k}}p_\Lambda \cdot \prod_{j=2}^{k-1}d_j^{\binom{\ell}{j}} \cdot \prod_{i=1}^{\ell} m_i.$$
\end{corollary}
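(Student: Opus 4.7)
The plan is to reduce the varying-density setting to Lemma~\ref{lem: counting} by applying the slicing lemma (Lemma~\ref{lem: slicing}) at the top level: break each $H^{(k)}[\Lambda]$ into slices of uniform density $d'$, apply Lemma~\ref{lem: counting} to each combination of slices, and sum. Choose $\epsilon_0$ and $m_0$ so that $3\epsilon_0 \leq \epsilon_{\ref{lem: counting}}(\gamma/2, d_0, k, \ell)$, $m_0 \geq n_{\ref{lem: counting}}(\gamma/2, d_0, k, \ell)$, and $\epsilon_0$ is sufficiently small in terms of $\gamma, d_0, k, \ell$ to absorb the residual-slice error discussed below.

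For each $\Lambda \in \binom{\ell}{k}$, write $p_\Lambda = s_\Lambda d'$ with $s_\Lambda \in \mathbb{Z}_{\geq 0}$. If some $s_\Lambda = 0$, then $\prod_\Lambda p_\Lambda = 0$, and by $(\epsilon,0)$-regularity of $H^{(k)}[\Lambda]$ together with Lemma~\ref{lem: counting} applied to the $(k-1)$-level subcomplex, one easily gets $|\cK_\ell(H^{(k)})| = O(\epsilon) \prod_i m_i$, matching the vanishing right-hand side up to error $\gamma$. So assume henceforth that all $s_\Lambda \geq 1$. Applying Lemma~\ref{lem: slicing} to $H^{(k)}[\Lambda]$ with $p_1 = \cdots = p_{s_\Lambda} = 1/s_\Lambda$ (valid since $d' \geq d_0 \gg \epsilon$, and $|\cK_k(H^{(k-1)}[\Lambda])| \gg m^k/\log m$ by Lemma~\ref{lem: counting}) yields a partition $H^{(k)}[\Lambda] = \bigcup_{i=0}^{s_\Lambda} H^{(k)}_{\Lambda, i}$ in which each slice with $i \geq 1$ is $(3\epsilon, d')$-regular and the residual slice $H^{(k)}_{\Lambda, 0}$ is $(3\epsilon, 0)$-regular with respect to $H^{(k-1)}[\Lambda]$.

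For each tuple $\sigma = (\sigma_\Lambda)_\Lambda \in \prod_\Lambda [s_\Lambda]$, let $H^{(k)}_\sigma$ denote the $k$-graph whose restriction to $\Lambda$ is $H^{(k)}_{\Lambda, \sigma_\Lambda}$; then $\{H^{(1)}, \ldots, H^{(k-1)}, H^{(k)}_\sigma\}$ is a $(3\epsilon, (d_2, \ldots, d_{k-1}, d'))$-regular $(m,\ell,k,\lambda)$-complex, so Lemma~\ref{lem: counting} gives
\[ |\cK_\ell(H^{(k)}_\sigma)| = (1 \pm \gamma/2)(d')^{\binom{\ell}{k}} \prod_{j=2}^{k-1} d_j^{\binom{\ell}{j}} \prod_{i=1}^\ell m_i. \]
Every $L \in \cK_\ell(H^{(k)})$ assigns to each $\Lambda$ a unique slice index $\sigma_\Lambda(L) \in \{0,1,\ldots,s_\Lambda\}$ (determined by which slice contains the $k$-edge of $L$ in $H^{(k)}[\Lambda]$), so $|\cK_\ell(H^{(k)})|$ splits as the sum of $|\cK_\ell(H^{(k)}_\sigma)|$ over all such tuples. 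Summing over the $\prod_\Lambda s_\Lambda$ tuples with all $\sigma_\Lambda \geq 1$ and using $s_\Lambda d' = p_\Lambda$ produces the main term $(1 \pm \gamma/2) \prod_\Lambda p_\Lambda \cdot \prod_{j=2}^{k-1} d_j^{\binom{\ell}{j}} \prod_i m_i$.

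It remains to bound the contribution of cliques with some $\sigma_\Lambda(L) = 0$. Each such $L$ is determined by the choice of $\Lambda$, a $k$-edge in $H^{(k)}_{\Lambda, 0}$ (of which there are at most $3\epsilon \,|\cK_k(H^{(k-1)}[\Lambda])|$), and one vertex from each of the remaining $\ell - k$ classes, giving at most $\binom{\ell}{k} \cdot 3\epsilon \cdot |\cK_k(H^{(k-1)}[\Lambda])| \cdot \prod_{i \notin \Lambda} m_i = O(\epsilon) \prod_i m_i$, which by the choice of $\epsilon_0$ is at most $(\gamma/2) \prod_\Lambda p_\Lambda \prod_j d_j^{\binom{\ell}{j}} \prod_i m_i$. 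Combining the two bounds yields the conclusion. The main, if essentially routine, obstacle is the constant-chasing: choosing $\epsilon_0$ small enough that the factor-$3$ slicing loss stays within Lemma~\ref{lem: counting}'s regularity threshold while the residual-slice error is simultaneously $\gamma$-negligible relative to the (potentially tiny) factor $\prod_\Lambda p_\Lambda \geq d_0^{\binom{\ell}{k}}$.
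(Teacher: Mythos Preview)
Your proof is correct and follows exactly the approach the paper indicates (the paper only says ``In view of Lemma~\ref{lem: slicing}, we obtain the following corollary'' and gives no further details). Slicing each $H^{(k)}[\Lambda]$ into $s_\Lambda$ pieces of density $d'$, applying Lemma~\ref{lem: counting} to every tuple of slices, summing, and then controlling the residual-slice contribution is precisely the intended argument, and your constant-chasing remark at the end correctly identifies the only place care is needed.

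One small remark: your handling of the degenerate case $p_\Lambda=0$ yields $|\cK_\ell(H^{(k)})|=O(\epsilon)\prod_i m_i$, which does not literally match the multiplicative form $(1\pm\gamma)\cdot 0=0$ of the stated conclusion. This is not a flaw in your argument but rather a slight imprecision in the statement itself when some $p_\Lambda$ vanish; in all applications (e.g.\ the proof of Lemma~\ref{lem: counting complex}) the corollary is only used to produce bounds that remain valid in this degenerate regime, so the issue is harmless.
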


Note that in the above lemma, some $p_{\Lambda}$ are allowed to be zero.

Let $F$ be an $\ell$-vertex $k$-graph and $\cH=\{H^{(j)}\}_{j=1}^{k}$ be a complex with $H^{(1)}=\{V_1,\dots, V_\ell\}$. 
For a bijection $\sigma:V(F)\to [\ell]$, 
we say an induced copy $F'$ of $F$ in $H^{(k)}$ is \emph{$\sigma$-induced} 
if for each $v\in V(F)$ the vertex of $F'$ corresponding to $v$ lies in $V_{\sigma(v)}$.
Let $IC_\sigma(F,\cH)$ 
be the number of $\sigma$-induced copies $F'$ of $F$ in $H^{(k)}$ such that $F'$ is contained in an element of $\cK_{\ell}(H^{(k-1)})$.

\begin{lemma}[Induced counting lemma for many clusters]\label{lem: counting complex}
Suppose $0<1/m\ll \epsilon \ll \gamma, d_0, 1/k, 1/\ell$ with $k\in \mathbb{N}\setminus\{1\}$ and suppose that
\begin{itemize}
\item $F$ is an $\ell$-vertex $k$-graph,
\item $d_0\leq d_j \leq 1 - d_0$ for every $j\in [k-1]\sm\{1\}$,
\item $\cH= \{H^{(j)}\}_{j=1}^{k}$ is an $(m,\ell,k)$-complex with $H^{(1)}= \{ V_1,\dots , V_{\ell}\}$,
\item for each $\Lambda\in \binom{[\ell]}{k}$, the complex $\cH[\Lambda]$ is an $(\epsilon,(d_2,\dots, d_{k-1},p_{\Lambda}))$-regular $(m,k,k)$-complex, and
\item $\sigma:V(F)\to[\ell]$ is a bijection. 
\end{itemize}
Then
$$IC_\sigma(F,\cH) = \left(  \prod_{e\in F} p_{\sigma(e)} \prod_{e\notin F, |e|=k}(1-p_{\sigma(e)}) \pm \gamma \right) \prod_{j=2}^{k-1}d_j^{\binom{\ell}{j}} \cdot m^{\ell}.$$
\end{lemma}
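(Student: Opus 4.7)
The plan is to reduce the induced count to counting $\ell$-cliques in a suitably modified complex, then invoke Corollary~\ref{cor: counting}. Identifying $V(F)$ with $[\ell]$ via $\sigma$, set $E:=E(F)\subseteq\binom{[\ell]}{k}$. By Lemma~\ref{lem: simple facts 1}(i), for each $\Lambda \in \binom{[\ell]}{k}$ the complement hypergraph $\cK_k(H^{(k-1)}[\Lambda])\setminus H^{(k)}[\Lambda]$ is $(\epsilon,1-p_\Lambda)$-regular with respect to $H^{(k-1)}[\Lambda]$.

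Define
\[
G^{(k)} := \bigcup_{\Lambda\in E} H^{(k)}[\Lambda] \ \cup \ \bigcup_{\Lambda\in\binom{[\ell]}{k}\setminus E}\bigl(\cK_k(H^{(k-1)}[\Lambda])\setminus H^{(k)}[\Lambda]\bigr),
\]
and let $p'_\Lambda:=p_\Lambda$ if $\Lambda\in E$ and $p'_\Lambda:=1-p_\Lambda$ otherwise. Then $\cH':=\{H^{(1)},\ldots,H^{(k-1)},G^{(k)}\}$ is an $(m,\ell,k)$-complex whose restriction $\cH'[\Lambda]$ is $(\epsilon,(d_2,\ldots,d_{k-1},p'_\Lambda))$-regular. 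The key observation is that $\cK_\ell(G^{(k)})$ is in bijection with the set of $\sigma$-induced copies of $F$ in $H^{(k)}$ lying in $\cK_\ell(H^{(k-1)})$: a crossing $\ell$-clique of $G^{(k)}$ forms an edge of $H^{(k)}$ precisely on the $\Lambda\in E$ and a non-edge precisely on the $\Lambda\notin E$, and its vertex set automatically lies in $\cK_\ell(H^{(k-1)})$ because $G^{(k)}\subseteq\cK_k(H^{(k-1)})$. Hence $IC_\sigma(F,\cH)=|\cK_\ell(G^{(k)})|$.

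To finish, I would apply Corollary~\ref{cor: counting} to $\cH'$ to obtain
\[
|\cK_\ell(G^{(k)})| = (1\pm\gamma') \prod_{j=2}^{k-1} d_j^{\binom{\ell}{j}} \cdot \prod_{\Lambda} p'_\Lambda \cdot m^\ell,
\]
where $\gamma'\ll\gamma$. Since $\prod_{j=2}^{k-1} d_j^{\binom{\ell}{j}}\le 1$ and $\prod_\Lambda p'_\Lambda \leq 1$, the multiplicative $\gamma'$-error translates into the claimed additive $\pm\gamma$-error. Unpacking $\prod_\Lambda p'_\Lambda = \prod_{e\in F} p_{\sigma(e)}\prod_{e\notin F,\,|e|=k}(1-p_{\sigma(e)})$ then yields the stated formula.

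The main technical obstacle is that Corollary~\ref{cor: counting} requires the $k$-th level densities to be common multiples of some $d'\geq d_0$, whereas the $p'_\Lambda$ above are arbitrary reals in $[0,1]$ (in particular they may be arbitrarily small or vanish). I would handle this by choosing a granularity $0<d''\ll\gamma/\binom{\ell}{k}$ with $d''\gg\epsilon$, applying Lemma~\ref{lem: slicing} to each $G^{(k)}[\Lambda]$ with $p'_\Lambda\geq 2d''$ to peel off a $(3\epsilon)$-regular subgraph of density $d''\lfloor p'_\Lambda/d''\rfloor$ (a multiple of $d''$) from a leftover of density at most $d''$, and treating the entire $G^{(k)}[\Lambda]$ as leftover when $p'_\Lambda<2d''$. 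Applying Corollary~\ref{cor: counting} to the rounded sub-complex gives the main term, while the number of crossing $\ell$-tuples that use at least one edge from a leftover piece is at most $\binom{\ell}{k}\cdot O(d'')\cdot m^\ell\leq \gamma m^\ell/2$ by a union bound and a single application of Lemma~\ref{lem: counting} to the lower levels. Together with $|\prod_\Lambda p'_\Lambda-\prod_\Lambda d''\lfloor p'_\Lambda/d''\rfloor|\le\binom{\ell}{k}d''$, this absorbs the rounding into the final additive $\gamma$.
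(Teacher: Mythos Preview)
Your approach is essentially the same as the paper's: reduce to counting $\ell$-cliques in the ``complemented'' top level $G^{(k)}$, then discretize the top-level densities via the slicing lemma so that Corollary~\ref{cor: counting} applies. The paper's execution of the discretization step is slightly different and a bit cleaner: instead of rounding down and separately bounding the cliques that use a leftover edge, it picks $q$ with $\epsilon\ll 1/q\ll \gamma,d_0,1/k,1/\ell$, uses Lemma~\ref{lem: slicing} to produce \emph{both} a subgraph $H'_1[\Lambda]$ of density $\lfloor q p'_\Lambda\rfloor/q$ and (via slicing the complement and taking a union) a supergraph $H'_2[\Lambda]$ of density $\lceil q p'_\Lambda\rceil/q$, applies Corollary~\ref{cor: counting} to each, and sandwiches $|\cK_\ell(H')|$ between the two resulting estimates. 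This sidesteps your leftover argument entirely.

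One small imprecision in your writeup: the additive error in the statement is $\gamma\prod_{j=2}^{k-1}d_j^{\binom{\ell}{j}}m^\ell$, not $\gamma m^\ell$, so your leftover bound $\binom{\ell}{k}O(d'')m^\ell$ must be compared against $\gamma\prod_j d_j^{\binom{\ell}{j}}m^\ell\ge \gamma d_0^{2^\ell}m^\ell$. This means your granularity $d''$ must be chosen small in terms of $d_0,\ell,k$ as well as $\gamma$ (which the hierarchy $\epsilon\ll d''\ll \gamma,d_0,1/k,1/\ell$ permits), not merely $d''\ll\gamma/\binom{\ell}{k}$ as you wrote.
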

\begin{proof}
We select $q\in \mathbb{N}$ such that $1/m\ll \epsilon \ll 1/q \ll \gamma, d_0,1/k, 1/\ell$ and 
define $\overline{H}^{(k)}:= \cK_{k}(H^{(k-1)}) \setminus H^{(k)}$.  
We also define an $(m,\ell,k)$-graph $H'$ on $\{V_1,\dots, V_\ell\}$ so that for each $e\in \binom{V(F)}{k}$, we have
\begin{align*}
	H'[\sigma(e)]:=\left\{\begin{array}{ll}
H^{(k)}[\sigma(e)] & \text{ if } e\in F,\\
\overline{H}^{(k)}[\sigma(e)] & \text{ otherwise,}
\end{array}\right.
\end{align*}
and let $H':=\bigcup_{e\in \binom{V(F)}{k}}H'[\sigma(e)]$.
Note that $H^{(k-1)}$ underlies $H'$.
Observe that there is a bijection between the set of all $\sigma$-induced copies $F'$ of $F^{(k)}$ in $H^{(k)}$ such that $F'$ is contained in an element of $\cK_{\ell}(H^{(k-1)})$ 
and the set of copies of $K_{\ell}^{(k)}$ in $H'$.
For $e\in \binom{V(F)}{k}$, we define
\begin{align*}
	p'_{\sigma(e)}:=\left\{\begin{array}{ll}
p_{\sigma(e)} & \text{ if } e\in F,\\
1-p_{\sigma(e)} & \text{ otherwise.}
\end{array}\right.
\end{align*}
By Lemma~\ref{lem: simple facts 1}(i), for each $\Lambda\in \binom{[\ell]}{k}$, 
the set $\{ H^{(j)}[\Lambda] \}_{j=1}^{k-1}\cup \{H'[\Lambda]\}$ is an $(\epsilon,(d_2,\dots, d_{k-1},p'_{\Lambda}))$-regular $(m,k,k)$-complex.
It suffices to show that 
\begin{align}\label{eq:goal}
|\cK_{\ell}(H')| = \left(\prod_{\Lambda\in \binom{\ell}{k}}p'_{\Lambda}\pm \gamma \right)\cdot \prod_{j=2}^{k-1}d_j^{\binom{\ell}{j}} \cdot  m^\ell.
\end{align}

We apply the slicing lemma (Lemma~\ref{lem: slicing})  to find for each $\Lambda\in \binom{[\ell]}{k}$  a subgraph $H_1'[\Lambda]$ of $H'[\Lambda]$ which is $(3\epsilon,\lfloor q p'_{\Lambda}\rfloor/q)$-regular\COMMENT{
Lemma~\ref{lem: slicing} can be applied if $p'_{\Lambda}\geq 2\epsilon$,
but if $p'_{\Lambda}< 2\epsilon$, then $\lfloor qp'_\Lambda\rfloor/q=0$.
So we can take $H_1'[\Lambda]=\es$.
We also use that if $s=1$ in Lemma~\ref{lem: slicing} then we don't need the condition that $1/m\ll p_1$.
}
 with respect to $H^{(k-1)}[\Lambda]$.
Similarly, for each $\Lambda\in \binom{[\ell]}{k}$, 
we apply Lemma~\ref{lem: slicing} to the graph $\cK_{k}(H^{(k-1)}[\Lambda]) \setminus H'[\Lambda]$.
In combination with the union lemma~(Lemma~\ref{lem: union regularity})
this gives a supergraph $H_2'[\Lambda]$ of $H'[\Lambda]$ which is $(6\epsilon,\lceil q p'_{\Lambda}\rceil/q)$-regular\COMMENT{
Lemma~\ref{lem: slicing} applied to the graph $\cK_{k}(H^{(k-1)}[\Lambda]) \setminus H'[\Lambda]$ gives us $3\epsilon$-regular graph. Taking union of this with $H'[\Lambda]$ gives us super graph which is $6\epsilon$-regular. \newline
If we use complement, then we will get $3\epsilon$-regular. If we use union as now, we will get $6\epsilon$-regular.
} with respect to $H^{(k-1)}[\Lambda]$.

Let $H_i':=\bigcup_{\Lambda\in \binom{[\ell]}{k}}H'_i[\Lambda]$ for each $i\in [2]$.
Observe that 
$|\cK_{\ell}(H'_1)|\leq |\cK_{\ell}(H')| \leq |\cK_{\ell}(H'_2)|.$
By Corollary~\ref{cor: counting} with $\gamma/2, 1/q, 1/q$ playing the roles of $\gamma, d', d_0$, respectively,\COMMENT{We can do this since $\lfloor qp'_{\Lambda}\rfloor/q$ is a multiple of $1/q$.}
\begin{align*}
|\cK_{\ell}(H'_1)| &\geq \left(1- \frac{\gamma}{2}\right)\prod_{\Lambda\in \binom{\ell}{k}}\lfloor q p'_{\Lambda}\rfloor/q \cdot \prod_{j=2}^{k-1}d_j^{\binom{\ell}{j}} \cdot  m^\ell\enspace \text{ and}\\
|\cK_{\ell}(H'_2)|&\leq \left(1+ \frac{\gamma}{2}\right)\prod_{\Lambda\in \binom{\ell}{k}}\lceil q p'_{\Lambda}\rceil/q \cdot \prod_{j=2}^{k-1}d_j^{\binom{\ell}{j}} \cdot  m^\ell.
\end{align*}
Note that for each $\Lambda \in \binom{\ell}{k}$, we have 
$p'_{\Lambda}-1/q \leq \lfloor q p'_{\Lambda}\rfloor/q$ and $\lceil q p'_{\Lambda}\rceil/q \leq p'_{\Lambda}+ 1/q.$
Thus we obtain~\eqref{eq:goal} as required.\COMMENT{
\begin{align*}
|\cK_{\ell}(H')|= \left(1\pm \frac{\gamma}{2}\right)\prod_{\Lambda\in \binom{\ell}{k}}(p'_{\Lambda}\pm 1/q)\cdot \prod_{j=2}^{k-1}d_j^{\binom{\ell}{j}} \cdot  m^\ell
= \left(\prod_{\Lambda\in \binom{\ell}{k}}p'_{\Lambda} \pm \gamma\right)\cdot \prod_{j=2}^{k-1}d_j^{\binom{\ell}{j}} \cdot  m^\ell,
\end{align*}
since $\binom{\ell}{k}/q < \gamma/4$. This completes the proof of \eqref{eq:goal}.}
\end{proof}

The previous lemma counts $\sigma$-induced copies of a $k$-graph $F$.
However, ultimately, we want to count all induced copies of $F$.
Let us introduce the necessary notation for this step.

Suppose $k,\ell\in \mathbb{N}\sm\{1\}$ such that $\ell\geq k$
and suppose $\ba \in \N^{k-1}$.
Suppose that $d_{\ba,k}:\hat{A}(k,k-1,\ba)\to[0,1]$ is a density function.
Suppose $F$ is a $k$-graph on $\ell$ vertices.
Let ${\rm Bij}(A,B)$ be the set of all bijections from $A$ to $B$.
Suppose $\hat{\bx}\in \hat{A}(\ell,k-1,\ba)$
and $\sigma \in {\rm Bij}(V(F),\bx^{(1)}_*)$ is a bijection.
Let $A(F)$ be the size of the automorphism group of $F$.
We now define three functions in terms of the parameters above that will estimate the number of induced copies of $F$ in certain parts of an $\epsilon$-regular $k$-graph.
Let
\begin{align*}
	IC(F,d_{\ba,k},\hat{\bx},\sigma) &:=  \prod_{\substack{\hat{\by}\leq_{k,k-1} \hat{\bx}, \\ \by^{(1)}_*\in \sigma(F)}} d_{\ba,k}(\hat{\by}) \prod_{\substack{\hat{\by}\leq_{k,k-1} \hat{\bx}, \\ \by^{(1)}_*\notin \sigma(F)}}(1-d_{\ba,k}(\hat{\by}))  \prod_{j=2}^{k-1}a_j^{-\binom{\ell}{j}},\\
IC(F,d_{\ba,k},\hat{\bx}) &:=  \frac{1}{A(F)} \sum_{\sigma \in {\rm Bij}(V(F),\bx^{(1)}_*)} IC(F,d_{\ba,k},\hat{\bx},\sigma),\\
IC(F,d_{\ba,k}) &:= \binom{a_1}{\ell}^{-1}\sum_{\hat{\bx} \in \hat{A}(\ell,k-1,\ba)} IC(F,d_{\ba,k},\hat{\bx}).
\end{align*}

We will now show that for a $k$-graph $H$ satisfying a suitable regularity instance $R=(\epsilon,\ba,d_{\ba,k})$,
the value $IC(F,d_{\ba,k})$ is a very accurate estimate for $\mathbf{Pr}(F, H)$ (recall the latter was introduced in Section~\ref{sec: basic notation}).
The same is true if $F$ is replaced by a finite family of $k$-graphs (see Corollary~\ref{cor: counting collection}).

\begin{lemma}[Induced counting lemma for general hypergraphs]\label{lem: counting hypergraph}
Suppose $0<1/n\ll \epsilon \ll 1/t, 1/a_1 \ll \gamma,  1/k, 1/\ell$ with $2\leq k \leq \ell$.
Suppose $F$ is an $\ell$-vertex $k$-graph and $\ba\in [t]^{k-1}$.
Suppose $H$ is an $n$-vertex $k$-graph satisfying a regularity instance $R= (\epsilon,\ba,d_{\ba,k})$.
Then 
$$\mathbf{Pr}(F, H)= IC(F,d_{\ba,k}) \pm \gamma.$$
\end{lemma}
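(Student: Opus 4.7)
The plan is to apply the induced counting lemma for many clusters (Lemma~\ref{lem: counting complex}) separately to each polyad $\hat{\bx}\in\hat{A}(\ell,k-1,\ba)$ and then sum the contributions. By hypothesis $H$ admits an $(\epsilon,\ba,d_{\ba,k})$-equitable family of partitions $\sP=\sP(k-1,\ba)$; by Lemma~\ref{lem: maps bijections} all polyad maps are bijections and for every $\hat{\bx}\in\hat{A}(\ell,k-1,\ba)$ the $(\ell,k-1,*)$-complex $\{\hat{P}^{(j)}(\hat{\bx})\}_{j=1}^{k-1}$ obtained from~\eqref{eq: larger polyad def} on the classes $V_{\alpha_1},\ldots,V_{\alpha_\ell}$ indexed by $\bx^{(1)}_*=\{\alpha_1<\cdots<\alpha_\ell\}$ is $(\epsilon,(1/a_2,\ldots,1/a_{k-1}))$-regular.

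For each such $\hat{\bx}$ I adjoin the $k$-th layer $H\cap\cK_k(\hat{P}^{(k-1)}(\hat{\bx}))$ to form an $(m,\ell,k)$-complex $\cH(\hat{\bx})$ with $m=\lfloor n/a_1\rfloor$. For every $\Lambda\in\binom{[\ell]}{k}$ its restriction to $\{V_{\alpha_i}:i\in\Lambda\}$ is $(\epsilon,d_{\ba,k}(\hat{\by}_\Lambda))$-regular with respect to the underlying $(k-1)$-polyad, where $\hat{\by}_\Lambda\leq_{k,k-1}\hat{\bx}$ is the unique vector with $(\by_\Lambda)^{(1)}_*=\{\alpha_i:i\in\Lambda\}$, by Definition~\ref{def: regularity instance}. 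Picking $\gamma'$ with $\epsilon\ll\gamma'\ll\gamma,1/k,1/\ell$ and applying Lemma~\ref{lem: counting complex} to $\cH(\hat{\bx})$, $F$, and each $\sigma\in{\rm Bij}(V(F),\bx^{(1)}_*)$ (identified with a bijection $V(F)\to[\ell]$), the identification $p_{\sigma(e)}=d_{\ba,k}(\hat{\by}_{\sigma(e)})$ makes the main factor in the lemma's conclusion match exactly the leading product in $IC(F,d_{\ba,k},\hat{\bx},\sigma)$.

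Every induced copy of $F$ whose vertex set lies in $\cK_\ell(\hat{P}^{(k-1)}(\hat{\bx}))$ corresponds to exactly $A(F)$ choices of $\sigma$, so summing $IC_\sigma(F,\cH(\hat{\bx}))$ over $\sigma$ and dividing by $A(F)$ yields the number of such induced copies, expressed as $IC(F,d_{\ba,k},\hat{\bx})\cdot m^\ell$ with additive error $(\ell!/A(F))\gamma'\prod_{j=2}^{k-1}a_j^{-\binom{\ell}{j}}m^\ell$. Proposition~\ref{prop:unique} implies that the sets $\{\cK_\ell(\hat{P}^{(k-1)}(\hat{\bx}))\}_{\hat{\bx}}$ partition $\cK_\ell(\sP^{(1)})$, so summing over $\hat{\bx}\in\hat{A}(\ell,k-1,\ba)$ recovers all induced copies of $F$ supported on crossing $\ell$-sets; non-crossing $\ell$-sets contribute at most $O(\ell^2/a_1)\binom{n}{\ell}\ll\gamma\binom{n}{\ell}$ copies by the $\ell$-analogue of~\eqref{eq: eta a1}. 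After using $m=n/a_1\pm 1$ to rewrite $\binom{a_1}{\ell}m^\ell=(1\pm o(1))\binom{n}{\ell}$ and dividing by $\binom{n}{\ell}$, the main term becomes $IC(F,d_{\ba,k})$ as required.

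The main obstacle is the quantitative error bookkeeping. After summing, the accumulated error from Lemma~\ref{lem: counting complex} across all $\hat{\bx}$ equals $O(\gamma')\cdot|\hat{A}(\ell,k-1,\ba)|\cdot\prod_{j=2}^{k-1}a_j^{-\binom{\ell}{j}}m^\ell$, and the crucial cancellation $|\hat{A}(\ell,k-1,\ba)|\prod_{j=2}^{k-1}a_j^{-\binom{\ell}{j}}=\binom{a_1}{\ell}$ turns this into $O(\gamma')\binom{n}{\ell}$, so any $\gamma'\ll\gamma/\ell!$ suffices. A secondary bookkeeping check is aligning $\sigma\in{\rm Bij}(V(F),\bx^{(1)}_*)$ with $\sigma:V(F)\to[\ell]$ so that the products in Lemma~\ref{lem: counting complex}'s conclusion correspond exactly to those defining $IC(F,d_{\ba,k},\hat{\bx},\sigma)$.
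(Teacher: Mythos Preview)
Your proposal is correct and follows essentially the same route as the paper's proof: both form the $(\ell,k)$-complex $\cH(\hat{\bx})$ for each $\hat{\bx}\in\hat{A}(\ell,k-1,\ba)$, apply Lemma~\ref{lem: counting complex} for every bijection $\sigma$, average out the automorphisms of $F$, sum over $\hat{\bx}$ using the cancellation $|\hat{A}(\ell,k-1,\ba)|\prod_{j=2}^{k-1}a_j^{-\binom{\ell}{j}}=\binom{a_1}{\ell}$, and absorb the non-crossing $\ell$-sets via~\eqref{eq: eta a1}. The only cosmetic differences are that the paper applies Lemma~\ref{lem: counting complex} directly with $\gamma/(3\ell!)$ in place of your intermediate $\gamma'$, and writes the underlying $(\ell,k-1)$-complex as $\cH'(\hat{\bx})=\{\bigcup_{\hat{\bz}\leq_{k,i}\hat{\bx}}\hat{P}^{(i)}(\hat{\bz})\}_{i\in[k-1]}$ rather than via~\eqref{eq: larger polyad def}.
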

\begin{proof}
Since $H$ satisfies the regularity instance $R$, 
there exists a  $(\epsilon,\ba,d_{\ba,k})$-equitable partition $\sP=\sP(k-1,\ba)$ of $H$ (as defined in Section~\ref{sec: subsub density function}).
Let $\sP^{(1)}= \{V_1, \dots , V_{a_1}\}$ and $m:= \lfloor n/a_1 \rfloor.$
We say an induced copy $F'$ of $F$ in $H$ is \emph{crossing-induced} 
if $V(F')\in \cK_{\ell}(\sP^{(1)})$ and \emph{non-crossing-induced} otherwise.
Then by \eqref{eq: eta a1}, 
\begin{equation}\label{eq: non-crossing-induced}
\begin{minipage}[c]{0.8\textwidth}\em
there are at most $\frac{\gamma}{3}\binom{n}{\ell}$ non-crossing-induced copies of $F$.
\end{minipage}
\end{equation}
The strategy of the proof is as follows.
We only consider crossing-induced copies of $F$, as the number of non-crossing-induced copies is negligible.
For each $\hat{\bx} \in \hat{A}(\ell,k-1,\ba)$, 
we fix some bijection $\sigma$ between $V(F)$ and $\bx_*^{(1)}$.
By Lemma~\ref{lem: counting complex}, we can accurately estimate the number of $\sigma$-induced copies of $F$.
By summing over all choices for $\hat{\bx}$ and $\sigma$ and taking in account which copies we counted multiple times,
we can estimate the number of crossing-induced copies of $F$ in $H$.

For each $\hat{\bx} \in \hat{A}(\ell,k-1,\ba)$, 
we consider the $(k-1)$-polyad $\hat{P}^{(k-1)}(\hat{\bx}) = \bigcup_{\hat{\by}\leq_{k,k-1} \hat{\bx}} \hat{P}^{(k-1)}(\hat{\by})$ as defined in \eqref{eq: larger polyad def}.
By Proposition~\ref{prop:unique}, for every crossing-induced copy $F'$ of $F$ in $H$, 
there is a unique $\hat{\bx}\in \hat{A}(\ell,k-1,\ba)$ such that $F'$ is contained in some element of $\cK_{\ell}(\hat{P}^{(k-1)}(\hat{\bx}))$ .

Consider any $\hat{\bx} \in \hat{A}(\ell,k-1,\ba)$ and a 
bijection $\sigma:V(F)\to\bx^{(1)}_*$. Let
$$\cH'(\hat{\bx}):= \bigg\{ \bigcup_{\hat{\bz}\leq_{k,i} \hat{\bx}} \hat{P}^{(i)}(\hat{\bz})\bigg\}_{i\in [k-1]}
\enspace \text{ and } \enspace \cH(\hat{\bx}):= \cH'(\hat{\bx}) \cup  \{H\cap \cK_{k}(\hat{P}^{(k-1)}(\hat{\bx}))\}.$$
Hence $\cH(\hat{\bx})$ is an $(\ell,k)$-complex and $\cH'(\hat{\bx})$ is an $(\ell,k-1)$-complex. 
Note that $\cH'(\hat{\bx}) = \bigcup_{\hat{\by}\leq_{k,k-1}\hat{\bx}}\hat{\cP}(\hat{\by})$, where $\hat{\cP}(\hat{\by})$ is as defined in \eqref{eq: complex definition by address}.

Lemma~\ref{lem: maps bijections} implies that each $\hat{\cP}(\hat{\by}) = \cH'(\hat{\bx})[\by^{(1)}_*]$ is $(\epsilon,(1/a_2,\dots, 1/a_{k-1}))$-regular (if $k\geq 3$).
Furthermore, since $\sP$ is an $(\epsilon,\ba,d_{\ba,k})$-equitable partition of $H$, for each $e\in \binom{V(F)}{k}$, 
the $k$-graph $H[\sigma(e)]$ is $(\epsilon,d_{\ba,k}(\hat{\by}))$-regular with respect to 
$\hat{P}^{(k-1)}(\hat{\by})= \hat{P}^{(k-1)}(\hat{\bx})[\bigcup_{ i\in \by^{(1)}_* } V_i]$, where $\hat{\by}$ is the unique vector satisfying $\hat{\by}\leq_{k,k-1} \hat{\bx}$ and $\by^{(1)}_* = \sigma(e)$.

Thus, by applying Lemma~\ref{lem: counting complex} with $\cH(\hat{\bx}), a_i^{-1},\gamma/(3\ell!), d_{\ba,k}(\hat{\by})$ playing the roles of $\cH, d_i,\gamma, p_{\by^{(1)}_*}$, we conclude that\COMMENT{$IC_{\sigma}(F,\cH(\hat{\bx}))$ is the number of crossing-induced copies $F'$ of $F$ in $H$ which are also $\sigma$-induced and for which $F'$ lies in some element of $\cK_{\ell}(\hat{P}^{(k-1)}(\hat{\bx}))$.} 
(with $IC_\sigma(F,\cH(\hat{\bx}))$ defined as in Lemma~\ref{lem: counting complex})
\begin{align*}
IC_{\sigma}(F,\cH(\hat{\bx})) &= \left(\prod_{\substack{\hat{\by}\leq_{k,k-1} \hat{\bx}, \\ \by^{(1)}_*\in \sigma( F)}} d_{\ba,k}(\hat{\by}) 
\prod_{\substack{\hat{\by}\leq_{k,k-1}\hat{\bx}, \\\by^{(1)}_*\notin \sigma(F)}}(1-d_{\ba,k}(\hat{\by}))  \pm \frac{\gamma}{3\ell!} \right)
\prod_{j=2}^{k-1}a_j^{-\binom{\ell}{j}} \cdot m^{\ell} \\ 
&= \left(IC(F,d_{\ba,k},\hat{\bx},\sigma)\pm \frac{\gamma}{3\ell!} \prod_{j=2}^{k-1}a_j^{-\binom{\ell}{j}}\right) m^{\ell} .
\end{align*}
Next we want to estimate the number of all crossing-induced copies of $F$ in $H$ which lie in some element of $\cK_{\ell}(\hat{P}^{(k-1)}(\hat{\bx}))$.
Observe that we count every copy of $F$ exactly $A(F)$ times if we sum over all possible bijections $\sigma$.
Therefore, the number of crossing-induced copies of $F$ in $H$ which lie in some element of $\cK_{\ell}(\hat{P}^{(k-1)}(\hat{\bx}))$ is
\begin{align*}
	\frac{1}{A(F)} \sum_{\sigma} IC_{\sigma}(F,\cH(\hat{\bx})) &=
	\frac{1}{A(F)} \sum_{\sigma} \left(IC(F,d_{\ba,k},\hat{\bx},\sigma)\pm \frac{\gamma}{3\ell!} \prod_{j=2}^{k-1}a_j^{-\binom{\ell}{j}}\right) m^{\ell} \\
	&= \left(IC(F,d_{\ba,k},\hat{\bx}) \pm \frac{\gamma}{3} \prod_{j=2}^{k-1}a_j^{-\binom{\ell}{j}}\right) m^{\ell}. 
\end{align*}
Note that $|\hat{A}(\ell,k-1,\ba)| = \binom{a_1}{\ell}\prod_{j=2}^{k-1}a_j^{\binom{\ell}{j}}$
and $\binom{a_1}{\ell} m^{\ell} = (1\pm \gamma/10)\binom{n}{\ell}$, because $1/a_1 \ll \gamma, 1/\ell$. 
Hence the number of crossing-induced copies of $F$ in $H$ is
\begin{align*}
&\sum_{\hat{\bx}\in \hat{A}(\ell,k-1,\ba)} \left(IC(F,d_{\ba,k},\hat{\bx}) \pm \frac{\gamma}{3} \prod_{j=2}^{k-1}a_j^{-\binom{\ell}{j}}\right) m^{\ell} \\
&\qquad\qquad=\left(\sum_{\hat{\bx}\in \hat{A}(\ell,k-1,\ba)} IC(F,d_{\ba,k},\hat{\bx})\right) m^{\ell} \pm \frac{\gamma}{3} \prod_{j=2}^{k-1}a_j^{-\binom{\ell}{j}}|\hat{A}(\ell,k-1,\ba)|m^{\ell}\\
&\qquad\qquad=  (IC(F,d_{\ba,k})\pm \gamma/2) \binom{n}{\ell}.
\end{align*}
This together with \eqref{eq: non-crossing-induced} implies the desired statement.
\end{proof}

In the previous lemma we counted the number of induced copies of a single $k$-graph $F$ in $H$.
It is not difficult to extend this approach to a finite family of $k$-graphs.
For a finite family $\cF$ of $k$-graphs, 
we define 
\begin{align}\label{eq: def IC cF d a k}
IC(\cF, d_{\ba,k}):= \sum_{F\in \cF} IC(F,d_{\ba,k}).
\end{align}

\begin{corollary}\label{cor: counting collection}
Suppose $0<1/n\ll \epsilon \ll 1/t, 1/a_1 \ll \gamma,  1/k, 1/\ell$ with $2\leq k \leq \ell$.
Let $\cF$ be a collection of $k$-graphs on $\ell$ vertices.
Suppose $H$ is an $n$-vertex $k$-graph satisfying a regularity instance $R= (\epsilon,\ba,d_{\ba,k})$ where $\ba\in [ t]^{k-1}$. Then 
$$\mathbf{Pr}(\cF, H)=IC(\cF, d_{\ba,k}) \pm \gamma.$$
\end{corollary}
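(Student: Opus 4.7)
The plan is to derive the corollary directly from Lemma~\ref{lem: counting hypergraph} by summing the per-graph estimates. First I would reduce to the case where $\cF$ consists of pairwise non-isomorphic $\ell$-vertex $k$-graphs. This reduction is harmless because every $\ell$-subset $S\subseteq V(H)$ induces a $k$-graph belonging to exactly one isomorphism class, so $\mathbf{Pr}(\cF, H)=\sum_{F\in \cF}\mathbf{Pr}(F,H)$ for such a set of representatives, and $IC(\cF,d_{\ba,k})=\sum_{F\in \cF} IC(F,d_{\ba,k})$ by definition \eqref{eq: def IC cF d a k}. In particular $|\cF|\leq N:=2^{\binom{\ell}{k}}$, which is bounded by a function of $k$ and $\ell$.

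Next I would apply Lemma~\ref{lem: counting hypergraph} to each $F\in \cF$ separately, with the error parameter $\gamma':=\gamma/N$ in place of $\gamma$. The hypothesis $1/a_1\ll \gamma, 1/k, 1/\ell$ ensures $\epsilon \ll 1/a_1 \ll \gamma'$, so the hierarchy required by the lemma is satisfied. This gives $\mathbf{Pr}(F,H)= IC(F,d_{\ba,k}) \pm \gamma'$ for every $F\in\cF$. Summing these estimates, the additive errors accumulate to at most $|\cF|\gamma' \leq \gamma$, which yields $\mathbf{Pr}(\cF, H)= IC(\cF, d_{\ba,k})\pm \gamma$ as required.

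Since the argument is essentially a union-bound on errors, there is no substantive obstacle; the only point requiring a little care is to verify that the implicit hierarchy is wide enough to accommodate the reduced parameter $\gamma'$, which is automatic because $N$ depends only on $k$ and $\ell$. Everything else, including the existence of the $(\epsilon,\ba,d_{\ba,k})$-equitable partition of $H$ and the bijective correspondence between polyads and address vectors used inside Lemma~\ref{lem: counting hypergraph}, has already been established in the preceding development and can be invoked as a black box.
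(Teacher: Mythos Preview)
Your proposal is correct and is essentially identical to the paper's proof: the paper also applies Lemma~\ref{lem: counting hypergraph} to each $F\in\cF$ with $\gamma/2^{\binom{\ell}{k}}$ in place of $\gamma$ and then sums, using $|\cF|\le 2^{\binom{\ell}{k}}$. Your explicit reduction to pairwise non-isomorphic representatives is a harmless clarification that the paper leaves implicit.
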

\begin{proof}
For each $F\in \cF$, we apply Lemma~\ref{lem: counting hypergraph} with $\gamma/2^{\binom{\ell}{k}}$ playing the role of $\gamma$.
As $|\cF|\leq 2^{\binom{\ell}{k}}$, this completes the proof.
\end{proof}

\subsection{Refining a partition}\label{sec:3.4}
In this subsection we make a simple observation regarding refinements of a given partition.
This shows that we can refine a family of partitions 
without significantly affecting the regularity parameters.

\begin{lemma}\label{lem: partition refinement}
Suppose $0< 1/n \ll\epsilon \ll 1/t ,1/k$ with $k, t\in \N\sm \{1\}$, $0<\eta <1$, and $\ba\in \N^{k-1}$.
Suppose $\sP=\sP(k-1,\ba)$ is an $(\eta,\epsilon,\ba)$-equitable family of partitions on $V$ with $|V|=n$. 
Suppose $\bb\in [t]^{k-1}$  and $a_i \mid b_i$ for all $i\in [k-1]$.
Then there exists a family of partitions $\sQ=\sQ(k-1,\bb)$ on $V$ which is $(\eta,\epsilon^{1/3},\bb)$-equitable and $\sQ \prec \sP$.
\end{lemma}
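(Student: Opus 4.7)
The plan is to construct $\sQ$ level by level. At level $1$, I would split each $V_i \in \sP^{(1)}$ into $c_1 := b_1/a_1$ sub-parts; by carefully allocating how many sub-parts per $V_i$ have size $\lceil n/b_1\rceil$ versus $\lfloor n/b_1\rfloor$, I can arrange that $\sQ^{(1)}$ is an equipartition of $V$ into $b_1$ parts refining $\sP^{(1)}$, and $b_1\geq a_1 \geq \eta^{-1}$ handles the equitable size condition.

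For $j = 2, \ldots, k-1$, I would construct $\sQ^{(j)}$ inductively, assuming that $\sQ^{(1)},\ldots,\sQ^{(j-1)}$ already satisfy $\sQ^{(i)} \prec \sP^{(i)}$ and that each polyad complex built from them is $(\epsilon^{1/3}, (1/b_2,\ldots,1/b_{j-1}))$-regular. For a fixed $\hat{Q}^{(j-1)} \in \hat{\sQ}^{(j-1)}$, Proposition~\ref{prop: hat relation}(xi) gives a unique $\hat{P}^{(j-1)} \in \hat{\sP}^{(j-1)}$ with $\cK_j(\hat{Q}^{(j-1)}) \subseteq \cK_j(\hat{P}^{(j-1)})$. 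Applying the counting lemma (Lemma~\ref{lem: counting}) to the $(j,j-1,*)$-complex of polyads associated with $\hat{P}^{(j-1)}$ (regular by Lemma~\ref{lem: maps bijections}(ii)) and to the analogous complex associated with $\hat{Q}^{(j-1)}$ (regular by induction), I obtain
\[
\alpha := \frac{|\cK_j(\hat{Q}^{(j-1)})|}{|\cK_j(\hat{P}^{(j-1)})|} \;\geq\; \frac{1}{2}\prod_{i=1}^{j-1}(a_i/b_i)^{\binom{j}{i}} \;\geq\; \frac{1}{2}\, t^{-2^k} \;\geq\; \epsilon^{1/2},
\]
where the last inequality uses the hierarchy. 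Each of the $a_j$ parts $P^{(j)}_\star \in \sP^{(j)}$ sitting inside $\cK_j(\hat{P}^{(j-1)})$ is $(\epsilon,1/a_j)$-regular with respect to $\hat{P}^{(j-1)}$, so by Lemma~\ref{lem: simple facts 1}(ii) it is $(\epsilon/\alpha,1/a_j)$-regular with respect to $\hat{Q}^{(j-1)}$. The slicing lemma (Lemma~\ref{lem: slicing}), applied with $s = c_j - 1$ parts and probabilities $p_i = 1/c_j$ (where $c_j := b_j/a_j$), then partitions each $P^{(j)}_\star \cap \cK_j(\hat{Q}^{(j-1)})$ into $c_j$ sub-parts, each $(3\epsilon/\alpha, 1/b_j)$-regular with respect to $\hat{Q}^{(j-1)}$. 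Since $3\epsilon/\alpha \leq 3\epsilon^{1/2} \leq \epsilon^{1/3}$, each is $(\epsilon^{1/3}, 1/b_j)$-regular. Taking $\sQ^{(j)}$ to be the union of all such slices over all $\hat{Q}^{(j-1)}$ and $P^{(j)}_\star$ produces a partition of $\cK_j(\sQ^{(1)})$ with $b_j$ parts per polyad and $\sQ^{(j)} \prec \sP^{(j)}$.

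Finally, Lemma~\ref{lem: family of partitions construction} confirms that $\sQ = \{\sQ^{(i)}\}_{i=1}^{k-1}$ is a valid family of partitions with $\sQ \prec \sP$, and the level-by-level regularity yields that for every $K \in \cK_k(\sQ^{(1)})$ the complex $\hat{\cQ}(K)$ is $(\epsilon^{1/3},(1/b_2,\ldots,1/b_{k-1}))$-regular; hence $\sQ$ is $(\eta,\epsilon^{1/3},\bb)$-equitable. The main technical obstacle is the bound on $\alpha$: it requires invoking the counting lemma separately on the $\sP$ and the inductively regular $\sQ$ complexes, and the hierarchy $\epsilon \ll 1/t, 1/k$ must be strong enough to absorb the factor $\prod (a_i/b_i)^{\binom{j}{i}}$, which can be as small as $t^{-O(2^k)}$.
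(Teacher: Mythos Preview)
Your proposal is correct and follows essentially the same route as the paper: an induction (equivalently, level-by-level construction) that refines $\sP^{(1)}$ into an equipartition with $b_1$ parts and then, at each higher level, uses the counting lemma to lower-bound $|\cK_j(\hat{Q}^{(j-1)})|/|\cK_j(\hat{P}^{(j-1)})|$, transfers regularity via Lemma~\ref{lem: simple facts 1}(ii), and splits with the slicing lemma. The paper records the intermediate regularity as $\epsilon^{2/3}$ (and then $3\epsilon^{2/3}\le \epsilon^{1/3}$) rather than your $\epsilon/\alpha\le \epsilon^{1/2}$, but this is cosmetic.
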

It is easy to prove this by induction on $k$ via an appropriate application of the slicing lemma (Lemma~\ref{lem: slicing}).
We omit the details.
\COMMENT{
We prove the statement by induction on $k$. 
Suppose $k=2$.
As $1/n\ll 1/\|\bb\|_\infty$, 
we can choose an equipartition of each set in $\sP^{(1)}$ into $b_1/a_1$ parts.
This is the desired statement.
Suppose now that $k\geq 3$ and the statement holds for $k-1$. 
We use the induction hypothesis to obtain an $(\eta,\epsilon^{1/3},(b_1,\dots,b_{k-2}))$-equitable family of partitions $\{\sQ^{(i)}\}_{i=1}^{k-2}$.
Since $\sQ^{(k-2)}\prec \sP^{(k-2)}$, 
for each $\hat{\bx}\in \hat{A}(k-1,k-2,(b_1,\dots, b_{k-2}))$, 
there exists $\hat{\by} \in \hat{A}(k-1,k-2,(a_1,\dots, a_{k-2}))$ such that $\hat{Q}^{(k-2)}(\hat{\bx}) \subseteq \hat{P}^{(k-2)}(\hat{\by})$. (Here, we also use Lemma~\ref{lem: maps bijections}(i).)
Recall that $\cK_{k-1}(\hat{P}^{(k-2)}(\hat{\by}))$ is partitioned into $(k-1)$-graphs $P^{(k-1)}(\hat{\by},1),\dots, P^{(k-1)}(\hat{\by},a_{k-1})$.
Lemma~\ref{lem: counting} and the fact that $\epsilon \ll 1/t$ and $\|\ba\|_{\infty}\leq \|\bb\|_{\infty}\leq t$ together imply that 
$$|\cK_{k-1}(\hat{Q}^{(k-2)}(\hat{\bx}))| = (1\pm 1/t)\prod_{i=1}^{k-2} b_i^{-\binom{k-1}{i}} n^{k-1} \geq t^{-4^k} |\cK_{k-1}(\hat{P}^{(k-2)})(\hat{\by})|.$$
Since $\sP$ is an $(\eta,\epsilon,\ba)$-equitable family of partitions, $P^{(k-1)}(\hat{\by},a)$ is $(\epsilon, 1/a_{k-1})$-regular with respect to $\hat{P}^{(k-2)}(\hat{\by})$ for each $a\in [a_{k-1}]$.
Thus Lemma~\ref{lem: simple facts 1}(ii) implies that for each $a\in [a_{k-1}]$, 
the $(k-1)$-graph
$\cK_{k-1}(\hat{Q}^{(k-2)}(\hat{\bx}))\cap P^{(k-1)}(\hat{\by},a)$ is $(\epsilon^{2/3},1/a_{k-1})$-regular with respect to $\hat{Q}^{(k-2)}(\hat{\bx})$.
We use Lemma~\ref{lem: slicing} to partition $\cK_{k-1}(\hat{Q}^{(k-2)}(\hat{\bx}))\cap P^{(k-1)}(\hat{\by},a)$ into  $b_{k-1}/a_{k-1}$ many $(k-1)$-graphs
$$Q(\hat{\bx}, (a-1) b_{k-1}/a_{k-1} + 1), \dots, Q(\hat{\bx}, a b_{k-1}/a_{k-1})$$ such that each of these are $(3\epsilon^{2/3}, 1/b_{k-1})$-regular with respect to $\hat{Q}^{(k-2)}(\hat{\bx})$.
We carry out this procedure for all $\hat{\bx}\in \hat{A}(k-1,k-2,(b_1,\dots, b_{k-2}))$ and obtain 
$$	\sQ^{(k-1)} := \{ Q(\hat{\bx},b) :\hat{\bx} \in \hat{A}(k-1,k-2,(b_1,\dots,b_{k-2})), b\in [b_{k-1}]\}.$$
Hence $\sQ := \{ \sQ^{(i)}\}_{i=1}^{k-1}$ satisfies $\sQ^{(k-1)} \prec \sP^{(k-1)}$ and each $Q^{(k-1)}(\hat{\bx},b) \in \sQ^{(k-1)}$ is $(\epsilon^{1/3},1/b_{k-1})$-regular with respect to $\hat{Q}^{(k-2)}(\hat{\bx})$ for all $\hat{\bx}\in \hat{A}(k,k-1,\bb)$.
In addition, we have $\sQ^{(k-1)}\prec \{\cK_{k-1}(\hat{Q}^{(k-2)}(\hat{\bx})):\hat{\bx}\in\hat{A}(k,k-1,\bb)\}$ by definition.
Thus $\{\sQ^{(j)}\}_{j=1}^{k-1}$ is the desired family of partitions.
}

\subsection{Small perturbations of partitions}\label{sec:3.5}
Here we consider the effect of small changes in a partition on the resulting parameters.
In particular, the next lemma implies that for any family of partitions $\sP$,
every family of partitions that is close to $\sP$ in distance is a family of partitions with almost the same parameters. This is proved in \cite{JKKO1}.

\begin{lemma}\label{lem: slightly different partition regularity}
Suppose $k\in \N\sm\{1\}$, $0<1/n \ll \nu \ll \epsilon$, and $0\leq \lambda \leq 1/4$.
Suppose $R=(\epsilon/3,\ba,d_{\ba,k})$ is a regularity instance.
Suppose $V$ is a vertex set of size $n$ and suppose $G^\kk,H^\kk$ are $k$-graphs on $V$ with $|G^{(k)}\triangle H^{(k)}|\leq \nu \binom{n}{k}$.
Suppose $\sP=\sP(k-1,\ba)$ is a $(1/a_1,\epsilon,\ba,\lambda)$-equitable family of partitions on $V$ which is an $(\epsilon,d_{\ba,k})$-partition of $H^{(k)}$.
Suppose $\sQ=\sQ(k-1,\ba)$ is a family of partitions on $V$ such that
for any $j\in [k-1]$, $\hat{\bx}\in \hat{A}(j,j-1,\ba)$, and $b\in [a_j]$, we have
\begin{align}\label{eq: P triangle Q leq nu}
	|P^{(j)}(\hat{\bx},b) \triangle Q^{(j)}(\hat{\bx},b)| \leq \nu \binom{n}{j}.
\end{align}
Then $\sQ$ is a $(1/a_1,\epsilon+ \nu^{1/6},\ba,\lambda+\nu^{1/6})$-equitable family of partitions which is an $(\epsilon+\nu^{1/6},d_{\ba,k})$-partition of $G^{(k)}$.
\end{lemma}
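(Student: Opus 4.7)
The conclusion splits naturally into two parts: (a) $\sQ$ is $(1/a_1,\epsilon+\nu^{1/6},\ba,\lambda+\nu^{1/6})$-equitable, i.e.\ each $Q^{(j)}(\hat{\bx},b)$ is $(\epsilon+\nu^{1/6},1/a_j)$-regular with respect to $\hat{Q}^{(j-1)}(\hat{\bx})$ with the right vertex-class sizes; and (b) $G^{(k)}$ is $(\epsilon+\nu^{1/6},d_{\ba,k}(\hat{\bx}))$-regular with respect to $\hat{Q}^{(k-1)}(\hat{\bx})$ for every $\hat{\bx}\in \hat{A}(k,k-1,\ba)$. I would derive both from a single \emph{persistence} principle: if a $j$-graph $A$ is $(\epsilon,d)$-regular with respect to a polyad $B$ whose crossing $j$-cliques number $\gg \nu^{1/2} n^j$, and if $|A\triangle A'|,\ |B\triangle B'|\leq O(\nu)n^j$, then $A'$ is $(\epsilon+\nu^{1/3},d)$-regular with respect to $B'$. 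This is an elementary consequence of the definition of $(\epsilon,d)$-regularity.

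\textbf{Execution.} First I would promote the level-wise hypothesis \eqref{eq: P triangle Q leq nu} to a bound on polyads. Using the explicit formula \eqref{eq:hatPconsistsofP(x,b)}, each polyad is a union of $j$ pieces of the form $P^{(j-1)}(\hat{\by},\cdot)$; applying \eqref{eq: P triangle Q leq nu} to each piece and summing gives
\[
|\hat{P}^{(j-1)}(\hat{\bx})\triangle \hat{Q}^{(j-1)}(\hat{\bx})| \leq j\nu \binom{n}{j-1}.
\]
Next, since $\sP$ is $(1/a_1,\epsilon,\ba,\lambda)$-equitable, Lemma~\ref{lem: counting} (whose applicability is exactly what the built-in constraint $\epsilon\leq \epsilon_{\ref{def: regularity instance}}(\norm{\ba},k)$ is designed to guarantee) provides the lower bound $|\cK_j(\hat{P}^{(j-1)}(\hat{\bx}))|\geq \tfrac{1}{2}\prod_{i=1}^{j-1}a_i^{-\binom{j}{i}}n^j$, which is much larger than $\nu^{1/2}n^j$ thanks to the hierarchy $\nu\ll \epsilon\ll 1/\norm{\ba}$. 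Hence the persistence principle upgrades $(\epsilon,1/a_j)$-regularity of $P^{(j)}(\hat{\bx},b)$ over $\hat{P}^{(j-1)}(\hat{\bx})$ into $(\epsilon+\nu^{1/6},1/a_j)$-regularity of $Q^{(j)}(\hat{\bx},b)$ over $\hat{Q}^{(j-1)}(\hat{\bx})$, which is (a); the vertex-class condition $|V_i^{\sQ}|=(1\pm(\lambda+\nu^{1/6}))n/a_1$ is immediate from \eqref{eq: P triangle Q leq nu} at $j=1$. For (b), I would apply the persistence principle one last time with $H^{(k)}\cap \cK_k(\hat{P}^{(k-1)}(\hat{\bx}))$ playing the role of $A$ and $G^{(k)}\cap \cK_k(\hat{Q}^{(k-1)}(\hat{\bx}))$ the role of $A'$, feeding in both $|H^{(k)}\triangle G^{(k)}|\leq \nu\binom{n}{k}$ and the polyad bound just established.

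\textbf{Main obstacle.} The only delicate point is the counting-lemma step: we need $|\cK_j(\hat{P}^{(j-1)}(\hat{\bx}))|$ to be of order $n^j/\norm{\ba}^{O(k)}$, not merely nonzero, so that perturbations of order $\nu n^j$ are negligible when comparing densities against the regularity threshold $\epsilon$. This is precisely why the definition of a regularity instance enforces $\epsilon\leq \epsilon_{\ref{def: regularity instance}}(\norm{\ba},k)\ll \epsilon_{\ref{lem: counting}}(1/\norm{\ba},1/\norm{\ba},k-1,k)/\norm{\ba}^{4^k}$, i.e.\ the counting lemma is built into the very notion we perturb. Once this is in hand, the bookkeeping is routine: a single power $\nu^{1/6}$ comfortably absorbs the $\nu^{1/3}$ losses from the persistence step together with the $\gamma$ losses from counting.
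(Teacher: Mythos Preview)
The paper does not actually contain a proof of this lemma: the text preceding the statement says ``This is proved in \cite{JKKO1}.'' So there is no in-paper proof to compare against.

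That said, your approach is correct and is evidently the one the authors had in mind. The paper's \LaTeX\ source contains (inside a \texttt{\textbackslash COMMENT} block, hence not printed) a ``Persistence lemma'' stating exactly your persistence principle: if $H^{(k)}$ is $(\epsilon,d)$-regular with respect to $H^{(k-1)}$, $|\cK_k(H^{(k-1)})|\geq \nu^{1/2}m^k$, $|H^{(k)}\triangle G^{(k)}|\leq \nu m^k$ and $|H^{(k-1)}\triangle G^{(k-1)}|\leq \nu m^{k-1}$, then $G^{(k)}$ is $(\epsilon+\nu^{1/3},d)$-regular with respect to $G^{(k-1)}$. Your plan is precisely to promote \eqref{eq: P triangle Q leq nu} to a bound on polyads via \eqref{eq:hatPconsistsofP(x,b)}, invoke Lemma~\ref{lem: counting} (enabled by the regularity-instance constraint $\epsilon/3\leq \epsilon_{\ref{def: regularity instance}}(\norm{\ba},k)$, cf.\ Lemma~\ref{lem: maps bijections}) to get the required lower bound on $|\cK_j(\hat{P}^{(j-1)}(\hat{\bx}))|$, and then apply persistence both at each level $j\in[k-1]\setminus\{1\}$ for equitability and at level $k$ (absorbing $|G^{(k)}\triangle H^{(k)}|\leq \nu\binom{n}{k}$ as well) for the density partition. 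Your remark that the counting-lemma applicability is the only genuine issue, and that the definition of a regularity instance is tailored to guarantee it, matches the authors' own comment in the source at Definition~\ref{def: regularity instance}.
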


The following lemma 
shows that for every equitable family of partitions $\sP$ whose vertex partition $\sP^{(1)}$ is an almost equipartition,
there is an equitable family of partitions with almost the same parameters whose vertex partition is an equipartition.
\begin{lemma}\label{lem: removing lambda}
Suppose $0< 1/n \ll \lambda   \ll \epsilon \leq 1$, $k\in \N\sm\{1\}$, and $R=(\epsilon/3,\ba,d_{\ba,k})$ is a regularity instance.
Suppose $\sP=\sP(k-1,\ba)$ is a $(1/a_1,\epsilon,\ba,\lambda)$-equitable family of partitions and an $(\epsilon,d_{\ba,k})$-partition of an $n$-vertex $k$-graph $H^{(k)}$.
Then there exists a family of partitions $\sQ=\sQ(k-1,\ba)$ which is an $(\epsilon+\lambda^{1/10},\ba,d_{\ba,k})$-equitable partition of $H^{(k)}$. 
\end{lemma}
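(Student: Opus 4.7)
The plan is to modify the vertex partition $\sP^{(1)}$ into an exact equipartition while perturbing each higher-level partition by at most an $O(\lambda)$-fraction of its $j$-sets, and then invoke Lemma~\ref{lem: slightly different partition regularity} to transfer regularity from $\sP$ to the new family $\sQ$.

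\emph{Step 1: Equalize the vertex partition.} Choose an equipartition $\sQ^{(1)}=\{W_1,\dots,W_{a_1}\}$ with $|W_i|\in\{\lfloor n/a_1\rfloor,\lfloor n/a_1\rfloor+1\}$ so as to minimize $\sum_i|V_i\triangle W_i|$. Since $|V_i|=(1\pm\lambda)n/a_1$, this sum is at most $2\lambda n$. Let $V^*:=\bigcup_i(V_i\cap W_i)$ and $U:=V\setminus V^*$, so $|U|\le 2\lambda n$.

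\emph{Step 2: Lift to higher levels via Lemma~\ref{lem: family of partitions construction}.} Inductively for $j=2,\dots,k-1$, define
\[
Q^{(j)}(\hat\bx,b):=\bigl(P^{(j)}(\hat\bx,b)\cap\tbinom{V^*}{j}\bigr)\cup E^{(j)}(\hat\bx,b),
\]
where $E^{(j)}(\hat\bx,1)$ collects those $J\in\cK_j(\sQ^{(1)})$ with $J\cap U\neq\es$ and $\hat\bx_\sQ(J)=\hat\bx$ (the address being computed from the already-constructed $\sQ^{(1)},\dots,\sQ^{(j-1)}$ under the natural labelling $\phi^{(i)}_\sQ(Q^{(i)}(\hat\by,b')):=b'$), and $E^{(j)}(\hat\bx,b):=\es$ for $b\ge 2$; the polyads $\hat Q^{(j-1)}(\hat\bx)$ are taken as in~\eqref{def:polyad}. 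A simple induction on $j$ shows that $\hat\bx_\sQ(J)=\hat\bx_\sP(J)$ whenever $J\sub V^*$, and this compatibility guarantees $P^{(j)}(\hat\bx,b)\cap\tbinom{V^*}{j}\sub\cK_j(\hat Q^{(j-1)}(\hat\bx))$, whence the cells $\{Q^{(j)}(\hat\bx,b):b\in[a_j]\}$ are disjoint and exhaust $\cK_j(\hat Q^{(j-1)}(\hat\bx))$. Nonemptiness is automatic: Lemma~\ref{lem: counting} together with Lemma~\ref{lem: maps bijections} gives $|P^{(j)}(\hat\bx,b)|=\Theta(n^j)$, whereas at most $O(\lambda)n^j$ of its $j$-sets meet $U$. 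Lemma~\ref{lem: family of partitions construction} then certifies that $\sQ=\{\sQ^{(j)}\}_{j=1}^{k-1}$ is a family of partitions.

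\emph{Step 3: Apply the perturbation lemma.} By construction $P^{(j)}(\hat\bx,b)\triangle Q^{(j)}(\hat\bx,b)\sub\{J\in\tbinom{V}{j}:J\cap U\neq\es\}$, which has size at most $C_k\lambda\binom{n}{j}$ for some $C_k=C_k(k)$. Set $\nu:=C_k\lambda\ll\epsilon$ and apply Lemma~\ref{lem: slightly different partition regularity} with $G^{(k)}:=H^{(k)}$; this shows $\sQ$ is an $(\epsilon+\nu^{1/6},d_{\ba,k})$-partition of $H^{(k)}$ and is $(1/a_1,\epsilon+\nu^{1/6},\ba,\lambda+\nu^{1/6})$-equitable. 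Since $\sQ^{(1)}$ was designed to be an equipartition, the $\lambda$ slack vanishes, and $(C_k\lambda)^{1/6}\le\lambda^{1/10}$ holds in the hierarchy $\lambda\ll\epsilon$, so $\sQ$ is an $(\epsilon+\lambda^{1/10},\ba,d_{\ba,k})$-equitable partition of $H^{(k)}$.

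The main obstacle is Step~2: the natural recipe ``keep the cell of every $j$-set that does not touch $U$'' only yields a genuine family of partitions because of the inductive identity $\hat\bx_\sQ(J)=\hat\bx_\sP(J)$ on $V^*$. Everything else---the arithmetic in Step~1, the estimate of the symmetric difference, and the invocation of Lemma~\ref{lem: slightly different partition regularity}---is routine bookkeeping.
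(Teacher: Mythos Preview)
Your proof is correct and follows essentially the same three-step strategy as the paper: equalize $\sP^{(1)}$ into an equipartition, inductively lift the perturbation to the higher levels so that the result is still a family of partitions (verified via Lemma~\ref{lem: family of partitions construction}), and then invoke Lemma~\ref{lem: slightly different partition regularity}. The only real difference is in the specific recipe at level $j$: the paper sets $Q^{(j)}(\hat\bx,b):=P^{(j)}(\hat\bx,b)\cap\cK_j(\hat Q^{(j-1)}(\hat\bx))$ for $b<a_j$ and dumps the remainder into the last cell, bounding the symmetric difference recursively as $|P^{(j)}(\hat\bx,b)\triangle Q^{(j)}(\hat\bx,b)|\le 2^{j}j!\lambda n^{j}$; you instead restrict to $\binom{V^*}{j}$ and put every $j$-set meeting $U$ into the first cell, which gives the cleaner direct bound $|P^{(j)}(\hat\bx,b)\triangle Q^{(j)}(\hat\bx,b)|\le|\{J:J\cap U\neq\emptyset\}|$ at the price of the compatibility observation $\hat\bx_\sQ(J)=\hat\bx_\sP(J)$ on $\binom{V^*}{j}$. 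Both variants are fine and lead to the same application of Lemma~\ref{lem: slightly different partition regularity} with $\nu$ of order $\lambda$.
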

\begin{proof}
Let $m:=\lfloor n/a_1 \rfloor$.
We write $\sP^{(1)}=\{V_1,\dots ,V_{a_1}\}$.
Since $\sP$ is a $(1/a_1,\epsilon,\ba,\lambda)$-equitable family of partitions,
we have $|V_i| = (1\pm \lambda)m$ for all $i\in [a_1]$, and Lemma~\ref{lem: maps bijections} implies that for each $j\in [k-1]$, 
the function $\hat{P}^{(j)}(\cdot):\hat{A}(j+1,j,\ba)\to \hat{\sP}^{(j)}$ is a bijection and 
for each $j\in [k-1]\setminus \{1\}$, the function
$P^{(j)}(\cdot,\cdot):\hat{A}(j,j-1,\ba)\times [a_j]\to \sP^{(j)}$ is also a bijection.
Next, we fix the size of the parts in the new equitable partition $\sQ$ of $V:=V_1\cup \dots\cup V_{a_1}$.
For each $i\in [a_1]$, let $m_i:= \lfloor (n+i-1)/a_1\rfloor$.
Thus $m_i\in \{m,m+1\}$.
Choose $U'_i\sub V_i$ of size $\max\{|V_i|, m_i\}$ and let $U'_0:= \bigcup_{i\in [a_1]} V_i\setminus U'_i$.
We partition $U'_0$ into $U''_1,\dots, U''_{a_1}$ in an arbitrary manner such that $|U''_i| = m_i - |U'_i|$.
For each $i\in [a_1]$, let 
\begin{align*}
	U_i := U'_i\cup U''_i \text{ and }\sQ^{(1)}:=\{U_1,\dots, U_{a_1}\}.
\end{align*}
Moreover, let $Q^{(1)}(b,b) := U_b$ for each $b\in [a_1]$,
and $Q^{(1)}(b,b'):=\es$ for all distinct $b,b'\in[a_1]$.
For each $i\in [a_1]$, we have $$|U_i\triangle V_i|\leq | (1\pm \lambda)m-m_i| \leq \lambda m+1.$$
For each $\hat{\bx}=(\alpha_1,\alpha_2) \in \hat{A}(2,1,\ba)$, let 
$\hat{Q}^{(1)}(\hat{\bx}):= U_{\alpha_1} \cup U_{\alpha_2}$.
Note that $\{\cK_{2}(\hat{Q}^{(1)}(\hat{\bx})):\hat{\bx} \in \hat{A}(2,1,\ba)\}$ forms a partition of $\cK_2(\sQ^{(1)})$.

Now, we inductively construct $\sQ^{(2)},\dots, \sQ^{(k-1)}$ in this order.
Assume that for some $j\in [k]\sm\{1\}$, we have already defined  $\{\sQ^{(i)}\}_{i=1}^{j-1}$ with $\sQ^{(i)}=\{Q^{(i)}(\hat{\bx},b): \hat{\bx}\in \hat{A}(i,i-1,\ba), b\in [a_i]\}$ and $\hat{\sQ}^{(i)}=\{\hat{Q}^{(i)}(\hat{\bx}) :\hat{\bx}\in \hat{A}(i+1,i,\ba)\}$ for each $i\in [j-1]$ such that the following hold.

\begin{itemize}
\item[(Q1)$_{j-1}$] For each $i\in [j-1]$, $\hat{\bx}\in \hat{A}(i,i-1,\ba)$ and $b\in [a_i]$, 
we have $|P^{(i)}(\hat{\bx},b)\triangle Q^{(i)}(\hat{\bx},b)| \leq 2^{i}i! \lambda n^{i}$.
\item[(Q2)$_{j-1}$] For each $i\in [j-1]\setminus\{1\}$ and $\hat{\bx}\in \hat{A}(i,i-1,\ba)$, the collection  
$\{Q^{(i)}(\hat{\bx},b) : b\in [a_i]\}$ forms a partition of $\cK_{i}(\hat{Q}^{(i-1)}(\hat{\bx}))$.
\item[(Q3)$_{j-1}$]  For each $i\in [j-1]$ and $\hat{\bx}\in \hat{A}(i+1,i,\ba)$, 
we have $\hat{Q}^{(i)}(\hat{\bx}) =\bigcup_{\hat{\by}\leq_{i,i-1} \hat{\bx}} Q^{(i)}(\hat{\by},\bx^{(i)}_{\by^{(1)}_* })$.
\end{itemize}\vspace{0.2cm}
Note that $\sQ^{(1)}$ satisfies (Q1)$_{1}$--(Q3)$_{1}$. Suppose first that $j\leq k-1$. In this case we will define $\sQ^{(j)}$ satisfying (Q1)$_{j}$--(Q3)$_{j}$.
For each $\hat{\bx}\in \hat{A}(j,j-1,\ba)$ and $b\in [a_j]$, we define
\begin{align*}
	Q^{(j)}(\hat{\bx},b):=\left\{ \begin{array}{ll}
	P^{(j)}(\hat{\bx},b) \cap\cK_{j}(\hat{Q}^{(j-1)}(\hat{\bx})) & \text{ if } b\in [a_j-1], \\
	\cK_{j}(\hat{Q}^{(j-1)}(\hat{\bx}))\setminus \bigcup_{b\in [a_j-1]} P^{(j)}(\hat{\bx},b) & \text{ otherwise.}
	\end{array}\right. 
\end{align*}
Let $$\sQ^{(j)}:= \{	Q^{(j)}(\hat{\bx},b):\hat{\bx}\in \hat{A}(j,j-1,\ba), b\in [a_j]\}.$$
Then for any fixed $\hat{\bx}\in \hat{A}(j,j-1,\ba)$, it is obvious that 
$Q^{(j)}(\hat{\bx},1),\dots, Q^{(j)}(\hat{\bx},a_j)$ forms a partition of $\cK_{j}(\hat{Q}^{(j-1)}(\hat{\bx}))$. Thus (Q2)$_{j}$ holds.

For each $\hat{\bz} \in \hat{A}(j+1,j,\ba)$, let
\begin{align*}
\hat{Q}^{(j)}(\hat{\bz}) :=\bigcup_{\hat{\by}\leq_{j,j-1} \hat{\bz}} Q^{(j)}(\hat{\by},\bz^{(j)}_{\by^{(1)}_* }).
\end{align*}
Then (Q3)$_{j}$ also holds.%
\COMMENT{For each $\hat{\bx}\in \hat{A}(j,j-1,\ba)$ and $b\in [a_j]$, the definition of $Q^{(j)}(\hat{\bx},b)$ implies that 
$$P^{(j)}(\hat{\bx},b)\triangle Q^{(j)}(\hat{\bx},b) \subseteq 
\cK_{j}(\hat{P}^{(j-1)}(\hat{\bx}))\triangle \cK_{j}(\hat{Q}^{(j-1)}(\hat{\bx})).$$
}

Note that for any fixed $(j-1)$-set $J'\in \hat{P}^{(j-1)}(\hat{\bx})\triangle \hat{Q}^{(j-1)}(\hat{\bx})$, there are at most $(1+\lambda)m$ distinct $j$-sets in $\cK_{j}(\hat{P}^{(j-1)}(\hat{\bx}))\triangle \cK_{j}(\hat{Q}^{(j-1)}(\hat{\bx}))$ containing $J'$.
Thus for $\hat{\bx}\in \hat{A}(j,j-1,\ba)$ and $b\in [a_j]$, we obtain
\begin{eqnarray*}
|P^{(j)}(\hat{\bx},b)\triangle Q^{(j)}(\hat{\bx},b)|
&\leq &|\cK_{j}(\hat{P}^{(j-1)}(\hat{\bx}))\triangle \cK_{j}(\hat{Q}^{(j-1)}(\hat{\bx}))|\\ &\leq& (1+\lambda) m |\hat{P}^{(j-1)}(\hat{\bx})\triangle \hat{Q}^{(j-1)}(\hat{\bx})| \\
&\stackrel{\eqref{eq:hatPconsistsofP(x,b)}, {\rm (Q3)}_{j-1} }{\leq}&
 \sum_{\hat{\by}\leq_{j-1,j-2} \hat{\bx}} 2m | P^{(j-1)}(\hat{\by},\bx^{(j)}_{\by^{(1)}_* }) \triangle Q^{(j-1)}(\hat{\by},\bx^{(j)}_{\by^{(1)}_* })|  \\
 & \stackrel{{\rm (Q1)}_{j-1}}{\leq}& 2^{j} j! \lambda n^{j}. 
 \end{eqnarray*}
 \COMMENT{Here, \eqref{eq:hatPconsistsofP(x,b)} indicate the fact that $\sP$ satisfies \eqref{eq:hatPconsistsofP(x,b)}.
 Also \eqref{eq: I subset J then leq} implies that there are at most $j$ vectors $\hat{\by}$ such that $\hat{\by} \leq_{j-1,j-2} \hat{\bx}$. Thus we get the final inequality. }
Thus (Q1)$_{j}$ holds and 
we obtain $\{\sQ^{(i)}\}_{i=1}^{j}$ satisfying (Q1)$_{j}$--(Q3)$_{j}$. 
Inductively we obtain $\sQ= \{\sQ^{(i)}\}_{i=1}^{k-1}$ satisfying (Q1)$_{k-1}$--(Q3)$_{k-1}$.

Note that since $R=(\epsilon/3,\ba,d_{\ba,k})$ is a regularity instance, we have the following inequality.
$$\epsilon\leq \|\ba\|_\infty^{-4^k} \epsilon_{\ref{lem: counting}}(\|\ba\|^{-1}_\infty, \|\ba\|^{-1}_\infty,k-1,k).$$
Thus Lemma~\ref{lem: counting} (together with Lemma~\ref{lem: maps bijections}(ii)) implies for any $j \in [k-1]\sm\{1\}$ and $(\hat{\bx},b)\in \hat{A}(j,j-1,\ba)\times[a_j]$ that
$|P^{(j)}(\hat{\bx},b)|  \geq \epsilon^{1/2} n^{j}.$%
\COMMENT{
$$|P^{(j)}(\hat{\bx},b)| \geq \frac{1}{2a_j} | \cK_j(\hat{P}^{(j-1)}(\hat{\bx}))|  \geq \frac{1}{4} \prod_{i=2}^{j}a_i^{-\binom{j}{i}} ((1-\lambda)m)^{j}  \geq \epsilon^{1/2} n^{j}.$$}
This with (Q1)$_{k-1}$ shows that  for any $j \in [k-1]\sm\{1\}$ and $(\hat{\bx},b)\in \hat{A}(j,j-1,\ba)\times[a_j]$, the $j$-graph $Q^{(j)}(\hat{\bx},b)$ is nonempty.
Together with properties (Q2)$_{k-1}$ and (Q3)$_{k-1}$ this in turn ensures that we can apply Lemma~\ref{lem: family of partitions construction} to show that $\sQ$ is a family of partitions.

By (Q1)$_{k-1}$ and the assumption that $R=(\epsilon/3,\ba,d_{\ba,k})$ is a regularity instance, we can apply Lemma~\ref{lem: slightly different partition regularity} with $\sP, \sQ, H^{(k)}, H^{(k)}, \lambda, \lambda^{9/10}$\COMMENT{ $2^{j}j! \lambda n^{j} \leq \lambda^{9/10} \binom{n}{j}.$} playing the roles of $\sP, \sQ, H^{(k)}, G^{(k)}, \lambda, \nu$ to obtain that  $\sQ=\sQ(k-1,\ba)$ is an $(\epsilon+\lambda^{1/10},\ba,d_{\ba,k})$-equitable partition of $H^{(k)}$.
\end{proof}

\section{Regular approximations of partitions and hypergraphs}
\label{sec: Regular approximations of partitions and hypergraphs}

The main aim of this section is to prove a strengthening of the partition version (Lemma~\ref{RAL(k)}) of the regular approximation lemma. As described in Section~\ref{sec: partitions of hypergraphs and RAL}, Lemma~\ref{RAL(k)} outputs for a given equitable family of partitions $\sQ$ another family of partitions $\sP$ that refines $\sQ$.
In Lemma~\ref{lem:refreg} $\sP$ has the additional feature that it almost refines a further given (arbitrary) family of partitions $\sO$.
Observe that we cannot hope to refine $\sO$ itself, as for example some sets in $\sO^{(1)}$ may be very small.
We also prove two further tools: Lemma~\ref{lem: mimic Kk} allows us to transfer the large scale structure of a hypergraph to another one on a different vertex set and Lemma~\ref{lem: reconstruction} concerns suitable perturbations of a given partition. 
Lemmas~\ref{lem:refreg}--\ref{lem: reconstruction} will all be used in the proof of Lemma~\ref{lem: similar}. 

\begin{lemma}\label{lem:refreg}
For all $k, o \in \N\setminus\{1\}$, $s\in \N$, all $\eta,\nu>0$, 
and every function $\epsilon : \mathbb{N}^{k-1}\rightarrow (0,1]$, 
there are $\mu=\mu_{\ref{lem:refreg}}(k,o,s,\eta,\nu,\epsilon)>0$ 
and $t=t_{\ref{lem:refreg}}(k,o,s,\eta,\nu,\epsilon)\in \N$ and $n_0=n_{\ref{lem:refreg}}(k,o,s,\eta,\nu,\epsilon)\in\N$ such that the following hold. 
Suppose
\begin{itemize}
\item[{\rm (O1)$_{\ref{lem:refreg}}$}] $V$ is a set and $|V|=n \geq n_0$,
\item[{\rm (O2)$_{\ref{lem:refreg}}$}] $\sO(k-1,\ba^{\sO}) = \{\sO^{(j)}\}_{j=1}^{k-1}$ is an $o$-bounded family of partitions on $V$,
\item[{\rm (O3)$_{\ref{lem:refreg}}$}] $\sQ=\sQ(k,\ba^{\sQ})$ is a $(1/a_1^{\sQ},\mu,\ba^{\sQ})$-equitable $o$-bounded family of partitions on $V$, and
\item[{\rm (O4)$_{\ref{lem:refreg}}$}] $\sH^{(k)}= \{H^{(k)}_1,\dots,H^{(k)}_s\}$ is a partition of $\binom{V}{k}$ so that $\sH^{(k)} \prec \sQ^{(k)}$.
\end{itemize}
Then there exist a family of partitions $\sP = \sP(k-1,\ba^{\sP})$ and a partition $\sG^{(k)}= \{G^{(k)}_1,\dots, G^{(k)}_s\}$ of $\binom{V}{k}$ satisfying the following for each $j\in [k-1]$ and $i\in [s]$.
\begin{itemize}
\item[{\rm (P1)$_{\ref{lem:refreg}}$}] $\sP$ is a $t$-bounded $(\eta,\epsilon(\ba^{\sP}), \ba^{\sP})$-equitable family of partitions, and $a^{\sQ}_j$ divides $a^{\sP}_j$,
\item[{\rm (P2)$_{\ref{lem:refreg}}$}] $\sP^{(j)}\prec \sQ^{(j)}$ and $\sP^{(j)} \prec_{\nu} \sO^{(j)}$, 

\item[{\rm (G1)$_{\ref{lem:refreg}}$}] $G^{(k)}_i$ is perfectly $\epsilon(\ba^{\sP})$-regular with respect to $\sP$,
\item[{\rm (G2)$_{\ref{lem:refreg}}$}] $\sum_{i=1}^{s} |G_i^{(k)}\triangle H^{(k)}_i| \leq \nu \binom{n}{k}$, and 
\item[{\rm (G3)$_{\ref{lem:refreg}}$}] $\sG^{(k)} \prec \sQ^{(k)}$ and if $H^{(k)}_i \subseteq \cK_k(\sQ^{(1)})$ then $G_i^{(k)} \subseteq \cK_k(\sQ^{(1)})$.
\end{itemize}
\end{lemma}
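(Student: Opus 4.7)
The plan is to deduce Lemma~\ref{lem:refreg} from its precursor Lemma~\ref{RAL(k)} by constructing a ``bridge'' family of partitions $\tilde\sQ$ that refines $\sQ$ exactly and approximately refines $\sO$, and then invoking Lemma~\ref{RAL(k)} with $\tilde\sQ$ (and a suitable refinement of $\sH^{(k)}$) in place of $\sQ$ (and $\sH^{(k)}$). The approximate refinement $\sP^{(j)} \prec_\nu \sO^{(j)}$ required by (P2)$_{\ref{lem:refreg}}$ will then follow from $\sP^{(j)} \prec \tilde\sQ^{(j)}$ (delivered by Lemma~\ref{RAL(k)}) combined with $\tilde\sQ^{(j)} \prec_{\nu/2} \sO^{(j)}$ via the transitivity observation~\eqref{eq: prec triangle}.

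To build $\tilde\sQ = \tilde\sQ(k, \ba^{\tilde\sQ})$ we proceed inductively on $j \in [k]$. Given $\tilde\sQ^{(1)}, \ldots, \tilde\sQ^{(j-1)}$, within each polyad $\hat{\tilde Q}^{(j-1)}$ we take the common refinement of $\sQ^{(j)}$ and $\sO^{(j)}$ restricted to $\cK_j(\hat{\tilde Q}^{(j-1)})$, reabsorb any piece of relative volume below an appropriately small threshold $\tau_j$ into a single ``garbage'' piece per polyad, and then equi-slice each surviving piece via the slicing lemma (Lemma~\ref{lem: slicing}) to obtain exactly $a^{\tilde\sQ}_j$ roughly equal-sized parts per polyad. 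By Lemma~\ref{lem: simple facts 1}(ii) together with the slicing lemma, each such part is regular with the parameter required as input to Lemma~\ref{RAL(k)} with respect to its parent polyad, provided $\mu_{\ref{lem:refreg}}$ was chosen sufficiently small compared to the thresholds. The total volume of garbage at level $j$ is at most $(\nu/(2k))\binom{n}{j}$, so $\tilde\sQ^{(j)} \prec_{\nu/(2k)} \sO^{(j)}$, while $\tilde\sQ^{(j)} \prec \sQ^{(j)}$ by construction. The resulting $\tilde\sQ$ is equitable with parameters suitable for Lemma~\ref{RAL(k)} and is $o'$-bounded with $o'$ depending only on $k, o, s, \nu, \eta$.

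Next, let $\tilde\sH^{(k)}$ be the common refinement of $\sH^{(k)}$, $\tilde\sQ^{(k)}$, and the cut $\{\cK_k(\sQ^{(1)}), \binom{V}{k} \setminus \cK_k(\sQ^{(1)})\}$; then $\tilde\sH^{(k)} \prec \tilde\sQ^{(k)}$ with at most $s' := 2s|\tilde\sQ^{(k)}|$ classes. Apply Lemma~\ref{RAL(k)} with $\tilde\sQ, \tilde\sH^{(k)}, s', o', \nu/2$, and $\epsilon'(\ba) := \epsilon(\ba)/(2s')$ in place of $\sQ, \sH^{(k)}, s, o, \nu, \epsilon$, obtaining $\sP$ and a refined partition $\tilde\sG^{(k)} = \{\tilde G^{(k)}_{i,j}\}$ satisfying (P1)--(P5)$_{\ref{RAL(k)}}$. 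For each $i \in [s]$, set $G^{(k)}_i := \bigcup_j \tilde G^{(k)}_{i,j}$, with $j$ ranging over the refinement classes of $H^{(k)}_i$. Property (G1)$_{\ref{lem:refreg}}$ follows from the union lemma (Lemma~\ref{lem: union regularity}) after a mild regularity-parameter bookkeeping (hence the factor $1/(2s')$ in $\epsilon'$); (G2)$_{\ref{lem:refreg}}$ follows from (P4)$_{\ref{RAL(k)}}$; (P1)$_{\ref{lem:refreg}}$ and $\sP^{(j)} \prec \sQ^{(j)}$ in (P2)$_{\ref{lem:refreg}}$ from transitivity $\sP \prec \tilde\sQ \prec \sQ$; the approximate refinement $\sP^{(j)} \prec_\nu \sO^{(j)}$ from~\eqref{eq: prec triangle}; and (G3)$_{\ref{lem:refreg}}$ from (P5)$_{\ref{RAL(k)}}$ applied to $\tilde\sQ$, combined with $\tilde\sQ^{(k)} \prec \sQ^{(k)}$ and the inclusion of the $\cK_k(\sQ^{(1)})$-cut in $\tilde\sH^{(k)}$ (possibly after a polyad-level adjustment of those $\tilde G^{(k)}_{i,j}$-contents that happen to lie in $\cK_k(\tilde\sQ^{(1)}) \setminus \cK_k(\sQ^{(1)})$, which is a polyad-wise dichotomy for $\sP^{(k-1)}$ and hence preserves perfect regularity).

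The main obstacle will be the inductive construction of $\tilde\sQ$: assembling the layers so that the regularity of $\sQ$ survives successive refinement and re-slicing, while keeping the garbage within the $\nu/2$ budget and bounding $\norm{\ba^{\tilde\sQ}}$ in terms of $k, o, s, \nu, \eta$. The key device is a careful threshold hierarchy $\mu_{\ref{lem:refreg}} \ll \mu_{\ref{RAL(k)}} \ll \tau_k \ll \cdots \ll \tau_1 \ll \nu$, combined with repeated applications of the slicing lemma and Lemma~\ref{lem: simple facts 1}. A secondary subtlety is the polyad-level bookkeeping needed for the $\cK_k(\sQ^{(1)})$-preservation in (G3)$_{\ref{lem:refreg}}$, which is why we include the extra $\cK_k(\sQ^{(1)})$-cut in $\tilde\sH^{(k)}$.
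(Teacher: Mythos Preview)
There is a genuine gap in your construction of $\tilde\sQ$. The family $\sO$ in hypothesis (O2)$_{\ref{lem:refreg}}$ is merely an $o$-bounded family of partitions in the sense of Definition~\ref{def: family of partitions}; it carries \emph{no} regularity properties whatsoever. Hence, for $j\ge 2$, the common-refinement pieces $O^{(j)}\cap Q^{(j)}\cap \cK_j(\hat{\tilde Q}^{(j-1)})$ need not be $\epsilon$-regular with respect to $\hat{\tilde Q}^{(j-1)}$ for any useful $\epsilon$. But the slicing lemma (Lemma~\ref{lem: slicing}) requires its input $H^{(k)}$ to be $(\epsilon,d)$-regular with respect to the underlying $(k-1)$-graph, so you cannot ``equi-slice each surviving piece'' into regular parts. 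Lemma~\ref{lem: simple facts 1}(ii) does not help here: it transfers the regularity of a $k$-graph to a dense sub-$(k-1)$-graph, not the regularity of an irregular piece sitting inside a regular ambient space. Consequently your $\tilde\sQ$ cannot be made $(1/a_1^{\tilde\sQ},\mu_{\ref{RAL(k)}},\ba^{\tilde\sQ})$-equitable, and you have no valid input for Lemma~\ref{RAL(k)}.

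This is exactly why the paper proceeds by induction on $k$ rather than by a single pass. In Claim~\ref{cl: claim 2 claim 2} it applies the inductive hypothesis $L_{\ref{lem:refreg}}(k-1)$ with the common refinement $\sR^{(k-1)}$ of $\sO^{(k-1)}$ and $\sQ^{(k-1)}$ playing the role of $\sH^{(k)}$. The output (via (G1)$_{\ref{lem:refreg}}$ at level $k-1$) is a partition $\sM^{(k-1)}$ whose parts are \emph{perfectly regular} with respect to the intermediate family $\sL$ and are close to the irregular pieces $R_i^{(k-1)}$. Only these regularized pieces $M_i^{(k-1)}$ can then be sliced (see the construction of $\sL^{(k-1)}$ via \ref{item:sL1}--\ref{item:sL3}). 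In short, the regular approximation lemma is the only tool available to convert the arbitrary pieces of $\sO$ into regular ones, and invoking it level by level is precisely the content of the induction you are trying to bypass.
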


We believe that Lemma~\ref{lem:refreg} will have additional applications. As we mentioned it is also used in the proof Corollary~9.3 
in \cite{JKKO1}.

In Lemma~\ref{lem:refreg} we may assume without loss of generality that $1/\mu,t,n_0$ are non-decreasing in $k,o,s$ and non-increasing in $\eta, \nu$.\COMMENT{We cannot include $\epsilon$ here, since $\epsilon$ is a function.}

To prove Lemma~\ref{lem:refreg} we proceed by induction on $k$.
In the induction step,
we first construct an `intermediate' family of partitions $\{\sL^{(i)}\}_{i=1}^{k-1}$ which
satisfies (P1)$_{\ref{lem:refreg}}$ and (P2)$_{\ref{lem:refreg}}$.
The partitions $\sL^{(1)},\ldots,\sL^{(k-1)}$ are constructed via the inductive assumption of Lemma~\ref{lem:refreg} (see Claim~\ref{cl: claim 2 claim 2}).
We then construct a partition $\sL^{(k)}$ via appropriate applications of the slicing lemma (see Claim~\ref{claim:hatsL}).
Finally, we apply Lemma~\ref{RAL(k)} with $\sL_*= \{\sL^{(i)}\}_{i=1}^{k}$ playing the role of $\sQ$ to obtain our desired family of partitions $\sP$ and construct $G_i^\kk$ based on the $k$-graphs guaranteed by Lemma~\ref{RAL(k)}.

\begin{proof}[Proof of Lemma~\ref{lem:refreg}]
First of all, 
by decreasing the value of $\eta$ if necessary, 
we may assume that $\eta < 1/(10k!)$.
We may also assume that $\nu\leq \eta$.\COMMENT{We need this to get (L$^*$1) in the $k=2$ case of the proof of Claim~\ref{cl: claim 2 claim 2}}

We use induction on $k$. For each $k \in \mathbb{N}\setminus\{1\}$, let $L_{\ref{lem:refreg}}(k)$ be the statement of the lemma. Let $L_{\ref{lem:refreg}}(1)$ be the following statement (Claim~\ref{cl: 6.2 k=1 base case}).
\begin{claim}[$L_{\ref{lem:refreg}}(1)$]\label{cl: 6.2 k=1 base case}
For all $o,s\in \N$, all $\eta,\nu>0$, 
there are $t= t_{\ref{lem:refreg}}(1,o,s,\nu):=so\lceil 2\nu^{-2}\rceil$ and $n_0=n_{\ref{lem:refreg}}(1,o,s,\nu)\in\N$ such that the following hold. 
Suppose
\begin{itemize}
\item[{\rm (O1)$^1_{\ref{lem:refreg}}$}] $V$ is a set and $|V|=n \geq n_0$,
\item[{\rm (O2)$^1_{\ref{lem:refreg}}$}] $\sQ^{(1)}$ is an equipartition of $V$ into $a^{\sQ}_1\leq o$ parts,
\item[{\rm (O3)$^1_{\ref{lem:refreg}}$}] $\sH^{(1)}= \{H^{(1)}_1,\dots,H^{(1)}_s\}$ is a partition of $V$ so that $\sH^{(1)} \prec \sQ^{(1)}$.
\end{itemize}
Then there exists a partition $\sP^{(1)}$ of $V$ satisfying the following.
\begin{itemize}
\item[{\rm (P1)$^1_{\ref{lem:refreg}}$}] $\sP^{(1)}$ is an equipartition of $V$ into $a_1^{\sP} \leq t$ parts, and $a^{\sQ}_1$ divides $a^{\sP}_1$,
\item[{\rm (P2)$^1_{\ref{lem:refreg}}$}] $\sP^{(1)}\prec \sQ^{(1)}$ and $\sP^{(1)} \prec_{\nu^2} \sH^{(1)}$.
\end{itemize}
\end{claim}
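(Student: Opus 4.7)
My plan is to build $\sP^{(1)}$ by linearly ordering each part $V_j$ of $\sQ^{(1)}$ so that the vertices belonging to a common class of $\sH^{(1)}$ are consecutive, and then cutting this order into consecutive chunks of sizes $m$ or $m+1$, where $m:=\lfloor n/a_1^{\sP}\rfloor$. I will set $p := s\lceil 2\nu^{-2}\rceil$ and $a_1^{\sP} := a_1^{\sQ}\cdot p$, which already forces $a_1^{\sQ}\mid a_1^{\sP}$ and $a_1^{\sP} \leq os\lceil 2\nu^{-2}\rceil = t$.

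Write $n = a_1^{\sP} m + r$ with $0 \leq r < a_1^{\sP}$, so an equipartition of $V$ has exactly $r$ parts of size $m+1$ and $a_1^{\sP}-r$ parts of size $m$. Since $\sQ^{(1)}$ is an equipartition and $a_1^{\sQ}\mid a_1^{\sP}$, each $V_j$ has size $pm + \lfloor r/a_1^{\sQ}\rfloor$ or $pm + \lfloor r/a_1^{\sQ}\rfloor + 1$, and the congruence $r\equiv n\pmod{a_1^{\sQ}}$ allows me to pick for each $j$ a value $s_j \in \{\lfloor r/a_1^{\sQ}\rfloor, \lfloor r/a_1^{\sQ}\rfloor+1\}$ with $\sum_j s_j = r$. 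I then partition $V_j$ into exactly $p$ consecutive chunks consisting of $s_j$ chunks of size $m+1$ and $p-s_j$ chunks of size $m$. Choosing $n_0$ large enough that $m\geq 1$, this yields an equipartition of $V$ refining $\sQ^{(1)}$, verifying (P1)$^1_{\ref{lem:refreg}}$.

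For (P2)$^1_{\ref{lem:refreg}}$ I exploit the ordering: writing $I_j := \{i\in [s]: H_i^{(1)}\sub V_j\}$, I arrange the vertices of $V_j$ so that the vertices of each $H_i^{(1)}$ with $i\in I_j$ appear consecutively. A chunk is \emph{pure} if it lies inside a single class of $\sH^{(1)}$ and \emph{mixed} otherwise. Since every mixed chunk in $V_j$ must contain one of the $|I_j|-1$ boundaries between consecutive classes, the total number of mixed chunks is at most $\sum_j(|I_j|-1)\leq s$, covering at most $s(m+1)\leq 2sm$ vertices in total. Defining $f:\sP^{(1)}\to\sH^{(1)}$ by sending each pure chunk to its containing class and each mixed chunk to any class it meets gives
\[
\sum_{\cP\in \sP^{(1)}}|\cP\sm f(\cP)| \;\leq\; 2sm \;\leq\; \frac{2sn}{a_1^{\sP}} \;\leq\; \frac{2n}{\lceil 2\nu^{-2}\rceil} \;\leq\; \nu^2 n,
\]
which is (P2)$^1_{\ref{lem:refreg}}$. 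No induction or heavy machinery is needed; the only slightly delicate point will be the arithmetic verifying that the assignment $(s_j)$ above indeed produces a valid equipartition refining $\sQ^{(1)}$, but this reduces to the observation that $r\equiv n\pmod{a_1^{\sQ}}$.
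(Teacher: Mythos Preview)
Your proposal is correct and takes essentially the same approach as the paper: both set $a_1^{\sP}=a_1^{\sQ}\cdot s\lceil 2\nu^{-2}\rceil$, refine each part $V_j$ of $\sQ^{(1)}$ into $p=s\lceil 2\nu^{-2}\rceil$ pieces of size $m$ or $m+1$ while respecting $\sH^{(1)}$ as much as possible, and bound the defect in the $\nu^2$-refinement by (number of $\sH^{(1)}$-classes)${}\times{}$(chunk size). Your linear-ordering formulation is in fact a little cleaner than the paper's explicit ``cut each $H_i^{(1)}$, collect leftovers, cut again, then redistribute a final remainder'' construction; in particular your observation that $s_j=|V_j|-pm$ is forced and automatically lies in $\{\lfloor r/a_1^{\sQ}\rfloor,\lfloor r/a_1^{\sQ}\rfloor+1\}$ with $\sum_j s_j=r$ disposes of the equipartition arithmetic more transparently than the paper's modular computation in~\eqref{eq: m m' size difference}.
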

\begin{proof}
Write $\sQ^{(1)}= \{Q^{(1)}_i : i\in [a_1^{\sQ}]\}.$ Let $a^{\sP}_1:= s a^{\sQ}_1 \lceil 2\nu^{-2} \rceil$, 
let $m:= \min\{|Q^{(1)}_i|: i\in [a_1^{\sQ}]\}$, and let $m':= \lfloor  |V|/a^{\sP}_1\rfloor$. 
Thus $|Q^{(1)}_i|\in \{m,m+1\}$ for each $i\in [a^{\sQ}_1]$. 
The sets in $\sP^{(1)}$ will have size $m'$ or $m'+1$.
Note that
\begin{align*}
m' \cdot\frac{a^{\sP}_1}{ a^{\sQ}_1} 
\leq  \left\lfloor  \frac{|V|}{a^{\sP}_1}\cdot\frac{a^{\sP}_1}{ a^{\sQ}_1} \right\rfloor 
= m 
< (m'+1) \cdot\frac{a^{\sP}_1}{ a^{\sQ}_1}.
\end{align*}
In particular, as $\frac{a^{\sP}_1}{ a^{\sQ}_1}$ is an integer, we have
\begin{align}
\label{eq: m m' size difference}
0\leq m ~({\rm mod}~ m') \leq \frac{a^{\sP}_1}{ a^{\sQ}_1}.
\end{align}
To obtain $\sP^{(1)}$ we further (almost) refine $\sH^{(1)}$.
For each $i\in [s]$, we define $\ell_{i}:= \lfloor|H^{(1)}_{i}|/m' \rfloor$.
We arbitrarily partition $H^{(1)}_{i}$ into $\sL(i,0),\dots, \sL(i,\ell_{i})$ such that $|\sL(i,r)|=m'$ for all $r\in [\ell_{i}]$ and $|\sL(i,0)|< m'$.
For each $j\in [a^{\sQ}_1]$, let $\sL'(j,0):=\bigcup_{ H^{(1)}_i\subseteq Q^{(1)}_j } \sL(i,0)$.
Let $\ell'_j := \lfloor |\sL'(j,0)|/m' \rfloor$.
We arbitrarily partition $\sL'(j,0)$ into $\sL''(j,0),\sL'(j,1),\dots, \sL'(j,\ell'_j)$ such that 
$|\sL'(j,r)| = m'$ for all $r\in [\ell'_j]$ and $|\sL''(j,0)|< m'$.
Note that since $\sH^{(1)}\prec \sQ^{(1)}$, for all $j \in [a_1^{\sQ}]$, we have
$$Q^{(1)}_j = \bigcup_{i\colon H^{(1)}_i\subseteq Q^{(1)}_j } H^{(1)}_{i} 
= \sL''(j,0) \cup \bigcup_{r\in [\ell'_j]}\sL'(j,r)\cup \bigcup_{i\colon H^{(1)}_i\subseteq Q^{(1)}_j, r\in [\ell_{i}]} \sL(i,r).$$
As all sets $\sL'(j,r)$ and $\sL(i,r)$ with $r>0$ have size exactly $m'$, the fact that $|\sL''(j,0)|< m'$ implies that we have $|\sL''(j,0)| = m  ~({\rm mod}~ m')$. 
On the other hand, this together with \eqref{eq: m m' size difference} implies that 
$|\sL''(j,0)| 
\leq a^{\sP}_1/ a^{\sQ}_1 
= \ell'_j + \sum_{i\colon H^{(1)}_i\subseteq Q^{(1)}_j} \ell_{i}.$ 
\COMMENT{
Note that $m = a m' + b$ for some $0\leq b \leq m'-1$. 
Then \eqref{eq: m m' size difference} implies that $a=(a^{\sP}_1/ a^{\sQ}_1)$ and $0\leq b < \frac{a^{\sP}_1}{ a^{\sQ}_1}$.
Now, $$Q^{(1)}_j=\sL''(j,0) \cup \bigcup_{r\in [\ell'_j]}\sL'(j,r)\cup \bigcup_{i\colon H^{(1)}_i\subseteq Q^{(1)}_j, r\in [\ell_{i}]} \sL(i,r)$$ is partitioned into sets of size exactly $m'$   ($\sL'(j,r)$  and $\sL(i,r)$ )  and a set $\sL''(j,0)$ of size less than $m'$.  
(Note that the right hand size of the displayed equation is union of disjoint sets)
Since $|Q^{(1)}_j| \in \{m, m+1\}$ and $\frac{a^{\sP}_1}{ a^{\sQ}_1} < m' -1$, it means that we have $a^{\sP}_1/ a^{\sQ}_1$ sets of size $m'$ and one set  $\sL''(j,0)$ of size exactly $b$ or $b+1$. Thus we have $a^{\sP}_1/ a^{\sQ}_1 
= \ell'_j + \sum_{i\colon H^{(1)}_i\subseteq Q^{(1)}_j} \ell_{i}$ and
$|\sL''(j,0)|\leq b+1 \leq a^{\sP}_1/ a^{\sQ}_1$.
}

Hence, by distributing at most one vertex from $\sL''(j,0)$ into each of the sets in $\{ \sL'(j,r) : r\in [\ell'_j]\} \cup \{\sL(i,r) : H^{(1)}_i\subseteq Q^{(1)}_j, r\in [\ell_{i}]\}$, 
we can obtain the following collection 
$$\{L^{(1)}(j,1),\dots, L^{(1)}(j,a^{\sP}_1/ a^{\sQ}_1)\}$$ 
of sets of size $m'$ or $m'+1$, which forms an equipartition of $Q^{(1)}_j$.\COMMENT{All such sets could have size $m'+1$ and it is OK.}
Let $$\sP^{(1)}:= \{ L^{(1)}(j,r) : j\in [a_1^{\sQ}], r\in [a^{\sP}_1/ a^{\sQ}_1]\}.$$
Then (P1)$^1_{\ref{lem:refreg}}$ holds.
By construction, for each $L^{(1)} \in \sP^{(1)}$, either there exists $(i,r)\in [s]\times [\ell_{i}]$ such that $|L^{(1)}\setminus \sL(i,r)|\leq 1$ or there exists $(j,r)\in [a_1^{\sQ}]\times [\ell'_j]$ such that $|L^{(1)}\setminus \sL'(j,r)|\leq 1$. 
In the former case, let $f(L^{(1)}) := H^{(1)}_i$ and the latter case, let $f(L^{(1)})$ be an arbitrary set in $\sH^{(1)}$. 
Then
 \begin{eqnarray*}
\sum_{L^{(1)}\in \sP^{(1)}} |L^{(1)}\setminus f(L^{(1)})|
&\leq&  |\sP^{(1)}|+\sum_{(j,r)\in [a^{\sQ}_1]\times [\ell_j']} |\sL'(j,r)|
\leq  a^{\sP}_1  + \sum_{ j\in [a^{\sQ}_1], H^{(1)}_i\subseteq Q^{(1)}_j} |\sL(i,0)|\nonumber \\
&\leq & a^{\sP}_1 + s a^{\sQ}_1 m'\leq  \nu^2 |V|.
\end{eqnarray*}
The final inequality follows since $n\geq n_0$.
This and the construction of $\sP^{(1)}$ shows that
$\sP^{(1)} \prec_{\nu^2} \sH^{(1)}$ and $\sP^{(1)} \prec \sQ^{(1)}.$
This shows that (P2)$^1_{\ref{lem:refreg}}$ holds and  thus completes the proof of Claim~\ref{cl: 6.2 k=1 base case}.
\end{proof}

So assume that $k\geq 2$ and $L_{\ref{lem:refreg}}(k-1)$ holds. 
Let $\mu_{\ref{RAL(k)}}, t_{\ref{RAL(k)}}, n_{\ref{RAL(k)}}$ be the functions defined in Lemma~\ref{RAL(k)}.
By decreasing the value of $\epsilon(\ba)$ if necessary, we may assume that for all $\ba\in \N^{k-1}$, we have
\begin{align}\label{eq: epsilon function small}
\epsilon(\ba)\ll 1/s, 1/k, 1/\|\ba\|_{\infty}.
\end{align}

If $k=2$, let $T:=o^{4^k+1}\lceil 2\nu^{-2}\rceil$. 
If $k\geq 3$, for each $\ba \in \mathbb{N}^{k-2}$, let $T=T(\ba,o,\nu) =\max\{ \norm{\ba}, o^{4^k+1}\lceil 2\nu^{-2}\rceil\}.$
If $k\geq 3$, then we also let $\mu':\mathbb{N}^{k-2}\rightarrow (0,1]$ be a function such that for any $\ba \in \mathbb{N}^{k-2}$, we have
\begin{align}\label{eq: mu' def}
\mu'(\ba)\ll \nu, 1/k, 1/o, 1/\norm{\ba} \enspace \text{ and } \enspace \mu'(\ba) <  (\mu_{\ref{RAL(k)}}(k,T ,2s T^{2^{k}}  ,\eta,\nu/3,\epsilon^2))^2.
\end{align}
For all $k\geq 2$, let
\begin{align}\label{eq: def t k-1 def}
t_{k-1}:= \left\{\begin{array}{ll}
  s o^{4^k+1} \lceil 2 \nu^{-2} \rceil & \text{ if }k=2,\\
\max\{ t_{\ref{lem:refreg}}(k-1,o, o^{4^k},\eta,\nu/3,\mu'), o^{4^k+1} \lceil 2 \nu^{-2}  \rceil\} & \text{ if }k\geq 3,
\end{array}\right.
\end{align}
which exists by the induction hypothesis.
Choose an integer $t$ such that
\begin{align}\label{eq: t def}
1/t \ll 1/t_{\ref{RAL(k)}}(k,t_{k-1},2s t_{k-1}^{2^{k}},\eta,\nu/3,\epsilon^2), 1/t_{k-1},
\end{align}
and choose $\mu>0$ such that
\begin{align}\label{eq: mu def}
\mu \ll  \left\{\begin{array}{ll}
 1/t, \mu_{\ref{RAL(k)}}(k,T ,2s T^{2^k}  ,\eta,\nu/3,\epsilon^2) & \text{ if }k=2,\\
1/t, \mu'(\ba), \epsilon(\ba'), \mu_{\ref{lem:refreg}}(k-1,o,o^{4^k},\eta,\nu/3,\mu')  & \text{ if }k\geq 3.\\
\text{ for any } \ba \in [t]^{k-2}, \ba'\in [t]^{k-1}&
\end{array}\right.
\end{align}
\COMMENT{Here, we include $ \mu_{\ref{RAL(k)}}(2,T ,s T^{4}  ,\eta,\nu/3,\epsilon^2)$ for the case of $k=2$ to obtain a partition $\sP$ satisfying (P$'$1)--(P$'$5) later in the proof. (because we only let $\mu\ll \mu'(\ba)$ for $k\geq 3$.) }
Finally, choose an integer $n_0$ such that
\begin{align}\label{eq: n0 def}
1/n_0\ll \left\{\begin{array}{ll}
  1/n_{\ref{RAL(k)}}(k,t_{k-1},2s t_{k-1}^{2^{k}},\eta,\nu/3,\epsilon^2), 1/n_{\ref{lem:refreg}}(k-1,o,o^{4^k},\nu), 1/\mu. & \text{ if }k=2,\\
1/n_{\ref{RAL(k)}}(k,t_{k-1},2s t_{k-1}^{2^{k}},\eta,\nu/3,\epsilon^2), 1/n_{\ref{lem:refreg}}(k-1,o,o^{4^k},\eta,\nu/3,\mu'), 1/\mu & \text{ if }k\geq 3.
\end{array}\right.
\end{align}
Suppose $\sO(k-1,\ba^{\sO})$, $\sQ(k,\ba^{\sQ})$ and $\sH^{(k)}$ are given (families of) partitions satisfying (O1)$_{\ref{lem:refreg}}$--(O4)$_{\ref{lem:refreg}}$ with $\mu, t, n_0$ as defined above.
Write
\begin{align*}
	\sO^{(k-1)}&=\{O^{(k-1)}_1,\dots, O^{(k-1)}_{s_O}\} \enspace \text{ and }\enspace 
	\sQ^{(k-1)}=\{Q^{(k-1)}_1,\dots, Q^{(k-1)}_{s_Q}\}.
\end{align*}
Let
\begin{align}\label{eq:OQR}
	 O^{(k-1)}_{s_O+1}&:= \binom{V}{k-1}\setminus \cK_{k-1}(\sO^{(1)}), \enspace \enspace 
	Q^{(k-1)}_{s_Q+1}:= \binom{V}{k-1}\setminus \cK_{k-1}(\sQ^{(1)}),\text{ and}\\
	\sR^{(k-1)}&:= \{ O^{(k-1)}_i\cap Q^{(k-1)}_j : i\in [s_O+1], j\in [s_Q+1]\} \setminus\{\emptyset\}.\notag
\end{align}
We also write $\sR^{(k-1)}= \{R^{(k-1)}_1,\dots, R^{(k-1)}_{s''}\}.$
See Figure~\ref{fig:regularity} for an illustration of the relationship of the different partitions defined in the proof.
\begin{figure}[t]
\centering
\begin{tikzpicture}


\node (Q1) at (1,5.5) {$Q_1^{(k-1)}$};
\node (Qj) at (5,5.5) {$Q_{g(j)}^{(k-1)}$};

\draw[thick] (3.5,0.6) rectangle (7,1.2);
\draw[thick, fill opacity=0.7,fill=gray!100] (2.3,1.2) rectangle (5.5,2.5);
\draw[thick] (5.5,1.2) rectangle (7.5,2.5);
\draw[thick] (3.3,2.5) rectangle (4.8,3.2);
\draw[thick] (4.8,2.5) rectangle  (6.3,3.2);
\draw[thick] (4.75,1.85) circle (1.1cm); 

\draw[ultra thick] 
(0,0) rectangle (8,5)
(2,0) --(2,5)
(4,0) --(4,5)
(6,0) --(6,5)
;

\node (O) at (3.1,1.7	) {$O_{i'}^{(k-1)}$};

\draw[thick] 
(11,2) +(100:2)--(4.45,2.92)
(11,2) +(-100:2)--(4.45,0.78);

\begin{scope}[shift={(11,2)}]

\draw[thick, fill opacity=0.4,fill=black!50] (-1.13,-1.28) rectangle (1.3,1.28);
\draw[thick, fill opacity=0.4,fill=black!50] (-1.13,-1.28) rectangle (1.1,1.48);
\draw[thick, fill opacity=0.6,fill=black!60] (-1.13,-1.28) rectangle (0.5,0.2);

\draw[thick] (0,0) circle (2cm);
\draw[ultra thick](125:2)--(-125:2);
\draw[thick](140:2)--(40:2)
(-140:2)--(-40:2)
(90:2)-- (90:1.3)
(1.3,1.3)--(1.3,-1.3)
;

\node (M) at (-0.5,2.5	) {$M_j^{(k-1)}$};
\node (R) at (2.9,1.0	) {$R_j^{(k-1)}$};
\node (L) at (2.3,1.6	) {$\sL(i,j)$};
\node (L0) at (-0.7,-2.3	) {$\sL(i,j,0)$};
\node (Lr) at (0.9,-2.8	) {$\sL(i,j,r)$};
\node (P) at (2.2,-1.9	) {$\in \sP^{(k-1)}$};

\draw[thick] 
(M) -- (-0.3,1.48)
(R) -- (1.28,0.7)
(L) -- (0.2,0.2)
(L0) -- (-1.05,-1.15)
(Lr) -- (-0.2,-1.15)
(P) -- (0.25,-0.7)
(0.5,-1.28)--(0.5,1.48)
(-1.13,0.2)--(1.3,0.2)

(0.02,0.2)--(0.02,-1.25)
(-0.46,0.2)--(-0.46,-1.25)
(-0.94,0.2)--(-0.94,-1.25)

(0.02,-0.9)--(0.5,-0.9)
(0.02,-0.55)--(0.5,-0.55)
(0.02,-0.2)--(0.5,-0.2)
;
\end{scope}

\end{tikzpicture}
\caption{An illustration of the cascade of partitions in the proof of Lemma~\ref{lem:refreg}.
}\label{fig:regularity}
\end{figure}
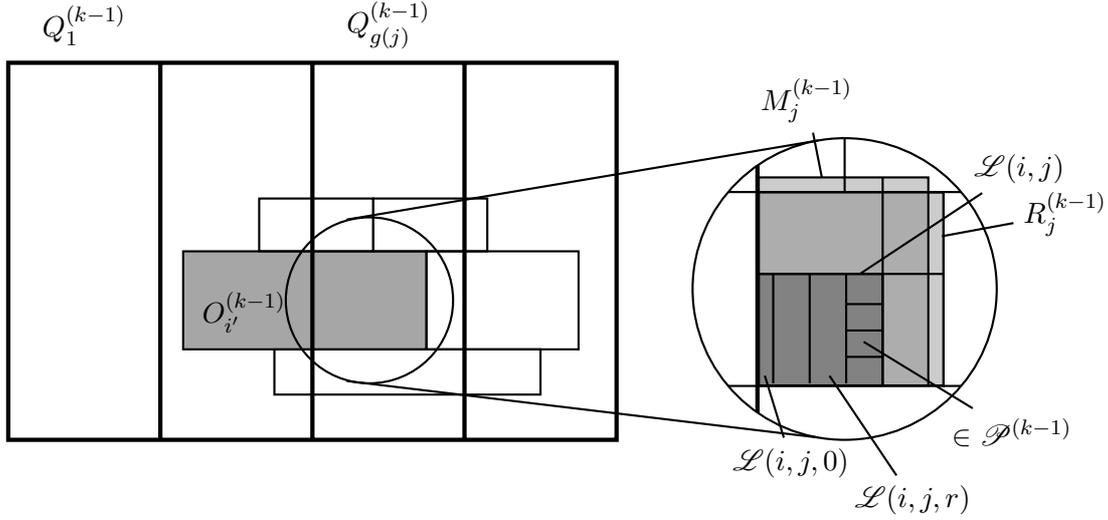

Since $\sO$ and $\sQ$ are both $o$-bounded, Proposition~\ref{prop: hat relation}(viii) implies that 
\begin{align}\label{eq: size s''}
s'' \leq \left(o^{2^k}\right)^{2} \leq o^{4^k}.
\end{align}

Now our aim is to construct a family of partitions $\sL$ as follows.
\begin{claim}\label{cl: claim 2 claim 2}
There exist $\{\sL^{(i)}\}_{i=1}^{k-1}$ and $\ba^{\sL}_{\rm long}=(a_1^{\sL},\ldots,a_{k-1}^{\sL}) \in [t_{k-1}]^{k-1}$ satisfying the following for all $j\in [k-1]$, where $\ba^{\sL}:= (a_1^{\sL},\dots, a_{k-2}^{\sL})$.
\begin{enumerate}[label= \rm (L$^*$\arabic*)]
\item\label{item:L*1} $\{\sL^{(i)}\}_{i=1}^{k-1}$ forms an $(\eta,\mu'(\ba^{\sL})^{1/2},\ba^{\sL}_{\rm long})$-equitable $t_{k-1}$-bounded family of partitions,
and $a_j^{\sQ}$ divides $a_j^{\sL}$,
\item\label{item:L*2} $\sL^{(j)} \prec \sQ^{(j)}$.
\item\label{item:L*3} $\sL^{(j)}\prec_{\nu/2}\sO^{(j)}$.
\end{enumerate}
\end{claim}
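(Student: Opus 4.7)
My plan is to split into the base case $k=2$ and the inductive step $k\geq 3$. For $k=2$, the claim reduces to Claim~\ref{cl: 6.2 k=1 base case}: I would feed it $\sQ^{(1)}$ together with $\sH^{(1)}:=\sR^{(1)}$, which refines both $\sO^{(1)}$ and $\sQ^{(1)}$ (here $\cK_1(\sO^{(1)})=V$, so no non-crossing part arises). The resulting equipartition is taken to be $\sL^{(1)}$; \ref{item:L*1} and \ref{item:L*2} are directly given, and \ref{item:L*3} follows from $\sL^{(1)}\prec_{\nu^2}\sR^{(1)}\prec\sO^{(1)}$ via~\eqref{eq: prec triangle} since $\nu^2\leq \nu/2$ (the regularity parameter in \ref{item:L*1} is vacuous as there is no regularity condition at uniformity $1$).

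For $k\geq 3$, the strategy is to invoke the inductive statement $L_{\ref{lem:refreg}}(k-1)$ with the first $k-2$ levels of $\sO$ playing the role of its input $\sO$, the first $k-1$ levels of $\sQ$ playing the role of its $\sQ$, and the partition $\sR^{(k-1)}$ of $\binom{V}{k-1}$ (which refines $\sQ^{(k-1)}$ by construction) playing the role of its top-level partition; the functions $\mu'$ and $\nu/3$ take the roles of $\epsilon$ and $\nu$. The quantifier choices in~\eqref{eq: mu' def}--\eqref{eq: n0 def} are set up precisely so that every hypothesis of $L_{\ref{lem:refreg}}(k-1)$ is met. This produces a $(k-2)$-uniform equitable family $\widetilde\sP=\widetilde\sP(k-2,\ba^{\widetilde\sP})$ together with a partition $\widetilde\sG^{(k-1)}=\{\widetilde G_i^{(k-1)}\}_{i=1}^{s''}$ of $\binom{V}{k-1}$ with each $\widetilde G_i^{(k-1)}$ perfectly $\mu'(\ba^{\widetilde\sP})$-regular with respect to $\widetilde\sP$, satisfying $\sum_i|\widetilde G_i^{(k-1)}\triangle R_i^{(k-1)}|\leq(\nu/3)\binom{n}{k-1}$ and $\widetilde\sG^{(k-1)}\prec\sQ^{(k-1)}$. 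Setting $\sL^{(j)}:=\widetilde\sP^{(j)}$ for $j\in[k-2]$, I would then construct $\sL^{(k-1)}$ as follows: pick $a_{k-1}^\sL$ as an integer multiple of $a_{k-1}^\sQ$ with $a_{k-1}^\sL\leq t_{k-1}$, and for each $(k-2)$-polyad $\hat{P}^{(k-2)}\in\hat{\sL}^{(k-2)}$ and each $i\in[s'']$ apply the slicing lemma~(Lemma~\ref{lem: slicing}) to cut $\widetilde G_i^{(k-1)}\cap\cK_{k-1}(\hat{P}^{(k-2)})$ into $a_{k-1}^\sL$ equal-density pieces; then for each $r\in[a_{k-1}^\sL]$ take the union (via Lemma~\ref{lem: union regularity}) of the $r$-th pieces across $i\in[s'']$ to obtain one regular $(k-1)$-graph of density $1/a_{k-1}^\sL$ relative to $\hat{P}^{(k-2)}$. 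Properties \ref{item:L*1} (equitability, $t_{k-1}$-boundedness, $a_{k-1}^\sQ\mid a_{k-1}^\sL$) and \ref{item:L*2} follow directly from the construction, using that each slice lies inside a single part of $\widetilde\sG^{(k-1)}\prec\sQ^{(k-1)}$ by (G3)$_{\ref{lem:refreg}}$, so the refinement of $\sQ^{(k-1)}$ is preserved.

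The main obstacle will be \ref{item:L*3}. I would compose the chain of maps
\[
\sL^{(k-1)}\;\to\;\widetilde\sG^{(k-1)}\;\to\;\sR^{(k-1)}\;\to\;\sO^{(k-1)}\cup\bigl\{\tbinom{V}{k-1}\setminus\cK_{k-1}(\sO^{(1)})\bigr\}
\]
and sum the resulting deviations. The slicing step contributes nothing, since each slice lies inside a single $\widetilde G_i^{(k-1)}$; the $\widetilde\sG^{(k-1)}\to\sR^{(k-1)}$ step contributes at most $(\nu/3)\binom{n}{k-1}$ by (G2)$_{\ref{lem:refreg}}$; and the final step is an exact refinement once the non-crossing parts of $\sR^{(k-1)}$ are routed to the slot $\binom{V}{k-1}\setminus\cK_{k-1}(\sO^{(1)})$ at zero cost. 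The total stays well below $(\nu/2)\binom{n}{k-1}$, establishing \ref{item:L*3} via~\eqref{eq: prec triangle}. The most delicate piece of bookkeeping is verifying that the slicing lemma applies in every polyad — specifically, that $|\cK_{k-1}(\hat{P}^{(k-2)})|$ is large enough and the densities involved are bounded away from $0$ — which is ensured by Lemma~\ref{lem: maps bijections} together with the fact that $\mu'(\ba^{\widetilde\sP})\ll 1/\|\ba^{\widetilde\sP}\|_\infty$ as arranged in~\eqref{eq: mu' def}.
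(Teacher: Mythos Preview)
Your base case $k=2$ is fine and matches the paper. The inductive call to $L_{\ref{lem:refreg}}(k-1)$ is also set up correctly and matches the paper (their $\sM^{(k-1)}$ is your $\widetilde\sG^{(k-1)}$). The gap is in your construction of $\sL^{(k-1)}$.

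You slice each $\widetilde G_i^{(k-1)}\cap\cK_{k-1}(\hat P^{(k-2)})$ into $a_{k-1}^{\sL}$ equal-density pieces and then \emph{union the $r$-th pieces across all $i\in[s'']$} to form a single part of $\sL^{(k-1)}$. This union step breaks both \ref{item:L*2} and \ref{item:L*3}. For \ref{item:L*2}: distinct $\widetilde G_i^{(k-1)}$ may lie in distinct parts of $\sQ^{(k-1)}$, so the union does not lie in a single $Q^{(k-1)}_p$; your justification (``each slice lies inside a single part of $\widetilde\sG^{(k-1)}$'') applies to the slices, not to the unions, which are the actual parts of $\sL^{(k-1)}$. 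For \ref{item:L*3}: the first arrow in your chain $\sL^{(k-1)}\to\widetilde\sG^{(k-1)}$ cannot be made to cost zero, because each union-part meets many $\widetilde G_i^{(k-1)}$'s (typically a $d_i$-fraction of each), so mapping it to any single $\widetilde G_i^{(k-1)}$ loses a constant fraction of its mass, not $o(\nu)$. A secondary problem is that slicing $\widetilde G_i^{(k-1)}\cap\cK_{k-1}(\hat P^{(k-2)})$ into $a_{k-1}^{\sL}$ pieces via Lemma~\ref{lem: slicing} requires its density to be bounded below, but some $\widetilde G_i^{(k-1)}$ can have arbitrarily small density on a given polyad; Lemma~\ref{lem: maps bijections} says nothing about these densities.

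The paper avoids all this by \emph{not} taking any union across $i$. It slices each $\sL(i,j):=\cK_{k-1}(\hat L_i^{(k-2)})\cap M_j^{(k-1)}$ into as many pieces of density \emph{exactly} $1/a_{k-1}^{\sL}$ as possible (not an equal $a_{k-1}^{\sL}$-way split), with a small remainder $\sL(i,j,0)$ of density $<1/a_{k-1}^{\sL}$. Each such piece is itself a part of $\sL^{(k-1)}$; it lies in the single $M_j^{(k-1)}$ (hence in a single $Q^{(k-1)}_{g(j)}$ and, up to $|M_j^{(k-1)}\triangle R_j^{(k-1)}|$, in a single $O^{(k-1)}$-class). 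The remainders are then pooled only within each $Q^{(k-1)}_p$ (using Subclaim~\ref{claim:Qregular} to see that the pool has density an integer multiple of $1/a_{k-1}^{\sL}$, since $a_{k-1}^{\sQ}\mid a_{k-1}^{\sL}$) and re-sliced exactly. These leftover parts need not refine $\sO^{(k-1)}$, but their total measure is $O(s''/a_{k-1}^{\sL})\leq\nu^2$ because $a_{k-1}^{\sL}$ is chosen large in \eqref{eq: def aLk-1}, so they are absorbed in the $\nu/2$ budget for \ref{item:L*3}.
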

Note that if $k=2$, then the function $\mu'$ is not defined, but in this case, 
$\mu'(\ba^{\sL})^{1/2}$ plays no role in the definition of an equitable family of partitions (Definition~\ref{def: equitable family of partitions}) since Definition~\ref{def: equitable family of partitions}(iii) is vacuously true.
\begin{proof}
First we prove the claim for $k=2$. 
We apply $L_{\ref{lem:refreg}}(1)$ with $\sQ^{(1)}, \sR^{(1)}, s'', \nu/2$ playing the roles of $\sQ^{(1)}, \sH^{(1)}, s, \nu$.
(This is possible by~\eqref{eq: n0 def}.)
Then we obtain  
a partition $\sL^{(1)}$ of $V$ which satisfies (P1)$^1_{\ref{lem:refreg}}$ and (P2)$^1_{\ref{lem:refreg}}$. 
Moreover, \eqref{eq: def t k-1 def} implies that $\sL^{(1)}$ is $t_1$-bounded, i.e. $\ba^{\sL}_{\rm long} \in [t_{k-1}]$. 
Since $\sR^{(1)} \prec \sO^{(1)}$, this in turn implies \ref{item:L*1}--\ref{item:L*3}.

Now we assume $k\geq 3$. First, we apply $L_{\ref{lem:refreg}}(k-1)$ with  the following objects and parameters.
(This is possible by~\eqref{eq: def t k-1 def} and~\eqref{eq: mu def}--\eqref{eq: size s''}.)
\newline

{\small
\begin{tabular}{c|c|c|c|c|c|c|c|c|c|c}
 object/parameter & $V$ & $\{\sO^{(j)}\}_{j=1}^{k-2}$ & $\{\sQ^{(i)}\}_{i=1}^{k-1}$ & $\sR^{(k-1)}$ &
$o$ & $s''$ & $\eta$ & $\nu/3$ & $\mu'$ &$t_{k-1}$\\ \hline
 playing the role of & $V$ & $\sO$ & $\sQ$ & $\sH^{(k)}$ & $o$ & $s$ & $\eta$ & $\nu$ & $\epsilon$ &$t$
 \\ 
\end{tabular}
}\newline \vspace{0.2cm}

\noindent
Then we obtain a family of partitions $\sL=\sL(k-2,\ba^{\sL})$ and 
a partition $\sM^{(k-1)}$ of $\binom{V}{k-1}$ with $\sM^{(k-1)}=\{M^{(k-1)}_1,\dots, M^{(k-1)}_{s''}\}$
which satisfy the following for each $i\in [s'']$ and $j\in [k-2]$.

\begin{itemize}
\item[(L$'$1)] $\sL$ is $(\eta,\mu'(\ba^{\sL}), \ba^{\sL})$-equitable and $t_{k-1}$-bounded, and $a^{\sQ}_j$ divides $a^{\sL}_j$,
\item[(L$'$2)] $\sL^{(j)} \prec \sQ^{(j)}$ and $\sL^{(j)} \prec_{\nu/3} \sO^{(j)}$,

\item[(M$'$1)] $M^{(k-1)}_i$ is perfectly $\mu'(\ba^{\sL})$-regular with respect to $\sL$,
\item[(M$'$2)] $\sum_{i=1}^{s''} |M_i^{(k-1)}\triangle R^{(k-1)}_i| \leq (\nu/3)\binom{n}{k-1}$, and 
\item[(M$'$3)] $\sM^{(k-1)} \prec \sQ^{(k-1)}$ and if $R_i^{(k-1)} \subseteq \cK_{k-1}(\sQ^{(1)})$, then $M_i^{(k-1)}\subseteq \cK_{k-1}(\sQ^{(1)})$.
\end{itemize}
\vspace{0.2cm}

Thus $\{\sL^{(i)}\}_{i=1}^{k-2}$ satisfies \ref{item:L*1}--\ref{item:L*3} for $j\in [k-2]$ and it only remains to construct $\sL^{(k-1)}$.
Let 
\begin{align}\label{eq: def t'}
t':= \max \{\norm{\ba^{\sL}}, a^{\sQ}_{k-1} o^{4^{k}} \lceil 2\nu^{-2}  \rceil \}.
\end{align}
Thus $\sL$ is $t'$-bounded and $t'\leq \min\{ t_{k-1},  T(\ba^{\sL},o,\nu)\} $ by~\eqref{eq: def t k-1 def}. 
For convenience, we write $ \hat{\sL}^{(k-2)}=\{ \hat{L}^{(k-2)}_1,\dots, \hat{L}^{(k-2)}_{s'}\}$.
Since $\sM^{(k-1)}\prec \sQ^{(k-1)}$ by (M$'$3), 
for each $j\in [s'']$ there exists a unique $g(j)\in [s_Q+1]$ such that $M^{(k-1)}_j \subseteq Q^{(k-1)}_{g(j)}$. 
For each $i\in [s']$, $j\in [s'']$, we define
\begin{align}\label{eq: def of sL ij}
\sL(i,j) := \cK_{k-1}(\hat{L}^{(k-2)}_i)\cap M^{(k-1)}_j.
\end{align}
For each $i\in [s']$, we define
$J(i):= \{j\in [s'']: \sL(i,j)\neq \emptyset\}$.
Note that $\sL(i,j)\subseteq Q^{(k-1)}_{g(j)}$ for all $i\in [s']$.

\begin{subclaim}
\label{claim:Qregular}
For each $i\in [s']$ and for each $j\in J(i)$, 
the $(k-1)$-graph $Q^{(k-1)}_{g(j)}$ is $(\mu'(\ba^{\sL}),d'_{g(j)})$-regular with respect to $\hat{L}^{(k-2)}_i$, where $d'_{g(j)}\in \{1/a^{\sQ}_{k-1},1\}$. 
\end{subclaim}
\begin{proof}
First, note that since $\sL\prec\{\sQ^{(j)}\}_{j=1}^{k-2}$, one of the following holds.
\begin{enumerate}[label=(LL\arabic*)]
\item\label{item:LL1} There exists 
$\hat{Q}^{(k-2)}\in \hat{\sQ}^{(k-2)}$ such that $\hat{L}^{(k-2)}_i\subseteq \hat{Q}^{(k-2)}$.
\item\label{item:LL2} $\cK_{k-1}(\hat{L_i}^{(k-2)})\subseteq \binom{V}{k-1}\setminus \cK_{k-1}(\sQ^{(1)})$.
\end{enumerate}

If \ref{item:LL1} holds, then by (L$'$1) and the fact that $\mu \ll \mu'(\ba^{\sL}) \ll \norm{\ba^{\sL}}^{-k}$ 
we can apply Lemma~\ref{lem: counting} twice to conclude that 
$$|\cK_{k-1}(\hat{L}^{(k-2)}_i)|\geq t_{k-1}^{-2^k}|\cK_{k-1}(\hat{Q}^{(k-2)})| \stackrel{\eqref{eq: t def}, \eqref{eq: mu def}}{ \geq} \mu^{1/3} |\cK_{k-1}(\hat{Q}^{(k-2)})|.$$
Note that, for each $j\in J(i)$, $Q^{(k-1)}_{g(j)}$ is $(\mu,1/a_{k-1}^{\sQ})$-regular with respect to $\hat{Q}^{(k-2)}$.%
\COMMENT{$\cK_{k-1}(\hat{L}_i^{(k-2)})$ and $Q^{(k-1)}_{g(j)}$ intersect by the definition of $g(j)$ and $J(i)$, and $\hat{L}^{(k-2)}_i\subseteq \hat{Q}^{(k-2)}$. Thus $Q^{(k-1)}_{g(j)} \subseteq \cK_{k-1}(\hat{Q}^{(k-2)})$ by (L$'$2).} 
Together with Lemma~\ref{lem: simple facts 1}(ii) this implies that $Q^{(k-1)}_{g(j)}$ is $(\mu^{2/3},1/a^{\sQ}_{k-1})$-regular with respect to $\hat{L}^{(k-2)}_i$ for each $j\in J(i)$.

If \ref{item:LL2} holds, $j\in J(i)$ implies that $M^{(k-1)}_j\nsubseteq \cK_{k-1}(\sQ^{(1)})$.
Thus~\eqref{eq:OQR} means that $M_j^{(k-1)}\cap Q^{(k-1)}_{s_Q+1}\neq\es$, which implies $g(j)=s_Q+1$.
Also \ref{item:LL2} implies that $\cK_{k-1}(\hat{L}_i^{(k-2)}) \subseteq Q^{(k-1)}_{s_Q+1}.$
Thus $Q^{(k-1)}_{s_Q+1}$ is $(0,1)$-regular with respect to $\hat{L}^{(k-2)}_i$.
This completes the proof of Subclaim~\ref{claim:Qregular}.
\end{proof}

Moreover, (M$'$1) implies that for each $i\in [s']$ and $j\in [s'']$, 
the $(k-1)$-graph $\sL(i,j)$ is $\mu'(\ba^{\sL})$-regular with respect to $\hat{L}^{(k-2)}_i$, and thus it is $(2\mu'(\ba^{\sL}), d(\sL(i,j) \mid \hat{L}^{(k-2)}_i) )$-regular with respect to $\hat{L}^{(k-2)}_i$.\COMMENT{It maybe $(\mu',d)$-regular with density $d'$ and $d\neq d'$. Then it is still $(2\mu',d')$-regular.}
Let 
\begin{align}\label{eq: def aLk-1}
a^{\sL}_{k-1} := a^{\sQ}_{k-1}  o^{4^k} \lceil 2 \nu^{-2} \rceil,
\end{align}
and $\ba^{\sL}_{\rm long} := (a_{1}^\sL,\ldots,a_{k-1}^\sL)$.
By \eqref{eq: def t'}, we conclude
\begin{align}\label{eq: t'-bounded}
\|\ba^{\sL}_{\rm long}\|_{\infty} = t' \leq \min\{ t_{k-1},  T(\ba^{\sL},o,\nu)\}.
\end{align}
Let $\ell_{i,j}:= \lfloor d(\sL(i,j) \mid \hat{L}^{(k-2)}_i)a^{\sL}_{k-1}\rfloor$, so $\ell_{i,j}=0$ if $j\notin J(i)$.
We now apply the slicing lemma (Lemma~\ref{lem: slicing}) to $\sL(i,j)$ for each $i\in [s']$ and $j\in [s'']$.%
\COMMENT{
{\small
\begin{tabular}{c|c|c|c|c|c|c}
object/parameter & $\sL(i,j)$ & $\hat{L}^{(k-2)}_i$ & $d(\sL(i,j) \mid \hat{L}^{(k-2)}_i)$ & $ \left( d(\sL(i,j) \mid \hat{L}^{(k-2)}_i) a^{\sL}_{k-1}\right)^{-1}$ & $\ell_{i,j} $  & $2\mu'(\ba^{\sL})$\\ \hline
playing the role of & $H^{(k)}$& $ H^{(k-1)}$ & $d$ &  $p_i$ & $s$ & $\epsilon$
 \\ 
\end{tabular}
}\newline \vspace{0.2cm}
}
We obtain edge-disjoint $(k-1)$-graphs $\sL(i,j,0),\dots,\sL(i,j,\ell_{i,j})$ such that 
\begin{enumerate}[label=($\sL$\arabic*)]
\item\label{item:sL1} $\sL(i,j) = \sL(i,j,0)\cup \bigcup_{r=1}^{\ell_{i,j}} \sL(i,j,r)$, 
\item\label{item:sL2} $\sL(i,j,r)$ is $(6\mu'(\ba^{\sL}),1/a^{\sL}_{k-1})$-regular with respect to $\hat{L}^{(k-2)}_i$ for each $r\in [\ell_{i,j}]$, and 
\item\label{item:sL3} $\sL(i,j,0)$ is $(6\mu'(\ba^{\sL}),d'_{i,j})$-regular with respect to $\hat{L}^{(k-2)}_i$, where $d'_{i,j}\leq 1/a^{\sL}_{k-1}$.\COMMENT{If $d(\sL(i,j) \mid \hat{L}^{(k-2)}_i)< 1/a^{\sL}_{k-1}$, then we have $\sL(i,j,0)=\sL(i,j) $.}
\end{enumerate}

Observe that $\sL(i,j,0)$ may not have density $1/a^{\sL}_{k-1}$.
Since we would like to achieve this density for all classes,
we now take the union of all these $(k-1)$-graphs and split this union into suitable pieces.
For all $i\in [s']$ and $p\in [s_Q+1]$, let 
$$\sL'(i,p) := \bigcup_{j\colon g(j)=p} \sL(i,j,0) 
= \left(\cK_{k-1}(\hat{L}^{(k-2)}_i)\cap Q^{(k-1)}_p \right)\setminus \left(\bigcup_{j,r\colon g(j)=p,r\in [\ell_{i,j}]} \sL(i,j,r)\right).$$\COMMENT{Here, we can just analyse the second expression, and use the union lemma~(Lemma~\ref{lem: union regularity}) to show $\sL'(i,p)$ is regular, and it is simpler. However, here we want to get the conclusion of $(\mu'(\ba^{\sL})^{2/3}, \ell'_{i,p}/a^{\sL}_{k-1})$-regular with exactly right number $\ell'_{i,p}/a^{\sL}_{k-1}$, so that we can make sure $p_0=0$ in the next application of Lemma~\ref{lem: slicing}.
Thus we instead analyse the third expression.}
Note that if $p\notin g(J(i))$, then $\sL'(i,p)=\emptyset$. So suppose that $p\in g(J(i))$.
Then
 \begin{equation}\label{eq:L'ip}
 \begin{minipage}[c]{0.8\textwidth}\em
$\sL'(i,p)$ is $(\mu'(\ba^{\sL})^{2/3}, \ell'_{i,p}/a^{\sL}_{k-1})$-regular with respect to $\hat{L}^{(k-2)}_i$ 
for some $\ell'_{i,p}\in \N$.
\end{minipage}\ignorespacesafterend 
\end{equation} 
Indeed the union lemma~(Lemma~\ref{lem: union regularity}) (applied with $\sum_{j\colon g(j)=p}\ell_{i,j} \leq s''a^{\sL}_{k-1}  \leq o^{4^k}a_{k-1}^{\sL} $ playing the role of $s$) implies that 
$\bigcup_{j,r\colon g(j)=p,r\in [\ell_{i,j}]} \sL(i,j,r)$ is  $(\mu'(\ba^{\sL})^{3/4}, \sum_{j\colon g(j)=p}\ell_{i,j}/a_{k-1}^{\sL})$-regular with respect to $\hat{L}^{(k-2)}_i$.
In addition, by Subclaim~\ref{claim:Qregular}, 
$\cK_{k-1}(\hat{L}^{(k-2)}_i)\cap Q^{(k-1)}_p$ is $(\mu'(\ba^{\sL}), d'_p)$-regular with respect to $\hat{L}^{(k-2)}_i$ for some $d'_p\in \{1/a^{\sQ}_{k-1},1\}$. 
Note that \eqref{eq: def of sL ij} implies
$$\bigcup_{j,r\colon g(j)=p, r\in [\ell_{i,j}]} \sL(i,j,r)\subseteq \cK_{k-1}(\hat{L}^{(k-2)}_i)\cap Q^{(k-1)}_p.$$ 
So Lemma~\ref{lem: simple facts 1}(iii) implies that \eqref{eq:L'ip} holds
where $\ell'_{i,p}:= a^{\sL}_{k-1}d'_p - \sum_{j: g(j)=p}\ell_{i,j}$. 
(Note $\ell'_{i,p}\in \N$ since $d'_p \in \{1/a^{\sQ}_{k-1},1\}$ and $a^{\sQ}_{k-1} \mid a^{\sL}_{k-1}$.)

In addition, for all $i\in [s']$ and $p\in g(J(i))$, we have
\begin{align}\label{eq: size of sL'(i,p)}
|\sL'(i,p)|
\stackrel{(\sL3)}{\leq} |g^{-1}(p)| \cdot \frac{5}{4a^{\sL}_{k-1}} \cdot |\cK_{k-1}(\hat{L}^{(k-2)}_i)| 
\stackrel{(\ref{eq: size s''}),(\ref{eq: def aLk-1})}{\leq} 
\nu^2 |\cK_{k-1}(\hat{L}^{(k-2)}_i)\cap Q^{(k-1)}_p|.
\end{align}\COMMENT{For the last inequality :
Note that $Q^{(k-1)}_p$ is $(\mu'(\ba^{\sL}),d'_{p})$-regular with respect to $\hat{L}_i^{(k-2)}$ with $d'_{p} \in \{1/a^{\sQ}_{k-1}, 1\}$. Thus
$|\cK_{k-1}(\hat{L}^{(k-2)}_i)\cap Q^{(k-1)}_p| \geq ( 1/a^{\sQ}_{k-1} - \mu'(\ba^{\sL}))  |\cK_{k-1}(\hat{L}^{(k-2)}_i)| \geq 0.8/a^{\sQ}_{k-1}|\cK_{k-1}(\hat{L}^{(k-2)}_i)|$.
This implies that 
$$ |g^{-1}(p)| \cdot \frac{5}{4a^{\sL}_{k-1}} \cdot |\cK_{k-1}(\hat{L}^{(k-2)}_i)| 
\leq  s'' \cdot (5/4) \cdot (a_{k-1}^{\sQ})^{-1}o^{-4^k}\lceil 2\nu^{-2} \rceil^{-1}  (5/4) a^{\sQ}_{k-1} |\cK_{k-1}(\hat{L}^{(k-2)}_i)\cap Q_p^{(k-1)}| 
\leq \nu^2 |\cK_{k-1}(\hat{L}^{(k-2)}_i)\cap Q_p^{(k-1)}| $$
}
Again, we apply the slicing lemma (Lemma~\ref{lem: slicing}), this time to $\sL'(i,p)$.%
\COMMENT{
{\small
\begin{tabular}{c|c|c|c|c|c|c}
object/parameter & $\sL'(i,p)$ & $\hat{L}^{(k-2)}_i$ & $\ell'_{i,p}/a^{\sL}_{k-1}$ & $ \ell'^{-1}_{i,p}$ & $\ell'_{i,p} $ & $\mu'(\ba^{\sL})^{2/3}$\\ \hline
playing the role of & $H^{(k)}$& $ H^{(k-1)}$ & $d$ &  $p_i$ & $s$ & $\epsilon$
 \\ 
\end{tabular}
}\newline \vspace{0.2cm}
}
By \eqref{eq:L'ip},
we obtain edge-disjoint $(k-1)$-graphs $\sL'(i,p,1),\dots, \sL'(i,p,\ell'_{i,p})$ such that 
\begin{itemize}
\item[($\sL'$1)] $\sL'(i,p) = \bigcup_{\ell=1}^{\ell'_{i,p}} \sL'(i,p,\ell)$, 
\item[($\sL'$2)] $\sL'(i,p,\ell)$ is $(\mu'(\ba^{\sL})^{1/2},1/a^{\sL}_{k-1})$-regular with respect to $\hat{L}^{(k-2)}_i$ for each $\ell\in [\ell'_{i,p}]$.
\end{itemize}
Thus for each $i\in [s']$, \eqref{eq: def of sL ij}, ($\sL1$) and ($\sL'1$) imply that
\begin{align*}
	\bigcup_{p\in g(J(i))}\left(\{\sL(i,j,r) : j, r \text{ with } g(j)=p, r\in [\ell_{i,j}]\}\cup \{\sL'(i,p,1),\dots, \sL'(i,p,\ell_{i,p}') \}\right)
\end{align*}
forms a partition of $\cK_{k-1}(\hat{L}^{(k-2)}_i)$ into $a^{\sL}_{k-1}$ edge-disjoint $(k-1)$-graphs,\COMMENT{That we get exactly $a_{k-1}^{\sL}$ parts follows automatically from the regularity.} each of which is $(\mu'(\ba^{\sL})^{1/2},1/a_{k-1}^{\sL})$-regular with respect to $\hat{L}^{(k-2)}_i$,
where the latter follows from \ref{item:sL2} and ($\sL'2$).
We define
$$\sL^{(k-1)}
:=\bigcup_{ i\in [s'], j\in J(i) } \{\sL(i,j,r) :r\in [\ell_{i,j}] \}\cup  \bigcup_{ i\in [s'],  p\in g(J(i)) } \{\sL'(i,p,\ell):\ell\in [\ell'_{i,p}] \}.$$ 
Then \ref{item:L*1} follows from (L$'$1) and the construction of $\sL^{(k-1)}$ ($t_{k-1}$-boundedness follows by~\eqref{eq: t'-bounded}).
Note that for all $i\in [s']$, $j\in [s'']$, $r\in[\ell_{i,j}]$, $p\in [s_Q+1]$, $\ell\in [\ell_{i,p}']$,
we have $\sL(i,j,r)\subseteq Q^{(k-1)}_{g(j)}$ and $\sL'(i,p,\ell)\subseteq Q^{(k-1)}_{p}$, and so \ref{item:L*2} holds.

\begin{subclaim}\label{claim:Lprec}
$\sL^{(k-1)}\prec_{\nu/2}\sO^{(k-1)}$.
\end{subclaim}
\begin{proof}
To prove the subclaim, we define a suitable function $f_{k-1}: \sL^{(k-1)}\to \sO^{(k-1)}$.
For each $j\in [s'']$,
let $h(j) \in [s_O+1]$ be the index such that $R^{(k-1)}_j =  O^{(k-1)}_{h(j)}\cap Q^{(k-1)}_{p}$ for some $p\in [s_Q+1]$. 
For each 
$i\in [s']$, $j\in J(i)$,
$r\in [\ell_{i,j}]$, $\ell\in [\ell'_{i,g(j)}]$, let
$$f_{k-1}(\sL(i,j,r)) := O^{(k-1)}_{h(j)} \text{ and }f_{k-1}(\sL'(i,g(j),\ell)) := O^{(k-1)}_{h(j)}.$$ 
For fixed $j\in [s'']$, \eqref{eq: def of sL ij} and \ref{item:sL1} imply that
\begin{align}\label{eq:Lijr}
	\bigcup_{i\in [s'],r\in [\ell_{i,j}]} \sL(i,j,r) \subseteq M^{(k-1)}_j.
\end{align}
Hence
\begin{eqnarray*}
& & \hspace{-2.4cm} \sum_{L^{(k-1)}\in \sL^{(k-1)}} |L^{(k-1)}\setminus f_{k-1}(L^{(k-1)})|
 \leq \sum_{i,j,r} |\sL(i,j,r)\setminus f_{k-1}(\sL(i,j,r)) | + \sum_{i,p,\ell} |\sL'(i,p,\ell)| \nonumber \\
&\stackrel{\eqref{eq:Lijr},(\sL'1)}{\leq}& \sum_{j\in [s'']} |M^{(k-1)}_j \setminus O^{(k-1)}_{h(j)} | + \sum_{i,p} |\sL'(i,p)|\nonumber \\
&\stackrel{\eqref{eq: size of sL'(i,p)}}{\leq}&  \sum_{j\in [s'']} |M^{(k-1)}_j \setminus R^{(k-1)}_{j} | + \nu^2\sum_{i,p}  |\cK_{k-1}(\hat{L}^{(k-2)}_i)\cap Q^{(k-1)}_p|\nonumber\\
&\stackrel{{\rm(M}'{\rm2)}}{\leq}& \frac{\nu}{3}\binom{n}{k-1} + \nu^2 \binom{n}{k-1} 
\leq \frac{2\nu}{5}\binom{n}{k-1}.  
\end{eqnarray*}\COMMENT{Here, the second inequality holds since $\sL(i,j,r)$ are all pairwise disjoint.}
The fact that $a_1^{\sL} \geq \eta^{-1}$ and \eqref{eq: eta a1} together imply that $|\cK_{k-1}(\sL^{(1)})|\geq \frac{4}{5}\binom{n}{k-1}$, so the subclaim follows.
\end{proof}
This shows that \ref{item:L*3} holds and completes the proof of Claim~\ref{cl: claim 2 claim 2}.
\end{proof}
Note that $\{\sL^{(i)}\}_{i=1}^{k-1}$ obtained in Claim~\ref{cl: claim 2 claim 2}
naturally defines $\hat{\sL}^{(k-1)}$.
Write $\hat{\sL}^{(k-1)}=\{ \hat{L}^{(k-1)}_1, \dots, \hat{L}^{(k-1)}_{\hat{s}_{\sL}}\}.$
We now construct $\sL^{(k)}$ by refining $\cK_k(\hat{L}_i^{(k-1)})$ for all $i\in [\hat{s}_{\sL}]$.

\begin{claim}\label{claim:hatsL}
For each $i\in [\hat{s}_{\sL}]$,
there is a  partition $\{L^{(k)}(i,1),\dots, L^{(k)}(i,a^{\sQ}_k)\}$ of $\cK_k(\hat{L}^{(k-1)}_i)$ such that 
$L^{(k)}(i,r)$ is $(\mu^{1/2},1/a^{\sQ}_k)$-regular with respect to $\hat{L}^{(k-1)}_i$ for each $r\in [a^{\sQ}_k]$.
Moreover, we can ensure that $\{ L(i,r): i\in [\hat{s}_{\sL}], r\in [a^{\sQ}_k] \} \prec \sQ^{(k)}$.
\end{claim}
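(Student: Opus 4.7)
My plan is to construct each $L^{(k)}(i,r)$ by intersecting $\cK_k(\hat L_i^{(k-1)})$ with an appropriate existing element of $\sQ^{(k)}$ whenever possible, and to fall back on the slicing lemma applied to the complete $k$-graph $\cK_k(\hat L_i^{(k-1)})$ otherwise. First, since $\sL^{(j)}\prec\sQ^{(j)}$ for all $j\in [k-1]$ by \ref{item:L*2}, Proposition~\ref{prop: hat relation}(xi) (applied with $j=k-1$) gives that for each $i\in [\hat s_{\sL}]$ either (a) there is a unique $\hat Q_i^{(k-1)} \in \hat{\sQ}^{(k-1)}$ with $\cK_k(\hat L_i^{(k-1)})\subseteq \cK_k(\hat Q_i^{(k-1)})$, or (b) $\cK_k(\hat L_i^{(k-1)})\subseteq \binom{V}{k}\setminus \cK_k(\sQ^{(1)})$; this is the same dichotomy that appeared one uniformity level lower in Subclaim~\ref{claim:Qregular}.

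In case (a), I would let $Q^{(k)}(\hat \by_i,1),\dots,Q^{(k)}(\hat \by_i,a^\sQ_k)$ be the $a^\sQ_k$ $\sQ^{(k)}$-parts lying on $\hat Q_i^{(k-1)}$ and define $L^{(k)}(i,r):=Q^{(k)}(\hat\by_i,r)\cap\cK_k(\hat L_i^{(k-1)})$, which immediately ensures $L^{(k)}(i,r)\subseteq Q^{(k)}(\hat\by_i,r)\in\sQ^{(k)}$. By (O3)$_{\ref{lem:refreg}}$, each $Q^{(k)}(\hat\by_i,r)$ is $(\mu,1/a^\sQ_k)$-regular with respect to $\hat Q_i^{(k-1)}$. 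To upgrade this to $(\mu^{1/2},1/a^\sQ_k)$-regularity with respect to the finer polyad $\hat L_i^{(k-1)}$, I would invoke Lemma~\ref{lem: simple facts 1}(ii); this requires $|\cK_k(\hat L_i^{(k-1)})|\ge \mu^{1/2}\,|\cK_k(\hat Q_i^{(k-1)})|$, which I plan to verify by applying Lemma~\ref{lem: counting} to $\hat{\cL}(\hat\bx)$ (valid by \ref{item:L*1} and Lemma~\ref{lem: maps bijections}(ii)) and separately to $\hat{\cQ}(\hat\by_i)$ (valid by (O3)$_{\ref{lem:refreg}}$). The resulting ratio is a product of powers of $a^\sQ_j/a^\sL_j\ge 1/t_{k-1}$, hence bounded below by a positive constant depending only on $k$ and $t_{k-1}$, which dominates $\mu^{1/2}$ by~\eqref{eq: mu def}.

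In case (b), the refinement condition $L^{(k)}(i,r)\subseteq \binom{V}{k}\setminus \cK_k(\sQ^{(1)})$ is automatic, so I only need to split $\cK_k(\hat L_i^{(k-1)})$ into $a^\sQ_k$ pieces that are $(\mu^{1/2},1/a^\sQ_k)$-regular with respect to $\hat L_i^{(k-1)}$. Since $\cK_k(\hat L_i^{(k-1)})$ is itself trivially $(0,1)$-regular with respect to $\hat L_i^{(k-1)}$ and Lemma~\ref{lem: counting} lower-bounds its size by $n^k/\log n$, a single application of the slicing lemma (Lemma~\ref{lem: slicing}) with $p_1=\dots=p_{a^\sQ_k-1}=1/a^\sQ_k$ (and the $0$-th part playing the role of $L^{(k)}(i,a_k^\sQ)$) delivers the desired partition. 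The main obstacle is case (a): I need the clique ratio between the two polyads to be comfortably larger than $\mu^{1/2}$ so that the $k$-th level densities $1/a^\sQ_k$ are preserved \emph{exactly} when the regularity is transferred from the coarse polyad $\hat Q_i^{(k-1)}$ to the much finer $\hat L_i^{(k-1)}$; unlike case (b), we cannot use the slicing lemma to create these densities from scratch here because the partition must agree with the prescribed $\sQ^{(k)}$-parts.
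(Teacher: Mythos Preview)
Your proposal is correct and follows essentially the same approach as the paper: the same crossing/non-crossing dichotomy, the same definition $L^{(k)}(i,r):=Q^{(k)}(\hat\by_i,r)\cap\cK_k(\hat L_i^{(k-1)})$ in case~(a) together with two applications of the counting lemma and Lemma~\ref{lem: simple facts 1}(ii), and the same use of the slicing lemma on the trivially $(0,1)$-regular $\cK_k(\hat L_i^{(k-1)})$ in case~(b). The only cosmetic difference is that you obtain the dichotomy via Proposition~\ref{prop: hat relation}(xi), whereas the paper derives it directly from $\sL^{(1)}\prec\sQ^{(1)}$ together with $\sL^{(k-1)}\prec\sQ^{(k-1)}$.
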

\begin{proof}
Since $\sL^{(1)} \prec \sQ^{(1)}$, for each $\hat{L}^{(k-1)}_i \in 	\hat{\sL}^{(k-1)}$, either $\cK_k(\hat{L}^{(k-1)}_i) \subseteq \cK_k(\sQ^{(1)})$ or $\cK_k(\hat{L}^{(k-1)}_i) \subseteq \binom{V}{k}\setminus\cK_k(\sQ^{(1)}).$

Suppose first that $\cK_k(\hat{L}^{(k-1)}_i) \subseteq \cK_k(\sQ^{(1)})$. 
As $\sL^{(k-1)} \prec \sQ^{(k-1)}$, 
there exists a (unique) $\hat{Q}^{(k-1)}_j \in \hat{\sQ}^{(k-1)}$ such that $\hat{L}^{(k-1)}_i\subseteq \hat{Q}^{(k-1)}_j$.
In addition, there are exactly $a^{\sQ}_{k}$ many $k$-graphs $Q^{(k)}(j,1),\dots, Q^{(k)}(j,a^{\sQ}_k)$ in $\sQ^{(k)}$ that partition $\cK_k(\hat{Q}^{(k-1)}_j)$.
For each $r\in [a^{\sQ}_k]$, let $$L^{(k)}(i,r):= Q^{(k)}(j,r)\cap \cK_k(\hat{L}^{(k-1)}_i).$$ 
Hence $\{L^{(k)}(i,1),\dots, L^{(k)}(i,a^{\sQ}_k)\}$ forms a partition of $\cK_k(\hat{L}^{(k-1)}_i)$.
We can now apply Lemma~\ref{lem: counting} twice and use \ref{item:L*1} as well as (O3)$_{\ref{lem:refreg}}$ to obtain that 
$$|\cK_k(\hat{L}^{(k-1)}_i)|\geq t_{k-1}^{-2^{k}} |\cK_k(\hat{Q}^{(k-1)}_j)|.$$
Thus Lemma~\ref{lem: simple facts 1}(ii), (O3)$_{\ref{lem:refreg}}$ and \eqref{eq: mu def} imply that 
$L^{(k)}(i,r)$ is $(\mu^{1/2},1/a^{\sQ}_k)$-regular with respect to $\hat{L}^{(k-1)}_i$ for each $r\in [a^{\sQ}_k]$.

Suppose next that we have $\cK_k(\hat{L}^{(k-1)}_i) \subseteq \binom{V}{k}\setminus\cK_k(\sQ^{(1)})$. 
We apply the slicing lemma (Lemma~\ref{lem: slicing}) with $\cK_k(\hat{L}^{(k-1)}_i), \hat{L}^{(k-1)}_i, 1, 1/a^{\sQ}_k$ playing the roles of $H^{(k)}, H^{(k-1)}, d, p_i$ respectively.
We obtain a partition $\{L^{(k)}(i,1),\dots, L^{(k)}(i,a^{\sQ}_k)\}$ of $\cK_k(\hat{L}^{(k-1)}_i)$ such that 
$L^{(k)}(i,r)$ is $(\mu^{1/2},1/a^{\sQ}_k)$-regular with respect to $\hat{L}^{(k-1)}_i$ for each $r\in [a^{\sQ}_k]$. 

The moreover part of Claim~\ref{claim:hatsL} is immediate from the construction in both cases.
\end{proof}

Let
$$\sL^{(k)}:= \{ L^{(k)}(i,r): i\in [\hat{s}_{\sL}], r\in [a^{\sQ}_k] \}, \enspace \ba^{\sL_*}:= (a^{\sL}_1,\dots, a^{\sL}_{k-1},a^{\sQ}_k) \enspace \text{ and } \enspace \sL_*:= \{\sL^{(i)}\}_{i=1}^{k},$$
$$\sJ^{(k)}:= \left( \{ H^{(k)}_i \cap L_*^{(k)} : i\in [s], L_*^{(k)}\in \sL^{(k)}\} \cup \{H^{(k)}_i \setminus \cK_{k}(\sL^{(1)}): i\in [s]\}\right)\setminus \{\emptyset\}.$$ 
\begin{align}\label{eq: sJ k j' for each i}
 \{J^{(k)}_1, \dots, J^{(k)}_{s_J}\}:= \sJ^{(k)}, \text{ and } J'(i):= \{ j' \in [s_J]: J^{(k)}_{j'} \subseteq H^{(k)}_i\} \text{ for each } i\in[s].
 \end{align}
Then $\sL^{(k)} \prec \sQ^{(k)}$. 
Let $\mu_* := \mu^{1/2}$ if $k=2$ and $\mu_*:= \mu'(\ba^{\sL})^{1/2} > \mu^{1/2}$ if $k\geq 3$. 
Then by \ref{item:L*1}, Claim~\ref{claim:hatsL}, and~\eqref{eq: t'-bounded}, we have
\begin{equation}\label{eq:sLkk}
\begin{minipage}[c]{0.8\textwidth}\em
 $\sL_*$ is a $(1/a_1^{\sL}, \mu_*,\ba^{\sL_*})$-equitable $t'$-bounded family of partitions.
\end{minipage}
\end{equation}
Moreover, $s_J\leq 2 s t'^{2^k}$ by \eqref{eq: t'-bounded} and Proposition~\ref{prop: hat relation}(viii).
Also we have $\sJ^{(k)} \prec \sH^{(k)}$ and $\{J'(1), \dots, J'(s)\}$ forms a partition of $[s_J]$.

Our next aim is to apply Lemma~\ref{RAL(k)} with the following objects and parameters.\COMMENT{
Cannot use $t_{k-1}$ instead of $t'$ since in~\eqref{eq: mu' def} we only have $\mu'(\ba) <  (\mu_{\ref{RAL(k)}}(k,T ,2s T^{2^{k}}  ,\eta,\nu/3,\epsilon^2))^2$.
}
\newline

{\small
\begin{tabular}{c|c|c|c|c|c|c|c|c|c}
object/parameter & $\sL_*$ & $\sJ^{(k)}$ & $t$ & $t'$ & $s_J $ & $\eta$ & $\nu/3$ & $\epsilon^2$& $\mu^*$ \\ \hline
playing the role of & $\sQ$ & $\sH^{(k)}$ & $t$ & $o$ & $s$ & $\eta$ & $\nu$ & $\epsilon$& $\mu$
 \\ 
\end{tabular}
}\newline \vspace{0.2cm}

Indeed, we can apply Lemma~\ref{RAL(k)}:
\eqref{eq: n0 def} ensures that (O1)$_{\ref{RAL(k)}}$ holds;
by \eqref{eq: mu' def}, \eqref{eq: mu def}, \eqref{eq: t'-bounded} and \eqref{eq:sLkk},
$\sL_*$ satisfies (O2)$_{\ref{RAL(k)}}$. 
By construction, $\sJ^{(k)} \prec \sL_*^{(k)}$, thus (O3)$_{\ref{RAL(k)}}$ also holds.
We obtain $\sP= \sP(k-1, \ba^{\sP})$ and $\sG'^{(k)}=\{ G'^{(k)}_1,\dots, G'^{(k)}_{s_J}\}$ satisfying the following.
\begin{enumerate}[label=(P$'$\arabic*)]
\item\label{eq:P'1} $\sP$ is $(\eta,\epsilon(\ba^{\sP})^2,\ba^{\sP})$-equitable and $t$-bounded, and $a^{\sL_*}_j$ divides $a^{\sP}_j$ for all $j\in [k-1]$,
\item\label{eq:P'2} for each $j\in [k-1]$, $\sP^{(j)} \prec  \sL^{(j)}$,
\item\label{eq:P'3} $G'^{(k)}_i$ is perfectly $\epsilon(\ba^{\sP})^2$-regular with respect to $\sP$ for all $i\in [s_J]$,
\item\label{eq:P'4} $\sum_{i\in[s_J]} |G'^{(k)}_i\triangle J^{(k)}_i| \leq (\nu/3)\binom{n}{k}$, and
\item\label{eq:P'5} $\sG'^{(k)} \prec \sL^{(k)}$ and if $J^{(k)}_i \subseteq \cK_k(\sL^{(1)})$, then $G'^{(k)}_i \subseteq \cK_k(\sL^{(1)})$.
\end{enumerate}
Here we obtain \ref{eq:P'1} from \eqref{eq: t def}.
In addition, we also have the following.
\begin{equation}\label{eq: Q perfectly regular wrt P}
\begin{minipage}[c]{0.8\textwidth}\em
$Q^{(k)}_{i'}$ is perfectly $\epsilon(\ba^{\sP})^2$-regular with respect to $\sP$ for all $i'\in [s_Q+1]$.
\end{minipage}
\end{equation}
Indeed, $\sP^{(k-1)}\prec \sL^{(k-1)} \prec \sQ^{(k-1)}$. Thus for each $\hat{P}^{(k-1)} \in \hat{\sP}^{(k-1)}$, either there exists unique $\hat{Q}^{(k-1)} \in \hat{\sQ}^{(k-1)}$ such that $\cK_k(\hat{P}^{(k-1)}) \subseteq \cK_{k}(\hat{Q}^{(k-1)})$, or $\cK_k(\hat{P}^{(k-1)}) \subseteq \binom{V}{k} \setminus \cK_k(\sQ^{(1)})$. 

In the former case, by two applications of Lemma~\ref{lem: counting} and \ref{eq:P'1}, it is easy to see that 
$$|\cK_k(\hat{P}^{(k-1)})|\geq t^{-2^{k}} |\cK_k(\hat{Q}^{(k-1)})|.$$
Thus (O3)$_{\ref{lem:refreg}}$ with Lemma~\ref{lem: simple facts 1}(ii) and \eqref{eq: mu def} implies that $Q^{(k)}_{i'}$ is $\epsilon(\ba^{\sP})^2$-regular with respect to $\hat{P}^{(k-1)}$.

Now suppose that $\cK_k(\hat{P}^{(k-1)}) \subseteq \binom{V}{k} \setminus \cK_k(\sQ^{(1)})$.
If $i'\in [s_Q]$, then we have $Q^{(k)}_{i'} \subseteq \cK_{k}(\sQ^{(1)})$. Thus $Q^{(k)}_{i'} \cap \cK_{k}(\hat{P}^{(k-1)}) =\emptyset,$ and $Q^{(k)}_{i'}$ is $(\epsilon(\ba^{\sP})^2,0)$-regular with respect to $\hat{P}^{(k-1)}$.
If $i'=s_Q+1$, then $Q^{(k)}_{i'} = \binom{V}{k}\setminus \cK_{k}(\sQ^{(1)})$, thus $Q^{(k)}_{i'}$ is $(\epsilon(\ba^{\sP})^2,1)$-regular with respect to $\hat{P}^{(k-1)}$.
Thus we have \eqref{eq: Q perfectly regular wrt P}.

It is easy to see that \ref{eq:P'1} and \ref{item:L*1}  imply (P1)$_{\ref{lem:refreg}}$. 
The statements \ref{eq:P'2}, \eqref{eq: prec triangle} together with \ref{item:L*2} and \ref{item:L*3} imply (P2)$_{\ref{lem:refreg}}$.

As $\sL^\kk \prec\sQ^\kk$ and \ref{eq:P'5} holds, we obtain $\sG'^\kk \prec\sQ^\kk$.
For each $i'\in [s_Q+1]$, let
$$G'(i'):= \{ j' \in[s_J] : G'^{(k)}_{j'} \subseteq Q^{(k)}_{i'}\},\enspace \text{and } \enspace H'(i'):= \{ i\in [s]: H^{(k)}_i \subseteq Q^{(k)}_{i'}\}.$$
Note that $\{G'(1), \dots G'(s_Q+1)\}$ forms a partition of $[s_J]$. Also by (O4)$_{\ref{lem:refreg}}$, 
$\{H'(1), \dots H'(s_Q+1)\}$ forms a partition of $[s]$. Moreover, both $G'(i')$ and $H'(i')$ are non-empty sets.
For each $i'\in[s_Q+1]$, 
we arbitrarily choose a representative $h'_{i'} \in H'(i')$.

Recall that $J'(i)$ was defined in \eqref{eq: sJ k j' for each i}.
For each $i'\in [s_Q+1]$ and $i\in H'(i')\setminus \{h'_{i'}\}$, we define
$$G^{(k)}_i := \bigcup_{j'\in J'(i)\cap G'(i')} G'^{(k)}_{j'} \enspace \text{ and } \enspace G^{(k)}_{h'_{i'}} :=  Q_{i'}^{(k)}\setminus \bigcup_{\ell \in H'(i')\setminus h'_{i'} } G^{(k)}_{\ell}.$$
Let
$$\sG^\kk:= \{ G^{(k)}_i :i\in [s]\}.$$
By the construction, $\sG^\kk$ forms a partition of $\binom{V}{k}$.
Moreover, we have the following:
\begin{equation}\label{eq: i,j',i' condi}
\begin{minipage}[c]{0.8 \textwidth} \em
Suppose that $i'\in [s_Q+1]$, $i \in H'(i')$ and $j'\in J'(i) \cap G'(i')$. 
Then $G'^{(k)}_{j'}\subseteq G^{(k)}_i$.
\end{minipage}
\end{equation}\COMMENT{If $i\neq h'_{i'}$, then it is trivial by the definition. If $i=h'_{i'}$, then still $G'^{(k)}_{j'}\subseteq Q_{i'}^{(k)}$ holds. Also 
$j'\notin J'(i_*)$ for any $i_*\neq h'_{i'}$, thus we get that
 $G'^{(k)}_{j'}\subseteq G^{(k)}_{h'_{i'}}$.
}
Note that the construction of $\sG^{(k)}$, \ref{eq:P'3}, the union lemma~(Lemma~\ref{lem: union regularity}),  \eqref{eq: epsilon function small} and \eqref{eq: t'-bounded} together imply that 
for each $i'\in [s_Q+1]$ and $i\in H'(i')\setminus\{h'_{i'}\}$, $G^{(k)}_i$ is perfectly $\epsilon(\ba^{\sP})^{3/2}$-regular with respect to $\sP$.\COMMENT{We need that $\epsilon(\ba^{\sP}) \ll 1/s_J$ here. But by \eqref{eq: epsilon function small} this holds since $s_J \leq 2st'^{2^k}$ and by \eqref{eq: t'-bounded} we have $t'= \norm{\ba^{\sL}_{\rm long}} \leq \norm{\ba^{\sP}}$. }
In particular, together with \eqref{eq: Q perfectly regular wrt P}, the union lemma~(Lemma~\ref{lem: union regularity}) and Lemma~\ref{lem: simple facts 1}(iii), this implies that
for each $i'\in [s_Q+1]$, $G^{(k)}_{h'_{i'}}$ is also perfectly $\epsilon(\ba^{\sP})$-regular with respect to $\sP$. Thus we obtain (G1)$_{\ref{lem:refreg}}$.

By the definition of $G^{(k)}_i$, 
we conclude that for every $i\in [s]$, 
there exists $i'\in [s_Q+1]$ such that $G^{(k)}_{i} \subseteq Q^{(k)}_{i'}$. 
Thus $\sG^{(k)} \prec \sQ^{(k)}$.
Moreover, 
if $H^{(k)}_{i} \subseteq \cK_{k}(\sQ^{(1)})$, then $i\in H'(i')$ for some $i' \neq s_Q+1$. 
Hence in this case $G^{(k)}_{i} \subseteq Q^{(k)}_{i'}$ with $i'\neq s_{Q}+1$, and so $G^{(k)}_{i}\subseteq \cK_{k}(\sQ^{(1)})$. 
Thus (G3)$_{\ref{lem:refreg}}$ holds.

We now verify (G2)$_{\ref{lem:refreg}}$.
Consider any edge $e \in G^{(k)}_{i} \setminus H^{(k)}_{i}$ for some $i\in  [s]$. 
We claim that 
\begin{align}\label{eq:einJG}
	e\in \bigcup_{j' \in [s_J] } J^{(k)}_{j'} \setminus G'^{(k)}_{j'}.
\end{align}
To prove \eqref{eq:einJG} note that
since $\sJ^{(k)}$ is a partition of $\binom{V}{k}$, there exists $j'\in [s_J]$ such that $e\in J^{(k)}_{j'}$. So \eqref{eq:einJG} holds if $e\notin G'^{(k)}_{j'}$. 
Thus assume for a contradiction that $e\in G'^{(k)}_{j'}$. 
Let $i_*\in [s]$ be the index such that $j'\in J'(i_*)$. 
Then $J^{(k)}_{j'} \subseteq H^{(k)}_{i_*}$.

Since $\{H'(1),\dots, H'(s_Q+1)\}$ forms a partition of $[s]$, 
there exists $i'\in [s_Q+1]$ such that $i_*\in H'(i')$.
Thus $e\in J^{(k)}_{j'}\subseteq  H^{(k)}_{i_*} \subseteq Q^{(k)}_{i'}$. 
Hence $Q^{(k)}_{i'}\cap G'^{(k)}_{j'}\neq \emptyset$.
Since $\sG'^{(k)}\prec \sQ^{(k)}$, 
this implies that $G'^{(k)}_{j'} \subseteq Q^{(k)}_{i'}$ and so $j'\in G'(i')$.
Consequently, we have $i_* \in H'(i')$ and $j'\in J'(i_*) \cap G'(i')$. This together with \eqref{eq: i,j',i' condi} implies that $e\in G'^{(k)}_{j'} \subseteq G^{(k)}_{i_*}$. Since $\sG^{(k)}$ is a partition of $\binom{V}{k}$, this implies that $i=i_*$.
But then $e\in H^{(k)}_{i_*} = H^{(k)}_{i}$, a contradiction.
This proves \eqref{eq:einJG}.

Then
\begin{align}\label{eq:GHJ}
	\sum_{i\in [s]} |G^{(k)}_i\setminus H^{(k)}_i| \stackrel{(\ref{eq:einJG})}{\leq}
	\sum_{j'\in [s_J] } |J'^{(k)}_{j'} \setminus G'^{(k)}_{j'} |.
\end{align}
Since all of $\sH^\kk, \sG^\kk, \sJ^\kk$ and $\sG'^\kk$ are partitions of $\binom{V}{k}$, we obtain 
$$\sum_{i\in[s]} |G^{(k)}_i\triangle H^{(k)}_i| = 2\sum_{i\in [s]} |G^{(k)}_i\setminus H^{(k)}_i|, \text{ and }
\sum_{i\in[s_J]} |G'^{(k)}_i\triangle J^{(k)}_i|= 2\sum_{i\in[s_J]} |J^{(k)}_i\setminus G'^{(k)}_i|.
$$\COMMENT{Every edge counted in the right side is counted twice for the left side. Note that if $v \in G^{(k)}_i\setminus J^{(k)}_i$ implies that $v\in J^{(k)}_{i'} \setminus G^{(k)}_{i'}$ for some $i'\neq i$. }
Thus we conclude
\begin{align*}
\sum_{i\in [s]} |G^{(k)}_i\triangle H^{(k)}_i|
=  2\sum_{i\in [s]} |G^{(k)}_i\setminus H^{(k)}_i| 
\stackrel{(\ref{eq:GHJ})}{\leq}
2\sum_{j'\in [s_J] } |J'^{(k)}_{j'} \setminus G'^{(k)}_{j'} |
\stackrel{\text{\ref{eq:P'4}}}{\leq}  \nu\binom{n}{k}.
\end{align*}
Thus (G2)$_{\ref{lem:refreg}}$ holds.
\end{proof}

Suppose we are given a $(k-1)$-graph $H$ on a vertex set $V$.
In the next lemma we apply Lemma~\ref{RAL(k)} to show that, given a different vertex set $V'$,
there exists another $(k-1)$-graph $F$ on $V'$ whose large scale structure is very close to that of~$H$.

\begin{lemma}\label{lem: mimic Kk}
Suppose $0< 1/m, 1/n \ll \epsilon \ll \nu, 1/o,  1/k \leq 1$ and $k,o \in \N\sm \{1\}$.
Suppose $\sP=\sP(k-1,\ba)$ and $\sQ=\sQ(k-1,\ba)$ are both $o$-bounded $(1/a_1,\epsilon,\ba)$-equitable families of partitions of $V$ and $V'$ respectively with $|V|=n$ and $|V'|=m$. 
Suppose that $H^{(k-1)} \subseteq \cK_{k-1}(\sP^{(1)})$. 
Then there exists a $(k-1)$-graph $G^{(k-1)}\subseteq \cK_{k-1}(\sP^{(1)})$ on $V$ and a $(k-1)$-graph $F^{(k-1)}\subseteq \cK_{k-1}(\sQ^{(1)})$ on $V'$ such that
\begin{itemize}
\item[{\rm (F1)$_{\ref{lem: mimic Kk}}$}]  $|H^{(k-1)} \triangle G^{(k-1)}| \leq \nu \binom{n}{k-1}$,
\item[{\rm (F2)$_{\ref{lem: mimic Kk}}$}] $d( \cK_{k}(G^{(k-1)}) \mid \hat{P}^{(k-1)}(\hat{\bz})) = d( \cK_{k}(F^{(k-1)}) \mid \hat{Q}^{(k-1)}(\hat{\bz})) \pm \nu$
for each $\hat{\bz}\in \hat{A}(k,k-1,\ba)$.
\end{itemize}
\COMMENT{Here, when $k=2$, $\cK_{k}(F^{(k-1)})$ only counts the crossing $2$-sets in $F^{(k-1)}$, i.e. pairs of vertices in $F^{(k-1)}$ such that two vertices lie in different set in $\sP^{(1)}$.}
\end{lemma}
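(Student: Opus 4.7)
The plan is to regularize $H^{(k-1)}$ on $V$ via Lemma~\ref{RAL(k)} while preserving the structure of $\sP^{(k-1)}$, and then transplant the resulting refined density profile to $V'$. The shared parameter $\ba$ between $\sP$ and $\sQ$ will let me identify polyad addresses across the two vertex sets. The case $k=2$ is immediate: take $G^{(1)}:=H^{(1)}$ and obtain $F^{(1)}\sub V'$ by matching the fractions $|H^{(1)}\cap V_\alpha|/|V_\alpha|$ on each $V'_\alpha$. I focus on $k\geq 3$.

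On $V$, I form the partition $\sH^{(k-1)}:=\{H^{(k-1)}\cap P,\,P\sm H^{(k-1)}:P\in\sP^{(k-1)}\}\cup\{\binom{V}{k-1}\sm\cK_{k-1}(\sP^{(1)})\}$ of $\binom{V}{k-1}$ (discarding empty classes); it refines $\sP^{(k-1)}$. Applying Lemma~\ref{RAL(k)} with $k-1$, $\sP$, $\sH^{(k-1)}$ in the roles of $k$, $\sQ$, $\sH^{(k)}$, and with a suitably small function $\epsilon^*$ and $\nu/10$ in place of $\epsilon$ and $\nu$, produces a refined $(k-2,\ba^*)$-equitable family $\sP'=\sP'(k-2,\ba^*)$ (with $a_i\mid a^*_i$ for $i\in[k-2]$) and a partition $\sG^{(k-1)}\prec\sP^{(k-1)}$ of $\binom{V}{k-1}$ whose classes are perfectly $\epsilon^*(\ba^*)$-regular with respect to $\sP'$. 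Letting $G^{(k-1)}$ be the union of those $G^{(k-1)}_i\in\sG^{(k-1)}$ whose $\sH^{(k-1)}$-counterpart has the form $H^{(k-1)}\cap P$, property (P4)$_{\ref{RAL(k)}}$ yields (F1)$_{\ref{lem: mimic Kk}}$. For each $P\in\sP^{(k-1)}$ and each $\hat{\bx}\in\hat{A}(k-1,k-2,\ba^*)$ such that $P$ is supported on the vertex classes of $\hat{\bx}$, record $\delta_P(\hat{\bx}):=d(G^{(k-1)}\cap P\mid\hat{P}'^{(k-2)}(\hat{\bx}))$; this is well-defined and $G^{(k-1)}\cap P$ is regular w.r.t.\ $\hat{P}'^{(k-2)}(\hat{\bx})$ with this density.

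On $V'$, I restrict $\sQ$ to its first $k-2$ levels (which remains an equitable $(k-2,(a_1,\dots,a_{k-2}))$-family) and apply Lemma~\ref{lem: partition refinement} to produce a refined $(k-2,\ba^*)$-equitable family $\sQ'=\sQ'(k-2,\ba^*)$ on $V'$. I choose the $\ba^*$-labelling of $\sQ'$ so that, for every $\hat{\bx}\in\hat{A}(k-1,k-2,\ba^*)$, the polyad $\hat{Q}'^{(k-2)}(\hat{\bx})$ lies inside the $\sQ$-polyad whose $\ba$-address matches that of the $\sP$-polyad containing $\hat{P}'^{(k-2)}(\hat{\bx})$; this is possible because both refinements share $\ba^*$ and the projection $\hat{A}(k-1,k-2,\ba^*)\to\hat{A}(k-1,k-2,\ba)$ is purely combinatorial. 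Similarly, fix a bijection between $\sP^{(k-1)}$-parts and $\sQ^{(k-1)}$-parts respecting $\ba$-addresses. For each pair $(Q,\hat{\bx})$ with $Q\in\sQ^{(k-1)}$ supported on the vertex classes of $\hat{\bx}$, matched to some $P\in\sP^{(k-1)}$, apply Lemma~\ref{lem: slicing} to carve from $Q\cap\cK_{k-1}(\hat{Q}'^{(k-2)}(\hat{\bx}))$ a $(k-1)$-graph that is regular w.r.t.\ $\hat{Q}'^{(k-2)}(\hat{\bx})$ with density $\delta_P(\hat{\bx})$ (the size condition follows from Lemma~\ref{lem: counting}; $\delta_P(\hat{\bx})=0$ contributes nothing). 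Taking the union of these pieces over all pairs gives $F^{(k-1)}\sub\cK_{k-1}(\sQ^{(1)})$.

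The remaining task, and the main obstacle, is to verify (F2)$_{\ref{lem: mimic Kk}}$. Fix $\hat{\bz}\in\hat{A}(k,k-1,\ba)$. A $k$-clique in $\cK_k(G^{(k-1)})\cap\cK_k(\hat{P}^{(k-1)}(\hat{\bz}))$ is pinned down by choosing, inside each of the $k$ vertex classes indexed by $\hat{\bz}^{(1)}$, a sub-class of $\sP'$; each of its $k$ faces then sits in $G^{(k-1)}\cap P^{(k-1)}_i$ (with $P^{(k-1)}_i$ the $i$-th part of $\hat{P}^{(k-1)}(\hat{\bz})$) and in the polyad $\hat{P}'^{(k-2)}(\hat{\bx}_i)$ determined by the sub-classes and $\hat{\bz}$. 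Using the regularity of $G^{(k-1)}\cap P^{(k-1)}_i$ with density $\delta_{P^{(k-1)}_i}(\hat{\bx}_i)$ on this polyad, Corollary~\ref{cor: counting} (applied to the $k$-partite $(k,k-1)$-complex obtained from $\hat{\cP}(\hat{\bz})$ by restriction to the chosen sub-classes) expresses the number of such $k$-cliques, up to $\pm(\nu/3)|\cK_k(\hat{P}^{(k-1)}(\hat{\bz}))|$, as a sum over sub-class choices of $\prod_{i=1}^k\delta_{P^{(k-1)}_i}(\hat{\bx}_i)$ times an explicit geometric factor depending only on $\ba^*$ and the sizes $|V_\alpha|$; the denominator $|\cK_k(\hat{P}^{(k-1)}(\hat{\bz}))|$ admits the analogous expression with every $\delta$ replaced by $1/a_{k-1}$. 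By the labelling choices above, the matched quantities on $V'$ agree term-by-term, so the identical computation on $V'$ yields the same ratio up to $\pm\nu/3$, giving (F2)$_{\ref{lem: mimic Kk}}$. The delicate point throughout is this address bookkeeping, ensuring that each pair $(P^{(k-1)}_i,\hat{\bx}_i)$ on $V$ has a matched pair $(Q^{(k-1)}_i,\hat{\bx}_i)$ on $V'$ carrying the same density value.
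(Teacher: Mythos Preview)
Your construction of $G^{(k-1)}$ and $F^{(k-1)}$ is exactly the paper's: apply Lemma~\ref{RAL(k)} at uniformity $k-1$ to regularize $H^{(k-1)}$ over a refined family $\sP'$ (the paper calls it $\sR$), then use Lemma~\ref{lem: partition refinement} to build a matching refined family $\sQ'$ (the paper's $\sL$) on $V'$, and carve out $F^{(k-1)}$ via Lemma~\ref{lem: slicing} so that its densities with respect to the $\sQ'$-polyads mirror those of $G^{(k-1)}$ with respect to the $\sP'$-polyads.

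The gap is in your verification of (F2)$_{\ref{lem: mimic Kk}}$ for $k\geq 4$. You claim that once one selects, for each of the $k$ vertex classes indexed by $\hat{\bz}^{(1)}$, a refined sub-class of $\sP'^{(1)}$, each $(k-1)$-face of the resulting $k$-clique lies in a polyad $\hat{P}'^{(k-2)}(\hat{\bx}_i)$ ``determined by the sub-classes and $\hat{\bz}$''. This is false when $k\geq 4$: the refined family $\sP'$ then has nontrivial levels $2,\dots,k-2$, and the $(k-2)$-polyad containing a given $(k-1)$-face depends on which $\sP'^{(2)},\dots,\sP'^{(k-2)}$-classes contain the subfaces of that face---information not encoded in the choice of vertex sub-classes or in $\hat{\bz}$. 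Your sum therefore conflates many distinct $\sP'$-polyads, and Corollary~\ref{cor: counting} cannot be applied to a single complex as you describe. (For $k=3$ the refined family is just a vertex partition, so there your decomposition is correct.)

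The repair is to sum over full refined addresses $\hat{\bu}\in\hat{A}(k,k-2,\ba^*)$ compatible with $\hat{\bz}$, not merely over vertex sub-class choices; for each such $\hat{\bu}$ one then has a genuinely regular $(k,k-1)$-complex to which Corollary~\ref{cor: counting} applies. The paper avoids this bookkeeping altogether: it observes that $G^{(k-1)}\cap\hat{P}^{(k-1)}(\hat{\bz})$ and $F^{(k-1)}\cap\hat{Q}^{(k-1)}(\hat{\bz})$ satisfy the \emph{same} regularity instance (with respect to $\sR$ and $\sL$ respectively), and then invokes Lemma~\ref{lem: counting hypergraph} (at uniformity $k-1$, with $F=K_k^{(k-1)}$) to conclude that the two normalized $K_k^{(k-1)}$-counts agree. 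That lemma already performs the correct summation over $\hat{A}(k,k-2,\ba^{\sR})$ internally, so (F2)$_{\ref{lem: mimic Kk}}$ follows in one line.
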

To prove Lemma~\ref{lem: mimic Kk},
we first apply Lemma~\ref{RAL(k)}
to obtain a family of partitions $\sR=\sR(k-2,\ba^{\sR})$ and a $k$-graph $G^{(k-1)}$ as in {\rm (F1)$_{\ref{lem: mimic Kk}}$}.
We then `project' $\sR$ onto $V'$ (so that it refines $\sQ$).
This results in a partition $\sL$.
We then apply the slicing lemma to construct $F^{(k-1)}$ which respects $\sL$ (and in particular has the appropriate densities).
\begin{proof}[Proof of Lemma~\ref{lem: mimic Kk}]
First suppose $k=2$. 
Then $H^{(1)} \subseteq V$.
Let $G^{(1)}:= H^{(1)}$. 
Thus (F1)$_{\ref{lem: mimic Kk}}$ holds. 
Recall that for each $b\in [a_1]$, the vertex sets $P^{(1)}(b,b)$ and $Q^{(1)}(b,b)$ denote the $b$-th parts in $\sP^{(1)}$ and $\sQ^{(1)}$, respectively.
For each $b\in [a_1]$, 
let $F^{(1)}(b,b)$ be a subset of $Q^{(1)}(b,b)$ with 
$$|F^{(1)}(b,b)|= \left\lfloor \frac{m}{n}|H^{(1)}\cap P^{(1)}(b,b)| \right\rfloor$$
and let $F^{(1)}:=\bigcup_{b\in [a_1]}F^{(1)}(b,b)$.
For each $\bz=( \alpha_1, \alpha_2) \in \hat{A}(2,1,\ba)$, we have
\begin{align*}
d( \cK_{2}(F^{(1)}) \mid \hat{Q}^{(1)}(\hat{\bz})) 
& = \frac{|F^{(1)}(\alpha_1,\alpha_1)| |F^{(1)}(\alpha_2,\alpha_2)|}{ (m/a_1\pm 1)^2} \\
& =\frac{ ( |H^{(1)}\cap P^{(1)}(\alpha_1,\alpha_1)| \pm n/m) ( |H^{(1)}\cap P^{(1)}(\alpha_2,\alpha_2)| \pm n/m) }{ (n/a_1\pm n/m)^2} \\
&= d(\cK_{2}(G^{(1)}) \mid \hat{P}^{(1)}(\hat{\bz})) \pm \nu.
\end{align*}
\COMMENT{Penultimate equality: 
Let $A:=  |H^{(1)}\cap P^{(1)}(\alpha_1,\alpha_1)|, B:=  |H^{(1)}\cap P^{(1)}(\alpha_2,\alpha_2)|$ then $A,B\leq n/a_1+1$.\newline
\begin{align*}
&\frac{ ( |H^{(1)}\cap P^{(1)}(\alpha_1,\alpha_1)| \pm n/m) ( |H^{(1)}\cap P^{(1)}(\alpha_2,\alpha_2)| \pm n/m) }{ (n/a_1\pm n/m)^2} = \frac{AB\pm (A+B)n/m \pm n^2/m^2}{ (n/a_1\pm n/m)^2} \\
&= \frac{AB \pm 2n^2/(a_1m) \pm 2n/m \pm n^2/m^2}{ (n/a_1\pm n/m)^2 }
= \frac{AB}{ (n/a_1\pm n/m)^2 } \pm  ( 2.5 a_1/m + 2.5 a_1^2/mn + 2a_1^2/m^2 )
\\
&= \frac{AB}{ ( 1\pm a_1/m)^2 (n/a_1)^2 } \pm 3a_1/m
= (1\pm 3a_1/m)(1\pm 3a_1/n) \frac{AB}{ |P^{(1)}(\alpha_1)| |P^{(1)}(\alpha_2)|} \pm 3a_1/m \\
&=  \frac{AB}{ |P^{(1)}(\alpha_1)| |P^{(1)}(\alpha_2)|}  \pm 6a_1(1/m+1/n)\\
&=d(\cK_{2}(G^{(1)}) \mid \hat{P}^{(1)}(\hat{\bz})) \pm \nu.
\end{align*}
We get the last inequality since $ \frac{AB}{ |P^{(1)}(\alpha_1)| |P^{(1)}(\alpha_2)|} \leq 1$.
}
Thus (F2)$_{\ref{lem: mimic Kk}}$ holds.

Now we show the lemma for $k\geq 3$. 
Let $\eta'$ be a constant such that $\epsilon \ll \eta' \ll \nu,1/o, 1/k$.
Let $\epsilon':\mathbb{N}^{k-2}\rightarrow (0,1]$ be a function such that 
\begin{align}\label{eq: epsilon' def mimic}
\epsilon'(\bb)\ll  \nu, 1/o, 1/k, 1/\|\bb\|_{\infty}
\text{ for all }\bb\in\mathbb{N}^{k-2}.
\end{align}
Let $t:=t_{\ref{RAL(k)}}(k-1,o, o^{4^{k}} ,\eta',\nu,\epsilon').$
Since $\epsilon \ll \nu,1/o,1/k, \eta'$, we may assume that
\begin{align} \label{eq: mimic epsilon def}
0<\epsilon \ll \mu_{\ref{RAL(k)}}(k-1,o,o^{4^{k}},\eta',\nu,\epsilon'), 1/t, \min\{ \epsilon'(\bb) : \bb\in [t]^{k-2}\},
\end{align}
and we may assume that $n,m> n_0:= n_{\ref{RAL(k)}}(k-1,o,o^{4^{k}},\eta',\nu,\epsilon').$
Let 
\begin{align*}
\{H^{(k-1)}_1,\dots, H^{(k-1)}_{s}\}&:=\{ P^{(k-1)}\cap H^{(k-1)} : P^{(k-1)}\in \sP^{(k-1)}\}\setminus \{\emptyset\} ,\\
\{H^{(k-1)}_{s+1},\dots, H^{(k-1)}_{s+s'}\} 
&:=  \left( \{ P^{(k-1)}\setminus H^{(k-1)} : P^{(k-1)}\in \sP^{(k-1)}\} \right. \\
 & \hspace{0.8cm} \left. \cup \left\{ \binom{V}{k-1}\setminus \cK_{k-1}(\sP^{(1)})\right\} \right)\setminus \{\emptyset\},\\
\sH^{(k-1)}&:=\{H^{(k-1)}_1,\dots, H^{(k-1)}_{s+s'}\} .
\end{align*}
Hence $\sH^{(k-1)}$ is a partition of $\binom{V}{k-1}$ such that $\sH^{(k-1)}\prec \sP^{(k-1)}$ and 
$s +s' \leq 2o^{2^k}+1 \leq o^{4^{k}}$ by Proposition~\ref{prop: hat relation}(viii).
We first construct $G^{(k-1)}$.
By \eqref{eq: mimic epsilon def}, 
we may apply Lemma~\ref{RAL(k)} with the following objects and parameters.\newline

{\small
\begin{tabular}{c|c|c|c|c|c|c|c|c|c}
object/parameter & $\sP$ & $\sH^{(k-1)}$ & $o$ & $s+s'$ & $\eta'$ & $\nu$ & $\epsilon'$ & $k-1$&$t$ \\ \hline
playing the role of & $\sQ$ & $\sH^{(k)}$ & $o$ & $s$ & $\eta$ & $\nu$ & $\epsilon$ & $k$&$t$
 \\ 
\end{tabular}
}\newline \vspace{0.2cm}

\noindent
We obtain $\sR= \sR(k-2,\ba^{\sR})$ and $\sG^{(k-1)}=\{G^{(k-1)}_1,\dots, G^{(k-1)}_{s+s'}\}$ satisfying the following.
\begin{itemize}
\item[(R1)$_{\ref{lem: mimic Kk}}$] $\sR$ is $(\eta',\epsilon'(\ba^{\sR}),\ba^{\sR})$-equitable and $t$-bounded and for each $j\in [k-2]$, $a_j$ divides $a^{\sR}_j$, 
\item[(R2)$_{\ref{lem: mimic Kk}}$] $ \{\sR^{(j)}\}_{j=1}^{k-2}=\sR \prec \{\sP^{(j)}\}_{j=1}^{k-2}$,
\item[(R3)$_{\ref{lem: mimic Kk}}$] for each $i\in [s+s']$, $G^{(k-1)}_i$ is perfectly $\epsilon'(\ba^{\sR})$-regular with respect to $\sR$,
\item[(R4)$_{\ref{lem: mimic Kk}}$] $\sum_{i=1}^{s+s'} |G^{(k-1)}_i\triangle H^{(k-1)}_i| \leq \nu \binom{n}{k-1}$, and
\item[(R5)$_{\ref{lem: mimic Kk}}$] $\sG^{(k-1)} \prec \sP^{(k-1)}$, and 
if $H^{(k-1)}_i \subseteq \cK_{k-1}(\sP^{(1)})$, then $G_i^{(k-1)} \sub\cK_{k-1}(\sP^{(1)})$.
\end{itemize}
Observe that $a^{\sR}_1 > \eta'^{-1}$ by (R1)$_{\ref{lem: mimic Kk}}$. 
Thus 
\begin{align}\label{eq: a1 sr} 
1/a^{\sR}_1 \ll \nu, 1/o, 1/k.
\end{align}
Let $G^{(k-1)}:= \bigcup_{i=1}^{s} G_i^{(k-1)}.$
Then (F1)$_{\ref{lem: mimic Kk}}$ holds and $G^{(k-1)}\subseteq \cK_{k-1}(\sP^{(1)})$.

Next we show how to construct $F^{(k-1)}$.
To this end we define a family of partitions $\sL$ on $V'$ which has the same number of parts as $\sR$.
We apply Lemma~\ref{lem: partition refinement} with $\{\sQ^{(j)}\}_{j=1}^{k-2}, \epsilon, \ba^{\sR}$ playing the roles of $\sP,\epsilon,\bb$ to obtain $\sL$ so that\COMMENT{
 Lemma~\ref{lem: partition refinement} shows that $\sL$ is $(1/a_1^{\sR}, \epsilon, \ba^{\sR})$-equitable but since $a_1^{\sR} \geq \eta'^{-1}$, we get that $\sL$ is actually $(\eta',\epsilon,\ba^{\sR})$-equitable.}
\begin{equation*}
\begin{minipage}[c]{0.9\textwidth}\em
$\sL=\sL(k-2,\ba^{\sR})$ is an $(\eta',\epsilon^{1/3}, \ba^{\sR})$-equitable family of partitions such that $\sL\prec \{\sQ\}_{j=1}^{k-2}$.
\end{minipage}
\end{equation*}
Let $\ba' := (a_1,\dots, a_{k-2})$, where $\ba := (a_1,\dots, a_{k-1})$.
By taking an appropriate $\ba^{\sR}$-labelling for $\sL$, we may also assume that for each $\hat{\bx}\in \hat{A}(k-1,k-2,\ba')$,
\begin{align}\label{eq: A hat bx def}
A(\hat{\bx})&:= \{\hat{\by}\in \hat{A}(k-1,k-2,\ba^{\sR}) : \hat{R}^{(k-2)}(\hat{\by})\subseteq \hat{P}^{(k-2)}(\hat{\bx})\} \nonumber \\ &= \{\hat{\by}\in \hat{A}(k-1,k-2,\ba^{\sR})  : \hat{L}^{(k-2)}(\hat{\by})\subseteq \hat{Q}^{(k-2)}(\hat{\bx})\}.
\end{align}
For each  $\hat{\bx}\in \hat{A}(k-1,k-2,\ba')$ and $\hat{\by}\in A(\hat{\bx})$, 
Lemma~\ref{lem: counting} implies that
\begin{align}\label{eq: cK hat L big mimic}
|\cK_{k-1}(\hat{L}^{(k-2)}(\hat{\by}))| &\geq (1-\nu)\prod_{j=1}^{k-1} (a^{\sR}_j)^{-\binom{k-1}{j}} m^{k-1} 
\stackrel{(\ref{eq: mimic epsilon def})}{\geq} \epsilon^{1/3} (1+\nu)\prod_{j=1}^{k-1} (a^{\sQ}_j)^{-\binom{k-1}{j}} m^{k-1} \nonumber \\
&\geq  \epsilon^{1/3}|\cK_{k-1}(\hat{Q}^{(k-2)}(\hat{\bx}))|.
\end{align}
We would like the relative densities of $F^{(k-1)}$ (with respect to the polyads of $\sL$) to reflect the relative densities of $G^{(k-1)}$ (with respect to the polyads of $\sR$).
For this, we first determine the relative densities of $G^{(k-1)}$ (see~\eqref{eq: Gyb reg}).
For each $\hat{\by}\in \hat{A}(k-1,k-2,\ba^{\sR})$, $b\in [a_{k-1}]$, and
the unique vector $\hat{\bx}$ with $\hat{\by} \in A(\hat{\bx})$, we define
\begin{align}\label{eq: Q* P* def}
&Q^{(k-1)}_*(\hat{\by},b) := Q^{(k-1)}(\hat{\bx},b)\cap \cK_{k-1}(\hat{L}^{(k-2)}(\hat{\by})), \nonumber \\ 
&P^{(k-1)}_*(\hat{\by},b):= P^{(k-1)}(\hat{\bx},b)\cap \cK_{k-1}(\hat{R}^{(k-2)}(\hat{\by})).
\end{align}
Since each $Q^{(k-1)}(\hat{\bx},b) \in \sQ^{(k-1)}$ is $(\epsilon,1/a_{k-1})$-regular with respect to $\hat{Q}^{(k-2)}(\hat{\bx})$ for each $b\in [a_{k-1}]$,  
Lemma~\ref{lem: simple facts 1}(ii) and \eqref{eq: cK hat L big mimic} with the definition of $A(\hat{\bx})$ imply that 
\begin{equation}\label{eq: Q/L regular}
\begin{minipage}[c]{0.8\textwidth}\em
$Q^{(k-1)}_*(\hat{\by},b)$ is $(\epsilon^{2/3},1/a_{k-1})$-regular with respect to $\hat{L}^{(k-2)}(\hat{\by})$.
\end{minipage}
\end{equation}
Similarly,\COMMENT{ \eqref{eq: cK hat L big mimic} for $\hat{R}^{(k-2)}(\hat{\by})$ can be also proved using Lemma~\ref{lem: counting} and \eqref{eq: mimic epsilon def}.}
\begin{equation}\label{eq: P/R regular}
\begin{minipage}[c]{0.8\textwidth}\em
$P^{(k-1)}_*(\hat{\by},b)$ is $(\epsilon^{2/3},1/a_{k-1})$-regular with respect to $\hat{R}^{(k-2)}(\hat{\by})$.
\end{minipage}
\end{equation}
For each $\hat{\by}\in \hat{A}(k-1,k-2,\ba^{\sR})$ and $b\in [a_{k-1}]$,
let 
\begin{align}\label{eq: G hat by b def mimic}
G(\hat{\by},b) := G^{(k-1)} \cap P^{(k-1)}_*(\hat{\by},b).
\end{align}
Thus $G(\hat{\by},b) \subseteq \cK_{k-1}(\hat{R}^{(k-2)}(\hat{\by}))$ by \eqref{eq: Q* P* def}. 
Since $\sG^{(k-1)}\prec \sP^{(k-1)}$ by (R5)$_{\ref{lem: mimic Kk}}$, 
we know that $G(\hat{\by},b)$ is the union of some $(k-1)$-graphs in $\{G^{(k-1)}_i\cap \cK_{k-1}(\hat{R}^{(k-2)}(\hat{\by})) : i\in [s]\}$. 
Thus (R3)$_{\ref{lem: mimic Kk}}$ and the union lemma~(Lemma~\ref{lem: union regularity}) with the fact\COMMENT{$\epsilon'(\ba)\ll 1/o$ and $s\leq o^{4^k}$.} that $\epsilon'(\ba^{\sR})\ll 1/s$
imply that $G(\hat{\by},b)$ is $\epsilon'(\ba^{\sR})^{2/3}$-regular with respect to $\hat{R}^{(k-2)}(\hat{\by})$. 
As $G(\hat{\by},b)\subseteq P^{(k-1)}_*(\hat{\by},b)$ and \eqref{eq: P/R regular} holds, 
there exists a number $d(\hat{\by},b)\in [0,1/a_{k-1}]$ such that\COMMENT{need that $\epsilon^{2/3} \leq \epsilon'(\ba^{\sR})^{1/2}$ here.}
\begin{equation}\label{eq: Gyb reg}
\begin{minipage}[c]{0.8\textwidth}\em
$G(\hat{\by},b)$ is $(\epsilon'(\ba^{\sR})^{1/2},d(\hat{\by},b))$-regular with respect to $\hat{R}^{(k-2)}(\hat{\by})$.
\end{minipage}
\end{equation}
Now we use the values $d(\hat{\by},b)$ to construct $F^{(k-1)}$.
We apply the slicing lemma (Lemma~\ref{lem: slicing}) with the following objects and parameters.\newline

{\small
\begin{tabular}{c|c|c|c|c|c}
object/parameter & $Q^{(k-1)}_*(\hat{\by},b)$ & $\hat{L}^{(k-2)}(\hat{\by})$ & $1/a_{k-1}$ & $ \max\{ d(\hat{\by},b)a_{k-1}, 1-d(\hat{\by},b)a_{k-1} \}$ & $1 $ \\ \hline
playing the role of & $H^{(k)}$& $ H^{(k-1)}$ & $d$ &  $p_1$ & $s$
 \\ 
\end{tabular}
}\newline \vspace{0.2cm}

\noindent
By~\eqref{eq: Q/L regular} we obtain a partition of $Q^{(k-1)}_*(\hat{\by},b)$ into two $(k-1)$-graphs such that
for one of these, say $F(\hat{\by},b)$, we have that
\begin{equation}\label{eq: Fyb reg}
\begin{minipage}[c]{0.8\textwidth}\em
$F(\hat{\by},b)$ is $(\epsilon'(\ba^{\sR})^{1/2},d(\hat{\by},b))$-regular with respect to $\hat{L}^{(k-2)}(\hat{\by})$ and $F(\hat{\by},b)\subseteq Q_*^{(k-1)}(\hat{\by},b)$.
\end{minipage}
\end{equation}
Let 
$$F^{(k-1)}:= \bigcup_{\hat{\by}\in \hat{A}(k-1,k-2,\ba^{\sR}), b\in [a_{k-1}]} F(\hat{\by},b).$$
Thus $F^{(k-1)}\sub \cK_{k-1}(\sQ^{(1)})$.

Now we have defined $F^{(k-1)}$ and $G^{(k-1)}$.
It only remains to show that these two $(k-1)$-graphs satisfy  (F2)$_{\ref{lem: mimic Kk}}$.
Fix any vector $\hat{\bz}\in \hat{A}(k,k-1,\ba)$.
Consider $\hat{\by}$ and $\hat{\bx}$ such that $\hat{\by}\in A(\hat{\bx}) \text{ and } \hat{\bx} \leq_{k-1,k-2} \hat{\bz}$. 
By \eqref{eq:hatPconsistsofP(x,b)} we have
\begin{align}\label{eq:unionP}
	\hat{P}^{(k-1)}(\hat{\bz}) = \bigcup_{\hat{\bw}\leq_{k-1,k-2} \hat{\bz}} P^{(k-1)}(\hat{\bw},\bz^{(k-1)}_{\bw^{(1)}_*}).
\end{align}
By \eqref{eq: A hat bx def}, $\hat{\by}\in A(\hat{\bx})$ implies that 
\begin{align*}
	&\cK_{k-1}(\hat{R}^{(k-2)}(\hat{\by})) \subseteq \cK_{k-1}(\hat{P}^{(k-2)}(\hat{\bx})) \enspace \text{ and }\\
	&\cK_{k-1}(\hat{R}^{(k-2)}(\hat{\by})) \cap \cK_{k-1}(\hat{P}^{(k-2)}(\hat{\bw}))=\emptyset \text{ for }\hat{\bw}\neq \hat{\bx}.
\end{align*}\COMMENT{This is used for showing the first equality below.}
Also $P^{(k-1)}(\hat{\bw},\bz^{(k-1)}_{\bw^{(1)}_*})\subseteq \cK_{k-1}(\hat{P}^{(k-2)}(\hat{\bw}))$ whenever $\hat{\bw} \leq_{k-1,k-2} \hat{\bz}$.
Together this implies
$$\hat{P}^{(k-1)}(\hat{\bz}) \cap \cK_{k-1}(\hat{R}^{(k-2)}(\hat{\by})) 
\stackrel{(\ref{eq:unionP})}{=} P^{(k-1)}(\hat{\bx},\bz^{(k-1)}_{\bx^{(1)}_*}) \cap \cK_{k-1}(\hat{R}^{(k-2)}(\hat{\by})) 
\stackrel{(\ref{eq: Q* P* def})}{=}  P_*^{(k-1)}(\hat{\by},\bz^{(k-1)}_{\bx^{(1)}_*}).$$
Thus
\begin{align}\label{eq: G hat y z sub x*}
G^{(k-1)}\cap \hat{P}^{(k-1)}(\hat{\bz}) \cap \cK_{k-1}(\hat{R}^{(k-2)}(\hat{\by}))=
G^{(k-1)} \cap  P_*^{(k-1)}(\hat{\by},\bz^{(k-1)}_{\bx^{(1)}_*})
 \stackrel{\eqref{eq: G hat by b def mimic}}{=} G(\hat{\by},\bz^{(k-1)}_{\bx^{(1)}_*}).
 \end{align}
Together with (R2)$_{\ref{lem: mimic Kk}}$ this implies that 
\begin{eqnarray*}
 G^{(k-1)} \cap \hat{P}^{(k-1)}(\hat{\bz}) &=&  \bigcup_{\hat{\bx}\leq_{k-1,k-2} \hat{\bz}\; } 
\bigcup_{ \hat{\by}\in A(\hat{\bx})} G(\hat{\by},\bz^{(k-1)}_{\bx^{(1)}_*}) .
\end{eqnarray*}
\COMMENT{Here, note that in the first union $\hat{\bx}\in \hat{A}(k-1,k-2,\ba)$, while $A(\hat{\bx})$ is defined for $\hat{\bx}\in \hat{A}(k-1,k-2,\ba')$. However, $\hat{A}(k-1,k-2,\ba') =\hat{A}(k-1,k-2,\ba)$. So the definition of $\leq_{k-1,k-2}$ works fine.}
Similarly 
\begin{eqnarray*}
 F^{(k-1)} \cap \hat{Q}^{(k-1)}(\hat{\bz}) &=&  \bigcup_{\hat{\bx}\leq_{k-1,k-2} \hat{\bz}\;} 
\bigcup_{ \hat{\by}\in A(\hat{\bx})} F(\hat{\by},\bz^{(k-1)}_{\bx^{(1)}_*}) .
\end{eqnarray*}
For each $\hat{\by}\in \hat{A}(k-1,k-2,\ba^{\sR})$
let 
$$d_{\ba^{\sR},\hat{\bz},k-1}(\hat{\by}):=\left\{\begin{array}{ll} d(\hat{\by},b) 
&\text{ if } \hat{\by}\in A(\hat{\bx}) \text{ for some } \hat{\bx} \leq_{k-1,k-2} \hat{\bz} \text{ and } b=\bz^{(k-1)}_{\bx^{(1)}_*}, \\
0 &\text{ otherwise.}
\end{array}\right.$$
The properties \eqref{eq: Gyb reg} and \eqref{eq: G hat y z sub x*} together imply that  for each $\hat{R}^{(k-2)}(\hat{\by})\in \hat{\sR}^{(k-2)}$, 
\begin{equation*}
\begin{minipage}[c]{0.8\textwidth}\em
$G^{(k-1)} \cap \hat{P}^{(k-1)}(\hat{\bz})$ is $(\epsilon'(\ba^{\sR})^{1/2}, d_{\ba^{\sR},\hat{\bz},k-1}(\hat{\by}))$-regular with respect to $\hat{R}^{(k-2)}(\hat{\by})$.
\end{minipage}
\end{equation*}
Analogously using \eqref{eq: Fyb reg},
we obtain that
for each $\hat{L}^{(k-2)}(\hat{\by})\in \hat{\sL}^{(k-2)}$,
\begin{equation*}
\begin{minipage}[c]{0.8\textwidth}\em
$F^{(k-1)} \cap \hat{Q}^{(k-1)}(\hat{\bz})$ is $(\epsilon'(\ba^{\sR})^{1/2}, d_{\ba^{\sR},\hat{\bz},k-1}(\hat{\by}))$-regular with respect to $\hat{L}^{(k-2)}(\hat{\by})$.
\end{minipage}
\end{equation*}
In other words, 
$\sR$ is an $(\epsilon'(\ba^{\sR})^{1/2},d_{\ba^{\sR},\hat{\bz},k-1})$-partition of $G^{(k-1)}\cap \hat{P}^{(k-1)}(\hat{\bz})$. 
From \eqref{eq: epsilon' def mimic} and \eqref{eq: a1 sr}, we know 
\begin{align*}
	\epsilon'(\ba^{\sR}) \ll 1/\|\ba^{\sR}\|_{\infty}\leq  1/a_1^{\sR} \ll \nu, 1/o, 1/k.
\end{align*}
In particular, this means that $G^{(k-1)}\cap \hat{P}^{(k-1)}(\hat{\bz})$ satisfies the following regularity instance.
 $$R:= (\epsilon'(\ba^{\sR})^{1/2}, \ba^{\sR}, d_{\ba^{\sR}, \hat{\bz},k-1}).$$
Similarly, $F^{(k-1)}\cap \hat{Q}^{(k-1)}(\hat{\bz})$ also satisfies the regularity instance $R$.
Thus we can apply Lemma~\ref{lem: counting hypergraph} twice with the following objects and parameters, once with $G^{(k-1)} \cap \hat{P}^{(k-1)}(\hat{\bz})$ playing the role of $H$ and once more with $F^{(k-1)} \cap \hat{Q}^{(k-1)}(\hat{\bz})$ playing the role of $H$.\newline

{\small
\begin{tabular}{c|c|c|c|c|c|c|c}
object/parameter & $\epsilon'(\ba^{\sR})^{1/2}$ & $\|a^{\sR}\|_{\infty}$ & $ a^{\sR}_1$ & $\nu^2 o^{-4^k}$ & $ \ba^{\sR}$ & $k-1$ & $K^{(k-1)}_k$  \\ \hline
playing the role of & $\epsilon$ & $t$ & $ a_1$ & $\gamma$ & $\ba$ & $k$ & $ F$ 
\end{tabular}
}\newline \vspace{0.2cm}

\noindent
Thus we obtain
\begin{align}\label{eq: GP IC sim}
\frac{|\cK_{k}( G^{(k-1)} \cap \hat{P}^{(k-1)}(\hat{\bz}))|}{\binom{n}{k}} 
&= IC(K^{(k-1)}_k,d_{\ba^{\sR}, \hat{\bz},k-1}) \pm \nu^2 o^{-4^{k}}\enspace\text{ and}\\
\label{eq: FQ IC sim}
\frac{|\cK_{k}( F^{(k-1)} \cap \hat{Q}^{(k-1)}(\hat{\bz}))|}{\binom{m}{k}}  
&= IC(K^{(k-1)}_k,d_{\ba^{\sR}, \hat{\bz},k-1}) \pm \nu^2 o^{-4^{k}}.
\end{align}
On the other hand, we can apply Lemma~\ref{lem: counting} to show that for every $\hat{\bz}\in \hat{A}(k,k-1,\ba)$
\begin{align*}
	|\cK_{k}(\hat{P}^{(k-1)}(\hat{\bz}))| 
	&= (1\pm \nu^2)\prod_{j=1}^{k-1}a_j^{-\binom{k}{j}}n^k
	\geq o^{-4^{k}} \binom{n}{k} \enspace\text{ and }\\
	|\cK_{k}(\hat{Q}^{(k-1)}(\hat{\bz}))| 
	&= (1\pm \nu^2 )\prod_{j=1}^{k-1}a_j^{-\binom{k}{j}}m^k
	\geq o^{-4^{k}}\binom{m}{k}.
\end{align*}
This together with \eqref{eq: GP IC sim} and \eqref{eq: FQ IC sim} implies that
$$ d( \cK_{k}(G^{(k-1)}) \mid \hat{P}^{(k-1)}(\hat{\bz})) = d( \cK_{k}(F^{(k-1)}) \mid \hat{Q}^{(k-1)}(\hat{\bz})) \pm \nu.$$
\end{proof}

Suppose we are given two families of partitions $\sP,\sO$ such that $\sP$ almost refines $\sO$ and such that $\sO$ is an equitable partition of some $k$-graph $H$.
Roughly speaking, the next lemma shows that there is a family of partitions $\sO'$ 
such that $\sP \prec \sO'$  and such that $\sO'$ is still an equitable partition of $H$ (with a somewhat larger regularity constant).

\begin{lemma}\label{lem: reconstruction}
Suppose $0< 1/m, 1/n \ll \epsilon \ll \nu \ \ll \epsilon_0\leq 1$ and $k\in \N\sm\{1\}$.
Suppose $V$ is a vertex set of size $n$.
Suppose $R=(\epsilon_0/2, \ba^{\sO}, d_{\ba^{\sO},k})$ is a regularity instance and
$\sO=\sO(k-1,\ba^{\sO})$ is an 
$(\epsilon_0,\ba^{\sO},d_{\ba^{\sO},k})$-equitable partition of a $k$-graph $H^{(k)}$ on $V$.
Suppose there exists an 
$(\eta,\epsilon,\ba^{\sP})$-equitable family of partitions  $\sP=\sP(k-1,\ba^{\sP})$ on $V$ such that $\sP \prec_{\nu} \sO$.
Then there exists a family of partitions $\sO'$ on $V$ such that 
\begin{itemize}
\item[{\rm (O$'$1)$_{\ref{lem: reconstruction}}$}] $\sP\prec \sO'$,
\item[{\rm (O$'$2)$_{\ref{lem: reconstruction}}$}] $\sO'$ is a $(1/a_1^{\sO}, \epsilon_0+\nu^{1/20},\ba^{\sO}, \nu^{1/20})$-equitable family of partitions and it is an
$(\epsilon_0+\nu^{1/20},d_{\ba^{\sO},k})$-partition of $H^{(k)}$,
\item[{\rm (O$'$3)$_{\ref{lem: reconstruction}}$}] for each $j\in [k-1]$ and $ (\hat{\bx},b) \in \hat{A}(j,j-1,\ba^{\sO})\times [a_j^{\sO}]$, we have $ |O'^{(j)}(\hat{\bx},b)\triangle O^{(j)}(\hat{\bx},b)| \leq \nu^{1/2} \binom{n}{j}.$
\end{itemize}
\end{lemma}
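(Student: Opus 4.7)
The plan is to exploit $\sP \prec_{\nu} \sO$ to coarsen $\sP$ into a new family of partitions $\sO'$ with the same shape $\ba^{\sO}$ as $\sO$, so that $\sP$ exactly refines $\sO'$ and each part of $\sO'$ is close to the corresponding part of $\sO$. Once this is done, Lemma~\ref{lem: slightly different partition regularity} (applied with $H^{(k)}$ playing both roles of $H^{(k)}$ and $G^{(k)}$) transfers the $(\epsilon_0,\ba^{\sO},d_{\ba^{\sO},k})$-equitability from $\sO$ to $\sO'$ at the cost of a small additive increase in the regularity parameter.

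Using $\sP \prec_\nu \sO$, I fix for each $j \in [k-1]$ a function $f^{(j)}: \sP^{(j)} \to \sO^{(j)} \cup \{\emtext{excess}\}$ with $\sum_{P \in \sP^{(j)}}|P \setminus f^{(j)}(P)| \leq \nu\binom{n}{j}$, and build $\sO'$ by induction on levels. For $j=1$, set
\[O'^{(1)}(b,b) := \bigcup\bigl\{P^{(1)}(\gamma,\gamma) \in \sP^{(1)} : f^{(1)}(P^{(1)}(\gamma,\gamma)) = O^{(1)}(b,b)\bigr\}\]
(distributing the at most $\nu n$ vertices in excess parts arbitrarily among the $O'^{(1)}(b,b)$), so $\sO'^{(1)}$ is a coarsening of $\sP^{(1)}$ into $a_1^{\sO}$ parts. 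For $j\geq 2$, once $\sO'^{(1)},\ldots,\sO'^{(j-1)}$ are defined the polyads $\hat{O}'^{(j-1)}(\hat{\bx})$ are determined via~\eqref{eq: hat P P relations emptyset}; for each $\hat{\bx} \in \hat{A}(j,j-1,\ba^{\sO})$ and $b \in [a_j^{\sO}-1]$ I set
\[O'^{(j)}(\hat{\bx},b) := \cK_j(\hat{O}'^{(j-1)}(\hat{\bx})) \cap \bigcup\bigl\{P^{(j)} \in \sP^{(j)} : f^{(j)}(P^{(j)}) = O^{(j)}(\hat{\bx},b)\bigr\},\]
and let $O'^{(j)}(\hat{\bx},a_j^{\sO})$ be the remainder of $\cK_j(\hat{O}'^{(j-1)}(\hat{\bx}))$. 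A straightforward induction yields (O$'$1): any $P^{(j)}\in\sP^{(j)}$ sits in a single polyad $\cK_j(\hat{O}'^{(j-1)}(\hat{\bx}))$ (by the refinement at level $j-1$), and is then either contained in one of the unions above, or falls entirely into $O'^{(j)}(\hat{\bx},a_j^{\sO})$.

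For (O$'$3) I prove $|O'^{(j)}(\hat{\bx},b) \triangle O^{(j)}(\hat{\bx},b)| \leq \nu^{1/2}\binom{n}{j}$ by induction on $j$. At level $1$ the bound is immediate from the defect of $f^{(1)}$. For $j \geq 2$ the symmetric difference splits into a \emph{polyad-discrepancy} term contained in $\cK_j(\hat{O}^{(j-1)}(\hat{\bx})) \triangle \cK_j(\hat{O}'^{(j-1)}(\hat{\bx}))$, bounded by multiplying the level-$(j-1)$ induction hypothesis by $n$ (and by the number of $(j-1)$-faces of a $j$-set), and a \emph{relabeling} term bounded by $\sum_P|P\setminus f^{(j)}(P)| + |\cK_j(\sO^{(1)})\setminus\cK_j(\sP^{(1)})|$; the last quantity is controlled via the level-$1$ defect since each $\sP^{(1)}$-mismatched vertex creates at most $n^{j-1}$ such $j$-sets. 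All contributions are of order $\nu n^j$, which is well below $\nu^{1/2}\binom{n}{j}$ for $\nu$ small. By the counting lemma (Lemma~\ref{lem: counting}) applied to $\sO$, we have $|O^{(j)}(\hat{\bx},b)| \gg \nu^{1/2}\binom{n}{j}$, so (O$'$3) forces every $O'^{(j)}(\hat{\bx},b)$ to be nonempty; together with the partition structure by construction, Lemma~\ref{lem: family of partitions construction} then certifies that $\sO'$ is a family of partitions.

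Finally, (O$'$2) follows from Lemma~\ref{lem: slightly different partition regularity} applied with $\sO$, $\sO'$, $H^{(k)}$, $H^{(k)}$, $0$, $\nu^{1/2}$ playing the roles of $\sP$, $\sQ$, $H^{(k)}$, $G^{(k)}$, $\lambda$, $\nu$ respectively: the output is a $(1/a_1^{\sO},\epsilon_0+\nu^{1/12},\ba^{\sO},\nu^{1/12})$-equitable $(\epsilon_0+\nu^{1/12},d_{\ba^{\sO},k})$-partition of $H^{(k)}$, and since $\nu^{1/12} \leq \nu^{1/20}$ for $\nu<1$, this implies (O$'$2). The main obstacle will be the inductive bookkeeping for (O$'$3): the polyad-discrepancy term couples level $j$ to level $j-1$, so one must carefully separate the crossings lost because $\sO'^{(1)}$ differs from $\sO^{(1)}$ on a $\nu n$-sized set of vertices from the crossings created because $\sP^{(1)}$ subdivides $\sO^{(1)}$-parts, in order to keep the accumulated constants small enough to close the induction at $\nu^{1/2}\binom{n}{j}$.
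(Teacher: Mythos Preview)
Your approach is essentially the same as the paper's: coarsen $\sP$ level by level using the $\prec_\nu$ witnesses, then invoke Lemma~\ref{lem: slightly different partition regularity}. The construction via ``intersect with the new polyad and dump the rest into the last label'' is equivalent to the paper's explicit modification of $f_j$ into a polyad-consistent $f'_j$; both guarantee $\sP^{(j)}\prec\sO'^{(j)}$ for the same reason (each $P^{(j)}$ sits in a single $\cK_j(\hat{P}^{(j-1)}(\hat\by))$, which by the level-$(j-1)$ refinement and Proposition~\ref{prop: hat relation}(xi) sits in a single $\cK_j(\hat{O}'^{(j-1)}(\hat\bx))$).

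There is one genuine bookkeeping gap in your (O$'$3) argument, and it is exactly the one you flag at the end. You state the induction hypothesis as $|O'^{(j)}(\hat\bx,b)\triangle O^{(j)}(\hat\bx,b)|\le \nu^{1/2}\binom{n}{j}$ and then bound the polyad-discrepancy term by ``level-$(j-1)$ hypothesis times $n$ times $j$''. That product is $\approx j^2\nu^{1/2}\binom{n}{j}$, not $O(\nu n^j)$, so your sentence ``all contributions are of order $\nu n^j$'' does not follow from the hypothesis you wrote, and the induction does not close. The fix is to carry a stronger bound through the induction and only relax to $\nu^{1/2}\binom{n}{j}$ at the end. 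The paper does this by working with the global sum
\[
\sum_{(\hat\bx,b)\in\hat A(j,j-1,\ba^{\sO})\times[a_j^{\sO}]}\bigl|O'^{(j)}(\hat\bx,b)\setminus O^{(j)}(\hat\bx,b)\bigr|\le j\,\nu\, n^{j},
\]
proved inductively: the polyad discrepancy at level $j$ is bounded by $n$ times the global sum at level $j-1$, giving $(j-1)\nu n^{j}$, and adding the $f^{(j)}$-defect $\nu\binom{n}{j}$ yields $j\nu n^{j}$. From this, together with $|\cK_j(\sO^{(1)})\triangle\cK_j(\sO'^{(1)})|\le 2\nu n^{j}$, one reads off the individual bound $(j+2)\nu n^{j}\le \nu^{1/2}\binom{n}{j}$. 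You can achieve the same effect by taking $|O'^{(j)}(\hat\bx,b)\triangle O^{(j)}(\hat\bx,b)|\le C_j\nu n^{j}$ (with $C_j$ depending only on $k$) as your working hypothesis; either way, the point is that the quantity you feed forward must scale like $\nu$, not $\nu^{1/2}$.
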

We construct $\sO'$ by induction on $j\in [k-1]$.
When constructing ${\sO'}^{(j-1)}$ a natural approach is as follows.
For a given class $O^{(j)}(\hat{\bx},b)$ of $\sO^{(j)}$ we can let $O'^{(j)}(\hat{\bx},b)$ consist e.g.~of all classes of $\sP^{(j)}$ which lie (mostly) in $O^{(j)}(\hat{\bx},b)$.
This is formalized by the function $f_j$ in~\eqref{eq: f similar}.
However, this construction may not fit with the existing polyads of $\hat{P}^{(j-1)}$ (i.e.~it may violate Definition~\ref{def: family of partitions}(ii)).
This issue requires some adjustments, whose overall effect can be shown to be negligible.

\begin{proof}[Proof of Lemma~\ref{lem: reconstruction}]
For any function $f:\hat{A}(j,j-1,\ba^{\sP})\times [a^{\sP}_j] \rightarrow \hat{A}(j,j-1,\ba^{\sO})\times [a^{\sO}_j]$, let
$$d(f):= \sum_{ (\hat{\bx},b) \in \hat{A}(j,j-1,\ba^{\sP})\times [a^{\sP}_j] } |P^{(j)}(\hat{\bx},b) \setminus O^{(j)}( f(\hat{\bx},b))|.$$
Note that $\sP \prec_{\nu} \sO$ implies that for each $j\in [k-1]$, 
there exists a function $f_{j}: \hat{A}(j,j-1,\ba^{\sP})\times [a^{\sP}_j] \rightarrow \hat{A}(j,j-1,\ba^{\sO})\times [a^{\sO}_j]$ such that 
\begin{align}\label{eq: f similar}
d(f_{j}) \leq \nu \binom{n}{j}.
\end{align}
Moreover, note that since $R$ is a regularity instance (see Definition~\ref{def: regularity instance}), we have $\epsilon_0\leq \norm{\ba^{\sO}}^{-4^k}\epsilon_{\ref{lem: counting}}(\|\ba^{\sO}\|^{-1}_\infty, \|\ba^{\sO}\|^{-1}_\infty,k-1,k)$. 
Thus Lemma~\ref{lem: counting} and the definition of an equitable family of partitions (see Definition~\ref{def: equitable family of partitions}) imply that for any $j \in [k-1]\sm\{1\}$ and $(\hat{\bx},b)\in \hat{A}(j,j-1,\ba^{\sO})\times[a_j^{\sO}]$, we have
\begin{align}\label{eq: O hypergraph not too small}
|O^{(j)}(\hat{\bx},b)| 
\geq \frac{1}{2\norm{\ba^{\sO}}^{2^k}}n^j
\geq \epsilon_0^{1/2} n^{j}.
\end{align}
For each $i\in [a_1^{\sO}]$,
let
$$O'^{(1)}(i,i) :=  \bigcup_{s\in [a^{\sP}_1], f_1( s )= i} P^{(1)}(s,s) \enspace \text{ and let } \enspace \sO'^{(1)}:= \{ O'^{(1)}(i,i)  : i\in [a^{\sO}_1]\}.$$

Note that \eqref{eq: f similar} implies that 
$|O'^{(1)}(i,i)| = (1\pm a^{\sO}_1\nu)n/a^{\sO}_1 = (1\pm \nu^{1/2}) n/a^{\sO}_1$\COMMENT{
Here, $\nu\ll \epsilon_0$. Since $R$ is a regularity instance, $\epsilon_0 \ll \norm{\ba^{\sO}}^{-1}$. Thus $\nu a_1^{\sO} \leq \nu^{1/2}$.}.
For all distinct $i,i'\in [a_1^\sO]$,
let ${O'}^{(1)}(i,i'):=\es$.
Hence $\sO'^{(1)}$ satisfies properties (O$'$1)$_1$--(O$'$4)$_1$ below. (Here, (O$'$2)$_1$ and (O$'$4)$_1$ are vacuous.)
Assume for some $j\in [k-1]\setminus\{1\}$ we have defined $\sO'^{(1)},\dots, \sO'^{(j-1)}$ satisfying the following for each $i\in[j-1]$:
\begin{itemize}
\item[(O$'$1)$_i$] $\sO'^{(i)}$ forms a partition of $\cK_{i}(\sO'^{(1)})$ and $\sP^{(i)}\prec \sO'^{(i)}$,
\item[(O$'$2)$_i$] if $i>1$, then 
$\sO'^{(i)}=\{O'^{(i)}(\hat{\bx},b) : (\hat{\bx},b) \in \hat{A}(i,i-1,\ba^{\sO})\times [a_i^{\sO}]\},$
\item[(O$'$3)$_i$] 
$$\sum_{ (\hat{\bx},b) \in \hat{A}(i,i-1,\ba^{\sO})\times [a_i^{\sO}] } |O'^{(i)}(\hat{\bx},b)\setminus O^{(i)}(\hat{\bx},b)| \leq i \nu n^{i},$$
\item[(O$'$4)$_i$] if $i>1$, then for each $\hat{\bx}\in \hat{A}(i,i-1,\ba^{\sO})$, the collection $\{O'^{(i)}(\hat{\bx},1),\dots, O'^{(i)}(\hat{\bx},a^{\sO}_{i})\}$ forms a partition of $\cK_{i}(\hat{O'}^{(i-1)}(\hat{\bx}))$, where 
$$\hat{O'}^{(i-1)}(\hat{\bx}):= \bigcup_{\hat{\by}\leq_{i-1,i-2} \hat{\bx}} O'^{(i-1)}(\hat{\by}, \bx^{(i-1)}_{\by^{(1)}_*}).$$
\end{itemize}\vspace{0.3cm}

We will now construct ${\sO'}^{(j)}$ satisfying (O$'$1)$_j$--(O$'$4)$_j$.
So assume that $k\geq 3$.
Note that (O$'$3)$_1$--(O$'$3)$_{j-1}$ with \eqref{eq: O hypergraph not too small} shows that  for any $i \in [j-1]$ and $(\hat{\bx},b)\in \hat{A}(i,i-1,\ba^{\sO})\times[a^{\sO}_i]$, the $i$-graph $O'^{(i)}(\hat{\bx},b)$ is nonempty.
Together with (O$'$1)$_1$--(O$'$1)$_{j-1}$, (O$'$2)$_1$--(O$'$2)$_{j-1}$, (O$'$3)$_1$--(O$'$3)$_{j-1}$, (O$'$4)$_1$--(O$'$4)$_{j-1}$,
\eqref{eq: O hypergraph not too small}, and Lemma~\ref{lem: family of partitions construction} this implies that $\{\sO'^{(i)}\}_{i=1}^{j-1}$ forms a family of partitions.\COMMENT{
Actually we are using Lemma~\ref{lem: family of partitions construction} not only to show it is a family of partitions but that the classes and polyads of $\sO'$ behave in the same way as the maps in \eqref{eq: hat P def}--\eqref{eq: hat P P relations emptyset}. This allows us to apply Proposition~\ref{prop: hat relation}.
Actually, in order to be able to apply Lemma~\ref{lem: family of partitions construction},
we not only need (O$'$2)$_i$,
but we also need that $|{O'}^{(i)}|=a_i^{\sO}$ (i.e.~that the ${O'}^{(i)}(\hat{\ba},b)$ are all distinct).
This follows from (O$'$3)$_i$ and \eqref{eq: O hypergraph not too small} since $\nu \ll \epsilon_0$.
}
Let $\hat{\sO'}^{(j-1)}$ be the collection of all the $\hat{O'}^{(j-1)}(\hat{\bx})$ with $\hat{\bx}\in \hat{A}(j,j-1,\ba^{\sO})$.
Note that Proposition~\ref{prop: hat relation}(iv)~and~(vi) implies that
\begin{equation}\label{eq: cK hat P j-1 forms a partition of}
\begin{minipage}[c]{0.8\textwidth}\em
$\{\cK_{j}(\hat{O}'^{(j-1)}(\hat{\bx})): \hat{\bx}\in \hat{A}(j,j-1,\ba^{\sO})\}$ forms a partition of $\cK_{j}(\sO'^{(1)})$.
\end{minipage}
\end{equation}
By Proposition~\ref{prop: hat relation}(xi)\COMMENT{
Can apply this with $\{{\sO'}^{(i)}\}^{j-1}_i$ playing the role of $\sP$.
}
\begin{align}\label{eq: hat P j-1 prec hat S' j-1}
	\{\cK_{j}(\hat{P}^{(j-1)}(\hat{\bx})): \hat{\bx}\in \hat{A}(j,j-1,\ba^{\sP})\}
	\prec \{\cK_{j}(\hat{O}'^{(j-1)}(\hat{\bx})): \hat{\bx}\in \hat{A}(j,j-1,\ba^{\sO})\}.
\end{align}
Let 
$$A:= \{ \hat{\by}\in \hat{A}(j,j-1,\ba^{\sP}) : \cK_{j}(\hat{P}^{(j-1)}(\hat{\by}))\subseteq \cK_{j}(\sO'^{(1)})\}.$$
Then~\eqref{eq: hat P j-1 prec hat S' j-1} implies that
 \begin{align}\label{eq: A partitions cKj sO'}
\bigcup_{\hat{\by}\in A} \cK_{j}(\hat{P}^{(j-1)}(\hat{\by})) = \cK_{j}(\sO'^{(1)}).
\end{align}
By~\eqref{eq: cK hat P j-1 forms a partition of} and~\eqref{eq: hat P j-1 prec hat S' j-1}, 
for each $\hat{\by} \in A$,
 there exists $g(\hat{\by})\in \hat{A}(j,j-1,\ba^{\sO})$ such that $\cK_{j}(\hat{P}^{(j-1)}(\hat{\by}))$ is a subset of $\cK_{j}(\hat{O'}^{(j-1)}(g(\hat{\by}))).$ 

\setcounter{claim}{0}
\begin{claim}\label{cl:smalldiff}
\begin{align*}
\sum_{\hat{\by}\in A} |\cK_{j}(\hat{P}^{(j-1)}(\hat{\by}) )\setminus \cK_{j}(\hat{O}^{(j-1)}(g(\hat{\by})))|\leq (j-1)\nu n^{j}.
\end{align*}
\end{claim}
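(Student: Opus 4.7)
The plan is to bound the left-hand side by first observing that the $j$-sets indexed by different $\hat{\by}\in A$ are disjoint, so the sum equals $|S|$ for a single set $S$, and then to exhibit, for each $J\in S$, a $(j-1)$-subset $J'\subset J$ lying in the symmetric difference between some $\hat{O'}^{(j-1)}(\hat{\bx})$ and $\hat{O}^{(j-1)}(\hat{\bx})$. A straightforward counting will then bound $|S|$ by $n$ times the total size of this symmetric difference, and the latter is controlled by (O$'$3)$_{j-1}$.

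To implement this, note that by \eqref{eq: A partitions cKj sO'} together with the fact that $\{\cK_j(\hat{P}^{(j-1)}(\hat{\by})):\hat{\by}\in\hat{A}(j,j-1,\ba^{\sP})\}$ partitions $\cK_j(\sP^{(1)})$ (Proposition~\ref{prop: hat relation}(vi)), the subcollection $\{\cK_j(\hat{P}^{(j-1)}(\hat{\by})):\hat{\by}\in A\}$ partitions $\cK_j(\sO'^{(1)})$. Set
\[
S:=\bigcup_{\hat{\by}\in A}\bigl(\cK_j(\hat{P}^{(j-1)}(\hat{\by}))\setminus \cK_j(\hat{O}^{(j-1)}(g(\hat{\by})))\bigr),
\]
which is a disjoint union, so the left-hand side of the claim equals $|S|$. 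For any $J\in S$, let $\hat{\by}\in A$ be the unique index with $J\in\cK_j(\hat{P}^{(j-1)}(\hat{\by}))$. By the definition of $g$, $J$ is a $j$-clique in $\hat{O'}^{(j-1)}(g(\hat{\by}))$ but not in $\hat{O}^{(j-1)}(g(\hat{\by}))$, so some $J'\in\binom{J}{j-1}$ satisfies $J'\in\hat{O'}^{(j-1)}(g(\hat{\by}))\setminus\hat{O}^{(j-1)}(g(\hat{\by}))$.

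Let $T:=\bigcup_{\hat{\bx}\in\hat{A}(j,j-1,\ba^{\sO})}\bigl(\hat{O'}^{(j-1)}(\hat{\bx})\setminus\hat{O}^{(j-1)}(\hat{\bx})\bigr)$. The previous paragraph gives a map $S\to T$, $J\mapsto J'$, whose fibres have size at most $n$, hence $|S|\leq n\cdot |T|$. To bound $|T|$, I would unfold $\hat{O'}^{(j-1)}(\hat{\bx})$ via (O$'$4)$_{j-1}$ and $\hat{O}^{(j-1)}(\hat{\bx})$ via Proposition~\ref{prop: hat relation}(iv): any $J'\in\hat{O'}^{(j-1)}(\hat{\bx})\setminus\hat{O}^{(j-1)}(\hat{\bx})$ lies in some $O'^{(j-1)}(\hat{\bu},\bx^{(j-1)}_{\bu^{(1)}_*})$ with $\hat{\bu}\leq_{j-1,j-2}\hat{\bx}$, while being simultaneously excluded from $O^{(j-1)}(\hat{\bu},\bx^{(j-1)}_{\bu^{(1)}_*})$. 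Thus $|T|\leq\sum_{(\hat{\bu},b)}|O'^{(j-1)}(\hat{\bu},b)\setminus O^{(j-1)}(\hat{\bu},b)|\leq(j-1)\nu n^{j-1}$ by (O$'$3)$_{j-1}$, and combining yields $|S|\leq (j-1)\nu n^j$ as required. The only subtlety is verifying that the unfolding of $\hat{O'}^{(j-1)}(\hat{\bx})$ has the form mandated by (O$'$4)$_{j-1}$ and that the partitioning claim on $A$ is valid; both follow from the inductive hypotheses together with the application of Lemma~\ref{lem: family of partitions construction} already performed earlier in the proof.
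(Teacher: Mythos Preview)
Your proposal is correct and follows essentially the same approach as the paper: both arguments regroup the sum as $\sum_{\hat{\bx}}|\cK_j(\hat{O'}^{(j-1)}(\hat{\bx}))\setminus\cK_j(\hat{O}^{(j-1)}(\hat{\bx}))|$, observe that each such $j$-set contains a $(j-1)$-subset in some $O'^{(j-1)}(\hat{\bu},b)\setminus O^{(j-1)}(\hat{\bu},b)$, and conclude via (O$'$3)$_{j-1}$ and the factor-$n$ extension count. One minor citation point: the unfolding $\hat{O'}^{(j-1)}(\hat{\bx})=\bigcup_{\hat{\bu}\leq_{j-1,j-2}\hat{\bx}}O'^{(j-1)}(\hat{\bu},\bx^{(j-1)}_{\bu^{(1)}_*})$ is not (O$'$4)$_{j-1}$ (which concerns $\hat{O'}^{(j-2)}$) but rather Proposition~\ref{prop: hat relation}(iv), available because $\{\sO'^{(i)}\}_{i=1}^{j-1}$ was already shown to be a family of partitions via Lemma~\ref{lem: family of partitions construction}.
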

\begin{proof}
Observe that by \eqref{eq: hat P j-1 prec hat S' j-1} for each $\hat{\bx}\in  \hat{A}(j,j-1,\ba^{\sO})$,
we have
$$\bigcup_{\hat{\by}\colon g(\hat{\by})=\hat{\bx}} \cK_{j}(\hat{P}^{(j-1)}(\hat{\by}))
=\cK_{j}(\hat{O'}^{(j-1)}(\hat{\bx})).$$ 
This implies that
$$ \sum_{\hat{\by}\in A} |\cK_{j}(\hat{P}^{(j-1)}(\hat{\by}) )\setminus \cK_{j}(\hat{O}^{(j-1)}(g(\hat{\by})))| =
\sum_{\hat{\bx}\in \hat{A}(j,j-1,\ba^{\sO})} |\cK_{j}(\hat{O'}^{(j-1)}(\hat{\bx}))\setminus \cK_{j}(\hat{O}^{(j-1)}(\hat{\bx}))|.$$
Any $j$-set counted on the right hand side lies in $\cK_{j}(\sO'^{(1)})$ and 
contains
a $(j-1)$-set $J\in  O'^{(j-1)}(\hat{\bz},b) \setminus O^{(j-1)}(\hat{\bz},b)$ for some $\hat{\bz}\leq_{j-1,j-2} \hat{\bx}$ and $b=\bx^{(j-1)}_{\bz^{(1)}_*}$. 
Note that (O$'$3)$_{j-1}$ implies that there are at most $(j-1)\nu n^{j-1}$ such sets $J$. 
For such a fixed $(j-1)$-set $J$, 
there are at most $n$ $j$-sets in $\cK_{j}(\sO'^{(1)})$ containing $J$. 
Thus at most $(j-1)\nu n^{j-1}\cdot n = (j-1)\nu n^{j}$ $j$-sets are counted in the above expression. 
This proves the claim.
\end{proof}

Ideally,
for every $\bx\in \hat{A}(j,j-1,\ba^{\sO})$ and $b\in [a_j^{\sO}]$,
we would like to define $O'^{(j)}(\bx,b)$
as the union of all $P^{(j)}(\hat{\by},b')$ for which  $f_{j}(\hat{\by},b') = (\hat{\bx},b)$ holds.
However, we may have $f_j(\hat{\by},b')\neq (g(\hat{\by}),b) $ for all $b\in [a_j^{\sO}]$.
This leads to difficulties when attempting to prove (O$'$4)$_j$.
We resolve this problem by defining a function $f_j'$, which is a slight modification of $f_j$.
To this end, let $$W:= \{ (\hat{\by},b') \in A\times [a^{\sP}_j] :f_j(\hat{\by},b')\neq (g(\hat{\by}),b) \text{ for all }b\in [a_j^{\sO}]\}.$$
Thus
if $(\hat{\by},b')\in W$, then $O^{(j)}(f_j(\hat{\by},b')) \cap \cK_{j}(\hat{O}^{(j-1)}(g(\hat{\by}))) =\emptyset$.%
\COMMENT{Because it is a partition refining another. Thus either it belongs, or doesn't intersect. there's no case of `intersecting some but not contained'} 
This and the fact that $P^{(j)}(\hat{\by},b') \subseteq  \cK_{j}(\hat{P}^{(j-1)}(\hat{\by}))$ imply that for $(\hat{\by},b')\in W$
\begin{align}\label{eq: cap belong}
P^{(j)}(\hat{\by},b') \cap O^{(j)}(f_{j}(\hat{\by},b')) \subseteq \cK_{j}(\hat{P}^{(j-1)}(\hat{\by}))\setminus \cK_{j}(\hat{O}^{(j-1)}(g(\hat{\by}))).
\end{align}

We define a function $f'_{j}:A\times [a^{\sP}_j]\to \hat{A}(j,j-1,\ba^{\sO})\times [a^{\sO}_j]$ by 
$$f'_{j}(\hat{\by},b') := \left\{\begin{array}{ll}
(g(\hat{\by}),b) \text{ for an arbitrary } b\in [a^{\sO}_j] &\text{ if } (\hat{\by},b')\in W,\\
f_{j}(\hat{\by},b') &\text{ otherwise.}
\end{array} \right.$$
For each $\hat{\bx}\in \hat{A}(j,j-1,\ba^{\sO})$ and $b\in[a^{\sO}_j]$, let
\begin{align}\label{eq: O' x b def}
O'^{(j)}(\hat{\bx},b) 
:= \bigcup_{ \substack{(\hat{\by},b')\in A\times [a_j^\sP]\colon\\ f'_{j}(\hat{\by},b') = (\hat{\bx},b)}} P^{(j)}(\hat{\by},b').
\end{align}

Let $\sO'^{(j)}$ be as described in (O$'$2)$_j$.
By \eqref{eq: cK hat P j-1 forms a partition of}, \eqref{eq: hat P j-1 prec hat S' j-1}, 
and the fact that $f'_{j}$ is defined for all $A\times [a^{\sP}_j]$, 
we obtain (O$'$1)$_j$.

We now verify (O$'$3)$_j$.
For this, we estimate $d(f_j')$, namely
\begin{eqnarray}\label{eq: efj ef'j}
d(f'_j) &=& \sum_{ (\hat{\by},b') \in \hat{A}(j,j-1,\ba^{\sP})\times [a^{\sP}_j] } \hspace{-0.5cm} |P^{(j)}(\hat{\by},b') \setminus O^{(j)}( f'_j(\hat{\by},b'))| \nonumber \\
&\leq&  \sum_{ (\hat{\by},b') \in \hat{A}(j,j-1,\ba^{\sP})\times [a^{\sP}_j]}   \hspace{-0.5cm}  |P^{(j)}(\hat{\by},b') \setminus O^{(j)}( f_j(\hat{\by},b'))|  \notag\\
&& \qquad+ \sum_{ (\hat{\by},b')\in W} |P^{(j)}(\hat{\by},b') \cap O^{(j)}(f_{j}(\hat{\by},b'))| \nonumber \\
&\stackrel{\eqref{eq: cap belong}}{\leq }& d(f_j) + \sum_{\hat{\by}\in A} |\cK_{j}(\hat{P}^{(j-1)}(\hat{\by}))\setminus \cK_{j}(\hat{O}^{(j-1)}(g(\hat{\by})))|\nonumber \\
&\stackrel{\rm Claim~\ref{cl:smalldiff}}{\leq}& d(f_j)+ (j-1)\nu n^{j} 
\stackrel{\eqref{eq: f similar}}{\leq} j \nu n^{j}.
\end{eqnarray}
This in turn implies that
\begin{eqnarray*}
& & \hspace{-2.4cm} \sum_{  (\hat{\bx},b) \in \hat{A}(j,j-1,\ba^{\sO}) \times [a_j^{\sO}]}  |O'^{(j)}(\hat{\bx},b)\setminus O^{(j)}(\hat{\bx},b)| \\ 
&\stackrel{(\ref{eq: O' x b def})}{=}& \sum_{ (\hat{\bx},b) \in \hat{A}(j,j-1,\ba^{\sO}) \times [a_j^{\sO}]}  \sum_{ \substack{(\hat{\by},b') \in A\times [a^{\sP}_j]\colon \\f'_{j}(\hat{\by},b')= (\hat{\bx},b)}}   |P^{(j)}(\hat{\by},b')\setminus O^{(j)}( \hat{\bx},b )|
\\
&=&  \sum_{(\hat{\by},b')\in A \times [a^{\sP}_j] }   |P^{(j)}(\hat{\by},b')\setminus O^{(j)}(f'_j(\hat{\by},b'))| \leq d(f'_{j}) \stackrel{\eqref{eq: efj ef'j}}{\leq} j\nu n^{j}. 
\end{eqnarray*}
Thus (O$'$3)$_j$ holds. 

Suppose $\hat{\bx}\in \hat{A}(j,j-1,\ba^{\sO})$ and $b\in [a_j^{\sO}]$.
Note that $P^{(j)}(\hat{\by},b') \subseteq \cK_{j}(\hat{P}^{(j-1)}(\hat{\by})) \subseteq \cK_j(\hat{O'}^{(j-1)}(g(\hat{\by})))$ for each $\hat{\by}\in A$ and $b'\in [a_j^{\sP}]$.
Together with~\eqref{eq: O' x b def} and the definition of $f_j'$,
we obtain that
$O'^{(j)}(\hat{\bx},b) \subseteq \cK_j(\hat{O'}^{(j-1)}(\hat{\bx})).$ 
By this and \eqref{eq: cK hat P j-1 forms a partition of}--\eqref{eq: A partitions cKj sO'}, the collection $\{O'^{(j)}(\hat{\bx},1),\dots, O'^{(j)}(\hat{\bx},a^{\sO}_{i})\}$ forms a partition of $\cK_{j}(\hat{O'}^{(j-1)}(\hat{\bx}))$. 
Thus (O$'$4)$_j$ holds.

\medskip

By repeating this procedure, 
we obtain $\sO'^{(1)},\dots ,\sO'^{(k-1)}$.
Let $\sO':=\{\sO'^{(j)}\}_{j=1}^{k-1}$.
As observed before~\eqref{eq: cK hat P j-1 forms a partition of}, $\sO'$ is a family of partitions.
Properties (O$'$1)$_1$--(O$'$1)$_{k-1}$ and imply (O$'$1)$_{\ref{lem: reconstruction}}$.

Note that (O$'$3)$_1$ implies that for each $j\in [k-1]$ we have
$|\cK_{j}(\sO^{(1)}) \triangle \cK_{j}({\sO'}^{(1)})| \leq 2 \nu n^{j}$.
Thus for each $j\in [k-1]$ and $ (\hat{\bx},b) \in \hat{A}(j,j-1,\ba^{\sO})\times [a_j^{\sO}]$, 
 this implies that
\begin{eqnarray*}
	|O'^{(j)}(\hat{\bx},b)\triangle O^{(j)}(\hat{\bx},b)|  \hspace{-0.2cm}
	&\leq&  \hspace{-0.2cm} |\cK_{j}(\sO^{(1)}) \triangle \cK_{j}({\sO'}^{(1)})|+  \hspace{-0.4cm}\sum_{\hat{\bx} \in \hat{A}(j,j-1,\ba) , b\in [a_j^{\sO}] } \hspace{-0.4cm} |O'^{(j)}(\hat{\bx},b)\setminus O^{(j)}(\hat{\bx},b)|  \\
	 \hspace{-0.2cm}&\stackrel{   ({\rm O}'3)_j }{\leq }  \hspace{-0.2cm}&(j+2)\nu n^{j} \leq \nu^{1/2} \binom{n}{j}.
\end{eqnarray*}
Thus we have (O$'$3)$_{\ref{lem: reconstruction}}$. Finally, since $R$ is a regularity instance, (O$'$3)$_{\ref{lem: reconstruction}}$ enables us to apply Lemma~\ref{lem: slightly different partition regularity} with the following objects and parameters.\newline

{\small
\begin{tabular}{c|c|c|c|c|c|c|c|c}
object/parameter & $\sO$ & $\sO'$ & $\nu^{1/2}$& $ 0 $ & $\epsilon_0$ & $d_{\ba^{\sO},k}$ & $H^{(k)}$ & $ H^{(k)}$  \\ \hline
playing the role of & $\sP$ & $\sQ$ & $\nu$& $\lambda$ & $\epsilon$ & $d_{\ba,k}$ & $H^{(k)}$ & $G^{(k)}$ 
 \\ 
\end{tabular}
}\newline \vspace{0.2cm}

\noindent
\COMMENT{Use (O3)$_{1}$ for $\lambda$.}
This implies (O$'$2)$_{\ref{lem: reconstruction}}$.
\end{proof}

\section{Sampling a regular partition}
\label{sec: Sampling a regular partition}

In this section we prove Theorem~\ref{lem: random choice2}. In Section~\ref{Building a family of partitions from three others} we provide the main tool (Lemma~\ref{lem: similar}) for this result
and in Section~\ref{Random samples} we deduce Theorem~\ref{lem: random choice2}.

\subsection{Building a family of partitions from three others}\label{Building a family of partitions from three others}
In this subsection we prove our key tool (Lemma~\ref{lem: similar}) for the proof of Theorem~\ref{lem: random choice2}.
Roughly speaking Lemma~\ref{lem: similar} states the following.
Suppose there are two $k$-graphs $H_1,H_2$ with vertex set $V_1,V_2$, respectively, and there are two  $\epsilon$-equitable families of partitions of these $k$-graphs which 
have the same parameters.
Suppose further that there is another  $\epsilon_0$-equitable family of partitions $\sO_1$ for $H_1$.
Then there is an equitable family of partitions $\sO_2$ of $H_2$ which has the (roughly) same parameters as $\sO_1$  provided $\epsilon\ll \epsilon_0$.
Even more loosely,
the result says that if two hypergraphs share a single `high-quality' regularity partition, then they share any `low-quality' regularity partition.

\begin{lemma}\label{lem: similar}
Suppose $0< 1/n, 1/m \ll \epsilon \ll  1/T, 1/a^{\sQ}_1 \ll \delta \ll \epsilon_0\leq 1$ and $k\in \N\sm\{1\}$.
Suppose $\ba^{\sQ}\in [T]^{k-1}$. 
Suppose that $R=(\epsilon_0/2, \ba^{\sO}, d_{\ba^{\sO},k})$ is a regularity instance.
Suppose $V_1,V_2$ are sets of size $n,m$, and $H_1^\kk, H_2^\kk$ are $k$-graphs on $V_1,V_2$, respectively.
Suppose
\begin{itemize}
\item[{\rm (P1)$_{\ref{lem: similar}}$}] $\sQ_1=\sQ_1(k-1,\ba^{\sQ})$ is an $(\epsilon,\ba^{\sQ},d_{\ba^{\sQ},k})$-equitable partition of $H_1^{(k)}$, 
\item[{\rm (P2)$_{\ref{lem: similar}}$}] $\sQ_2=\sQ_2(k-1,\ba^{\sQ})$ is an $(\epsilon,\ba^{\sQ},d_{\ba^{\sQ},k})$-equitable partition of $H_2^{(k)}$, and
\item[{\rm (P3)$_{\ref{lem: similar}}$}] $\sO_1=\sO_1(k-1,\ba^{\sO})$ is an $(\epsilon_0,\ba^{\sO},d_{\ba^{\sO},k})$-equitable partition of $H_1^{(k)}$.
\end{itemize}
Then there exists an $(\epsilon_0+\delta,\ba^{\sO},d_{\ba^{\sO},k})$-equitable partition $\sO_2$ of $H_2^{(k)}$. 
\end{lemma}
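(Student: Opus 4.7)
The plan is to use the strengthened regular approximation lemma (Lemma~\ref{lem:refreg}) to build a common ``fine'' refinement $\sP_1$ of both $\sQ_1$ and $\sO_1$ on $V_1$, then construct a matching ``twin'' family of partitions $\sP_2$ on $V_2$ refining $\sQ_2$, together with a $k$-graph $G_2^{(k)}$ close to $H_2^{(k)}$ for which $\sP_2$ is an equitable partition carrying the same density function $d_{\ba^\sP, k}$ as the one produced by $\sP_1$. Finally, the combinatorial relation $\sP_1 \prec_\nu \sO_1$ depends only on the $\ba^\sP$-address space (which is shared by $\sP_1$ and $\sP_2$), so I will read off a candidate $\sO_2^*$ from $\sP_2$ and upgrade it to the required equitable partition of $H_2^{(k)}$ via Lemma~\ref{lem: reconstruction}.

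Concretely, I first choose auxiliary $\epsilon \ll \nu \ll \eta \ll \delta \ll \epsilon_0$ together with a small function $\epsilon(\cdot) : \N^{k-1} \to (0,1]$ satisfying $\epsilon(\ba)\ll \min\{1/\|\ba\|_\infty, \epsilon_{\ref{def: regularity instance}}(\|\ba\|_\infty, k)\}$. Applying Lemma~\ref{lem:refreg} to $H_1^{(k)}$ with $\sO = \sO_1$, $\sQ = \sQ_1$, and $\sH^{(k)} = \{H_1^{(k)}, \binom{V_1}{k}\sm H_1^{(k)}\}$ produces $\sP_1 = \sP_1(k-1,\ba^\sP)$ and $G_1^{(k)}$ with $\sP_1 \prec \sQ_1$, $\sP_1 \prec_\nu \sO_1$, $G_1^{(k)}$ perfectly $\epsilon(\ba^\sP)$-regular with respect to $\sP_1$, and $|G_1^{(k)}\triangle H_1^{(k)}| \leq \nu\binom{n}{k}$; let $d_{\ba^\sP,k}$ denote the density function witnessing this regularity. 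Next, I build $\sP_2 = \sP_2(k-1,\ba^\sP)$ on $V_2$ refining $\sQ_2$ with the matching parameters via Lemma~\ref{lem: partition refinement}, then apply Lemma~\ref{lem: mimic Kk} iteratively at levels $j = 2, \ldots, k-1$ to ensure that the $k$-clique counts $|\cK_k(\hat{P}_2^{(k-1)}(\hat{\bz}))|$ match those on the $V_1$-side up to $\nu$-error. Having $\sP_2$ in hand, I construct $G_2^{(k)}$ polyad by polyad: for each $\hat{\bz}^\sQ \in \hat{A}(k,k-1,\ba^\sQ)$, slice $H_2^{(k)} \cap \cK_k(\hat{Q}_2^{(k-1)}(\hat{\bz}^\sQ))$ using Lemma~\ref{lem: slicing} into regular pieces, one per $\sP_2$-subpolyad $\hat{P}_2^{(k-1)}(\hat{\bz}^\sP)$, of relative density $d_{\ba^\sP,k}(\hat{\bz}^\sP)$. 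Feasibility rests on the averaging identity that the mean of $d_{\ba^\sP,k}(\hat{\bz}^\sP)$ over all $\hat{\bz}^\sP$ refining a fixed $\hat{\bz}^\sQ$ equals $d_{\ba^\sQ,k}(\hat{\bz}^\sQ)\pm \nu^{1/2}$ (since $G_1^{(k)}$ has density approximately $d_{\ba^\sQ,k}(\hat{\bz}^\sQ)$ on $\hat{Q}_1^{(k-1)}(\hat{\bz}^\sQ)$ and $\sP_1 \prec \sQ_1$), which in turn matches the density of $H_2^{(k)}$ on $\hat{Q}_2^{(k-1)}(\hat{\bz}^\sQ)$ up to $\epsilon$-error by (P2)$_{\ref{lem: similar}}$; hence $|G_2^{(k)}\triangle H_2^{(k)}|$ is small and $\sP_2$ is an $(\epsilon(\ba^\sP)^{1/2}, \ba^\sP, d_{\ba^\sP,k})$-equitable partition of $G_2^{(k)}$. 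This twin construction is the main technical obstacle.

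To finish, I transfer the refinement relation: the maps $f_j:\hat{A}(j,j-1,\ba^\sP)\times[a_j^\sP]\to \hat{A}(j,j-1,\ba^\sO)\times[a_j^\sO]$ witnessing $\sP_1 \prec_\nu \sO_1$ depend only on the shared $\ba^\sP$-address space, so applying them on the $V_2$-side (with the minor adjustments used in the proof of Lemma~\ref{lem: reconstruction} to maintain the partition structure) produces $\sO_2^*$ with $\sP_2 \prec_{\nu'} \sO_2^*$ for some small $\nu'$. Since $\sO_1$ is an $(\epsilon_0,\ba^\sO, d_{\ba^\sO,k})$-equitable partition of $H_1^{(k)}$, hence of $G_1^{(k)}$ up to a small perturbation by Lemma~\ref{lem: slightly different partition regularity}, and $G_2^{(k)}$ shares the $\sP$-density profile of $G_1^{(k)}$ by construction, an averaging argument along the refinement $\sP_2 \prec_{\nu'} \sO_2^*$ shows that $\sO_2^*$ is an $(\epsilon_0+\delta/3, \ba^\sO, d_{\ba^\sO,k})$-equitable partition of $G_2^{(k)}$; one more application of Lemma~\ref{lem: slightly different partition regularity} transfers this to $H_2^{(k)}$. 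Lemma~\ref{lem: reconstruction} applied with $\sO = \sO_2^*$, $\sP = \sP_2$, $H^{(k)} = H_2^{(k)}$ then promotes $\sP_2 \prec_{\nu'} \sO_2^*$ to a true refinement and yields an $(\epsilon_0+\delta/2, \ba^\sO, d_{\ba^\sO,k})$-equitable partition of $H_2^{(k)}$ with equipartition slack $\nu'^{1/20}$, which Lemma~\ref{lem: removing lambda} removes to deliver the required $\sO_2$.
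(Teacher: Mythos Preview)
Your high-level plan is close to the paper's six-step argument, but the step you call ``an averaging argument along the refinement $\sP_2 \prec_{\nu'} \sO_2^*$'' hides the entire difficulty and, as stated, does not go through. To check that $G_2^{(k)}$ is $(\epsilon_0+\delta/3, d_{\ba^\sO,k}(\hat{\bw}))$-regular with respect to $\hat{O}_2^{*(k-1)}(\hat{\bw})$ you must control $|G_2^{(k)}\cap\cK_k(F^{(k-1)})|$ for an \emph{arbitrary} test subgraph $F^{(k-1)}\subseteq \hat{O}_2^{*(k-1)}(\hat{\bw})$. Writing $\cK_k(F^{(k-1)})=\bigcup_{\hat{\bu}}\cK_k(F^{(k-1)})\cap\cK_k(\hat{P}_2^{(k-1)}(\hat{\bu}))$ and using the $\sP_2$-regularity of $G_2^{(k)}$ gives
\[
|G_2^{(k)}\cap\cK_k(F^{(k-1)})|\approx\sum_{\hat{\bu}} d_{\ba^\sP,k}(\hat{\bu})\,\bigl|\cK_k(F^{(k-1)})\cap\cK_k(\hat{P}_2^{(k-1)}(\hat{\bu}))\bigr|,
\]
but the weights $|\cK_k(F^{(k-1)})\cap\cK_k(\hat{P}_2^{(k-1)}(\hat{\bu}))|$ are chosen adversarially by $F^{(k-1)}$ and can concentrate on polyads $\hat{\bu}$ with $d_{\ba^\sP,k}(\hat{\bu})$ far from $d_{\ba^\sO,k}(\hat{\bw})$. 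Sharing the $\sP$-density profile with $G_1^{(k)}$ does \emph{not} by itself transfer the $(\epsilon_0)$-regularity of $\sO_1$; that regularity is a constraint on all weighted averages \emph{realisable by actual $(k-1)$-graphs on $V_1$}, and you need a mechanism to show that every weight vector realisable on $V_2$ is (approximately) realisable on $V_1$. The paper supplies exactly this: it applies Lemma~\ref{lem: mimic Kk} \emph{inside the regularity verification} to produce, for each test graph $F^{(j)}$ on $V_2$, a mimic $J_1^{(j)}$ on $V_1$ with matching polyad-wise clique densities, and then exploits the known regularity of $\sO_1'$ on $V_1$ (see the computations~\eqref{eq: J2 J1}--\eqref{eq: estimate combined} and Claim~\ref{cl: O'2 bw b is epsilon0 regular wrt}). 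Your use of Lemma~\ref{lem: mimic Kk} ``iteratively at levels $j=2,\dots,k-1$ to ensure clique counts match'' is misplaced: matching of $|\cK_{j+1}(\hat{P}_i^{(j)}(\hat{\by}))|$ for $i=1,2$ is automatic from the counting lemma once both $\sP_1$ and $\sP_2$ are $(\cdot,\ba^\sP)$-equitable, and this is not what Lemma~\ref{lem: mimic Kk} does anyway.

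Two smaller points. First, the paper does not use the $G^{(k)}$-output of Lemma~\ref{lem:refreg}; it defines $G_i^{(k)}$ simply as the crossing part of $H_i^{(k)}$, which via Lemma~\ref{lem: simple facts 1}(ii) is automatically $(\epsilon',d_{\ba^\sP,k})$-regular with the \emph{coarse} density function $d_{\ba^\sP,k}(\hat{\by}):=d_{\ba^\sQ,k}(\hat{\bx})$. Your slicing construction of $G_2^{(k)}$ is unnecessary and runs into trouble when $d_{\ba^\sP,k}(\hat{\by})>d_{\ba^\sQ,k}(\hat{\bx})$ on some subpolyad. Second, the paper applies Lemma~\ref{lem: reconstruction} on $V_1$ (Step~3) to pass from $\sP_1\prec_\nu\sO_1$ to an exact refinement $\sP_1\prec\sO_1'$ \emph{before} reading off the combinatorial data $B_j(\hat{\bw},b)$; this is what makes the transfer to $V_2$ clean. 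Applying it at the end on $V_2$, as you do, is redundant once regularity is already established.
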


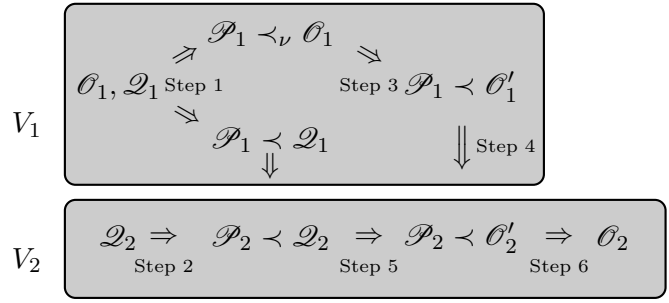
\begin{figure}[t]
\centering
\begin{tikzpicture}

\draw[rounded corners,thick,fill=gray!40] (1.3,-0.8) rectangle (9.2,0.5);
\draw[rounded corners,thick,fill=gray!40] (1.3,0.7) rectangle (7.6,3.1);

\node (v1) at (0.8,1.5) {$V_1$};
\node (v2) at (0.8,-0.3) {$V_2$};

\node (o1q1) at (2,2) {$\sO_1,\sQ_1$};
\node (q2) at (2,0) {$\sQ_2$};

\node (p2q2) at (4,0) {$\sP_2\prec\sQ_2$};

\node (p2o2') at (6.5,0) {$\sP_2\prec\sO_2'$};

\node (o2) at (8.5,0) {$\sO_2$};

\node (p1q1) at (4,1.3) {$\sP_1\prec\sQ_1$};
\node (p1o1) at (4,2.7) {$\sP_1\prec_\nu\sO_1$};

\node (p1o1') at (6.5,2) {$\sP_1\prec\sO_1'$};

\node (a1) at (2.6,0) {$\Rightarrow$};
\node (a2) at (5.3,0) {$\Rightarrow$};
\node (a3) at (7.8,0) {$\Rightarrow$};
\node[rotate=-30] (a4) at (5.3,2.4) {$\Rightarrow$};
\node (a5) at (4,0.95) {$\Downarrow$};
\node[rotate=-90] (a6) at (6.5,1.2) {$\Longrightarrow$};
\node[rotate=-30] (a7) at (2.9,1.6) {$\Rightarrow$};
\node[rotate=30] (a8) at (2.9,2.4) {$\Rightarrow$};

\node (s1) at (3,2) {{\tiny Step 1}};
\node (s2) at (2.6,-0.4) {{\tiny Step 2}};
\node (s3) at (5.3,2) {{\tiny Step 3}};
\node (s4) at (7.1,1.2) {{\tiny Step 4}};
\node (s5) at (5.3,-0.4) {{\tiny Step 5}};
\node (s6) at (7.8,-0.4) {{\tiny Step 6}};

\end{tikzpicture}
\caption{The proof strategy for Lemma~\ref{lem: similar}.}\label{fig:steps}
\end{figure}

A crucial point here is that the construction of $\sO_2$ incurs only an additive increase (by~$\delta$) of the regularity parameter of $\sO_1$.

For an illustration of the proof strategy of Lemma~\ref{lem: similar} see Figure~\ref{fig:steps}.
Our strategy is first to apply Lemma~\ref{lem:refreg} to $\sQ_1,\sO_1$
to obtain a family of partitions $\sP_1$ that refines $\sQ_1$ and almost refines $\sO_1$ (see Step~\ref{step1}).
Moreover, we refine $\sQ_2$ and obtain $\sP_2$ in such a way that $\sP_2$ has the same number of partition classes as $\sP_1$ (see Step~\ref{step2}).
We then apply Lemma~\ref{lem: reconstruction} to construct a family $\sO'_1$ of partitions that is very similar to $\sO_1$ and satisfies $\sP_1\prec \sO_1'$ (see Step~\ref{step3}).
Then we analyse how $\sP_1$ refines $\sO_1'$ (see Step~\ref{step4}). 
We then use Lemma~\ref{lem: mimic Kk} to mimic this structure in order to build $\sO_2'$ from $\sP_2$ (see Step~\ref{step5}). 
Finally in Step~\ref{step6} we apply Lemma~\ref{lem: removing lambda} to show that $\sO_2'$ can be slightly modified to obtain the desired $\sO_2$.

\begin{proof}[Proof of Lemma~\ref{lem: similar}]

We start with several definitions. Choose a new constant $\nu$ such that $1/T, 1/a_1^{\sQ} \ll \nu \ll \delta$.
Let $\overline{\epsilon}:\mathbb{N}^{k-1}\rightarrow (0,1]$ be a function such that  for any $\ba=(a_1,\dots, a_{k-1})\in \N^{k-1}$, we have
$0<\overline{\epsilon}(\ba) \ll \nu, \norm{\ba}^{-k}.$
Now given $\overline{\epsilon}$, we define
$$t':=t_{\ref{lem:refreg} }(k,T,T^{4^k},1/a^{\sQ}_1,\nu,\overline{\epsilon}).$$
\COMMENT{The definition of regularity instance implies that $\epsilon_0 \ll \norm{\ba^{\sO}}^{-4^k} \leq 2^{-4^k}$ and thus $\epsilon \ll 1/k$, i.e. we don't have to include $1/k$ in the hierarchy.}
Observe that $0<\epsilon \ll 1/k,1/T,1/a^{\sQ}_1,\nu, 1/t'$.
Thus we may assume that for any $\ba\in [t']^{k-1}$, we have
$$0<\epsilon \ll \overline{\epsilon}(\ba),\mu_{\ref{lem:refreg}}(k,T,T^{4^k},1/a^{\sQ}_1,\nu,\overline{\epsilon}).$$
\begin{step}\label{step1}
Constructing $\sP_1$ as a refinement of $\sQ_1$.
\end{step}
Let 
\begin{align*}
\sQ_1^{(k)}&:= \{ \cK_{k}(\hat{Q}_1^{(k-1)}): \hat{Q}_1^{(k-1)} \in \hat{\sQ}_1^{(k-1)} \}\enspace\text{ and}\\
	{\sQ_1'}^{(k)}&:=\left(\sQ_1^{(k)} \cup \left\{ \binom{V_1}{k}\sm \cK_{k}(\sQ_1^{(1)})\right\}\right) \setminus\{\emptyset\}.
\end{align*}

Since $\sQ_1$ is $T$-bounded, 
$|\sQ_1^{(k)}|\leq T^{2^{k}} $ by Proposition~\ref{prop: hat relation}(viii). 
Thus $|{\sQ_1'}^{(k)}|\leq T^{4^k}$. Moreover, the fact that $R$ is a regularity instance (and the definition of $ \epsilon_{\ref{def: regularity instance}}$) implies that $\ba^{\sO} \in [T]^{k-1}$.
We can apply Lemma~\ref{lem:refreg} with the following objects and parameters.\newline

{\small
\begin{tabular}{c|c|c|c|c|c|c|c|c|c}
object/parameter & $V_1$ & $\sO_1$ & ${\sQ_1'}^{(k)} $  & $\{\sQ_1^{(i)}\}_{i=1}^{k}$ &
$T$ & $T^{4^k}$ & $1/a^{\sQ}_1$ & $\nu$ & $\overline{\epsilon}$ \\ \hline
playing the role of & $V$ & $\sO$ & $\sH^{(k)}$ & $\sQ$ & $o$ & $s$ & $\eta$ & $\nu$ & $\epsilon$
 \\ 
\end{tabular}
}\newline \vspace{0.2cm}

\noindent
Observe that $\sO_1$, $\{\sQ_1^{(i)}\}_{i=1}^{k}$ and ${\sQ_1'}^{(k)}$ playing the roles of $\sO$, $\sQ$ and $\sH^{(k)}$, respectively, 
satisfy (O1)$_{\ref{lem:refreg}}$--(O4)$_{\ref{lem:refreg}}$ in Lemma~\ref{lem:refreg}. 
We obtain a family of partitions $\sP_1=\sP_1(k-1,\ba^{\sP})$ such that the following hold.

\begin{enumerate}[label=(P1\arabic*)]
\item\label{item:P11} $\sP_1$ is $(1/a^{\sQ}_1,\overline{\epsilon}(\ba^{\sP}), \ba^{\sP})$-equitable and $t'$-bounded, and $a^{\sQ}_j$ divides $a^{\sP}_j$ for each $j\in [k-1]$.
\item\label{item:P12} $\sP_1^{(j)}\prec \sQ_1^{(j)}$ and $\sP_1^{(j)} \prec_{\nu} \sO_1^{(j)}$ for each $j\in [k-1]$.
\end{enumerate}
Let 
$$\epsilon' := \overline{\epsilon}(\ba^{\sP}), 
\enspace \text{ and } \enspace a_{k}^{\sP}=a_{k}^{\sQ}=a_{k}^{\sO}:=1.$$%
\COMMENT{We do not update $\ba^{\sP}$.}
Hence $\epsilon' \ll \nu, \norm{\ba^{\sP}}^{-k}$ by the definition of $\overline{\epsilon}$.

By \ref{item:P12}, for each $j\in [k-1]$, $\hat{\by}\in \hat{A}(j,j-1,\ba^{\sP})$, and $b'\in [a^{\sP}_j]$, 
either there exists $\hat{\bx}\in \hat{A}(j,j-1,\ba^{\sQ})$ and $b\in [a^{\sQ}_j]$ such that $P_1^{(j)}(\hat{\by},b') \subseteq Q_1^{(j)}(\hat{\bx},b)$
or $P_1^{(j)}(\hat{\by},b') \cap Q_1^{(j)}(\hat{\bx},b)=\es$ for all $\hat{\bx}\in \hat{A}(j,j-1,\ba^{\sQ})$ and $b\in [a^{\sQ}_j]$.
This allows us to describe $\sP_1$ in terms of $\sQ_1$ in the following way.
For each $j\in [k-1]$, $\hat{\bx}\in \hat{A}(j,j-1,\ba^{\sQ})$, and $b\in [a^{\sQ}_j]$, we define
\begin{align}\label{eq: def Aj hat bx b and Aj}
A_j(\hat{\bx},b)&:= \{ (\hat{\by},b')\in \hat{A}(j,j-1,\ba^{\sP}) \times [a^{\sP}_j]: P_1^{(j)}(\hat{\by},b') \subseteq Q_1^{(j)}(\hat{\bx},b)\}\text{ and} \nonumber \\ 
A_j&:=\bigcup_{\hat{\bx}\in \hat{A}(j,j-1,\ba^{\sQ}), b\in [a_j^{\sQ}]} A_j(\hat{\bx},b).
\end{align}
For each $\hat{\bx}\in \hat{A}(k,k-1,\ba^{\sQ})$, let
\begin{align*}
	\hat{A}_k(\hat{\bx}) &:= \{ \hat{\by} \in \hat{A}(k,k-1,\ba^{\sP}) : \hat{P}^{(k-1)}_1(\hat{\by})\subseteq \hat{Q}^{(k-1)}_1(\hat{\bx})\}, \text{ and }\\ 
	\hat{A}_k&:= \bigcup_{\hat{\bx}\in \hat{A}(k,k-1,\ba^{\sQ})} \hat{A}_k(\hat{\bx}).
\end{align*}

The density function $d_{\ba^{\sQ},k}$ for $\sQ_1$ naturally gives rise to a density function for $\sP_1$.
Indeed, for each $\hat{\by}\in A(k,k-1,\ba^{\sP})$, we define
$$d_{\ba^{\sP},k}(\hat{\by})
:=\left\{\begin{array}{ll} 
d_{\ba^{\sQ},k}(\hat{\bx}) &\text{ if } \hat{\by}\in \hat{A}_k(\hat{\bx}) \text{ for some $\hat{\bx}\in \hat{A}(k,k-1,\ba^{\sQ})$ and}\\
0 &\text{ if } \hat{\by}\notin \hat{A}_k. \end{array}\right.$$

Recall that $\sP_1$ is a $(1/a^{\sQ}_1,\epsilon',\ba^{\sP})$-equitable family of partitions, 
$\sQ_1$ is a $(1/a^{\sQ}_1,\epsilon,\ba^{\sQ})$-equitable family of partitions,
and $\epsilon \ll \epsilon' \ll \nu, \norm{\ba^{\sP}}^{-k}$.
Thus Lemma~\ref{lem: counting}
implies that for each $j\in [k-1]$, $\hat{\by}\in \hat{A}(j+1,j,\ba^{\sP})$, and $\hat{\bx}\in \hat{A}(j+1,j,\ba^{\sQ})$,
we have
\begin{align}\label{eq: 1 polyad size}
|\cK_{j+1}(\hat{P}^{(j)}_1(\hat{\by}))|  = (1\pm \nu) \prod_{i=1}^{j} (a^{\sP}_i)^{-\binom{j+1}{i}} n^{j+1} \text{ and } |\cK_{j+1}(\hat{Q}^{(j)}_1(\hat{\bx}))| = (1\pm \nu) \prod_{i=1}^{j} (a^{\sQ}_i)^{-\binom{j+1}{i}} n^{j+1}.
\end{align}
By \ref{item:P11}, for each $j\in [k-2]$, $\hat{\by}\in \hat{A}(j+1,j,\ba^{\sP})$ and $b\in [a_{j+1}^{\sP}]$, we have
\begin{align}\label{eq: P'1 hypergraph not too small}
|P^{(j+1)}_1(\hat{\by},b)| = (1/a^{\sP}_{j+1} \pm \overline{\epsilon}(\ba^{\sP}))|\cK_{j+1}(\hat{P}^{(j)}_1(\hat{\by}))| = 
(1\pm 2\nu) \prod_{i=1}^{j+1} (a^{\sP}_i)^{-\binom{j+1}{i}} n^{j+1} .
\end{align}

It will be convenient to restrict our attention to the $k$-graph $G_1^\kk$ which consists of the crossing $k$-sets of $H_1^\kk$ 
with respect to $\sQ_1^{(1)}$ (rather than $H_1^\kk$ itself).

\setcounter{claim}{0}
\begin{claim}\label{cl: we define G1}
Let $G^{(k)}_1:= H^{(k)}_1 \cap\bigcup_{ \hat{\by}\in \hat{A}_k }  \cK_k(\hat{P}_1^{(k-1)}(\hat{\by}))$. 
Then
\begin{enumerate}[label=\rm (G1\arabic*)]
\item\label{item:G11} $\sP_1$ is an $(\epsilon', \ba^{\sP}, d_{\ba^{\sP},k})$-equitable partition of $G_1^{(k)}$.
\item\label{item:G12} $|G^{(k)}_1 \triangle H^{(k)}_1 |\leq \nu \binom{n}{k}$.
\end{enumerate}
\end{claim}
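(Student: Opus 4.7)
The plan is to prove \ref{item:G12} first and then \ref{item:G11}, both leveraging the following structural observation: $\hat{\by}\in \hat{A}_k$ precisely when the $k$ vertex classes indexed by $\by^{(1)}_*$ lie in $k$ distinct vertex classes of $\sQ_1^{(1)}$. This is forced by the refinement relation in \ref{item:P12} (applied at each level $j\in [k-1]$ via Proposition~\ref{prop: hat relation}(xi)), which guarantees the inclusion $\hat{P}_1^{(k-1)}(\hat{\by}) \subseteq \hat{Q}_1^{(k-1)}(\hat{\bx})$ for the uniquely determined $\hat{\bx}\in \hat{A}(k,k-1,\ba^{\sQ})$.

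For \ref{item:G12}, I would argue that any $k$-set $e \in \cK_k(\sQ_1^{(1)})$ belongs to a unique polyad $\hat{Q}_1^{(k-1)}(\hat{\bx})$ and to a unique polyad $\hat{P}_1^{(k-1)}(\hat{\by})$ with $\hat{\by} \in \hat{A}_k(\hat{\bx}) \subseteq \hat{A}_k$; hence every $k$-set of $H_1^{(k)}$ that is crossing with respect to $\sQ_1^{(1)}$ lies in $G_1^{(k)}$. Therefore $H_1^{(k)} \setminus G_1^{(k)} \subseteq \binom{V_1}{k} \setminus \cK_k(\sQ_1^{(1)})$, and \eqref{eq: eta a1} applied with $\eta = 1/a_1^{\sQ}$ combined with the hierarchy $1/a_1^{\sQ} \ll \nu$ yields $|G_1^{(k)}\triangle H_1^{(k)}|\leq k^2/a_1^{\sQ}\binom{n}{k}\leq \nu\binom{n}{k}$.

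For \ref{item:G11}, since \ref{item:P11} already gives that $\sP_1$ is $(1/a_1^{\sQ}, \epsilon', \ba^{\sP})$-equitable, only the $k$-level regularity needs verification. For each $\hat{\by} \in \hat{A}(k,k-1,\ba^{\sP})$ I split into two cases. If $\hat{\by} \notin \hat{A}_k$, then $d_{\ba^{\sP},k}(\hat{\by}) = 0$ and by construction $G_1^{(k)} \cap \cK_k(\hat{P}_1^{(k-1)}(\hat{\by})) = \emptyset$, so $(\epsilon', 0)$-regularity is trivial. If $\hat{\by} \in \hat{A}_k(\hat{\bx})$, then $d_{\ba^{\sP},k}(\hat{\by}) = d_{\ba^{\sQ},k}(\hat{\bx})$ and, by (P2)$_{\ref{lem: similar}}$ (as $H_1^{(k)}$ satisfies the $\sQ_1$-partition via (P1)$_{\ref{lem: similar}}$), the graph $H_1^{(k)}$ is $(\epsilon, d_{\ba^{\sQ},k}(\hat{\bx}))$-regular with respect to $\hat{Q}_1^{(k-1)}(\hat{\bx})$. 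Using \eqref{eq: 1 polyad size} I will lower-bound the ratio $\alpha := |\cK_k(\hat{P}_1^{(k-1)}(\hat{\by}))| / |\cK_k(\hat{Q}_1^{(k-1)}(\hat{\bx}))|$ by a positive constant depending only on $t'$ and $\nu$, and then apply Lemma~\ref{lem: simple facts 1}(ii) to pass regularity down to $\hat{P}_1^{(k-1)}(\hat{\by})$ with error $\epsilon/\alpha$. Since $H_1^{(k)}$ and $G_1^{(k)}$ coincide on $\cK_k(\hat{P}_1^{(k-1)}(\hat{\by}))$ when $\hat{\by}\in\hat{A}_k$, the graph $G_1^{(k)}$ inherits this regularity.

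The main bookkeeping obstacle, I expect, will be confirming that the amplified error $\epsilon/\alpha$ in Case~1 is at most $\epsilon' = \overline{\epsilon}(\ba^{\sP})$; this follows from the chosen hierarchy $\epsilon \ll \mu_{\ref{lem:refreg}}(k,T,T^{4^k},1/a^{\sQ}_1,\nu,\overline{\epsilon})$ together with $\|\ba^{\sP}\|_\infty \leq t'$ and the growth conditions imposed on $\overline{\epsilon}$, but it requires explicit tracking of how $\overline{\epsilon}$ depends on $\|\ba^{\sP}\|_\infty$.
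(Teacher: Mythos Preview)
Your proposal is correct and follows essentially the same route as the paper's proof: for \ref{item:G12} both you and the paper observe that $G_1^{(k)}\triangle H_1^{(k)}\subseteq \binom{V_1}{k}\setminus\cK_k(\sQ_1^{(1)})$ and invoke \eqref{eq: eta a1}, and for \ref{item:G11} both arguments pass the $(\epsilon,d_{\ba^{\sQ},k}(\hat\bx))$-regularity of $H_1^{(k)}$ with respect to $\hat{Q}_1^{(k-1)}(\hat\bx)$ down to $\hat{P}_1^{(k-1)}(\hat\by)$ via \eqref{eq: 1 polyad size} and Lemma~\ref{lem: simple facts 1}(ii). Two small remarks: the regularity hypothesis on $H_1^{(k)}$ comes from (P1)$_{\ref{lem: similar}}$, not (P2)$_{\ref{lem: similar}}$; and your bookkeeping worry is simpler than you make it---the paper just uses $|\cK_k(\hat{P}_1^{(k-1)}(\hat\by))|\ge \epsilon'|\cK_k(\hat{Q}_1^{(k-1)}(\hat\bx))|$ (a deliberately weak bound from \eqref{eq: 1 polyad size}) together with the already-stated hierarchy $\epsilon\ll\overline{\epsilon}(\ba)$ for all $\ba\in[t']^{k-1}$, so that Lemma~\ref{lem: simple facts 1}(ii) with $\alpha=\epsilon'$ and $\epsilon\le(\epsilon')^2$ yields $(\epsilon/\epsilon',d)$-regularity with $\epsilon/\epsilon'\le\epsilon'$.
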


\begin{proof}
Consider $\hat{\bx} \in \hat{A}(k,k-1,\ba^{\sQ})$ and $\hat{\by} \in \hat{A}_k(\hat{\bx})$.
Note that \eqref{eq: 1 polyad size} implies that $$|\cK_{k}(\hat{P}^{(k-1)}_1(\hat{\by}))| \geq \epsilon' |\cK_{k}(\hat{Q}^{(k-1)}_1(\hat{\bx}))|.$$ Also by (P1)$_{\ref{lem: similar}}$, $H_1^{(k)}$ is $(\epsilon,d_{\ba^{\sQ},k}(\hat{\bx}))$-regular with respect to $\hat{Q}^{(k-1)}_1(\hat{\bx})$.
Thus by Lemma~\ref{lem: simple facts 1}(ii) $H^{(k)}_1$ is $(\epsilon', d_{\ba^{\sQ},k}(\hat{\bx}))$-regular with respect to $\hat{P}_1^{(k-1)}(\hat{\by})$. Together with the definition of $d_{\ba^{\sQ},k}(\hat{\by})$ this in turn shows that for all $\hat{\by} \in \hat{A}(k,k-1,\ba^{\sP})$ we have that 
$G_1^{(k)}$ is $(\epsilon',d_{\ba^{\sP},k}(\hat{\by}))$-regular with respect to $\hat{P}^{(k-1)}_1(\hat{\by})$.
Thus \ref{item:G11} holds.

Note that \ref{item:P12} and the definition of $\hat{A}_k$ imply that
$$G^{(k)}_1 \triangle H^{(k)}_1  \subseteq 
\binom{V}{k}\setminus \cK_{k}(\sQ_1^{(1)}).$$
Since $\sQ_1$ is $(1/a_1^{\sQ},\epsilon,\ba^{\sQ})$-equitable and $1/a_1^{\sQ}\ll \nu, 1/k$, we obtain
$$
\left|\binom{V}{k}\setminus \cK_{k}(\sQ_1^{(1)})\right| \stackrel{\eqref{eq: eta a1}}{\leq} \frac{k^2}{a_1^{\sQ}} \binom{n}{k} \leq \nu \binom{n}{k}.$$
This proves \ref{item:G12}.
\end{proof}

\begin{step}\label{step2}
Refining $\sQ_2$ into $\sP_2$ which mirrors $\sP_1$.
\end{step}

We have now set up the required definitions for the objects on $V_1$ and will now proceed with the objects on $V_2$.
By using Lemma~\ref{lem: partition refinement} with $\sQ_2$, $\ba^{\sP}$, $t'$ playing the roles of $\sP, \bb, t$, respectively, 
we can obtain 
a $(1/a_1^{\sQ},\epsilon^{1/3},\ba^{\sP})$-equitable family of partitions $\sP_2=\sP_2(k-1,\ba^{\sP})$ such that $\sP_2\prec \sQ_2$. 
By considering an appropriate $\ba^{\sP}$-labelling, 
we may assume that 
 for each $j\in [k-1]$, $(\hat{\by},b')\in \hat{A}(j,j-1,\ba^{\sP})\times [a_j^{\sP}]$ and $(\hat{\bx},b)\in \hat{A}(j,j-1,\ba^{\sQ})\times [a_j^{\sQ}]$, we have
$$P^{(j)}_2(\hat{\by},b') \subseteq Q^{(j)}_2(\hat{\bx},b) \text{ if and only if }(\hat{\by},b') \in A_j(\hat{\bx},b).$$%
\COMMENT{We know that $\hat{P}^{(j)}_1(\hat{\by},b') \subseteq \hat{Q}^{(j)}_1(\hat{\bx},b) \text{ if and only if }(\hat{\by},b') \in A_j(\hat{\bx},b)$ from the definition of $A_j$. So we just see the way $\sP_1$ is labelled, and mimic it into $\sP_2$, then we get this. Not sure whether we need to prove this or just say that we can do it.}

Again Lemma~\ref{lem: counting} and the fact that $\epsilon'\ll \nu, \norm{\ba^{\sP}}^{-k}$
imply that for each $j\in [k-1]$, $\hat{\by}\in \hat{A}(j+1,j,\ba^{\sP})$ and $\hat{\bx}\in \hat{A}(j+1,j,\ba^{\sQ})$,
we have
\begin{align}\label{eq: 2 polyad size}
\begin{split}
|\cK_{j+1}(\hat{P}^{(j)}_2(\hat{\by}))|  &= (1\pm \nu) \prod_{i=1}^{j} (a^{\sP}_i)^{-\binom{j+1}{i}} m^{j+1} \enspace \text{ and } \\
|\cK_{j+1}(\hat{Q}^{(j)}_2(\hat{\bx}))| &= (1\pm \nu) \prod_{i=1}^{j} (a^{\sQ}_i)^{-\binom{j+1}{i}} m^{j+1}.
\end{split}
\end{align}

Let $$G^{(k)}_2:= H^{(k)}_2 \cap\bigcup_{ \hat{\by}\in \hat{A}_k }  \cK_k(\hat{P}_2^{(k-1)}(\hat{\by})).$$
Similarly as in Claim~\ref{cl: we define G1},\COMMENT{Also similarly as in the proof of Claim~\ref{cl: we define G1},
for each $\hat{\bx}\in \hat{A}(k,k-1,\ba^{\sQ})$ and $\hat{\by} \in \hat{A}_k(\hat{\bx})$, 
the $k$-graph $H_2^{(k)}$ is $(\epsilon', d_{\ba^{\sQ},k}(\hat{\by}))$-regular with respect to $\hat{P}_2^{(k-1)}(\by)$.}
we conclude the following.
\begin{enumerate}[label=(G2\arabic*)]
\item\label{item:G21} $\sP_2$ is an $(\epsilon', \ba^{\sP}, d_{\ba^{\sP},k})$-equitable partition of $G_2^{(k)}$.
\item\label{item:G22} $|G^{(k)}_2 \triangle H^{(k)}_2 |\leq \nu \binom{n}{k}$.
\end{enumerate}

\begin{step}\label{step3}
Modifying $\sO_1$ into $\sO_1'$ with $\sP\prec \sO_1'$.
\end{step}

Recall that $\sP_1\prec_{\nu} \sO_1$ by \ref{item:P12}.
We next replace $\sO_1$ by a very similar family of partitions $\sO'_1$ such that $\sP_1\prec \sO'_1$.
To this end we apply Lemma~\ref{lem: reconstruction} with $\sO_1, \sP_1$ playing the roles of $\sO , \sP$, respectively, and obtain $\sO'_1=\sO'_1(k-1,\ba^{\sO})$ such that
\begin{enumerate}[label=(O$'$1\arabic*)]
\item\label{item:O'11} $\sP_1\prec \sO'_1$.
\item\label{item:O'12} $\sO'_1$ is a $(1/a_1^{\sO},\epsilon_0+ \nu^{1/20},\ba^{\sO},\nu^{1/20})$-equitable family of partitions 
which is an $(\epsilon_0+\nu^{1/20},d_{\ba^{\sO},k})$-partition of $H^{(k)}_1$.
\item\label{item:O'13} for each $j\in [k-1]$ and $ (\hat{\bx},b) \in \hat{A}(j,j-1,\ba^{\sO})\times [a_j^{\sO}]$, we have $ |O'^{(j)}(\hat{\bx},b)\triangle O^{(j)}(\hat{\bx},b)| \leq \nu^{1/2} \binom{n}{j}.$
\end{enumerate}

Note that since $(\epsilon_0/2,\ba^{\sO},d_{\ba^{\sO},k})$ is a regularity instance and $\nu\ll \epsilon_0$, we have 
$$\epsilon_0+\nu^{1/20} \leq \norm{\ba^{\sO}}^{-4^k} \cdot \epsilon_{\ref{lem: counting}}(\|\ba^{\sO}\|_\infty^{-1}, \|\ba^{\sO}\|_\infty^{-1},k-1,k).$$ 
Thus Lemma~\ref{lem: counting} implies for any $ j \in [k-1]$ and $\hat{\bw} \in \hat{A}(j+1,j,\ba^{\sO})$, we have
\begin{align}\label{eq: O'1 hat hypergraph not too small}
|\cK_{j+1}(\hat{O}'^{(j)}_1(\hat{\bw}))| \geq \epsilon_0^{1/2} n^{j+1}.
\end{align}
Also, \ref{item:O'12} implies that
for all $j\in[k-2]$, $\hat{\bw} \in \hat{A}(j+1,j,\ba^{\sO})$ and $b\in [a_{j+1}^{\sO}]$, we have
\begin{align}\label{eq: O'1 hypergraph not too small}
|O'^{(j+1)}_1(\hat{\bw},b)| \geq (1/a_{j+1}^{\sO} -2\epsilon_0)|\cK_{j+1}(\hat{O}'^{(j)}_1(\hat{\bw}))|  \geq \epsilon_0^{2/3} n^{j+1}.
\end{align}

Moreover, by (O$'$12), (O$'$13) and \ref{item:G12}, we can apply Lemma~\ref{lem: slightly different partition regularity} with $\sO'_1, \sO'_1, H^{(k)}_1$ and $G_1^{(k)}$ playing the roles of $\sP, \sQ, H^{(k)}$ and $G^{(k)}$ to obtain that
\begin{equation}\label{eq: O'1 is also good partition too}
\begin{minipage}[c]{0.8\textwidth} \em
$\sO'_1$ is an $(\epsilon_0 + 2\nu^{1/20}, d_{\ba^{\sO},k})$-partition of $G_1^{(k)}$.
\end{minipage}
\end{equation}

\begin{step}\label{step4}
Describing $\sO_1'$ in terms of its refinement $\sP_1$.
\end{step}
We now describe how the partition classes and polyads of $\sO_1'$ can be expressed in terms of $\sP_1$.
This description will be used to construct $\sO_2'$ from $\sP_2$ in Step~\ref{step5}.

For each $j\in [k-2]$,
our next aim is to define $B_{j+1}(\hat{\bw},b)$ for $\hat{\bw}\in \hat{A}(j+1,j,\ba^{\sO})$ and $b\in [a^{\sO}_{j+1}]$ 
in a similar way as we defined $A_{j+1}(\bx,b)$ for $\hat{\bx}\in \hat{A}(j+1,j,\ba^{\sQ})$ and $b\in [a^{\sQ}_{j+1}]$ in \eqref{eq: def Aj hat bx b and Aj}.
To this end, for each $b\in [a_1^{\sO}]$, let
$$B_1(b,b) := \{ (b',b') \in \hat{A}(1,0,\ba^{\sP})\times [a^{\sP}_1] : P^{(1)}_1(b',b') \subseteq O'^{(1)}_1(b,b) \}.$$
For each $j\in [k-1]$, let
$$\hat{B}_{j+1} := \left\{\hat{\bu} \in \hat{A}(j+1,j,\ba^{\sP}) : \left|\bu^{(1)}_* \cap \{b' : (b',b')\in B_1(b,b)\}\right|\leq 1 \text{ for each } b\in [a_1^{\sO}] \right\}.$$ 
Note that this easily implies that\COMMENT{Note that by \eqref{eq: 1 polyad size}, $\cK_{j+1}(\hat{P}^{(j)}_1(\hat{\bu}))$ is never an emptyset. }
\begin{align}\label{eq: hat Bj if crossing in O}
\hat{\bu} \in \hat{B}_{j+1} \enspace \text{if and only if} \enspace \cK_{j+1}(\hat{P}^{(j)}_1(\hat{\bu})) \subseteq \cK_{j+1}(\sO_1'^{(1)}).
\end{align}
Consider any $j\in [k-1]$ and $\hat{\bw}\in \hat{A}(j+1,j,\ba^{\sO})$.
Let 
\begin{align}\label{eq: B hat bw b  def2}
&\hat{B}_{j+1}(\hat{\bw}):= \left\{ \hat{\bu} \in \hat{A}(j+1,j,\ba^{\sP}): \cK_{j+1}(\hat{P}^{(j)}_1(\hat{\bu}))\subseteq \cK_{j+1}(\hat{O'}^{(j)}_1(\hat{\bw}))\right\}.
\end{align}
Together with (O$'$11) and Proposition~\ref{prop: hat relation}(xi) this implies that 
\begin{align}\label{eq: cK hat O' bw is union of hat Bj+1}
\cK_{j+1}(\hat{O'_1}^{(j)}(\hat{\bw})) = \bigcup_{\hat{\bu}\in \hat{B}_{j+1}(\hat{\bw})} \cK_{j+1}(\hat{P_1}^{(j)}(\hat{\bu})).
\end{align}
Moreover, if $ j \in [k-2]$ and $b\in [a_{j+1}^{\sO}]$, let
\begin{align}\label{eq: B hat bw b  def}
&B_{j+1}(\hat{\bw},b):=
 \left\{ (\hat{\bu},b') \in \hat{A}(j+1,j,\ba^{\sP})\times [a^{\sP}_{j+1}] : P^{(j+1)}_1(\hat{\bu},b') \subseteq O'^{(j+1)}_1(\hat{\bw},b)\right\}.
\end{align}
Thus \ref{item:O'11}, \eqref{eq: 1 polyad size} and \eqref{eq: O'1 hat hypergraph not too small} imply that for all $j\in [k-1]$ and $\hat{\bw}\in \hat{A}(j+1,j,\ba^{\sO})$
\begin{align}\label{eq: Bj hat bw size}
|\hat{B}_{j+1}(\hat{\bw})| \geq \frac{| \cK_{j+1}( \hat{O'}^{(j)}_1(\hat{\bw}))|}{(1 + \nu) \prod_{i=1}^{j} (a^{\sP}_i)^{-\binom{j+1}{i}} n^{j+1}} \geq \frac{1}{2}\epsilon_0^{1/2}\prod_{i=1}^{j}(a^{\sP}_i)^{\binom{j+1}{i}}.
\end{align}
Similarly,
\ref{item:O'11}, \eqref{eq: P'1 hypergraph not too small}  and \eqref{eq: O'1 hypergraph not too small}
imply that for all $j\in [k-1]\setminus\{1\}$, $\hat{\bw}\in \hat{A}(j,j-1,\ba^{\sO})$ and $b\in [a_j^\sO]$,
\begin{align}\label{eq: Bj wb size}
|B_j(\hat{\bw},b)| \geq \frac{| O'^{(j-1)}_1(\hat{\bw},b)|}{(1+ 2\nu) \prod_{i=1}^{j} (a^{\sP}_i)^{-\binom{j}{i}} n^{j} } \geq \frac{1}{2}\epsilon_0^{2/3}\prod_{i=1}^{j}(a^{\sP}_i)^{\binom{j}{i}} > 0.
\end{align}
Note that by Proposition~\ref{prop: hat relation}(xi) and \ref{item:O'11},  for each $j\in [k-1]$, we have
\begin{align}\label{eq: cKjP precs cKjO}
\{\cK_{j+1}( \hat{P}^{(j)}_1(\hat{\bu})) :\hat{\bu}\in \hat{A}(j+1,j,\ba^{\sP})\}
\prec \{\cK_{j+1}(\hat{O'}_1^{(j)}(\hat{\bw})): \hat{\bw}\in \hat{A}(j+1,j,\ba^{\sO})\}.
\end{align}
Together with \eqref{eq: hat Bj if crossing in O} and Proposition~\ref{prop: hat relation}(vi) applied to $\sO'_1$, this implies that $\hat{\bu} \in \hat{B}_{j+1}$ if and only if $\hat{\bu} \in \hat{B}_{j+1}(\hat{\bw})$ for some $\hat{\bw}\in  \hat{A}(j+1,j,\ba^{\sO})$.\COMMENT{
Together with \eqref{eq: hat Bj if crossing in O} and Proposition~\ref{prop: hat relation}(vi) applied to $\sO'_1$, this implies that $\cK_{j+1}( \hat{P}^{(j)}_1(\hat{\bu})) \subseteq \cK_{j+1}( \hat{\sO'_1}^{(1)})$ if and only if there exists $\hat{\bw} \in \hat{A}(j+1,j,\ba^{\sO})$ such that $\cK_{j+1}(\hat{P}^{(j)}_1(\hat{\bu})) \subseteq \cK_{j+1}(\hat{O'_1}^{(j)}(\hat{\bw}))$. In other words, by using \eqref{eq: hat Bj if crossing in O}, we can see that $\hat{\bu} \in \hat{B}_{j+1}$ if and only if $\hat{\bu} \in \hat{B}_{j+1}(\hat{\bw})$ for some $\hat{\bw}\in  \hat{A}(j+1,j,\ba^{\sO})$.} 
Thus for each $j\in [k-1]$,
\begin{equation}\label{eq: union of hat Bj is hat Bj}
\begin{minipage}[c]{0.8\textwidth}\em
$\{  \hat{B}_{j+1}(\hat{\bw})  : \hat{\bw} \in \hat{A}(j+1,j,\ba^{\sO}) \} $ forms a partition of $ \hat{B}_{j+1}$.
\end{minipage}
\end{equation}

The following observations relate polyads and partition classes of $\sO_1'$ and $\sP_1$.
They will be used in the proof of Claim~\ref{eq: sO'2 family of partitions} to relate $\sO_2'$ (which is constructed in Step~\ref{step5}) and $\sP_2$.

\begin{claim}\label{eq: two sets are same}
\begin{enumerate}[label=\rm (\roman*)]
	\item For all $j\in[k-1]\sm\{1\}$ and $\hat{\bw} \in \hat{A}(j,j-1,\ba^{\sO})$, we have
\begin{align*}
\bigcup_{b\in [a_j^{\sO}]} B_j(\hat{\bw},b) = \hat{B}_j(\hat{\bw}) \times [a_j^{\sP}].
\end{align*}
	\item For all $j\in [k-1]$ and $\hat{\bw}\in \hat{A}(j+1,j,\ba^{\sO})$, we have
$$\left\{ (\hat{\bv},\bu^{(j)}_{\bv^{(1)}_*}) : \hat{\bu}\in \hat{B}_{j+1}(\hat{\bw}), \hat{\bv}\leq_{j,j-1}\hat{\bu}\right\}\subseteq \bigcup_{\hat{\bz}\leq_{j,j-1}\hat{\bw}}  B_{j}(\hat{\bz},\bw^{(j)}_{\bz^{(1)}_*}).$$
\end{enumerate}

\end{claim}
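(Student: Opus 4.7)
The overall strategy is to exploit $\sP_1\prec\sO'_1$ (from~\ref{item:O'11}) together with Proposition~\ref{prop: hat relation}(vi) and~(xi): the collections $\{\cK_j(\hat{P}^{(j-1)}_1(\hat{\bu}))\}_{\hat{\bu}}$ and $\{\cK_j(\hat{O'}^{(j-1)}_1(\hat{\bw}))\}_{\hat{\bw}}$ partition $\cK_j(\sP_1^{(1)})$ and $\cK_j(\sO'^{(1)}_1)$ respectively, with the former refining the latter on the common crossing region. Nonemptiness of $P_1^{(j)}(\hat{\bu},b')$ (from~\eqref{eq: P'1 hypergraph not too small}) will be used throughout to rule out the degenerate case in the definition of $\prec$.

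For~(i), both inclusions reduce to short arguments. \emph{Forward:} if $(\hat{\bu},b')\in B_j(\hat{\bw},b)$, then the nonempty $P_1^{(j)}(\hat{\bu},b')$ lies inside $\cK_j(\hat{P}^{(j-1)}_1(\hat{\bu}))$ and, via $O'^{(j)}_1(\hat{\bw},b)\subseteq\cK_j(\hat{O'}^{(j-1)}_1(\hat{\bw}))$, also inside $\cK_j(\hat{O'}^{(j-1)}_1(\hat{\bw}))$; Proposition~\ref{prop: hat relation}(xi) then forces $\cK_j(\hat{P}^{(j-1)}_1(\hat{\bu}))\subseteq\cK_j(\hat{O'}^{(j-1)}_1(\hat{\bw}))$, so $\hat{\bu}\in\hat{B}_j(\hat{\bw})$. \emph{Reverse:} given $\hat{\bu}\in\hat{B}_j(\hat{\bw})$ and $b'\in[a^{\sP}_j]$, the nonempty $P_1^{(j)}(\hat{\bu},b')$ sits inside $\cK_j(\hat{O'}^{(j-1)}_1(\hat{\bw}))\subseteq\cK_j(\sO_1'^{(1)})$, and $\sP_1^{(j)}\prec\sO_1'^{(j)}$ places it inside a single $O'^{(j)}_1(\hat{\bw}',b)$; disjointness of the $\cK_j(\hat{O'}^{(j-1)}_1(\cdot))$-polyads of $\sO'_1$ then forces $\hat{\bw}'=\hat{\bw}$, so $(\hat{\bu},b')\in B_j(\hat{\bw},b)$.

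For~(ii), the key idea is to read off the required $\hat{\bz}\leq_{j,j-1}\hat{\bw}$ from a single $(j+1)$-clique rather than going through the definition of $\hat{B}_{j+1}(\hat{\bw})$ abstractly. Given $(\hat{\bv},\bu^{(j)}_{\bv^{(1)}_*})$ on the LHS, \eqref{eq: 1 polyad size} yields some $J\in\cK_{j+1}(\hat{P}^{(j)}_1(\hat{\bu}))$, and $\hat{\bu}\in\hat{B}_{j+1}(\hat{\bw})$ also places $J$ in $\cK_{j+1}(\hat{O'}^{(j)}_1(\hat{\bw}))$. The fact that $J$ is $(j+1)$-crossing simultaneously over the $\sP^{(1)}_1$-classes indexed by $\hat{\bu}^{(1)}_*$ and over the $\sO_1'^{(1)}$-classes indexed by $\hat{\bw}^{(1)}_*$, combined with $\sP_1^{(1)}\prec\sO_1'^{(1)}$, induces a bijection $\pi:\hat{\bu}^{(1)}_*\to\hat{\bw}^{(1)}_*$ characterized by $P^{(1)}_1(\alpha,\alpha)\subseteq O'^{(1)}_1(\pi(\alpha),\pi(\alpha))$. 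Let $\hat{\bz}\leq_{j,j-1}\hat{\bw}$ be the unique vector with $\hat{\bz}^{(1)}_*=\pi(\hat{\bv}^{(1)}_*)$, and let $J'$ be the $j$-face of $J$ obtained by deleting its vertex in the $\sP^{(1)}_1$-class indexed by $\hat{\bu}^{(1)}_*\setminus\hat{\bv}^{(1)}_*$. Applying~\eqref{eq:hatPconsistsofP(x,b)} to $\sP_1$ identifies $J'\in P^{(j)}_1(\hat{\bv},\bu^{(j)}_{\bv^{(1)}_*})$, while applying it to $\sO'_1$ identifies $J'\in O'^{(j)}_1(\hat{\bz},\bw^{(j)}_{\bz^{(1)}_*})$. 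Invoking $\sP_1^{(j)}\prec\sO_1'^{(j)}$ and the disjointness of $\sO'_1$-classes at level~$j$ then upgrades this single-element containment to $P^{(j)}_1(\hat{\bv},\bu^{(j)}_{\bv^{(1)}_*})\subseteq O'^{(j)}_1(\hat{\bz},\bw^{(j)}_{\bz^{(1)}_*})$, giving $(\hat{\bv},\bu^{(j)}_{\bv^{(1)}_*})\in B_j(\hat{\bz},\bw^{(j)}_{\bz^{(1)}_*})$. The main obstacle is the address-space bookkeeping: checking that $\pi$ is a bijection (immediate from the existence of a crossing $(j+1)$-clique in both polyads) and that the second coordinate delivered by~\eqref{eq:hatPconsistsofP(x,b)} for $\sO'_1$ is precisely $\bw^{(j)}_{\bz^{(1)}_*}$, which follows from matching the labels of the unique $j$-face of $J$ identified on each side.
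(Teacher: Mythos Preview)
Your proof is correct and follows essentially the same approach as the paper. For (i) the arguments are nearly identical; for (ii) both you and the paper take a $(j+1)$-clique $J\in\cK_{j+1}(\hat{P}^{(j)}_1(\hat{\bu}))\subseteq\cK_{j+1}(\hat{O'}^{(j)}_1(\hat{\bw}))$, pass to a $j$-face lying in $P^{(j)}_1(\hat{\bv},\bu^{(j)}_{\bv^{(1)}_*})$, and then use $\sP_1\prec\sO'_1$ to upgrade the single-element containment---the only difference is that you explicitly build the bijection $\pi$ and name $\hat{\bz}$, whereas the paper first records the decomposition $\hat{O'_1}^{(j)}(\hat{\bw})=\bigcup_{\hat{\bz}\leq_{j,j-1}\hat{\bw}}\bigcup_{(\hat{\bv},b')\in B_j(\hat{\bz},\bw^{(j)}_{\bz^{(1)}_*})}P^{(j)}_1(\hat{\bv},b')$ and concludes the membership without identifying $\hat{\bz}$.
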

\begin{proof}
We first prove (i).
Note that for all $j\in [k-1]\setminus\{1\}$,  $(\hat{\bu},b')\in  \hat{A}(j,j-1,\ba^{\sP})\times [a_j^{\sP}]$ and 
$(\hat{\bw},b)\in  \hat{A}(j,j-1,\ba^{\sO})\times [a_j^{\sO}]$ with $(\hat{\bu},b')\in B_j(\hat{\bw},b)$, we have
$$P_1^{(j)}(\hat{\bu},b') \subseteq O'^{(j)}_1(\hat{\bw},b) \subseteq \cK_{j}(\hat{O'_1}^{(j-1)}(\hat{\bw})).$$
Since $P_1^{(j)}(\hat{\bu},b') \subseteq \cK_{j}( \hat{P}^{(j-1)}_1(\hat{\bu}))$, this means
$\cK_{j}( \hat{P}^{(j-1)}_1(\hat{\bu}))\cap \cK_{j}(\hat{O'}^{(j-1)}(\hat{\bw})) \neq \emptyset.$ 
By \eqref{eq: cKjP precs cKjO} this in turn implies that 
$\cK_j(\hat{P}^{(j-1)}_1(\hat{\bu}))\subseteq \cK_j(\hat{O'_1}^{(j-1)}(\hat{\bw}))$, and thus $\hat{\bu} \in \hat{B}_j(\hat{\bw})$. 
On the other hand, if $\hat{\bu}\in \hat{B}_j(\hat{\bw})$, then \ref{item:O'11} implies that for each $b'\in [a_j^{\sP}]$ there exists $b\in [a_j^{\sO}]$ such that $P_1^{(j)}(\hat{\bu},b')  \subseteq O'^{(j)}_1(\hat{\bw},b)$, and thus $(\hat{\bu},b') \in B_j(\hat{\bw},b)$.

We now prove (ii).
Recall that for each $\hat{\bw}\in \hat{A}(j+1,j,\ba^{\sO})$, $\hat{O'_1}^{(j)}(\hat{\bw})$ 
satisfies \eqref{eq:hatPconsistsofP(x,b)}. 
Together with (O$'$11) this implies that 
\begin{align}\label{eq:Phat1}
\hat{O'_1}^{(j)}(\hat{\bw})  = \bigcup_{\hat{\bz}\leq_{j,j-1}\hat{\bw}}  {O'_1}^{(j)}(\hat{\bz},\bw^{(j)}_{\bz^{(1)}_*}) \stackrel{\eqref{eq: B hat bw b  def}}{=} 
\bigcup_{\hat{\bz}\leq_{j,j-1}\hat{\bw}}
\bigcup_{(\hat{\bv},b')\in B_{j}(\hat{\bz}, \bw^{(j)}_{\bz^{(1)}_*})} P^{(j)}_1(\hat{\bv},b').
\end{align}
Then 
\begin{eqnarray*}
& &(\hat{\bv},b') \in \{ (\hat{\bv},\bu^{(j)}_{\bv^{(1)}_*}) : \hat{\bu}\in \hat{B}_{j+1}(\hat{\bw}), \hat{\bv}\leq_{j,j-1}\hat{\bu}\}\\
&\stackrel{\eqref{eq:hatPconsistsofP(x,b)}, \eqref{eq: B hat bw b  def2}}{\Longrightarrow }&   \exists \hat{\bu} \in \hat{A}(j+1,j,\ba^{\sP}) :  \cK_{j+1}(\hat{P}^{(j)}_1(\hat{\bu}))\subseteq \cK_{j+1}(\hat{O'}_1^{(j)}(\hat{\bw})), P_1^{(j)}(\hat{\bv},b') \subseteq \hat{P}_1^{(j)}(\hat{\bu})  \\
& \stackrel{\eqref{def:polyad} }{\Longrightarrow } & \exists (I,J) \in P_1^{(j)}(\hat{\bv},b') \times \cK_{j+1}(\hat{O'}_1^{(j)}(\hat{\bw})) : I \subseteq J\\
& \stackrel{\eqref{def:polyad},({\rm O}'11)}{\Longrightarrow} & P^{(j)}_1(\hat{\bv},b') \subseteq \hat{O'_1}^{(j)}(\hat{\bw}) \\
& \stackrel{ \eqref{eq:Phat1}}{\Longrightarrow} & 
(\hat{\bv},b') \in \bigcup_{\hat{\bz}\leq_{j,j-1}\hat{\bw}}  B_{j}(\hat{\bz},\bw^{(j)}_{\bz^{(1)}_*}).
\end{eqnarray*}
This proves the claim.
\end{proof}

\begin{step}\label{step5}
Constructing $\sO_2'$ from $\sP_2$.
\end{step}

Together $B_j(\bw,b)$ and $\hat{B}_j(\bw)$  encode how $\sO_1'$ can be refined into $\sP_1$.
We now use this information to construct $\sO_2'$ from $\sP_2$. 
Claim~\ref{eq: sO'2 family of partitions} then shows that this construction indeed yields a family of partitions (whose polyads can be expressed in terms of those of $\sP_2$).
Finally, Claim~\ref{cl: O'2 bw b is epsilon0 regular wrt} shows that the partition classes are appropriately regular.

For each $b\in [a^{\sO}_1]$, we let 
\begin{align}\label{eq: O'(1)(b,b) def}
O'^{(1)}_2(b,b):= \bigcup_{(b',b') \in B_1(b,b)} P^{(1)}_2(b',b').
\end{align}
We also let $\sO_2'^{(1)}:= \{ O_2'^{(1)}(b,b): b\in [a_1^{\sO}]\}$.
Again, as in \eqref{eq: hat Bj if crossing in O}, this easily implies that for each $j\in [k-1]$
\begin{align}\label{eq: hat Bj if crossing in O2}
\hat{\bu} \in \hat{B}_{j+1} \enspace \text{if and only if} \enspace \cK_{j+1}(\hat{P}^{(j)}_2(\hat{\bu})) \subseteq \cK_{j+1}(\sO_2'^{(1)}).
\end{align}
Note that for each $b\in [a^{\sO}_1]$, by (O$'$12), we have that 
\begin{eqnarray}\label{eq: how equitable O'2(1) is}
 |O'^{(1)}_2(b,b)| &=& \sum_{(b',b')\in B_1(b,b)} | P^{(1)}_2(b',b') | =  (1\pm 2\nu^{1/20})m/a_1^{\sO}.
\end{eqnarray}\COMMENT{
\begin{eqnarray*}
\sum_{(b',b')\in B_1(b,b)} | P^{(1)}_2(b',b') | &=& \sum_{(b',b')\in B_1(b,b)} (\frac{m}{a_1^{\sP}} \pm 1)   \nonumber \\
 & =&  \frac{m}{n} \left(\sum_{(b',b')\in B_1(b,b)}  \frac{n}{a_1^{\sP}} \right) \pm  |B_1(b,b)| \nonumber \\
 & =& \frac{m}{n} \left( |O'^{(1)}_1(b,b)| \pm |B_1(b,b)| \right) \pm |B_1(b,b)| \nonumber \\
 &\stackrel{ {\rm(O}'{\rm12)}}{=}&   \frac{m}{n} \left( (1\pm \nu^{1/20})n/a_1^{\sO} \pm |B_1(b,b)| \right) \pm |B_1(b,b)| \nonumber \\
 & = & (1\pm 2\nu^{1/20})m/a_1^{\sO}.
\end{eqnarray*}
We get the last equality since $1/m \ll \nu, 1/T$ and $|B_1(b,b)| \leq T$.}
In analogy to \eqref{eq: B hat bw b  def}, 
for each $j\in [k-1]\setminus\{1\}$, $\hat{\bw}\in \hat{A}(j,j-1,\ba^{\sO})$, and $b\in [a^{\sO}_j]$,
we define
\begin{align}\label{eq: Q'2 bwb def}
O'^{(j)}_2(\hat{\bw},b)
:= \bigcup_{(\hat{\bu},b')\in B_j(\hat{\bw},b)} P^{(j)}_2(\hat{\bu},b'),
\end{align}
and for each $j\in [k-1]\setminus \{1\}$, we let
$$\sO'^{(j)}_2:= \{ O'^{(j)}_2(\hat{\bw},b) :\hat{\bw}\in \hat{A}(j,j-1,\ba^{\sO}), b\in [a^{\sO}_j]\}.$$
Moreover, let ${\sO'_2}:= \{{\sO_2'}^{(j)}\}_{j=1}^{k-1}$.
Note that since $\sP_2$ is a family of partitions,\COMMENT{Note that each part in a family of partitions are non-empty} \eqref{eq: Bj wb size} and \eqref{eq: Q'2 bwb def} imply that $O'^{(j)}_2(\hat{\bw},b)$ is nonempty for each $j\in [k-1]\setminus\{1\}$ and $(\hat{\bw},b)\in \hat{A}(j,j-1,\ba^{\sO})\times [a_j^{\sO}]$.
The construction of $\sO'_2$ also gives rise to a natural description of all polyads.
Indeed, as in \eqref{eq:hatPconsistsofP(x,b)}, we define for all $j\in [k-1]$ and $\hat{\bw}\in \hat{A}(j+1,j,\ba^{\sO})$
\begin{eqnarray}\label{eq: hat O'2 naturally define}
\hat{O'}^{(j)}_2(\hat{\bw})
&:=& \bigcup_{\hat{\bz}\leq_{j,j-1}\hat{\bw}} O'^{(j)}_2(\hat{\bz},\bw^{(j)}_{\bz^{(1)}_*}) \\
&\stackrel{\eqref{eq: Q'2 bwb def}}{=}& \bigcup_{\hat{\bz}\leq_{j,j-1}\hat{\bw}} \bigcup_{ (\hat{\bv},b')\in B_{j}(\hat{\bz},\bw^{(j)}_{\bz^{(1)}_*}) }P^{(j)}_2(\hat{\bv},b').\label{eq: hat O' j-1 2 hat bw consists} 
\end{eqnarray}

\begin{claim} \label{eq: sO'2 family of partitions}
$\sO'_2$ is a family of partitions on $V_2$. Moreover, 
for all $j\in [k-1]$ and $\hat{\bw} \in \hat{A}(j+1,j,\ba^{\sO})$, we have
\begin{align}\label{eq: cKj+1 hat O'2(j) is..}
\cK_{j+1}({\hat{O}_2}^{\prime(j)}(\hat{\bw})) = \bigcup_{\hat{\bu}\in \hat{B}_{j+1}(\hat{\bw})} \cK_{j+1}(\hat{P}^{(j)}_2(\hat{\bu})).
\end{align}
\end{claim}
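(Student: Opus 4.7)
I will apply Lemma~\ref{lem: family of partitions construction} to the family $\{{O'_2}^{(j)}(\hat{\bw},b)\}$ defined in~\eqref{eq: O'(1)(b,b) def}--\eqref{eq: Q'2 bwb def} and the polyads ${\hat{O'_2}}^{(j)}(\hat{\bw})$ defined in~\eqref{eq: hat O'2 naturally define}. Condition~\ref{item:FP3} is the very definition~\eqref{eq: hat O'2 naturally define}. Condition~\ref{item:FP1} follows from~\eqref{eq: Bj wb size} together with the fact that each $P^{(j)}_2(\hat{\bu},b')$ is nonempty (the $j=1$ case uses~\eqref{eq: how equitable O'2(1) is} and~\ref{item:O'12}). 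The main work is to verify condition~\ref{item:FP2} together with the polyad identity~\eqref{eq: cKj+1 hat O'2(j) is..}, which I will prove simultaneously by induction on $j$.

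The base case $j=1$ requires ${\sO'_2}^{(1)}$ to be a partition of $V_2$: disjointness is clear since the $B_1(b,b)$ are disjoint (as $\sP_1^{(1)}\prec {\sO'_1}^{(1)}$ by~\ref{item:O'11}) and since the classes $P^{(1)}_2(b',b')$ already form a partition of $V_2$; coverage follows because every diagonal pair $(b',b')$ lies in some $B_1(b,b)$, so $\bigcup_{b\in[a_1^{\sO}]} {O'_2}^{(1)}(b,b) = \bigcup_{b'\in [a_1^{\sP}]} P^{(1)}_2(b',b') = V_2$. The identity~\eqref{eq: cKj+1 hat O'2(j) is..} at level $j=1$ then follows from~\eqref{eq: hat Bj if crossing in O2} together with Proposition~\ref{prop: hat relation}(vi).

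For the inductive step at level $j\in [k-1]\setminus\{1\}$, Claim~\ref{eq: two sets are same}(i) combined with the fact that $\{P^{(j)}_2(\hat{\bu},b'):b'\in[a_j^{\sP}]\}$ partitions $\cK_j(\hat{P}^{(j-1)}_2(\hat{\bu}))$ yields
\begin{align*}
\bigcup_{b\in [a_j^{\sO}]}{O'_2}^{(j)}(\hat{\bw},b)
= \bigcup_{(\hat{\bu},b')\in \hat{B}_j(\hat{\bw})\times [a_j^{\sP}]} P^{(j)}_2(\hat{\bu},b')
= \bigcup_{\hat{\bu}\in \hat{B}_j(\hat{\bw})} \cK_j(\hat{P}^{(j-1)}_2(\hat{\bu})).
\end{align*}
The inductive hypothesis (the polyad identity~\eqref{eq: cKj+1 hat O'2(j) is..} at level $j-1$) identifies the right-hand side with $\cK_j({\hat{O'_2}}^{(j-1)}(\hat{\bw}))$, giving~\ref{item:FP2}; disjointness of the ${O'_2}^{(j)}(\hat{\bw},b)$ for distinct $b$ comes from the disjointness of the $B_j(\hat{\bw},b)$ (see~\eqref{eq: B hat bw b  def}) and that of the $P^{(j)}_2(\hat{\bu},b')$, while nonemptiness ensures that there are exactly $a_j^{\sO}$ classes.

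Finally, to establish~\eqref{eq: cKj+1 hat O'2(j) is..} at level $j$, combining~\eqref{eq: hat O' j-1 2 hat bw consists} with Claim~\ref{eq: two sets are same}(ii) gives $\bigcup_{\hat{\bu}\in \hat{B}_{j+1}(\hat{\bw})}\hat{P}^{(j)}_2(\hat{\bu})\subseteq {\hat{O'_2}}^{(j)}(\hat{\bw})$, and passing to $(j{+}1)$-cliques yields the inclusion ``$\supseteq$'' (using that $\cK_{j+1}(\hat{P}^{(j)}_2(\hat{\bu}))\neq\emptyset$ by~\eqref{eq: 2 polyad size}). For the reverse inclusion, any $J\in \cK_{j+1}({\hat{O'_2}}^{(j)}(\hat{\bw}))$ lies in $\cK_{j+1}({\sO'_2}^{(1)})$, so by~\eqref{eq: hat Bj if crossing in O2} and Proposition~\ref{prop: hat relation}(vi) applied to $\sP_2$ there is a unique $\hat{\bu}\in \hat{B}_{j+1}$ with $J\in \cK_{j+1}(\hat{P}^{(j)}_2(\hat{\bu}))$; the partition statement at level $j$ just established together with the $\sO'_2$-analog of~\eqref{eq: union of hat Bj is hat Bj} then forces $\hat{\bu}\in \hat{B}_{j+1}(\hat{\bw})$. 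The main obstacle is simply the bookkeeping across the cascade of polyads; the conceptual point is that Claim~\ref{eq: two sets are same} encodes how $\sP_1$ refines $\sO'_1$ in a purely combinatorial way, and this template can be mirrored inside $\sP_2$ because $\sP_1$ and $\sP_2$ share the same address space.
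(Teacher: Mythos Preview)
Your proposal is correct and follows the same approach as the paper: verify conditions~\ref{item:FP1}--\ref{item:FP3} of Lemma~\ref{lem: family of partitions construction} inductively, using Claim~\ref{eq: two sets are same}(i) for~\ref{item:FP2} and Claim~\ref{eq: two sets are same}(ii) together with~\eqref{eq: union of hat Bj is hat Bj} for the polyad identity. The paper packages the induction as a ``maximal $\ell$'' argument and, for the reverse inclusion, appeals explicitly to Proposition~\ref{prop: hat relation}(vi),(ix) applied to the already-established family $\{\sO_2'^{(i)}\}_{i=1}^{j}$ (rather than an ``$\sO'_2$-analog'' of~\eqref{eq: union of hat Bj is hat Bj}, which is itself an index-set statement needing no analog); otherwise the arguments coincide.
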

\begin{proof}
We will prove Claim~\ref{eq: sO'2 family of partitions} by applying the criteria in Lemma~\ref{lem: family of partitions construction}.
For each $j\in [k-1]\sm \{1\}$, $\hat{\bw} \in \hat{A}(j,j-1,\ba^{\sO})$ and $b\in [a_{j}^{\sO}]$, let $\phi^{(j)}( O'^{(j)}_2(\hat{\bw},b)) := b$. 
Let $\ell \in [k-1]$ be the largest number satisfying the following.
\begin{itemize}
\item[(OP1)$_{\ell}$] $\{\sO_2'^{(j)}\}_{j=1}^{\ell}$ is a family of partitions,
\item[(OP2)$_{\ell}$] Let $O^{(j)}_*(\cdot,\cdot)$ and $\hat{O}_*^{(j)}(\cdot)$ be the maps defined as in \eqref{eq: hat P def}--\eqref{eq: hat P P relations emptyset} for $\{\sO_2'^{(j)}\}_{j=1}^{k}$ and $\{\phi^{(j)}\}_{j=2}^{k}$. Then $\{\phi^{(j)}\}_{j=2}^{\ell}$ is an $(a_1^\sO,\dots, a_\ell^\sO)$-labelling of $\{\sO'^{(j)}_2\}_{j=1}^{\ell}$ such that for each $j\in [\ell]\setminus\{1\}$ and $(\hat{\bw},b)\in \hat{A}(j,j-1,\ba^{\sO}) \times [a_j^{\sO}]$, we have 
$$O_*^{(j)}(\hat{\bw},b) = O_2'^{(j)}(\hat{\bw},b),$$ 
and for each $j \in [\ell]$ and $\hat{\bw} \in \hat{A}(j+1,j,\ba^{\sO})$ we have
\begin{align*}
	\hat{O}_*^{(j)}(\hat{\bw}) = {\hat{O}_2}^{\prime(j)}(\hat{\bw}).
\end{align*}
\end{itemize}
\COMMENT{First we check that (OP1)$_{1}$--(OP2)$_{1}$ hold.
It is clear that $\sO_2'^{(1)}$ forms a family of partitions and $O_*^{(1)}(b,b) = O_2'^{(1)}(b,b)$ for each $b\in [a_1^{\sO}]$.  By \eqref{eq:hatPconsistsofP(x,b)} and \eqref{eq: hat O'2 naturally define}, 
for each $\hat{\bw} \in \hat{A}(2,1,\ba^{\sO})$ we have
$$\hat{O_*}^{(1)}(\hat{\bw}) = \bigcup_{ b\in \bw^{(1)}_*} O_*^{(1)}(b,b) =  \bigcup_{ b\in \bw^{(1)}_*} O_2'^{(1)}(b,b) = \hat{O'_2}^{(1)}(\hat{\bw}).$$}
It is easy to check that (OP1)$_{1}$--(OP2)$_{1}$ hold and thus $\ell\geq 1$.
Since $\{\sO_2'^{(j)}\}_{j=1}^{\ell}$ is a family of partitions, $\hat{\sO'_2}^{(j)}$ is well-defined for each $j\in [\ell]$.
Claim~\ref{eq: two sets are same}(ii) now allows us to express (the cliques spanned by) these polyads in terms of those of $\sP_2^{(j)}$.
\setcounter{subclaim}{0}
\begin{subclaim} \label{eq: cKjhatO' = bigcup hat bu b'}
For each $j\in [\ell]$ and $\hat{\bw} \in \hat{A}(j+1,j,\ba^{\sO})$, we have
$$\cK_{j+1}({\hat{O}_2}^{\prime(j)}(\hat{\bw})) = \bigcup_{\hat{\bu}\in \hat{B}_{j+1}(\hat{\bw})} \cK_{j+1}(\hat{P}^{(j)}_2(\hat{\bu})).$$
\end{subclaim}
\begin{proof}
Consider $j\in [\ell]$ and $\hat{\bw} \in \hat{A}(j+1,j,\ba^{\sO})$. Note that 
$$ \bigcup_{\hat{\bu}\in \hat{B}_{j+1}(\hat{\bw})} \hat{P}^{(j)}_2(\hat{\bu}) \stackrel{\eqref{eq:hatPconsistsofP(x,b)}}{=} \bigcup_{\hat{\bu}\in \hat{B}_{j+1}(\hat{\bw})}\bigcup_{ \hat{\bv}\leq_{j,j-1}\hat{\bu} } P_2^{(j)}(\hat{\bv},\bu^{(j)}_{\bv^{(1)}_*}).$$ 
This together with \eqref{eq: hat O' j-1 2 hat bw consists} and Claim~\ref{eq: two sets are same}(ii) implies that 
$ \bigcup_{\hat{\bu}\in \hat{B}_{j+1}(\hat{\bw})} \hat{P}^{(j)}_2(\hat{\bu}) \subseteq {\hat{O}_2}^{\prime(j)}(\hat{\bw})$, thus we obtain
\begin{align}\label{eq: subset eq holds}
\bigcup_{\hat{\bu}\in \hat{B}_{j+1}(\hat{\bw})} \cK_{j+1}(\hat{P}^{(j)}_2(\hat{\bu})) \subseteq \cK_{j+1}({\hat{O}_2}^{\prime(j)}(\hat{\bw})).
\end{align}
On the other hand, we have\COMMENT{To see the first equality below, consider any $J\in \cK_{j+1}(\sO_2'^{(1)})$. Then $J\in \cK_{j+1}(\sP_2^{(1)})$. Thus there exists $\hat{\bu'}\in \hat{A}(j+1,j,\ba^{\sP})$ with $J\in \cK_{j+1}(\hat{P}_2^{(j)}(\hat{\bu'})$. 
Then definition of $\hat{B}_{j+1}$ now implies that $\hat{\bu'}\in \hat{B}_{j+1}$. Thus $\cK_{j+1}(\sO_2'^{(1)}) \subseteq \bigcup_{\hat{\bu}\in \hat{B}_{j+1}} \cK_{j+1}(\hat{P}_2^{(j)}(\hat{\bu}))$. The other direction follows form \eqref{eq: hat Bj if crossing in O2}.}
\begin{align}\label{eq:cliques}
\bigcup_{\hat{\bu} \in \hat{B}_{j+1} } \cK_{j+1}(\hat{P}^{(j)}_2(\hat{\bu})) 
\stackrel{\eqref{eq: hat Bj if crossing in O2}}{=} \cK_{j+1}(\sO_2'^{(1)}) = \bigcup_{\hat{\bw}\in \hat{A}(j+1,j,\ba^{\sO})} \cK_{j+1}({\hat{O}_2}^{\prime(j)}(\hat{\bw})).
\end{align}
(Here the final equality follows from (OP1)$_\ell$, (OP2)$_\ell$ and Proposition~\ref{prop: hat relation}(vi) applied to $\{ O_2'^{(j)}\}_{j=1}^{\ell}$.)
Consider a $(j+1)$-set $J \in \cK_{j+1}({\hat{O}_2}^{\prime(j)}(\hat{\bw}) )$. 
Then by~\eqref{eq:cliques}
there exists $\hat{\bu'} \in \hat{B}_{j+1}$ such that $J\in \cK_{j+1}(\hat{P}^{(j)}_2(\hat{\bu'}))$.

We claim that  $\hat{\bu'} \in \hat{B}_{j+1}(\hat{\bw})$.
Indeed if not, then by \eqref{eq: union of hat Bj is hat Bj}, there exists $\hat{\bw'} \in \hat{A}(j+1,j,\ba^{\sO})\setminus\{\hat{\bw}\}$ such that $\hat{\bu'} \in \hat{B}_{j+1}(\hat{\bw'})$, thus we have 
$$J \in \bigcup_{\hat{\bu}\in \hat{B}_{j+1}(\hat{\bw'})} \cK_{j+1}(\hat{P}^{(j)}_2(\hat{\bu})) \stackrel{\eqref{eq: subset eq holds}}{\subseteq} \cK_{j+1}({\hat{O}_2}^{\prime(j)}(\hat{\bw'}) ).$$ 
Hence $\cK_{j+1}({\hat{O}_2}^{\prime(j)}(\hat{\bw}) )\cap \cK_{j+1}({\hat{O}_2}^{\prime(j)}(\hat{\bw'}) )$ is nonempty (as it contains $J$). However, since $\{\sO_2'^{(i)}\}_{i=1}^{j}$ is a family of partitions, this contradicts Proposition~\ref{prop: hat relation}(vi) and (ix).\COMMENT{Here, (vi) alone does not forbid the possibility of $\cK_{j+1}({\hat{O'}_2}^{(j)}(\hat{\bw}) ) = \cK_{j+1}({\hat{O'}_2}^{(j)}(\hat{\bw'}) )$. Thus (ix) is included here. }
Hence, we have $\hat{\bu'} \in \hat{B}_{j+1}(\hat{\bw})$, thus $J\in  \bigcup_{\hat{\bu}\in \hat{B}_{j+1}(\hat{\bw})} \cK_{j+1}(\hat{P}^{(j)}_2(\hat{\bu}))$. 
The fact that this holds for arbitrary $J \in \cK_{j+1}({\hat{O}_2}^{\prime(j)}(\hat{\bw}) )$ combined with \eqref{eq: subset eq holds} proves the subclaim.
\end{proof}

Now, if $\ell=k-1$, then $\sO_2'$ is a family of partitions and Subclaim~\ref{eq: cKjhatO' = bigcup hat bu b'} implies the moreover part of Claim~\ref{eq: sO'2 family of partitions}.
Assume that $\ell < k-1$ for a contradiction. 
Now we show that $\{\sO_2'^{(j)}\}_{j=1}^{\ell+1}$ and the maps $\{ O_2'^{(j)}(\cdot, \cdot), \hat{O_2'}^{(j)}(\cdot)\}_{j=1}^{\ell+1}$ satisfy the conditions \ref{item:FP1}--\ref{item:FP3} in Lemma~\ref{lem: family of partitions construction}. 
Condition \ref{item:FP1} follows from (OP1)$_\ell$,  \eqref{eq: Bj wb size} and \eqref{eq: Q'2 bwb def}.  
Condition \ref{item:FP3} also holds because of \eqref{eq: hat O'2 naturally define} and the assumption that $\ell<k-1$. 
Property (OP1)$_{\ell}$ implies that \ref{item:FP2} holds when $j\in [\ell]$.
To check that \ref{item:FP2} also holds for $j=\ell+1$, consider $\hat{\bw} \in \hat{A}(\ell+1,\ell, \ba^{\sO})$. 
By Claim~\ref{eq: two sets are same}(i) and~\eqref{eq: Bj wb size},\COMMENT{We also use that the $B_{\ell+1}(\hat{\bw},b)$ are disjoint as $\sP_2$ is a family of partitions.} we have that
$\{ B_{\ell+1}(\hat{\bw},1),\dots, B_{\ell+1}(\hat{\bw},a_{\ell+1}^{\sO})\}$  forms a partition of  $\hat{B}_{\ell+1}(\hat{\bw})\times [a_{\ell+1}^{\sP}]$  into nonempty sets.%
\COMMENT{Since $\cK_j(\hat{P}_1^{(j-1)}(\hat{\bu}) ) = \bigcup_{b'\in [a_j^{\sP}] }P_1^{(j)}(\hat{\bu},b')$ }
Thus by \eqref{eq: Q'2 bwb def} and Subclaim~\ref{eq: cKjhatO' = bigcup hat bu b'}, 
$\{O'^{(\ell+1)}_2(\hat{\bw},1),\dots, O'^{(\ell+1)}_2(\hat{\bw},a_{\ell+1}^{\sO})\}$ forms a partition of  $\cK_{\ell+1}(\hat{O}_2^{\prime(\ell)}(\hat{\bw})) $ into nonempty sets.
Thus \ref{item:FP2} holds for $j=\ell+1$.

Hence, by \eqref{eq: O'(1)(b,b) def}
we can apply Lemma~\ref{lem: family of partitions construction} to see that\COMMENT{with $O_2'^{(1)}(b,b), \{\sO_2'^{(j)}\}_{j=1}^{\ell+1}$, $\{ O_2'^{(j)}(\cdot, \cdot)\}_{j=1}^{\ell+1}, \{ \hat{O_2'}^{(j)}(\cdot)\}_{j=1}^{\ell+1}$ playing the roles of $V_b, \sP, \{P'^{(j)}(\cdot,\cdot)\}_{j=1}^{\ell+1}, \{\hat{P'}^{(j)}(\cdot)\}_{j=1}^{\ell+1}$ to obtain that $\{\sO_2'^{(j)}\}_{j=1}^{\ell+1}$ is a family of partitions on $V_2$ and conclude that
$\{\phi^{(j)}\}_{j=2}^{\ell+1}$ is an $(a_1^{\sO},\dots, a_{\ell+1}^{\sO})$-labelling of $\{\sO_2'^{(j)}\}_{j=1}^{\ell+1}$ such that the maps $O_*^{(j)}(\cdot, \cdot)$ and $\hat{O_*}^{(j)}(\cdot)$ defined in  \eqref{eq: hat P def}--\eqref{eq: hat P P relations emptyset} for $\{\sO_2'^{(j)}\}_{j=1}^{\ell+1}$ and $\{\phi^{(j)}\}_{j=2}^{\ell+1}$ coincide with
$O_2'^{(j)}(\cdot, \cdot)$ and $\hat{O_2'}^{(j)}(\cdot)$ for each $j\in [\ell+1]$.} 
(OP1)$_{\ell+1}$ and (OP2)$_{\ell+1}$ hold, a contradiction to the choice of $\ell$.
Thus $\ell=k-1$, which proves the claim.
\end{proof}
By Claim~\ref{eq: sO'2 family of partitions}, $\sO_2'$ is a family of partitions and
\eqref{eq: Q'2 bwb def} implies that $\sP_2 \prec \sO_2'$. 
Consider any $j\in [k-1]$ and $\hat{\bw}\in \hat{A}(j+1,j,\ba^{\sO})$.
Note that 
\begin{align}\label{eq: cK hat O' size}
&|\cK_{j+1}(\hat{O}_1^{\prime(j)}(\hat{\bw}))|
\stackrel{(\ref{eq: cK hat O' bw is union of hat Bj+1})}{=} \sum_{\hat{\bu}\in \hat{B}_{j+1}(\hat{\bw})} |\cK_{j+1}(\hat{P}_1^{(j)}(\hat{\bu}))| 
\stackrel{(\ref{eq: 1 polyad size})}{=} |\hat{B}_{j+1}(\hat{\bw})| (1\pm \nu) \prod_{i=1}^{j}(a^{\sP}_i)^{-\binom{j+1}{i}} n^{j+1}, \nonumber \\
&|\cK_{j+1}(\hat{O}_2^{\prime(j)}(\hat{\bw}))|
\stackrel{(\ref{eq: cKj+1 hat O'2(j) is..})}{=} \sum_{\hat{\bu}\in \hat{B}_{j+1}(\hat{\bw})} |\cK_{j+1}(\hat{P}_2^{(j)}(\hat{\bu}))| 
\stackrel{(\ref{eq: 2 polyad size})}{=} |\hat{B}_{j+1}(\hat{\bw})| (1\pm \nu) \prod_{i=1}^{j}(a^{\sP}_i)^{-\binom{j+1}{i}} m^{j+1}.
\end{align}

For notational convenience, 
for each $\hat{\bw}\in \hat{A}(k,k-1,\ba^{\sO})$,
let 
$$
O'^{(k)}_1(\hat{\bw},1):= G^{(k)}_1\cap \cK_{k}(\hat{O}_1^{\prime(k-1)}(\hat{\bw}))
\enspace \text{ and }\enspace 
O'^{(k)}_2(\hat{\bw},1) := G^{(k)}_2\cap \cK_{k}(\hat{O}_2^{\prime(k-1)}(\hat{\bw})).
$$

\begin{claim}\label{cl: O'2 bw b is epsilon0 regular wrt}
For all $j\in [k-1]$, $\hat{\bw} \in \hat{A}(j+1,j,\ba^{\sO})$ and $b\in [a^{\sO}_{j+1}]$, we have that
$O_2^{\prime(j+1)}(\hat{\bw},b)$ is 
\begin{itemize}
\item $(\epsilon_0+3\nu^{1/20}, 1/a_{j+1}^{\sO})$-regular with respect to $\hat{O}_2^{\prime(j)}(\hat{\bw})$ if $j\leq k-2$, and
\item $(\epsilon_0+3\nu^{1/20}, d_{\ba^{\sO},k}(\hat{\bw}))$-regular with respect to $\hat{O}_2^{\prime(j)}(\hat{\bw})$ if $j=k-1$.
\end{itemize}
\end{claim}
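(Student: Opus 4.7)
The plan is to transport the regularity of $\sO_1'$ on $V_1$ (which comes from \ref{item:O'12} when $j\leq k-2$, and from \eqref{eq: O'1 is also good partition too} when $j=k-1$) across to $V_2$ using Lemma~\ref{lem: mimic Kk} as a bridge. The key observation is that the construction of $\sO_2'$ from $\sP_2$ in Step~\ref{step5} uses exactly the same combinatorial data (the sets $\hat{B}_{j+1}(\hat{\bw})$ and $B_{j+1}(\hat{\bw},b)$) as the corresponding description of $\sO_1'$ in terms of $\sP_1$ worked out in Step~\ref{step4}, so once a test subgraph on $V_2$ is matched polyad-by-polyad with one on $V_1$, the $V_1$-regularity transfers directly.

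Fix $\hat{\bw}$ and $b$ and consider any test subgraph $Q\subseteq \hat{O}_2^{\prime(j)}(\hat{\bw})$ with $|\cK_{j+1}(Q)|\geq (\epsilon_0+3\nu^{1/20})|\cK_{j+1}(\hat{O}_2^{\prime(j)}(\hat{\bw}))|$. Using \eqref{eq: cKj+1 hat O'2(j) is..}, the plan is to split $\cK_{j+1}(Q)$ according to the underlying $\sP_2$-polyad as $\cK_{j+1}(Q)=\bigsqcup_{\hat{\bu}\in \hat{B}_{j+1}(\hat{\bw})}\cK_{j+1}(Q_{\hat{\bu}})$ with $Q_{\hat{\bu}}:=Q\cap \hat{P}_2^{(j)}(\hat{\bu})$. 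Applying the $\epsilon'$-regularity of $\sP_2$ from \ref{item:G21} to each $Q_{\hat{\bu}}$ of density at least $\epsilon'$ in its polyad (the smaller $Q_{\hat{\bu}}$'s contribute negligibly) and summing over $b'$ with $(\hat{\bu},b')\in B_{j+1}(\hat{\bw},b)$ in the case $j\leq k-2$ gives
\[
|O_2^{\prime(j+1)}(\hat{\bw},b)\cap \cK_{j+1}(Q)|=\sum_{\hat{\bu}\in \hat{B}_{j+1}(\hat{\bw})}\delta(\hat{\bu})|\cK_{j+1}(Q_{\hat{\bu}})|\pm \nu^{1/10}|\cK_{j+1}(Q)|,
\]
where $\delta(\hat{\bu}):=c(\hat{\bu},\hat{\bw},b)/a^{\sP}_{j+1}$ with $c(\hat{\bu},\hat{\bw},b):=|\{b':(\hat{\bu},b')\in B_{j+1}(\hat{\bw},b)\}|$ in the case $j\leq k-2$, and $\delta(\hat{\bu}):=d_{\ba^{\sP},k}(\hat{\bu})$ in the case $j=k-1$.

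Next, the plan is to apply Lemma~\ref{lem: mimic Kk} with $\{\sP_2^{(i)}\}_{i=1}^{j}$, $\{\sP_1^{(i)}\}_{i=1}^{j}$ and $Q$ in the roles of $\sP$, $\sQ$ and $H^{(k-1)}$, and with $j+1$ in place of $k$, producing a $j$-graph $Q_1\subseteq \cK_j(\sP_1^{(1)})$ whose fractional density in each polyad $\hat{P}_1^{(j)}(\hat{\bu})$ agrees with that of $Q$ in $\hat{P}_2^{(j)}(\hat{\bu})$ up to $2\nu$ (combining (F2) with the polyad-wise version of (F1) obtained via \eqref{eq: 2 polyad size}). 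Together with \eqref{eq: 1 polyad size}--\eqref{eq: 2 polyad size} and \eqref{eq: cK hat O' size}, this shows that the two fractional sizes $|\cK_{j+1}(Q_1)|/|\cK_{j+1}(\hat{O}_1^{\prime(j)}(\hat{\bw}))|$ and $|\cK_{j+1}(Q)|/|\cK_{j+1}(\hat{O}_2^{\prime(j)}(\hat{\bw}))|$ differ by at most $\nu^{1/4}$, so $|\cK_{j+1}(Q_1)|$ exceeds the threshold $(\epsilon_0+\nu^{1/20})|\cK_{j+1}(\hat{O}_1^{\prime(j)}(\hat{\bw}))|$ needed to invoke the regularity of $\sO_1'$. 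Applying that regularity to $Q_1$ and repeating the polyad-by-polyad decomposition on $V_1$ (now using the $\epsilon'$-regularity from \ref{item:G11}) then yields
\[
\sum_{\hat{\bu}\in \hat{B}_{j+1}(\hat{\bw})}\delta(\hat{\bu})|\cK_{j+1}(Q_{1,\hat{\bu}})|=\left(\rho\pm (\epsilon_0+2\nu^{1/20})\right)|\cK_{j+1}(Q_1)|,
\]
with $\rho=1/a^{\sO}_{j+1}$ or $\rho=d_{\ba^{\sO},k}(\hat{\bw})$ depending on the case. The $2\nu$-matching of polyad densities together with the $(1\pm\nu)$-matching of polyad sizes converts this $V_1$ identity into the analogous one with $Q$ in place of $Q_1$ (losing a further $\nu^{1/10}|\cK_{j+1}(Q)|$), and substituting into the first display completes the proof.

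The main obstacle is the careful bookkeeping of the error terms produced by Lemma~\ref{lem: mimic Kk}, the polyad-size approximations from \eqref{eq: 1 polyad size}--\eqref{eq: 2 polyad size}, the internal $\epsilon'$-regularity, and the inherited $(\epsilon_0+\nu^{1/20})$-regularity on $V_1$; verifying that they all fit into the $3\nu^{1/20}$ additive budget relies on the hierarchy $\epsilon\ll \epsilon'\ll \nu\ll \delta\ll \epsilon_0$ fixed at the start of the proof of Lemma~\ref{lem: similar}.
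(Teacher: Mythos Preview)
Your approach is essentially the paper's: it fixes a test graph in $\hat{O}_2'^{(j)}(\hat{\bw})$, decomposes everything over the $\sP_2$-polyads via $\hat{B}_{j+1}(\hat{\bw})$, uses the union-lemma regularity of $O_i'^{(j+1)}(\hat{\bw},b)$ with respect to each $\hat{P}_i^{(j)}(\hat{\bu})$ (your $\delta(\hat{\bu})$ is exactly the paper's $d(\hat{\by})$ in Subclaim~\ref{claim:O'regular}), and applies Lemma~\ref{lem: mimic Kk} to transfer between $V_2$ and $V_1$ so that the $(\epsilon_0+\nu^{1/20})$-regularity of $\sO_1'$ from \ref{item:O'12} and \eqref{eq: O'1 is also good partition too} can be invoked.

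One point you gloss over that the paper treats explicitly: the graph $Q_1$ produced by Lemma~\ref{lem: mimic Kk} lies only in $\cK_j(\sP_1^{(1)})$, not in $\hat{O}_1'^{(j)}(\hat{\bw})$, so you cannot apply the $\sO_1'$-regularity to $Q_1$ directly. You must first show that $|\cK_{j+1}(Q_1)\setminus \cK_{j+1}(\hat{O}_1'^{(j)}(\hat{\bw}))|$ is negligible, by transferring the fact that $Q\subseteq \hat{O}_2'^{(j)}(\hat{\bw})$ back along the polyad-density matching (this is the paper's \eqref{eq: J1 setminus O'}, obtained from (J2) together with \eqref{eq: J2 hat O' bw}). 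Only then does $Q_1\cap \hat{O}_1'^{(j)}(\hat{\bw})$ exceed the regularity threshold (as in \eqref{eq: cK J1 O' hat bw big}) and your displayed identity with $|\cK_{j+1}(Q_1)|$ on the right become valid up to the intended error. This is a bookkeeping detail rather than a new idea, and once inserted your argument matches the paper's line by line.
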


\begin{proof} To prove Claim~\ref{cl: O'2 bw b is epsilon0 regular wrt},
we will apply Lemma~\ref{lem: mimic Kk} (see \ref{item:J1} and \ref{item:J2} and the preceding discussion), which allows us to transfer information about $\sO_1'$ and $\sP_1$ to $\sO_2'$ and $\sP_2$.
Fix $j\in [k-1]$, 
$\hat{\bw}\in \hat{A}(j+1,j,\ba^{\sO})$, and $b\in [a_{j+1}^{\sO}]$. Let 
\begin{align}\label{eq: def d def d ddddd}
d:= \left\{ \begin{array}{ll} 
1/a^{\sO}_{j+1} & \text{ if } j\leq k-2, \\
d_{\ba^{\sO},k}(\hat{\bw}) &\text{ if } j=k-1.
\end{array}\right.
\end{align}
Consider an arbitrary $j$-graph $F^{(j)}\subseteq \hat{O}_2^{\prime(j)}(\hat{\bw})$ with 
\begin{align}\label{eq: F j-1 big}
|\cK_{j+1}(F^{(j)})|\geq ( \epsilon_0+3\nu^{1/20}) |\cK_{j+1}(\hat{O}_2^{\prime(j)}(\hat{\bw}))|.
\end{align}
To prove the claim, it suffices to show that $d(O'^{(j+1)}_2(\hat{\bw},b)\mid F^{(j)})=d\pm (\epsilon_0+3\nu^{1/20})$.
For each $\hat{\by} \in \hat{A}(j+1,j,\ba^{\sP})$, let
\begin{align}\label{eq: def d def d ddddd hat by}
d(\hat{\by}):= \left\{ \begin{array} {ll}
\frac{1}{a^{\sP}_{j+1}} \left|\{ b': (\hat{\by},b')\in B_{j+1}(\hat{\bw},b)\}\right| &\text{ if }  j \leq k-2,\\
d_{\ba^{\sP},k}(\hat{\by}) &\text{ if } \hat{\by}\in \hat{B}_{j+1}(\hat{\bw}), j=k-1,\\
0 &\text{ if } \hat{\by}\notin \hat{B}_{j+1}(\hat{\bw}), j=k-1.
\end{array}\right.
\end{align}
Thus Claim~\ref{eq: two sets are same}(i) and the above definition implies that 
\begin{equation}\label{eq: when d hat by 0}
\begin{minipage}[c]{0.8\textwidth} \em
if $\hat{\by}\notin \hat{B}_{j+1}(\hat{\bw})$, then we have $d(\hat{\by})=0$.
\end{minipage}
\end{equation}
\begin{subclaim}\label{claim:O'regular}
For all $\hat{\by} \in \hat{A}(j+1,j,\ba^{\sP})$ and each $i\in [2]$, we have that
$O'^{(j+1)}_i(\hat{\bw},b)$ is $(\epsilon'^{1/2},d(\hat{\by}))$-regular with respect to $\hat{P}_i^{(j)}(\hat{\by})$.
\end{subclaim}
\begin{proof}
First, we note that if $j\leq k-2$, then by \eqref{eq: B hat bw b  def} and \eqref{eq: Q'2 bwb def},  for $i\in [2]$, we have
$$O'^{(j+1)}_i(\hat{\bw},b)\cap \cK_{j+1}(\hat{P}^{(j)}_i(\hat{\by})) 
= \bigcup_{b'\colon (\hat{\by},b')\in B_{j+1}(\hat{\bw},b)}P^{(j+1)}_i(\hat{\by},b'). $$
Together with \ref{item:G11}, \ref{item:G21} and \eqref{eq: def d def d ddddd hat by} this shows that $O'^{(j+1)}_i(\hat{\bw},b) \cap \cK_{j+1}(\hat{P}_i^{(j)}(\hat{\by}))$ is the disjoint union of $a_{j+1}^{\sP} d(\hat{\by})\leq \norm{\ba^{\sP}}$ hypergraphs, 
each of which is $(\epsilon',1/a_{j+1}^{\sP})$-regular with respect to $\hat{P}^{(j)}_i(\hat{\by})$.
Thus the union lemma~(Lemma~\ref{lem: union regularity}) together with the fact that $\epsilon'\ll 1/\norm{\ba^{\sP}} $ implies Subclaim~\ref{claim:O'regular} in this case.

If $j=k-1$, then we have $b=1$. 
If $\hat{\by}\in \hat{B}_{k}(\hat{\bw})$, then
$\cK_k( \hat{P}_i^{(k-1)}(\hat{\by})) \subseteq \cK_k(\hat{O}_i^{\prime(k-1)}(\hat{\bw}))$ by \eqref{eq: B hat bw b  def2} and Claim~\ref{eq: sO'2 family of partitions}.  Thus
  $$O'^{(k)}_i(\hat{\bw},1)\cap \cK_{k}(\hat{P}_i^{(k-1)}(\hat{\by})) = G^{(k)}_i \cap \cK_{k}(\hat{P}_i^{(k-1)}(\hat{\by})).$$
Together with \ref{item:G11}, \ref{item:G21} and \eqref{eq: def d def d ddddd hat by} this implies Subclaim~\ref{claim:O'regular} in this case.

If $\hat{\by}\notin \hat{B}_k(\hat{\bw})$, 
then by \eqref{eq: B hat bw b  def2}, \eqref{eq: cKjP precs cKjO} and Claim~\ref{eq: sO'2 family of partitions} we have
$${O'_i}^{(k)}(\hat{\bw},1)\cap \cK_{k}(\hat{P}_i^{(k-1)}(\hat{\by})) 
\subseteq \cK_{k}(\hat{O}_i^{\prime(k-1)}(\hat{\bw}))\cap  \cK_{k}(\hat{P}_i^{(k-1)}(\hat{\by})) = \emptyset.$$
Since $d(\hat{\by})=0$ in this case, 
this proves Subclaim~\ref{claim:O'regular}.
\end{proof}

In order to show that $O_2^{\prime(j+1)}(\hat{\bw},b)$ is $(\epsilon_0+3\nu^{1/20})$-regular with respect to $\hat{O}_2^{\prime(j)}(\hat{\bw})$,
we will transfer all the calculations about hypergraph densities from the hypergraphs on $V_2$
to the hypergraphs on $V_1$, 
because there we have much better control over their structure.
To this end we first use Lemma~\ref{lem: mimic Kk} to show the existence of two hypergraphs $J_1^{(j)},J_2^{(j)}$ on $V_1,V_2$, respectively, 
that exhibit a very similar structure in terms of their densities and where $J_2^{(j)}$ is very close to $F^{(j)}$.
Consequently, $J_1^{(j)}$ also resembles $F^{(j)}$.

More precisely, we now apply Lemma~\ref{lem: mimic Kk} with $F^{(j)},  \{\sP_2^{(i)}\}_{i=1}^{j}, \{\sP_1^{(i)}\}_{i=1}^{j}, j$ playing the roles of $H^{(k-1)}, \sP, \sQ, k-1$ respectively (we can do this by \ref{item:P11}, \ref{item:G21} and Claim~\ref{eq: sO'2 family of partitions}).
We obtain $j$-graphs $J_2^{(j)}\sub \cK_{j}(\sP_2^{(1)})$ on $V_2$ and $J_1^{(j)}\sub \cK_{j}(\sP_1^{(1)})$ on $V_1$ such that 
\begin{enumerate}[label=(J\arabic*)]
\item\label{item:J1} $|J_2^{(j)}\triangle F^{(j)}|\leq \nu \binom{m}{j}$, and
\item\label{item:J2}  $ d( \cK_{j+1}(J_2^{(j)}) \mid \hat{P}^{(j)}_2(\hat{\by})) = d( \cK_{j+1}(J_1^{(j)}) \mid \hat{P}^{(j)}_1(\hat{\by})) \pm \nu$ 
for each $\hat{\by}\in \hat{A}(j+1,j,\ba^{\sP})$.
\end{enumerate}
Our next aim is to estimate $|\cK_{j+1}(J_2^{(j)})|$ in terms of $|\cK_{j+1}(J_1^{(j)})|$. 
\begin{eqnarray}\label{eq: J2 J1} 
& & \hspace{-2cm} \left|\cK_{j+1}(J_2^{(j)})\right| \enspace = \sum_{\hat{\by}\in \hat{A}(j+1,j,\ba^{\sP})} \left|\cK_{j+1}(J_2^{(j)}) \cap \cK_{j+1}(\hat{P}^{(j)}_2(\hat{\by}))\right| \nonumber \\
&\stackrel{\eqref{eq: 2 polyad size}}{=}&  \hspace{-0.3cm} (1\pm \nu) \prod_{i=1}^{j} (a^{\sP}_i)^{-\binom{j+1}{i}} m^{j+1} \sum_{\hat{\by}\in \hat{A}(j+1,j,\ba^{\sP})} d( \cK_{j+1}(J_2^{(j)}) \mid \hat{P}^{(j)}_2(\hat{\by})) \nonumber \\
&\stackrel{\rm \ref{item:J2},\eqref{eq: 1 polyad size}}{=}&   \hspace{-0.3cm}
(1\pm 3\nu) \frac{m^{j+1}}{n^{j+1}}\left|\cK_{j+1}(\hat{P}^{(j)}_1(\hat{\by}))\right| 
\sum_{\hat{\by}\in \hat{A}(j+1,j,\ba^{\sP})} (d( \cK_{j+1}(J_1^{(j)}) \mid \hat{P}^{(j)}_1(\hat{\by}))\pm \nu ) \nonumber \\
&=& \hspace{-0.3cm} (1\pm 3\nu)\frac{m^{j+1}}{n^{j+1}}
 \left ( \sum_{\hat{\by}\in \hat{A}(j+1,j,\ba^{\sP})} \hspace{-0.6cm} \left|\cK_{j+1}(J_1^{(j)}) \cap \cK_{j+1}(\hat{P}^{(j)}_1(\hat{\by}))\right|
  \pm \nu \hspace{-0.6cm} \sum_{\hat{\by}\in \hat{A}(j+1,j,\ba^{\sP})} \hspace{-0.6cm} \left|\cK_{j+1}(\hat{P}^{(j)}_1(\hat{\by}))\right| \right) \nonumber \\
&=&  \hspace{-0.3cm} \frac{m^{j+1}}{n^{j+1}}( |\cK_{j+1}(J_1^{(j)})| \pm 5\nu n^{j+1} ).
\end{eqnarray}%
\COMMENT{As a second line on could insert $\sum_{\hat{\by}\in  \hat{A}(j+1,j,\ba^{\sP})} d( \cK_{j+1}(J_2^{(j)}) \mid \hat{P}^{(j)}_2(\hat{\by})) \cdot \left|\cK_{j+1}(\hat{P}^{(j)}_2(\hat{\by}))\right|$}
Similarly, we obtain
\begin{eqnarray}\label{eq: O2 cap J2}
& &\hspace{-2.2cm} \left|O'^{(j+1)}_2(\hat{\bw},b)\cap \enspace \cK_{j+1}(J_2^{(j)})\right| 
=  \sum_{\hat{\by}\in \hat{A}(j+1,j,\ba^{\sP})} \hspace{-0.3cm} \left|O'^{(j+1)}_2(\hat{\bw},b)\cap \cK_{j+1}(J_2^{(j)}) \cap \cK_{j+1}(\hat{P}_2^{(j)}(\hat{\by}))\right| \nonumber \\
&\stackrel{\rm Subcl.~\ref{claim:O'regular}}{=}& \hspace{-0.3cm}
\sum_{\hat{\by}\in \hat{A}(j+1,j,\ba^{\sP})} \left(d(\hat{\by})\left|\cK_{j+1}(J_2^{(j)} )\cap \cK_{j+1}(\hat{P}^{(j)}_2(\hat{\by}))\right| 
 \pm \epsilon'^{1/4} \left|\cK_{j+1}(\hat{P}^{(j)}_2(\hat{\by}))\right|\right) \nonumber \\
&\stackrel{\eqref{eq: 2 polyad size},\eqref{eq: when d hat by 0}}{=}& \hspace{-0.3cm}
\left( (1\pm \nu) \prod_{i=1}^{j} (a^{\sP}_i)^{-\binom{j+1}{i}} m^{j+1} \hspace{-0.3cm} \sum_{\hat{\by}\in  \hat{B}_{j+1}(\hat{\bw})} \hspace{-0.3cm} d(\hat{\by}) d( \cK_{j+1}(J_2^{(j)}) \mid \hat{P}^{(j)}_2(\hat{\by})) \right)\pm \epsilon'^{1/4} m^{j+1} \nonumber \\
&\stackrel{{\rm \ref{item:J2}},\eqref{eq: 1 polyad size}}{=}&\hspace{-0.3cm}
 \left((1\pm 3\nu)\frac{m^{j+1}}{n^{j+1}}\left|\cK_{j+1}(\hat{P}^{(j)}_1(\hat{\by}))\right|\sum_{\hat{\by}\in \hat{B}_{j+1}(\hat{\bw})}  \hspace{-0.3cm} d(\hat{\by})d( \cK_{j+1}(J_1^{(j)}) \mid \hat{P}^{(j)}_1(\hat{\by}) ) \right) \pm 4\nu m^{j+1} \nonumber \\
&=& \hspace{-0.3cm} (1\pm 3\nu)\frac{m^{j+1}}{n^{j+1}} 
\left ( \sum_{\hat{\by}\in  \hat{B}_{j+1}(\hat{\bw})} \hspace{-0.3cm} d(\hat{\by})\left|\cK_{j+1}(J_1^{(j)}) \cap \cK_{j+1}(\hat{P}^{(j)}_1(\hat{\by}))\right|
\right)\pm 4\nu m^{j+1}\nonumber \\
&\stackrel{\eqref{eq: when d hat by 0}, \rm Subcl.~\ref{claim:O'regular}}{=}& \hspace{-0.3cm}
(1\pm 3\nu)\frac{m^{j+1}}{n^{j+1}}\left( \hspace{-0.5cm} \sum_{ \hspace{0.5cm} \hat{\by}\in \hat{A}(j+1,j,\ba^{\sP})}  \hspace{-0.9cm} \left|O'^{(j+1)}_1(\hat{\bw},b)\cap \cK_{j+1}(J_1^{(j)}) \cap \cK_{j+1}(\hat{P}_1^{(j)}(\hat{\by}))\right| \right)
\pm 5\nu m^{j+1} \nonumber \\
&=& \hspace{-0.3cm} \frac{m^{j+1}}{n^{j+1}} \left(\left|O'^{(j+1)}_1(\hat{\bw},b)\cap \cK_{j+1}(J_1^{(j)})\right| \pm 10\nu n^{j+1}\right).
\end{eqnarray}
\COMMENT{Second equality:
To elaborate why it holds, the following is an explanation.
We let $Q^{(j-1)} := J_2^{(j-1)}\cap \hat{P}^{(j-1)}_2(\hat{\by})$.
Then $Q^{(j-1)} \subseteq \hat{P}^{(j-1)}_2(\hat{\by})$.
Since $O'^{(j)}_2$ is $(\epsilon'^{1/2},d(\hat{\by}))$- regular with respect to $\hat{P}^{(j-1)}_2(\hat{\by})$,
we have two cases. \newline
1. $|\cK_j(Q^{(j-1)}) | > \epsilon'^{1/2} | \cK_j( \hat{P}^{(j-1)}_2(\hat{\by}) )| $.
 Then $ |O'^{(j)}_2 \cap \cK_{j}(J_2^{(j-1)})\cap \cK_j(\hat{P}^{(j-1)}_2(\hat{\by}))| = |O'^{(j)}_2 \cap \cK_{j}( Q^{(j-1)} )| = (d(\hat{\by}) \pm \epsilon'^{1/2}) |\cK_{j}(J_2^{(j-1)})\cap \cK_j(\hat{P}^{(j-1)}_2(\hat{\by})) | = d(\hat{\by}) |\cK_{j}(J_2^{(j-1)})\cap \cK_j(\hat{P}^{(j-1)}_2(\hat{\by})) | \pm \epsilon'^{1/2} | \cK_j( \hat{P}^{(j-1)}_2(\hat{\by}) )|$.\newline
2. If $|\cK_j(Q^{(j-1)}) | < \epsilon'^{1/2} | \cK_j( \hat{P}^{(j-1)}_2(\hat{\by}) )| $, then $|O'^{(j)}_2 \cap \cK_{j}( Q^{(j-1)} )|  = 0 \pm |\cK_j(Q^{(j-1)}) |  =  0 \pm \epsilon'^{1/2} | \cK_j( \hat{P}^{(j-1)}_2(\hat{\by}) )|= d(\hat{\by}) |\cK_{j}(J_2^{(j-1)})\cap \cK_j(\hat{P}^{(j-1)}_2(\hat{\by})) | \pm \epsilon'^{1/4} | \cK_j( \hat{P}^{(j-1)}_2(\hat{\by}) )|.$
Thus we have it for both cases.
The key thing here is that if $H^{(k)}$ is $(\epsilon,d)$ regular w.r.t $H^{(k-1)}$, then
for any $Q^{(k-1)}$, we have $ H^{(k)}\cap \cK_k(Q^{(k-1)}) = d |\cK_k(Q^{(k-1)})| \pm 2\epsilon |\cK_{k}(H^{(k-1)})|$ even if $|\cK_k(Q^{(k-1)})| < \epsilon  |\cK_{k}(H^{(k-1)})|$.
(Note that here, we add $\pm 2\epsilon |\cK_{k}(H^{(k-1)})|$, instead of multiplying $(d\pm \epsilon)$. )
}
Note that \ref{item:J1} implies that
\begin{align}\label{eq: cKj J2 triangle F}
\left|\cK_{j+1}(J_2^{(j)}))\triangle \cK_{j+1}(F^{(j)})\right| \leq \nu \binom{m}{j} \cdot m \leq  \nu m^{j+1}
\end{align}
Since $F^{(j)}\subseteq \hat{O'}^{(j)}_2(\hat{\bw})$ by assumption, \eqref{eq: cKj J2 triangle F} implies that 
\begin{align}\label{eq: J2 hat O' bw}
\left|\cK_{j+1}(J_2^{(j)})\setminus \cK_{j+1}( \hat{O}^{\prime(j)}_2(\hat{\bw}))\right|\leq \nu m^{j+1}.
\end{align}
We can transfer \eqref{eq: J2 hat O' bw} to the corresponding graphs on $V_1$ as follows: 
\begin{eqnarray}\label{eq: J1 setminus O'}
&&\hspace{-2.8cm} \left|\cK_{j+1}(J_1^{(j)}) \setminus \cK_{j+1}(\hat{O}^{\prime(j)}_1(\hat{\bw}))\right|
\stackrel{\eqref{eq: B hat bw b  def2}}{=} \sum_{\hat{\by}\notin \hat{B}_{j+1}(\hat{\bw})} \left|\cK_{j+1}(J_1^{(j)})\cap \cK_{j+1}(\hat{P}^{(j)}_1(\hat{\by}))\right| \nonumber \\
&\stackrel{\eqref{eq: 1 polyad size}}{\leq}& (1+ \nu) \prod_{i=1}^{j} (a^{\sP}_i)^{-\binom{j+1}{i}} n^{j+1} \hspace{-0.3cm}\sum_{\hat{\by}\notin \hat{B}_{j+1}(\hat{\bw})}\hspace{-0.3cm} d( \cK_{j+1}(J_1^{(j)}) \mid \hat{P}^{(j)}_1(\hat{\by})) \nonumber \\
 &\stackrel{{\rm \ref{item:J2}}}{\leq }& 
\left((1+ \nu) \prod_{i=1}^{j} (a^{\sP}_i)^{-\binom{j+1}{i}} n^{j+1} \hspace{-0.3cm} \sum_{\hat{\by}\notin \hat{B}_{j+1}(\hat{\bw})}  \hspace{-0.3cm} d( \cK_{j+1}(J_2^{(j)}) \mid \hat{P}^{(j)}_2(\hat{\by}))\right)  + 2 \nu n^{j+1} \nonumber \\
 &\stackrel{\eqref{eq: 2 polyad size}, \eqref{eq: cKj+1 hat O'2(j) is..}}{\leq}& \frac{n^{j+1}}{m^{j+1}} \left|\cK_{j+1}(J_2^{(j)}) \setminus \cK_{j+1}(\hat{O}^{\prime(j)}_2(\hat{\bw}))\right| + 5\nu n^{j+1}\nonumber \\
&\stackrel{\eqref{eq: J2 hat O' bw}}{\leq}&  6 \nu n^{j+1}.
\end{eqnarray}%
\COMMENT{We may insert as a second line $\sum_{\hat{\by}\notin\hat{B}_{j+1}(\hat{\bw})}  \hspace{-0.3cm} d( \cK_{j+1}(J_1^{(j)}) \mid \hat{P}^{(j)}_1(\hat{\by}))\cdot \left|\cK_{j+1}(\hat{P}^{(j)}_1(\hat{\by}))\right|$.}
Next we show that $|\cK_{j+1}(J_1^{(j)})\cap \cK_{j+1}(\hat{O}^{\prime(j)}_1(\hat{\bw}))|$ is not too small:
\begin{eqnarray}\label{eq: cK J1 O' hat bw big}
\left|\cK_{j+1}(J_1^{(j)})\cap \cK_{j+1}(\hat{O'}^{(j)}_1(\hat{\bw}))\right|
&\stackrel{\eqref{eq: J1 setminus O'}}{\geq} & 
\left|\cK_{j+1}(J_1^{(j)})\right| - 6 \nu n^{j+1}\nonumber \\
&\stackrel{\eqref{eq: J2 J1}}{\geq} & 
\frac{n^{j+1}}{m^{j+1}} \left|\cK_{j+1}(J_2^{(j)})\right|  - 11 \nu n^{j+1} \nonumber \\
&\stackrel{ \eqref{eq: cKj J2 triangle F}}{\geq } & \frac{n^{j+1}}{m^{j+1}} \left|\cK_{j+1}(F^{(j)})\right|  - 12\nu n^{j+1} \nonumber \\
&\stackrel{ \eqref{eq: F j-1 big}}{\geq} & 
\frac{n^{j+1}}{m^{j+1}}(\epsilon_0+3\nu^{1/20})\left|\cK_{j+1}(\hat{O}^{\prime(j)}_2(\hat{\bw}))\right| - 12\nu n^{j+1} \nonumber \\
&\stackrel{ \eqref{eq: Bj hat bw size},\eqref{eq: cK hat O' size}}{\geq} & (\epsilon_0+2\nu^{1/20})\left|\cK_{j+1}(\hat{O}^{\prime(j)}_1(\hat{\bw}))\right|.
\end{eqnarray}

Recall that $d$ was defined in \eqref{eq: def d def d ddddd}. We now can combine our estimates to conclude that
\begin{eqnarray}\label{eq: estimate combined}
&& \hspace{-2.9cm} \left|O'^{(j+1)}_2(\hat{\bw},b)\cap \cK_{j+1}(F^{(j)})\right| 
\stackrel{\eqref{eq: O2 cap J2},\eqref{eq: cKj J2 triangle F}}{=}
\frac{m^{j+1}}{n^{j+1}}\left(\left|{O'_1}^{(j+1)}(\hat{\bw},b)\cap \cK_{j+1}(J_1^{(j)})\right| \pm 20\nu n^{j+1} \right)
\nonumber \\
& \hspace{-0.5cm} \stackrel{\eqref{eq: O'1 is also good partition too}, \eqref{eq: cK J1 O' hat bw big}, \text{\ref{item:O'12}}}{=}&  \hspace{-0.3cm}
\frac{m^{j+1}}{n^{j+1}} \left( (d\pm (\epsilon_0+ 2\nu^{1/20})) \left|\cK_{j+1}(J_1^{(j)})\cap\cK_{j+1}( \hat{O}_1^{\prime(j)}(\hat{\bw}))\right| \pm 20\nu n^{j+1} \right)\nonumber \\
&\hspace{-0.5cm}=& \hspace{-0.3cm}
\frac{m^{j+1}}{n^{j+1}} \hspace{-0.05cm}(d\pm (\epsilon_0\hspace{-0.05cm}+ \hspace{-0.05cm}2\nu^{1/20}))\hspace{-0.05cm} \left(\hspace{-0.05cm} |\cK_{j+1}(J_1^{(j)})|\hspace{-0.05cm}- \hspace{-0.05cm}|\cK_{j+1}(J_1^{(j)}) \hspace{-0.05cm}\setminus\hspace{-0.05cm} \cK_{j+1}(\hat{O}^{\prime(j)}_1(\hat{\bw})) |\hspace{-0.05cm}\right) \hspace{-0.05cm}\pm \hspace{-0.05cm}20\nu m^{j+1}  \nonumber\\
&\hspace{-0.5cm}\stackrel{\eqref{eq: J2 J1},\eqref{eq: J1 setminus O'}}{=} &\hspace{-0.3cm}  (d\pm (\epsilon_0+ 2\nu^{1/20})) \left|\cK_{j+1}(J_2^{(j)})\right| \pm 40\nu m^{j+1}  \nonumber\\
&\hspace{-0.5cm}\stackrel{\eqref{eq: cKj J2 triangle F}}{=} & \hspace{-0.3cm}
( d\pm (\epsilon_0+ 2\nu^{1/20})) \left|\cK_{j+1}(F^{(j)})\right| \pm 50\nu m^{j+1}  \nonumber \\
&\hspace{-0.5cm}=& \hspace{-0.3cm} (d\pm (\epsilon_0 + 3\nu^{1/20}))\left|\cK_{j+1}(F^{(j)})\right|.
\end{eqnarray}

Here, we obtain the final inequality since \eqref{eq: Bj hat bw size}, \eqref{eq: cK hat O' size} and \eqref{eq: F j-1 big} imply  
$|\cK_{j+1}(F^{(j)})| \geq \epsilon_0^{2} m^{j+1}$ and $\nu\ll \epsilon_0$.
\eqref{eq: estimate combined} holds for all $F^{(j)}\subseteq \hat{O}_2^{\prime(j)}(\hat{\bw})$ satisfying \eqref{eq: F j-1 big}, thus $O_2'^{(j+1)}(\hat{\bw},b)$ is $(\epsilon_0+3\nu^{1/20},d)$-regular with respect to $\hat{O}^{\prime(j)}_2(\hat{\bw})$. 
This with the definition of $d$ completes the proof of Claim~\ref{cl: O'2 bw b is epsilon0 regular wrt}. 
\end{proof}

Claim~\ref{cl: O'2 bw b is epsilon0 regular wrt} and \eqref{eq: how equitable O'2(1) is} show that $\sO'_2$ is a $(1/a_1^\sO,\epsilon_0+3\nu^{1/20}, \ba^{\sO}, 2\nu^{1/20})$-equitable family of partitions 
which is also an $(\epsilon_0+3\nu^{1/20},d_{\ba^{\sO},k})$-partition of $G_2^{(k)}$ (as defined in Section~\ref{sec: subsub density function}). 
Note that $(\epsilon_0+3\nu^{1/20})/3 \leq \epsilon_0/2$, thus $((\epsilon_0+3\nu^{1/20})/3,\ba^{\sO},d_{\ba^{\sO},k})$ is a regularity instance.
Since $|G_2^{(k)}\triangle H_2^{(k)}|\leq \nu \binom{m}{k}$ by \ref{item:G22}, this means that
we can apply Lemma~\ref{lem: slightly different partition regularity} with the following objects and parameters.\newline

{\small
\begin{tabular}{c|c|c|c|c|c|c|c}
object/parameter & $\sO'_2$ & $\sO'_2$ & $\nu$ & $\epsilon_0+ 3\nu^{1/20}$ & $d_{\ba^{\sO},k}$ & $G^{(k)}_2$ & $ H^{(k)}_2$  \\ \hline
playing the role of & $\sP$ & $\sQ$ & $\nu$ & $\epsilon$ & $d_{\ba,k}$ & $H^{(k)}$ & $G^{(k)}$ 
 \\ 
\end{tabular}
}\newline \vspace{0.2cm}

\noindent
Hence $\sO'_2$ is also an $(\epsilon_0+4\nu^{1/20},d_{\ba^{\sO},k})$-partition of $H_2^{(k)}$.

\begin{step}\label{step6}
Adjusting $\sO_2'$ into an equipartition $\sO_2$.
\end{step}
Finally, 
we modify $\sO_2'$ to turn it from an `almost' equipartition into an equipartition $\sO_2$.
For this we apply Lemma~\ref{lem: removing lambda} with $\sO'_2, H_2^{(k)} \epsilon_0+4\nu^{1/20}, 2\nu^{1/20}, d_{\ba^{\sO},k}$ playing the roles of $\sP, H^{(k)}, \epsilon,\lambda, d_{\ba,k}$ respectively. 
This guarantees an $(\epsilon_0+3\nu^{1/200}, \ba^{\sO},d_{\ba^{\sO},k})$-equitable partition $\sO_2$ of $H_2^{(k)}$, which completes the proof.
\end{proof}

\subsection{Random samples}\label{Random samples}

To prove our results about random samples of hypergraphs,
we will need the following lemma due to Czygrinow and Nagle.
It states that $\epsilon$-regularity of a random complex is inherited by a random sample (but with significantly worse parameters).

\begin{lemma}[Czygrinow and Nagle \cite{CN11}] \label{lem: grabbing}
Suppose $0< 1/m_0, 1/s, \epsilon \ll \epsilon', d_0, 1/\ell, 1/k\leq 1$ and $k, \ell\in \N\sm \{1\}$ with $\ell\geq k$. 
Suppose $\sH=\{H^{(j)}\}_{j=1}^{k}$ is an $(\epsilon,(d_2,\dots,d_k))$-regular $(\ell,k)$-complex with $H^{(1)}=\{V_1, \dots , V_\ell\}$ 
such that $d_i\in [d_0,1]$, and $|V_i|>m_0$ for all $i\in [\ell]$. Let $s_1,\dots, s_{\ell} \geq s$ be integers such that $|V_i|\geq s_i$.
Then for  subsets $S_i \in \binom{V_i}{s_i}$ chosen uniformly at random, $\{H^{(j)}[S_1\cup S_2\cup \dots \cup S_{\ell}]\}_{j=1}^{k}$ is an $(\epsilon',(d_2,\dots, d_k))$-regular $(\ell,k)$-complex with probability at least $1- e^{-\epsilon s}$.
\end{lemma}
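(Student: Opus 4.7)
My plan is to work level-by-level and appeal to concentration of measure. Specifically, I fix $j\in\{2,\dots,k\}$ and $\Lambda\in\binom{[\ell]}{j}$, write $S:=\bigcup_{i\in\Lambda}S_i$, and aim to show that $H^{(j)}[S]$ is $(\epsilon',d_j)$-regular with respect to $H^{(j-1)}[S]$ with probability at least $1-\exp(-2\epsilon s)$. Since $k$ and $\binom{\ell}{j}$ are bounded by constants governed by the hierarchy, the lemma follows by a union bound after a constant-factor adjustment of $\epsilon$. Throughout I choose an auxiliary parameter $\nu$ with $\epsilon\ll\nu\ll\epsilon',d_0$.

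For the concentration of global counts, the counting lemma (Lemma~\ref{lem: counting}) applied to $\sH[\Lambda]$ viewed as a $(j,j)$-subcomplex pins down $|\cK_j(H^{(j-1)}[\Lambda])|$ and $|H^{(j)}\cap\cK_j(H^{(j-1)}[\Lambda])|$ to within a $(1\pm\nu)$-factor of their natural expected values. A vertex-exposure martingale (modelling the sample $S_i$ as the first $s_i$ elements of a uniform random permutation of $V_i$) combined with Azuma's inequality then transfers these estimates to the restrictions: both $|\cK_j(H^{(j-1)}[S])|$ and $|H^{(j)}[S]\cap\cK_j(H^{(j-1)}[S])|$ are within a $(1\pm\nu)$ factor of $p\cdot|\cK_j(H^{(j-1)}[\Lambda])|$ and $d_j\cdot p\cdot|\cK_j(H^{(j-1)}[\Lambda])|$ respectively, where $p:=\prod_{i\in\Lambda}(s_i/|V_i|)$, with failure probability $\exp(-\Omega(\nu^2 s))$. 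The boundedness of each vertex's contribution --- crucially relying on $d_{j'}\ge d_0$ for all $j'<j$, which ensures that no single exposed vertex is incident to an atypical number of $j$-cliques --- gives the required Lipschitz constant. This handles the "global" direction corresponding to the witness $Q^{(j-1)}=H^{(j-1)}[S]$.

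The main obstacle is certifying the regularity condition for \emph{every} dense subhypergraph $Q^{(j-1)}\subseteq H^{(j-1)}[S]$, since there can be up to $2^{s^{j-1}}$ such witnesses while Azuma only gives a singly-exponential tail. The strategy is to reduce to a bounded-size family of canonical witnesses. Leveraging the regularity of the ambient complex, one fixes a polyad-type decomposition of $\cK_{j-1}(H^{(1)}[\Lambda])$ into a bounded number $N=N(\epsilon',d_0,k,\ell)$ of classes (refining $H^{(j-1)}$ and its complement within $\cK_{j-1}(H^{(1)}[\Lambda])$) so that $H^{(j)}$ is regular across each induced polyad at a controlled density; every admissible $Q^{(j-1)}$ is then approximated in symmetric difference, up to an error absorbed by the gap $\epsilon'-\epsilon$, by one of at most $2^N$ unions of classes of this fixed decomposition. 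For each canonical witness $\widetilde Q^{(j-1)}$, the $(\epsilon,d_j)$-regularity of $H^{(j)}$ with respect to $H^{(j-1)}$ gives $|H^{(j)}\cap\cK_j(\widetilde Q^{(j-1)})|=(d_j\pm\epsilon)|\cK_j(\widetilde Q^{(j-1)})|$, and a further Azuma application transfers both of these counts into $\binom{S}{j}$ with multiplicative error $\nu$ and failure probability $\exp(-\Omega(\nu^2 d_0^{O(1)} s))$. A union bound over the $2^N$ canonical witnesses is comfortably absorbed by this tail estimate (since $N$ does not depend on $s$), and applying the inheritance along the chain of approximations yields the desired $(\epsilon',d_j)$-regularity. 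The only delicate point, which I anticipate as the technical crux, is verifying that the symmetric-difference approximation preserves both numerator and denominator to within a multiplicative factor controlled by $\epsilon'-\epsilon$; this relies on the lower bound on $|\cK_j(H^{(j-1)}[S])|$ secured in the second paragraph together with standard comparisons using the counting lemma.
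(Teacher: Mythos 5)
The paper does not give a proof of this lemma: it is cited from Czygrinow and Nagle~\cite{CN11} (who prove the case $\ell=k$), and the paper merely notes that the general $\ell\geq k$ case follows by a union bound over the pairs $\Lambda\in\binom{[\ell]}{j}$. So there is no in-paper proof to compare against, and what I can do is assess whether your attempt stands on its own. Your outer structure (iterate over $j$ and $\Lambda$, union bound over the boundedly many choices) and the Azuma/martingale treatment in your second paragraph for the ``global'' clique and edge counts are sound in outline.

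The gap is in your third paragraph. You assert that every admissible witness $Q^{(j-1)}\subseteq H^{(j-1)}[S]$ can be approximated in symmetric difference, up to an error absorbed by $\epsilon'-\epsilon$, by one of at most $2^N$ unions of classes of a decomposition of $\cK_{j-1}(H^{(1)}[\Lambda])$ that was \emph{fixed in advance} of the sample, with $N$ depending only on $\epsilon',d_0,k,\ell$. This is not true: the witnesses range over \emph{all} subgraphs of $H^{(j-1)}[S]$ that support at least $\epsilon'|\cK_j(H^{(j-1)}[S])|$ cliques, of which there are $2^{\Theta(s^{j-1})}$. A uniformly random half of the edges of $H^{(j-1)}[S]$ is, with overwhelming probability, far in symmetric difference from every member of any fixed family of $2^N$ approximants, since it mixes evenly across any pre-chosen decomposition. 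Consequently no bounded canonical family covers all witnesses, and the subsequent union bound does not apply. This is precisely the known difficulty in ``regularity inheritance under random sampling'': already in the graph case (Duke--R\"odl, and Alon, Duke, Lefmann, R\"odl, Yuster) one does not verify regularity witness-by-witness but replaces it with an equivalent low-complexity statistic (a co-degree/defect or deviation condition) that is a bounded sum over vertices or edges, concentrates under sampling, and implies regularity via a separate Cauchy--Schwarz-type argument. \cite{CN11} carries out the hypergraph analogue using counting-lemma and deviation characterisations of $(\epsilon,d)$-regularity. Repairing your proof would require swapping the canonical-witness reduction for such a characterisation; as written, the third paragraph does not go through.
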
\COMMENT{
In their paper, they used $\epsilon$ for regularity constant, and used $1-e^{-c s}$ in the theorem. 
They said that `for given $\epsilon',d_0,1/k$, there exists $c, \epsilon, n_0, s$ such that...'  However, we can just take $\min\{c,\epsilon\}$ to be our $\epsilon$.
(Note that $\epsilon$-regularity trivially implies $\epsilon_0$-regular for bigger $\epsilon_0$. )}
Note that in \cite{CN11}, the lemma is only stated for the case $\ell=k$, but the case $\ell\geq k$ follows via a union bound.
The next lemma generalizes Lemma~\ref{lem: grabbing} and shows how an equitable partition of a $k$-graph transfers with high probability to a random sample.

\begin{lemma}\label{lem: random choice}
Suppose $0< 1/n<1/q\ll \epsilon\ll   \epsilon' \ll 1/t, 1/k,$ and $k\in \N\sm \{1\}$.
Suppose that $\sP=\sP(k-1,\ba)$ is an $(\epsilon,\ba,d_{\ba,k})$-equitable partition of a $k$-graph $H$ on vertex set $V$ with $|V|=n$ and $\ba\in [t]^{k-1}$.
Then for a set $Q\in \binom{V}{q}$ chosen uniformly at random, with probability at least $1 -e^{-\epsilon^3 q}$, 
there exists an $(\epsilon',\ba,d_{\ba,k})$-equitable family of partitions $\sQ$ of $H[Q]$.
\end{lemma}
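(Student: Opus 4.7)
The plan is to take $\sQ$ to be the restriction of $\sP$ to $Q$ in the natural way, verify that $\sQ$ inherits the regularity structure of $\sP$ with only a small loss in parameters, and then clean up using Lemma~\ref{lem: removing lambda}. More precisely, for each $j\in[k-1]$ and $(\hat{\bx},b)\in\hat{A}(j,j-1,\ba)\times[a_j]$, I would set $Q^{(j)}(\hat{\bx},b):=P^{(j)}(\hat{\bx},b)\cap\binom{Q}{j}$, let $\sQ^{(j)}$ be the collection of these, and define polyads $\hat{Q}^{(j)}(\hat{\bx})$ via the analogue of~\eqref{eq:hatPconsistsofP(x,b)}. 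Lemma~\ref{lem: family of partitions construction} then certifies that $\sQ$ is a family of partitions.

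First I would use a Chernoff-type concentration bound to show that, with failure probability at most $e^{-\Omega(q)}$, every class $V_i\cap Q$ has size $(1\pm\lambda)q/a_1$ for some very small $\lambda$ chosen so that $\lambda^{1/10}\ll\epsilon'-\epsilon$. This yields the equipartition condition for $\sQ^{(1)}$ up to slack $\lambda$.

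Next, for each $\hat{\bx}\in\hat{A}(k,k-1,\ba)$, consider the $(k,k,*)$-complex obtained by joining $\hat{\cP}(\hat{\bx})$ with $H\cap\cK_k(\hat{P}^{(k-1)}(\hat{\bx}))$. By Lemma~\ref{lem: maps bijections}(ii) combined with the definition of an $(\epsilon,\ba,d_{\ba,k})$-equitable partition, this complex is $(\epsilon,(1/a_2,\dots,1/a_{k-1},d_{\ba,k}(\hat{\bx})))$-regular. Conditional on the sizes $|V_i\cap Q|$, each $V_i\cap Q$ is a uniformly random subset of $V_i$ of that size, so Lemma~\ref{lem: grabbing} applies and, except with probability $e^{-\Omega(\epsilon q/t)}$, yields a complex that is $(\epsilon',(1/a_2,\dots,1/a_{k-1},d_{\ba,k}(\hat{\bx})))$-regular on $Q$. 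Because Proposition~\ref{prop: hat relation}(viii) bounds the number of such polyads by a constant depending only on $t$ and $k$, a union bound over all $\hat{\bx}$ still leaves total failure probability at most $e^{-\epsilon^3 q}$, establishing simultaneously that $\sQ$ is a $(1/a_1,\epsilon',\ba,\lambda)$-equitable family of partitions and an $(\epsilon',d_{\ba,k})$-partition of $H[Q]$.

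Finally, I would invoke Lemma~\ref{lem: removing lambda} to convert this approximate equipartition into a genuine $(\epsilon'+\lambda^{1/10},\ba,d_{\ba,k})$-equitable partition, after which adjusting $\epsilon'$ downward by a constant factor at the start gives the claim. The main obstacle is that Lemma~\ref{lem: grabbing} requires every density $d_i$ to lie in a fixed interval $[d_0,1]$, whereas the top-level density $d_{\ba,k}(\hat{\bx})$ may be arbitrarily small (even zero). I would handle this by splitting on cases: for polyads with $d_{\ba,k}(\hat{\bx})\geq\epsilon^{1/4}$, apply Lemma~\ref{lem: grabbing} directly with $d_0:=\min(\epsilon^{1/4},1/t)$; for the remaining polyads, apply Lemma~\ref{lem: grabbing} only to the underlying $(k-1)$-complex $\hat{\cP}(\hat{\bx})$ (whose densities are all $1/a_j\geq 1/t$) and treat the top level separately, using that few edges of $H$ lie inside the relevant cliques so that a direct Chernoff bound on edge counts yields the required $(\epsilon',d_{\ba,k}(\hat{\bx}))$-regularity almost for free.
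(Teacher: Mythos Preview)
Your outline is essentially the paper's proof: restrict $\sP$ to $Q$, control class sizes by concentration, split polyads into high- and low-density cases, apply Lemma~\ref{lem: grabbing} to the full $(k,k)$-complex in the former and only to the underlying $(k,k-1)$-complex $\hat{\cP}(\hat{\bx})$ in the latter, then clean up with Lemma~\ref{lem: removing lambda}.

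One point does not work as written: you cannot take the density threshold to be $\epsilon^{1/4}$. The hierarchy in Lemma~\ref{lem: grabbing} reads $\epsilon\ll d_0$, i.e.\ $\epsilon\le f(d_0)$ for some unspecified function $f$; setting $d_0=\epsilon^{1/4}$ makes this $\epsilon\le f(\epsilon^{1/4})$, which you have no control over (it fails e.g.\ if $f(x)\le x^5$). The paper fixes this by inserting a fresh constant $\nu$ with $\epsilon\ll\nu\ll 1/t,1/k,\epsilon'$ and using $\nu$ as the threshold. With that change your argument goes through.

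As a packaging difference, rather than a per-polyad Chernoff bound in the low-density case, the paper aggregates: it sets $G:=\bigl(\bigcup_{d_{\ba,k}(\hat{\bx})\ge\nu}H\cap\cK_k(\hat{P}^{(k-1)}(\hat{\bx}))\bigr)\cup\bigl(H\setminus\cK_k(\sP^{(1)})\bigr)$, so that $|G\triangle H|\le 2\nu\binom{n}{k}$ and the low-density polyads contribute nothing to $G$. One application of Lemma~\ref{lem: random subset edge size} then bounds $|(H\setminus G)[Q]|$, after which $\sP[Q]$ is an $(\epsilon'/2,d_{\ba,k})$-partition of $G[Q]$ by the grabbing lemma alone, and a single call to Lemma~\ref{lem: slightly different partition regularity} transfers this to $H[Q]$. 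This is marginally cleaner than treating each low-density polyad on its own, but the content is the same.
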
\COMMENT{It suffices to prove this holds with probability at least 0.99. However, the paper \cite{CN11} emphasize that the probability $1- e^{-cq}$ is meaningful, so we prove $1- e^{-\epsilon^3 q}$. }
The parameter $\epsilon'$ in Lemma~\ref{lem: random choice} will be too large for our purposes.
But we can combine Lemmas~\ref{lem: similar} and~\ref{lem: random choice} to obtain the stronger assertion stated in (Q1)$_{\ref{lem: random choice2}}$ of Theorem~\ref{lem: random choice2}.

For the proof of Lemma~\ref{lem: random choice} we also need the following lemma which is easy to show, for example using Azuma's inequality.
We omit the proof.

\begin{lemma}\label{lem: random subset edge size}
	Suppose $0<1/n \leq  1/q \ll 1/k \leq 1/2$ and $1/q \ll \nu$.
	Let $H$ be an $n$-vertex $k$-graph on vertex set $V$. Let $Q\in \binom{V}{q}$ be a $q$-vertex subset of $V$ chosen uniformly at random. Then 
	$$\mathbb{P}\left[|H[Q]| = \frac{q^{k}}{n^{k}}|H| \pm \nu \binom{q}{k} \right] \geq 1- 2 e^{\frac{-\nu^2q}{8k^2}}.$$
\end{lemma}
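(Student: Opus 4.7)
The plan is to realize $Q$ via a uniformly random bijection $\sigma\colon[n]\to V$, set $Q:=\{\sigma(1),\dots,\sigma(q)\}$ and $X:=|H[Q]|$, and bound $|X-(q^k/n^k)|H||$ by separately controlling the bias $|\mathbb{E}[X]-(q^k/n^k)|H||$ and the fluctuation $|X-\mathbb{E}[X]|$.

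For the bias, the exact identity $\mathbb{E}[X]=|H|\binom{q}{k}/\binom{n}{k}$, together with the elementary estimate $\binom{q}{k}/\binom{n}{k}=(q^k/n^k)(1\pm O(k^2/q))$, the bound $|H|\le\binom{n}{k}$ and the observation that $(q^k/n^k)\binom{n}{k}\le(1+O(k^2/q))\binom{q}{k}$, gives a bias of at most $O(k^2/q)\binom{q}{k}$, which is $o(\nu\binom{q}{k})$ by the hierarchy $1/q\ll 1/k,\nu$.

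For the fluctuation, I would apply Azuma's inequality to the Doob martingale $Y_i:=\mathbb{E}[X\mid\sigma(1),\dots,\sigma(i)]$ for $i=0,\dots,q$ (positions after $q$ do not affect $X$). A standard coupling/swap argument --- comparing two potential choices $v,v'$ for $\sigma(i)$ while coupling the remaining positions by swapping $v$ and $v'$ wherever they occur in the tail of the permutation --- shows that the two resulting $q$-sets either coincide or differ by exactly one vertex, and since a one-vertex change to $Q$ affects at most $2\binom{q-1}{k-1}$ edges of $H[Q]$, this yields $|Y_i-Y_{i-1}|\le 2\binom{q-1}{k-1}$. Using the identity $\binom{q-1}{k-1}=(k/q)\binom{q}{k}$ one computes $\sum_{i=1}^q(2\binom{q-1}{k-1})^2=(4k^2/q)\binom{q}{k}^2$, and Azuma's inequality applied with deviation $\nu\binom{q}{k}$ yields
\begin{equation*}
\mathbb{P}\!\left[|X-\mathbb{E}[X]|>\nu\binom{q}{k}\right]\le 2\exp\!\left(-\frac{\nu^2\binom{q}{k}^2}{2(4k^2/q)\binom{q}{k}^2}\right)=2\exp\!\left(-\frac{\nu^2 q}{8k^2}\right),
\end{equation*}
which, combined with the bias bound, gives the claim. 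The only minor care needed is that the negligible bias be absorbed into the Azuma deviation without degrading the exponent; since the bias is of order $(k^2/q)\binom{q}{k}=o(\nu\binom{q}{k})$, the resulting $(1-o(1))^2$ loss in the exponent is easily swallowed by the constant factor $2$ in front of the exponential. This is the only step requiring attention; the rest is routine.
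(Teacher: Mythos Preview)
Your approach is exactly the paper's: a vertex-exposure Doob martingale plus Azuma. One quibble: swapping a single vertex of $Q$ changes $|H[Q]|$ by at most $\binom{q-1}{k-1}$, not $2\binom{q-1}{k-1}$ (the edges through neither of the two swapped vertices cancel, leaving a difference of two nonnegative quantities each bounded by $\binom{q-1}{k-1}$), so your Azuma exponent is really $\nu^2 q/(2k^2)$ before the bias correction; that factor-$4$ slack is what actually absorbs the $o(\nu)$ bias, not the prefactor $2$, which is Azuma's two-sided constant and not spare room.
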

\COMMENT{
	\begin{proof}
		Suppose we reveal $q$ vertices $v_1,\dots, v_q$ of $V$ one by one and let $Q:=\{v_1,\dots, v_q\}$.
		Consider an exposure martingale $X_0,\dots, X_q$ such that 
		$X_i:=\mathbb{E}[ |H[Q]| \mid v_1,\dots, v_i]$. 
		Then it is easy to check that this forms a $\binom{q}{k-1}$-Lipschitz martingale. Moreover, $X_0 = \mathbb{E}[|H[Q]|] = \binom{q}{k}|H|/\binom{n}{k}$.
		Therefore, by Azuma's inequality, we obtain
		$$
		\mathbb{P}\left[|H[Q]| 
		= \frac{q^{k}}{n^{k}}|H| \pm \nu \binom{q}{k} \right] 
		\geq 1 - 2e^{-\nu^2 \binom{q}{k}^2 /(3\binom{q}{k-1}^2 q) } \geq 1- 2 e^{-\nu^2q/(8k^2)}.$$
	\end{proof}
}

\begin{proof}[Proof of Lemma~\ref{lem: random choice}]
We choose an additional constant $\nu$ such that 
$$0<\epsilon \ll \nu \ll 1/t,1/k, \epsilon'.$$ 
Let $Q$ be a set of $q$ vertices selected uniformly at random in $V$.
Write $\sP^{(1)} = \{V_1, \dots , V_{a_1}\}$ and let $S_i := Q\cap V_i$.
For $\bS=(s_1,\dots, s_{a_1})$ with $\sum_{i=1}^{a_1} s_i = q$ and $s_i\in \N\cup \{0\}$, 
let $\cE(\bS)$ be the event that $|S_i|=s_i$ for all $i\in [a_1]$, and let 
$$I:=\left\{ \bS : s_i = (1\pm \epsilon)\frac{q}{a_1} \text{ for each }i\in [a_1]\right\}.$$ 
By some standard concentration inequality, we conclude
\begin{align}\label{eq: P s1..}
\mathbb{P}\left[\bigvee_{\bS\in I} \cE(\bS)\right]
 \geq 1 - 2a_1 e^{-\epsilon^2 q^2/(a_1^2 q)} \geq 1 - e^{-\epsilon^{5/2}q}.
\end{align}
Recall that for $\hat{\bx}\in \hat{A}(k,k-1,\ba)$, $\hat{\cP}(\hat{\bx})$ denotes the $(k,k-1)$-complex induced by $\hat{\bx}$ in $\sP$ as defined in \eqref{eq: complex definition by address} (as remarked at \eqref{eq: complex definition by address}, for a family of partitions $\sP$, $\hat{\cP}(\hat{\bx})$ is indeed a $(k,k-1)$-complex). Let 
$$A:= \{\hat{\bx}\in \hat{A}(k,k-1,\ba): d_{\ba,k}(\hat{\bx})\geq \nu\} \enspace \text{ and } \enspace 
G:= \bigcup_{\hat{\bx}\in A}\left( H\cap \cK_{k}(\hat{P}^{(k-1)}(\hat{\bx}))\right) \cup \left( H\setminus \cK_{k}(\sP^{(1)})\right) .$$
It is easy to see that $G\subseteq H$ and $|G\triangle H|\leq 2\nu \binom{n}{k}$.\COMMENT{
$|G\triangle H| \leq \sum_{ \hat{\bx}\in \hat{A}(k,k-1,\ba)} (\nu+\epsilon) |\cK_{k}(\hat{P}^{(k-1)}(\hat{\bx}))| \leq 2\nu \binom{n}{k}.$
}

For each $\hat{\bx}\in \hat{A}(k,k-1,\ba)$, let 
$$\hat{\cP}'(\hat{\bx}):= \hat{\cP}(\hat{\bx}) \cup \left\{G\cap \cK_{k}(\hat{P}^{(k-1)}(\hat{\bx}))\right\}.$$ 
Note that $\hat{\cP}'(\hat{\bx})$ is an $(\epsilon,(1/a_2,\dots, 1/a_{k-1},d_{\ba,k}(\hat{\bx})))$-regular $(k,k)$-complex for each $\hat{\bx} \in A$ and $\hat{\cP}(\hat{\bx})$ is a $(\epsilon,(1/a_2,\dots, 1/a_{k-1}))$-regular $(k,k-1)$-complex for each $\hat{\bx}\in \hat{A}(k,k-1,\ba)$.

For each $\hat{\bx}\in A$, 
we define the following event:
\begin{itemize}
\item[($\hat{\cE}(\hat{\bx})$)] $\hat{\cP}'(\hat{\bx})[Q]$ is an $(\epsilon'/2,(1/a_2,\dots, 1/a_{k-1}, d_{\ba,k}(\hat{\bx})))$-regular $(k,k)$-complex.
\end{itemize}
For each $\hat{\bx} \in \hat{A}(k,k-1,\ba)\setminus A$,
we also define the following event:
\begin{itemize}
\item[($\hat{\cE}(\hat{\bx})$)]\em$\hat{\cP}(\hat{\bx})[Q]$ is an $(\epsilon'/2,(1/a_2,\dots, 1/a_{k-1}))$-regular $(k,k-1)$-complex.
\end{itemize}
Note that for each $\hat{\bx} \in \hat{A}(k,k-1,\ba)\setminus A$, the event $\hat{\cE}(\hat{\bx})$ implies that the complex $\hat{\cP}'(\hat{\bx})[Q]$ is an $(\epsilon'/2,(1/a_2,\dots, 1/a_{k-1}, d_{\ba,k}(\hat{\bx}))$-regular complex as  we have $d_{\ba,k}(\hat{\bx}) \leq \nu  \ll  \epsilon'$ and $(G\cap \cK_{k}(\hat{P}^{(k-1)}(\hat{\bx})))[Q] = \emptyset$.\COMMENT{Note that an empty set is $(\epsilon'/2, \nu)$-regular with respect to any $(k-1)$-graph as long as $\nu < \epsilon'/2$.}
Thus we have that
\begin{equation}\label{eq: sP[Q] is good for G[Q]}
\begin{minipage}[c]{0.8\textwidth}\em
$\bigwedge_{\hat{\bx}\in \hat{A}(k,k-1,\ba)} \hat{\cE}(\hat{\bx})$ implies that  $\sP[Q]$ is an $(\epsilon'/2, d_{\ba,k})$-partition of $G[Q]$.
\end{minipage}
\end{equation}

Consider any $\hat{\bx} \in A$. Since $q$ is sufficiently large, we may apply Lemma~\ref{lem: grabbing} with the following objects and parameters.\newline

{\small
\begin{tabular}{c|c|c|c|c|c|c}
object/parameter & $\hat{\cP}'(\hat{\bx}) $ & $S_i$ & $1/a_1,\dots, 1/a_{k-1}$ & $d_{\ba,k}(\hat{\bx})$ & $\epsilon'/2$ & $\nu/2$   \\ \hline
playing the role of & $\cH$ & $S_i $ & $d_1,\dots,d_{k-1}$ & $d_{k}$ & $\epsilon'$ & $d_0$
 \\ 
\end{tabular}
}\newline \vspace{0.2cm}

\noindent
We obtain for any fixed $\bS\in I$, that
\begin{align}\label{eq:prop}
	\mathbb{P}[ \hat{\cE}(\hat{\bx}) \mid \cE(\bS)] \geq 1-e^{-\epsilon^2 q}.
\end{align}
In a similar way, for each $\hat{\bx} \in \hat{A}(k,k-1,\ba)\setminus A$, we can apply Lemma~\ref{lem: grabbing} to $\hat{\cP}(\hat{\bx})$ to obtain that \eqref{eq:prop} holds, too.
Thus for each $\hat{\bx}\in  \hat{A}(k,k-1,\ba)$, we obtain
\begin{eqnarray*}
\mathbb{P}[\hat{\cE}(\hat{\bx}) ] 
&=& \sum_{ \bS \in I} \mathbb{P}[ \hat{\cE}(\hat{\bx})  \mid \cE(\bS)] \mathbb{P}[  \cE(\bS)] + \sum_{ \bS \notin I}  \mathbb{P}[\hat{\cE}(\hat{\bx})  \mid \cE(\bS)] \mathbb{P}[  \cE(\bS)]\\
&\stackrel{\eqref{eq:prop}}{\geq}& (1-e^{ - \epsilon^2 q}) \sum_{ \bS \in I}   \mathbb{P}[  \cE(\bS)]  
\stackrel{\eqref{eq: P s1..}}{\geq} (1 - e^{-\epsilon^2 q}) (1- e^{-\epsilon^{5/2} q})
\geq 1 - 2 e^{-\epsilon^{5/2} q}.
\end{eqnarray*}
Let $\cE_0$ be the event that 
\begin{align}\label{eq: def event E0}
|(H\setminus G)[Q]|\leq 3\nu \binom{q}{k}.
\end{align}
Since $|H\setminus G| \leq 2\nu \binom{n}{k}$, 
we may apply Lemma~\ref{lem: random subset edge size} with $n, H\setminus G,Q, \nu/2$ playing the roles of $n, H,Q,\nu$ to obtain
$$\mathbb{P}[\cE_0] 
\geq 1 - e^{-\nu^{3}q}.$$
As $ |\hat{A}(k,k-1,\ba)|\leq t^{2^k}$ by Proposition~\ref{prop: hat relation}(viii), 
we conclude
\begin{align}\label{eq:Qprob}
	\mathbb{P}\left[ \cE_0\wedge \bigwedge_{\hat{\bx}\in  \hat{A}(k,k-1,\ba)} \hat{\cE}(\hat{\bx})\right] 
	\geq 1- e^{-\nu^{3}q} - 2t^{2^k} e^{-\epsilon^{5/2}q} 
	\geq 1- e^{-\epsilon^{8/3}q}.
\end{align}

Now suppose that $\cE(\bS)$ holds for some $\bS\in I$ and that $\cE_0 \wedge\bigwedge_{\hat{\bx}\in  \hat{A}(k,k-1,\ba)} \hat{\cE}(\hat{\bx})$ holds.
Then $\sP$ induces a family of partitions $\sP[Q]$ on $Q$ which is $(1/a_1,\epsilon'/2,\ba,\epsilon)$-equitable.
Note $\epsilon'\ll 1/t, 1/k$, thus $(\epsilon'/6, \ba,d_{\ba,k})$ is a regularity instance.
Since $\nu \ll  \epsilon' \ll 1/t$, by using \eqref{eq: sP[Q] is good for G[Q]}, we can apply Lemma~\ref{lem: slightly different partition regularity} with the following objects and parameters.\newline

{\small
\begin{tabular}{c|c|c|c|c|c|c|c|c}
object/parameter & $\sP[Q]$ & $\sP[Q]$ & $3\nu$ & $\epsilon'/2$ & $d_{\ba,k}$ & $G[Q]$ & $H[Q]$  & $\epsilon$ \\ \hline
playing the role of & $\sP$ & $\sQ$ & $\nu$ & $\epsilon$ & $d_{\ba,k}$ & $H^{(k)}$ & $G^{(k)}$  & $\lambda$
 \\ 
\end{tabular}
}\newline \vspace{0.2cm}

This implies that $\sP[Q]$ is an $(1/a_1,\epsilon'/2+ \nu^{1/7},\ba,\nu^{1/7} )$-equitable family of partitions on $Q$ which is also an $(\epsilon'/2 +\nu^{1/7},d_{\ba,k})$-partition of $H[Q]$.\COMMENT{Note that $\epsilon + (3\nu)^{1/6}\leq \nu^{1/7}$.}

Finally, since $\nu \ll \epsilon'$, Lemma~\ref{lem: removing lambda} implies that there exists 
a family of partitions $\sQ$ which is an $(\epsilon', \ba,d_{\ba,k})$-equitable partition of $H[Q]$.
By~\eqref{eq: P s1..} and \eqref{eq:Qprob}, this completes the proof.
\end{proof}

Next we proceed with the proof of Theorem~\ref{lem: random choice2}.
To prove (Q1)$_{\ref{lem: random choice2}}$, we first apply the regular approximation lemma (Theorem~\ref{thm: RAL}) to obtain an $\epsilon$-equitable partition $\sP_1$ of a $k$-graph $G$ that is very close to $H$.
Lemma~\ref{lem: random choice} implies that (with high probability) $G[Q]$ has a regularity partition $\sP_2$ which has the same parameters as $\sP_1$, 
except for a much worse regularity parameter $\epsilon'$. 
However, we still have $\epsilon' \ll \epsilon_0$ and thus we can now apply Lemma~\ref{lem: similar}
to $G$, $G[Q]$ and $\sP_1, \sP_2, \sO_1$ to obtain an equitable partition $\sO_2$ of $G[Q]$ which reflects $\sO_1$.
By Lemma~\ref{lem: slightly different partition regularity}, $\sO_2$ is also an equitable partition of $H[Q]$.
To prove (Q2)$_{\ref{lem: random choice2}}$,
we again apply Lemma~\ref{lem: similar} but with the roles of $G$ and $G[Q]$ interchanged.

\begin{proof}[Proof of Theorem~\ref{lem: random choice2}]
Choose new constants $\eta, \nu$ so that 
$ c \ll \eta \ll  \nu \ll \delta.$ 
Let $\overline{\epsilon}:\mathbb{N}^{k-1}\rightarrow (0,1]$ be a function such that for all $\bb\in \mathbb{N}^{k-1}$, we have
$$\overline{\epsilon}(\bb) \ll \norm{\bb}^{-k}.$$
Let $t_0:= t_{\ref{thm: RAL}} (\eta,\nu,\overline{\epsilon})$.

By Theorem~\ref{thm: RAL}, there exists a $t_0$-bounded $(\eta,\overline{\epsilon}(\ba^{\sP}),\ba^{\sP})$-equitable family of partitions $\sP_1=\sP_1(k-1,\ba^{\sP})$, a $k$-graph $G$ and a density function $d_{\ba^{\sP},k}$ such that the following hold.
\begin{itemize}
\item[(G1)$_{\ref{lem: random choice2}}$]
$\sP_1$ is an $(\overline{\epsilon}(\ba^{\sP}),d_{\ba^{\sP},k})$-partition of $G$, and
\item[(G2)$_{\ref{lem: random choice2}}$] $|G\triangle H|\leq \nu \binom{n}{k}$.
\end{itemize}
(Here (G1)$_{\ref{lem: random choice2}}$ follows from Theorem~\ref{thm: RAL}(ii), \eqref{eq: perfectly regular is regular}, and Lemma~\ref{lem: maps bijections}.)

Let $\epsilon := \overline{\epsilon}(\ba^{\sP})$ and $T:= \norm{\ba^{\sP}}$. As $t_0$ only depends on $\eta,\nu,\overline{\epsilon}$, we may assume that $c\ll \epsilon$.%
\COMMENT{
Here we also use that $c\ll 1/k$ since $c \ll \delta\ll \epsilon_0$ and $\epsilon_0\leq \norm{\ba}^{-4^k}\leq 2^{-4^k}$ as $(2\epsilon_0/3,\ba,d_{\ba,k})$ is a regularity instance.
}
Together with the choice of $\overline{\epsilon}$ and the fact that  $1/T\leq 1/a_1^{\sP} \leq \eta$, this implies 
$$0 < 1/n< 1/q \ll c \ll \epsilon \ll 1/T, 1/a_1^{\sP}  \ll  \nu \ll \delta \ll \epsilon_0 \leq 1.$$
Additionally, we choose $\epsilon'$ so that
\begin{align}\label{eq: constants hierarchy}
0 < 1/n< 1/q \ll c \ll \epsilon \ll \epsilon' \ll 1/T, 1/a_1^{\sP}  \ll  \nu \ll \delta \ll \epsilon_0  \leq 1.
\end{align}
Let $\cE_0$ be the event that 
$$|G[Q]\triangle H[Q]|\leq 2\nu \binom{q}{k}.$$ 
Property (G2)$_{\ref{lem: random choice2}}$ and Lemma~\ref{lem: random subset edge size} imply that
\begin{align}\label{eq: mathbb prob E0}
\mathbb{P}[\cE_0] \geq 1- e^{-\nu^{3}q}.
\end{align}

Let $\cE_1$ be the event that there exists a family of partitions $\sP_2=\sP_2(k-1,\ba^{\sP})$  
which is an $(\epsilon',\ba^{\sP},d_{\ba^{\sP},k})$-equitable partition of $G[Q]$. 
Since $\epsilon \ll \epsilon'$, 
Lemma~\ref{lem: random choice} implies that 
\begin{align}\label{eq:E_1}
	\mathbb{P}[\cE_1] \geq 1 - e^{-\epsilon^{3}q}.
\end{align}
Thus \eqref{eq: mathbb prob E0} and \eqref{eq:E_1}  imply that
\begin{align}\label{eq: E0 wedge E1 holds with prob..}
\mathbb{P}[\cE_0 \wedge \cE_1 ] \geq 1 - 2e^{-\epsilon^{3}q} \geq 1- e^{-cq}.
\end{align}
Hence it suffices to show that the two statements (Q1)$_{\ref{lem: random choice2}}$ and (Q2)$_{\ref{lem: random choice2}}$ both hold 
if we condition on $\cE_0\wedge \cE_1$.

First, assume $\cE_0\wedge \cE_1$ holds and $\sO_1$ exists as in (Q1)$_{\ref{lem: random choice2}}$. 
As $\nu\ll \delta \ll \epsilon_0$, 
we can apply Lemma~\ref{lem: slightly different partition regularity} with $\sO_1$, $\sO_1$, $\nu$, $\epsilon_0$, $d_{\ba,k}$, $H$ and $G$ playing the roles of $\sP$, $\sQ$, $\nu$, $\epsilon$, $d_{\ba,k}$, $H^{(k)}$ and $G^{(k)}$, respectively, to 
conclude that  $\sO_1$ is also an 
$(\epsilon_0+\delta/3,d_{ \ba,k})$-partition of~$G$.

Note that $(\epsilon_0+\delta/3)/2 \leq 2\epsilon_0/3$, thus $((\epsilon_0+\delta/3)/2, \ba, d_{\ba,k})$ is a regularity instance.
By this and \eqref{eq: constants hierarchy}, we can apply Lemma~\ref{lem: similar} with the following objects and parameters. \newline

{\small
\begin{tabular}{c|c|c|c|c|c|c|c|c|c|c|c|c|c}
object/parameter & $n$ & $q$ & $\sO_1$ & $\sP_1$ & $\sP_2$ & $G$ & $G[Q]$ &$\epsilon'$ & $T$  & $\delta/3$ & $\epsilon_0+\delta/3$&   $ d_{\ba{^{\sP},k}}$ & $d_{\ba,k}$ \\ \hline
playing the role of  & $n$ & $m$& $\sO_1$& $\sQ_1$& $\sQ_2$& $H^{(k)}_1$& $H^{(k)}_2$ & $\epsilon$ & $T$ & $\delta$ &  $\epsilon_0$&  $ d_{\ba{^{\sQ},k}}$   & $d_{\ba^{\sO},k}$\\
\end{tabular}
}\newline \vspace{0.2cm}

\noindent Hence there exists an $(\epsilon_0+2\delta/3,\ba,d_{ \ba,k})$-equitable partition $\sO_2$ of $G[Q]$.
Since $\cE_0$ holds and $\nu\ll \delta \ll \epsilon_0$, 
we can apply Lemma~\ref{lem: slightly different partition regularity} with $\sO_2$, $\sO_2$, $2\nu$, $\epsilon_0+2\delta/3$, $d_{\ba,k}$, $H[Q]$ and $G[Q]$ playing the roles of $\sP$, $\sQ$, $\nu$, $\epsilon$, $d_{\ba,k}$, $G^{(k)}$ and $H^{(k)}$, respectively. Then we conclude that $\sO_2$ is an $(\epsilon_0+\delta,\ba,d_{ \ba,k})$-equitable partition of $H[Q]$. 
Thus $\cE_0\wedge \cE_1$ implies (Q1)$_{\ref{lem: random choice2}}$.

Now assume $\cE_0\wedge \cE_1$ holds and $\sO_2$ exists as in (Q2)$_{\ref{lem: random choice2}}$. 
As $\nu\ll \delta \ll \epsilon_0$, 
we can apply Lemma~\ref{lem: slightly different partition regularity} with  $\sO_2$, $\sO_2$, $2\nu$, $\epsilon_0$, $d_{\ba,k}$, $H[Q]$ and $G[Q]$ playing the roles of 
$\sP$, $\sQ$, $\nu$, $\epsilon$, $d_{\ba,k}$, $H^{(k)}$ and $G^{(k)}$, respectively.
Thus $\sO_2$ is an $(\epsilon_0+\delta/3,\ba,d_{ \ba,k})$-equitable partition of $G[Q]$.
By \eqref{eq: constants hierarchy} and the fact that $R$ is a regularity instance, 
we can apply Lemma~\ref{lem: similar} with the following objects and parameters. \newline

{\small
\begin{tabular}{c|c|c|c|c|c|c|c|c|c|c|c|c|c}
object/parameter & $q$ & $n$ & $\sO_2$ & $\sP_2$ & $\sP_1$ & $G[Q]$& $G$&  $\epsilon'$ & $T$& $\delta/3$ & $\epsilon_0+\delta/3$&  $ d_{\ba{^{\sP},k}}$ & $d_{\ba,k}$\\ \hline
playing the role of & $n$ & $m$& $\sO_1$& $\sQ_1$& $\sQ_2$& $H_1^{(k)}$&$H_2^{(k)}$& $\epsilon$ & $T$& $\delta$ & $\epsilon_0$&   $ d_{\ba{^{\sQ},k}}$   & $d_{\ba^{\sO},k}$    
\end{tabular}
}\newline \vspace{0.2cm}

Thus there exists a family of partitions $\sO_1$ which is an $(\epsilon_0+2\delta/3,\ba,d_{ \ba,k})$-equitable partition of $G$.
By (G2)$_{\ref{lem: random choice2}}$ and the fact that $\nu\ll \delta \ll \epsilon_0$, 
we can apply Lemma~\ref{lem: slightly different partition regularity} with $\sO_1$, $\sO_1$, $\nu$, $\epsilon_0+2\delta/3$, $d_{\ba,k}$, $H$ and $G$ playing the roles of 
$\sP$, $\sQ$, $\nu$, $\epsilon$, $d_{\ba,k}$, $G^{(k)}$ and $H^{(k)}$, respectively. 
We conclude that $\sO_1$ is an $(\epsilon_0+\delta,\ba,d_{ \ba,k})$-equitable partition of $H$. 
Thus $\cE_0\wedge \cE_1$ implies (Q2)$_{\ref{lem: random choice}}$.
\end{proof}

\bibliographystyle{amsplain}
\bibliography{littesting}

\end{document}